\definecolor{my-link}{rgb}{0.5,0.0,0.0}
\definecolor{my-blue}{rgb}{0.0,0.0,0.6}
\definecolor{my-red}{rgb}{0.5,0.0,0.0}
\definecolor{my-green}{rgb}{0.0,0.5,0.0}
\definecolor{nicos-red}{rgb}{0.75,0.0,0.0}
\definecolor{really-light-gray}{gray}{0.8}
\definecolor{darkgreen}{rgb}{0.0,0.5,0.0}
\definecolor{darkblue}{rgb}{0.0,0.0,0.3}
\definecolor{nicosred}{rgb}{0.65,0.1,0.1}
\definecolor{light-gray}{gray}{0.7}
\newcommandx{\addmath}[2][1=]{\todo[linecolor=red,backgroundcolor=red!25,bordercolor=red,#1]{#2}}
\newcommandx{\fixtext}[2][1=]{\todo[linecolor=blue,backgroundcolor=blue!25,bordercolor=blue,#1]{#2}}
\newcommandx{\note}[2][1=]{\todo[linecolor=yellow,backgroundcolor=yellow!25,bordercolor=yellow,#1]{#2}}
\newcommand\bbullet{{\raisebox{0.5pt}{\scaleobj{0.5}{\bullet}}}} %% looks good for paths x_\bbullet 
\newcommand\acdot{{\raisebox{-1pt}{\scaleobj{1.4}{\cdot}}}}  %%dot inside function 
\newtheorem{theorem}{Theorem}[section]
\newtheorem{lemma}[theorem]{Lemma}
\newtheorem{corollary}[theorem]{Corollary}
\theoremstyle{definition}
\newtheorem{definition}[theorem]{Definition}
\theoremstyle{remark}
\newtheorem{remark}[theorem]{Remark}
\numberwithin{figure}{section}
\numberwithin{equation}{section}
\definecolor{sussexg}{rgb}{0,0.7,0.7}
\definecolor{sussexp}{rgb}{0.4,0,0.4}
\definecolor{sussexb}{rgb}{0.4,0.4,0.7}
\definecolor{mygray}{rgb}{0.75,0.75,0.75}
\newcommand{\E}{\mathbb{E}}
\newcommand{\cD}{\mathcal{D}}
\newcommand{\cE}{\mathcal{E}}
\newcommand{\cM}{\mathcal{M}}
\newcommand{\cB}{\mathcal{B}}
\newcommand{\cT}{\mathcal{T}}
\renewcommand{\P}{\mathbb{P}}
\newcommand{\V}{\mathbb{V}}
\newcommand{\Phat}{\widehat{\mathbb{P}}}
\newcommand{\Ehat}{\widehat{\mathbb{E}}}
\newcommand{\R}{\mathbb{R}}
\newcommand{\Z}{\mathbb{Z}}
\newcommand{\N}{\mathbb{N}}
\newcommand{\X}{\mathbb{X}}
\newcommand{\T}{\mathbb{T}}
\newcommand{\cX}{\mathcal{X}}
\newcommand{\cXx}{\mathcal{X}^x}
\newcommand{\e}{\varepsilon}
\newcommand{\fl}[1]{\lfloor{#1}\rfloor}
\DeclareMathOperator{\bbE}{\mathbb{E}}
\DeclareMathOperator{\bbN}{\mathbb{N}}
\DeclareMathOperator{\bbP}{\mathbb{P}}
\DeclareMathOperator{\bbR}{\mathbb{R}}
\DeclareMathOperator{\bbV}{\mathbb{V}}
\DeclareMathOperator{\bbX}{\mathbb{X}}
\DeclareMathOperator{\bbZ}{\mathbb{Z}}
\DeclareMathOperator{\bfE}{\mathbf{E}}
\DeclareMathOperator{\bfP}{\mathbf{P}}
\DeclareMathOperator{\bfk}{\mathbf{k}}
\DeclareMathOperator{\mfg}{\mathfrak{g}}
\DeclareMathOperator{\mfG}{\mathfrak{G}}
\DeclareMathOperator{\mfS}{\mathfrak{S}}
\DeclareMathOperator{\sA}{\mathcal{A}}
\DeclareMathOperator{\sB}{\mathcal{B}}
\DeclareMathOperator{\sF}{\mathcal{F}}
\DeclareMathOperator{\sG}{\mathcal{G}}
\DeclareMathOperator{\sH}{\mathcal{H}}
\DeclareMathOperator{\sI}{\mathcal{I}}
\DeclareMathOperator{\sL}{\mathcal{L}}
\DeclareMathOperator{\sM}{\mathcal{M}}
\DeclareMathOperator{\sR}{\mathcal{R}}
\DeclareMathOperator{\sX}{\mathcal{X}}
\DeclareMathOperator{\lf}{\lfloor}
\DeclareMathOperator{\rf}{\rfloor}
\DeclareMathOperator{\Var}{{\mathbb V}ar}
\def\Diff{\cD}
\def\w{\omega}
\def\wbar{{\bar\w}}
\def\wtil{{\tilde\w}}
\newcommand{\ehat}{\widehat{e}}
\def\unif{\vartheta}
\def\eB{\kappa}
\providecommand{\abs}[1]{\vert#1\vert}
\def\what{{\widehat\w}}
\def\wh{\widehat}
\def\wt{\widetilde}
\def\Omhat{\widehat\Omega}
\def\fe{\Lambda}
\def\fepl{\fe_{\rm{pl}}}
\def\Zne{Z^{\rm NE}}
\def\ximin{{\underline\xi}}
\def\ximax{{\overline\xi}}
\def\ximinmin{\underline{\underline\xi}}
\def\ximaxmax{\overline{\overline\xi}}
\def\zetaminmin{\underline{\underline\zeta}}
\def\etamin{{\underline\eta}}
\def\zetamin{{\underline\zeta}}
\def\zetamax{{\overline\zeta}}
\DeclareMathOperator{\ri}{ri}    %rel interior  
\DeclareMathOperator{\ext}{ext}    %extremes 
\DeclareMathOperator{\dist}{dist} 
\def\XDLR{\overline X^{\,x,\w}}
\def\Xtree{\widehat X^{\,\zeta,\w}}
\def\Xtreen{\widehat X^{\,\zeta,\w,(n)}}
\def\DLR{\mathrm{DLR}}
\newcommand{\siDLR}{\overrightarrow{\DLR}}
\newcommand{\biDLR}{\overleftrightarrow{\DLR}}
\def\CI{\phi}
\def\cid{\xi_*}
\def\pici{\pi^{\rm{cif}}}
\def\ker{\kappa}
\def\Bbar{\overline B}
\def\Pibar{\overline\Pi}
\newcommand{\That}{\widehat{T}}
\def\Omtemphat{\widehat\Omega_0}
\def\Ommono{\hyperref[Ommonohat]{\textcolor{black}{\Omega_{\rm{coc}}}}} %the Omega on which monotonicity (and other properties?) of cocycles holds
\def\Ommonohat{\hyperref[Ommonohat]{\textcolor{black}{\widehat\Omega_{\rm{coc}}}}}%the Omega on which monotonicity (and other properties?) of cocycles holds
\def\Omcont#1{\hyperref[Omconthat]{\textcolor{black}{\Omega_{\rm{cont},#1}}}} %the Omega on which continuity holds at \xi
\def\Omconthat#1{\hyperref[Omconthat]{\textcolor{black}{\widehat\Omega_{\rm{cont},#1}}}} %the Omega on which continuity holds at \xi
\def\Omconsthat#1{\hyperref[Omconsthat]{\textcolor{black}{\widehat\Omega_{\rm{tilt},#1}}}} %the Omega on which h(B) is constan
\def\Omback#1{\hyperref[Omback]{\textcolor{black}{\Omega_{#1,\downarrow0}}}} %the Omega on which the B-tree dies out
\def\OmBus{\hyperref[OmBus]{\textcolor{black}{\Omega'_{[\ximin,\ximax]}}}} %the Omega on which liminf and limsup bounds hold for Busemann limits for the given \xi
\def\Ombi{\hyperref[Ombi]{\textcolor{black}{\Omega_{\rm{bi},[\ximin,\ximax]}}}} %the Omega on which liminf and limsup bounds hold for Busemann limits for the given \xi
\def\OmBuszeta{\hyperref[OmBus]{\textcolor{black}{\Omega'_{[\zetamin,\zetamax]}}}} %the Omega on which liminf and limsup bounds hold for Busemann limits for the given \zeta
\def\OmBusnice{\hyperref[OmBusnice]{\textcolor{black}{\Omega_1}}} %the Omega on which Busemann limits exist on a dense set
\def\OmBushat{\hyperref[OmBushat]{\textcolor{black}{\widehat\Omega'_{[\ximin,\ximax]}}}} %the Omega on which Busemann limit holds
\def\OmBusallhat{\hyperref[OmBusallhat]{\textcolor{black}{\widehat\Omega_{\rm{Bus}}}}} %the Omega on which liminf and limsup bounds hold for Busemann limits for all \xi
\def\OmBusall{\hyperref[OmBusallhat]{\textcolor{black}{\Omega_{\rm{Bus}}}}} %the Omega on which liminf and limsup bounds hold for Busemann limits for all \xi
\def\Omexist{\hyperref[Omexist]{\textcolor{black}{\Omega_{\rm{exist}}}}} %the Omega on which directedness of cocycle DLRs holds
\def\Omexisthat{\hyperref[Omexisthat]{\textcolor{black}{\widehat\Omega_{\rm{exist}}}}} %the Omega on which directedness of cocycle DLRs holds
\def\Omdir{\hyperref[Omdir]{\textcolor{black}{\Omega_{\rm{dir}}}}} %the Omega on which every DLR is directed
\def\Omdirhat{\hyperref[Omdirhat]{\textcolor{black}{\widehat\Omega_{\rm{dir}}}}} %the Omega on which every DLR is directed
\def\OmBusPi{\hyperref[OmBusPi]{\textcolor{black}{\Omega_{[\ximin,\ximax]}}}} %the Omega on which DLR is Busemann for the given \xi
\def\OmBusPizeta{\hyperref[OmBusPi]{\textcolor{black}{\Omega_{[\zetamin,\zetamax]}}}} %the Omega on which DLR is Busemann for the given \xi
\def\OmBusPihat{\hyperref[OmBusPihat]{\textcolor{black}{\widehat\Omega_{[\ximin,\ximax]}}}} %the Omega on which DLR is Busemann for the given \xi
\def\OmBusPiall{\hyperref[OmBusPiall]{\textcolor{black}{\Omega_{\rm{uniq}}}}} %the Omega on which Busemann limits will hold for all directions at once
\def\Omnondeg{\hyperref[Omnondeg]{\textcolor{black}{\Omega_{\rm{nondeg}}}}}  %the Omega on which the only nondegenerate DLR solutions are \Pi^{e_i}
\def\OmBhat#1{\hyperref[OmBhat]{\textcolor{black}{\widehat\Omega_{#1}}}}  %the Omega on which the DLR solution corresponding to B is directed
\def\Omreg{\hyperref[Omreg]{\textcolor{black}{\Omega_{\rm{reg}}}}} % the Omega on which regular conditional probabilities are supported on the projection
\def\Omcif{\hyperref[Omcif]{\textcolor{black}{\Omega_{\rm{cif}}}}} % the Omega on which regular conditional probabilities are supported on the projection
\def\Omeihat{\hyperref[Omeihat]{\textcolor{black}{\widehat\Omega_{e_1,e_2}}}}
\def\WLLN{WLLN$_\xi$}
\def\SLLN{SLLN$_\xi$}
\DeclareRobustCommand{\cev}[1]{%
  \mathpalette\do@cev{#1}%
}
\newcommand{\do@cev}[2]{%
  \fix@cev{#1}{+}%
  \reflectbox{$\m@th#1\vec{\reflectbox{$\fix@cev{#1}{-}\m@th#1#2\fix@cev{#1}{+}$}}$}%
  \fix@cev{#1}{-}%
}
\newcommand{\fix@cev}[2]{%
  \ifx#1\displaystyle
    \mkern#22mu
  \else
    \ifx#1\textstyle
      \mkern#22mu
    \else
      \ifx#1\scriptstyle
        \mkern#22mu
      \else
        \mkern#22mu
      \fi
    \fi
  \fi
}
\def\backpix{{\cev{\pi}}^{\,x}}
\def\esssup{\mathop{\mathrm{ess\,sup}}}
\def\range{\mathcal R}
\def\Uset{\mathcal U}
\def\Usetnonuniq{\Uset_x^\w}
\def\Usetnonuniqz{\Uset_0^\w}
\def\Udense{\Uset_0}
\def\Ddense{\Diff_0}
\font \mymathbb = bbold10 at 11pt
\newcommand{\one}{\mbox{\mymathbb{1}}}    %indicator function
\begin{document}

\title[Polymer Gibbs Measures]
{Busemann functions and Gibbs measures\\ in directed polymer models on $\mathbb{Z}^2$} 

\author[C.~Janjigian]{Christopher Janjigian}
\address{Christopher Janjigian\\ University of Utah\\  Mathematics Department\\ 155 S 1400 E\\   Salt Lake City, UT 84112\\ USA.}
\email{janjigia@math.utah.edu}
\urladdr{http://www.math.utah.edu/~janjigia}
\thanks{C.\ Janjigian was partially supported by a postdoctoral grant from the Fondation Sciences Math\'ematiques de Paris while working at Universit\'e Paris Diderot.}

\author[F.~Rassoul-Agha]{Firas Rassoul-Agha}
\address{Firas Rassoul-Agha\\ University of Utah\\  Mathematics Department\\ 155S 1400E\\   Salt Lake City, UT 84112\\ USA.}
\email{firas@math.utah.edu}
\urladdr{http://www.math.utah.edu/~firas}
\thanks{F.\ Rassoul-Agha was partially supported by National Science Foundation grants DMS-1407574 and DMS-1811090}

\keywords{
Busemann functions,
coalescence,
cocycle,
competition interface,
directed polymers,
Gibbs measures,
Kardar-Parisi-Zhang,
polymer measures,
random environments,
viscous stochastic Burgers,
DLR,
DPRE,
KPZ,
RWRE}
\subjclass[2000]{60K35, 60K37} 

%\date{October 8, 2018}   %Submitted
%\date{March 1, 2019}    %Revised
%\date{May 16, 2019}    %Accepted
%\date{May 21, 2019}  %Some modifications
\date{October 3, 2019} %Final modifications
%\date{Last modified on \today\ at\ \filemodprinttime{\jobname}}

\begin{abstract}  
We consider random walk in a space-time random potential, also known as directed random polymer measures, on the planar square lattice
with nearest-neighbor steps and general i.i.d.\ weights on the vertices.
We construct covariant cocycles and use them to prove new results on existence, uniqueness/non-uniqueness, and asymptotic directions of semi-infinite polymer measures (solutions to 
the Dobrushin-Lanford-Ruelle equations). We also prove non-existence of covariant or deterministically directed  bi-infinite polymer measures.
Along the way, we prove almost sure existence of Busemann function limits in directions where
the limiting free energy has some regularity. 
\end{abstract}
\maketitle

\setcounter{tocdepth}{1}
\tableofcontents

\section{Introduction} 
%In this paper, 
We study a class of probability measures on nearest-neighbor up-right random walk paths in the two-dimensional square lattice. 
The vertices of the lattice are populated with i.i.d.\ random variables called {\sl weights} and
the energy of a finite path is given by the sum of the weights along the path. We assume that these weights are nondegenerate and have finite $2+\e$ moments, but they are otherwise general. The {\sl point-to-point quenched polymer measures} are probability measures on admissible paths between two fixed sites in which the probability of a path is proportional to the exponential of its energy.
This model is known as the {\sl directed polymer with bulk disorder} and it was introduced in the statistical physics literature by Huse and Henley \cite{Hus-Hen-85} in 1985 to model the domain wall in the ferromagnetic Ising model with random impurities. It has been the subject of intense study over the past  three decades;
see the recent surveys \cite{Com-Shi-Yos-04,Hol-09,Com-17}.

Many of our main results concern {\sl semi-infinite polymer measures}, which we will also call {\sl semi-infinite DLR solutions} or {\sl Gibbs measures} to help connect our results to the usual language of statistical mechanics. Semi-infinite polymer measures are probability measures on infinite length admissible up-right paths emanating from a fixed site which are consistent with the point-to-point quenched polymer measures.  Some of the natural questions about such measures include whether all such measures must satisfy a law of large numbers (LLN), whether measures exist which satisfy a LLN with any given direction, and under what conditions such measures are unique. Ideally one would like to answer these questions for almost every realization of the environment simultaneously for all directions.

This is the third paper to consider these questions in 1+1 dimensional directed polymer models; the recent \cite{Geo-etal-15} and \cite{Bak-Li-18-}  address similar questions in related models which have more structure than the models considered here. 
%while the latter does not consider a solvable model, but makes use of a certain symmetry in their model that leads to a 
%quadratic limiting free energy.   See Section \ref{sec:works} for more details and a longer discussion of past literature.
%

\cite{Geo-etal-15} studies the model first introduced in \cite{Sep-12-corr}, which is a special case of the model studied in this paper where the weights have the log-gamma distribution. 
The authors use the solvability of the model (i.e.\ the possibility of exact computations) 
to introduce semi-infinite polymer measures which satisfy a LLN with any fixed direction for that model. 
As alluded to in the fourth paragraph on page 2283 of \cite{Geo-etal-15}, the authors expected their structures and conclusions to generalize. We demonstrate that they do, but in addition to studying more general models, the present paper considers a much wider class of problems than \cite{Geo-etal-15}; hence most of the results we discuss are new even in this solvable setting.

\cite{Bak-Li-18-} studies 1+1 dimensional directed polymers in continuous space and discrete time, where the underlying random walk has Gaussian increments. The authors show existence and uniqueness of semi-infinite polymer measures satisfying the law of large numbers with a fixed deterministic direction\textemdash but, the event on which this holds depends on the direction chosen. While the model considered in \cite{Bak-Li-18-} is not solvable, a symmetry in the model inherited from the Gaussian walk leads to a quadratic limiting free energy. This is a critical feature of the model, since the method used in that project relies in an essential way on having a curvature bound for the free energy.

Some of our results, specifically ones concerning existence and uniqueness of semi-infinite polymer measures in deterministic directions, 
can likely be obtained with the techniques of \cite{Bak-Li-18-} if one assumes or proves a curvature condition on the limiting free energy, which we will denote by $\fe$. 
Proving such a condition is a long-standing open problem.
We prefer to avoid {\sl a priori} curvature assumptions for two reasons: 
%first, our results hold under moment assumptions which are weaker than those which are believed to characterize the KPZ class \cite{Bir-Bou-Pot-07}, so it is possible that some of the models we consider have free energies which do not in fact satisfy global curvature conditions; second, 
first, most of our theorems are valid  under no assumptions on $\fe$
and second, as we will see in Section \ref{sec:cocycles}, the stochastic process that is our main tool, 
the {\sl Busemann process}, is naturally indexed by elements of the superdifferential of $\fe$, and
we believe that understanding the structure of this object without any {\sl a priori} regularity assumptions might provide a path to proving differentiability or strict concavity of $\fe$.  

%The approach in \cite{Bak-Li-18-} relies crucially on the curvature of the free energy and does not work in general unless 
%a general theorem on  curvature is proved.  
%In this paper, we develop an approach that circumvents the need for a priori regularity assumptions and in fact 
%offers the chance of progress on questions of regularity of the free energy.  
%See the end of Section \ref{sec:works} for more.

We now sketch what we can show about semi-infinite polymers in more detail. Before beginning, we remark that the set of semi-infinite polymer measures is convex and it  suffices to study the extreme points. 
Although most of our theorems apply without {\sl a priori} assumptions on $\fe$, 
they take their nicest form when $\fe$ is both  differentiable and strictly concave. 
This is conjectured to be the case in general.
In this case, our results say that except for a single null set of weights all of the following hold. Every extremal measure satisfies a strong LLN (Corollary \ref{cor:main2}). For every direction
in $\Uset=\{(t,1-t):0\le t\le 1\}$ 
there is at least one extremal semi-infinite polymer measure with that asymptotic direction (Corollary \ref{cor:main1}). Except for possibly a random countable set of directions, this measure is unique (Theorem \ref{th:main4}(\ref{th:main4:e})). The directions of non-uniqueness are precisely the directions at which %a microscopic gradient-like object given by 
the Busemann process is discontinuous (Theorem \ref{th:main4}(\ref{th:main4:e})). 
%HERE
This set of directions is either always empty or always infinite %dense in $\Uset$ 
(Theorem \ref{th:main4}(\ref{th:main4:b''})). 
The connection between the non-uniqueness set and discontinuities of the Busemann process has not previously been observed. Moreover, this is the first time the countability of this set has been shown in positive temperature.

We do not resolve the question of whether or not the set of non-uniqueness directions is actually empty almost surely. 
As mentioned above, this is equivalent to the almost sure continuity of the process of Busemann functions viewed as a function of the direction. This latter question can likely be answered for the log-gamma polymer, where it is natural to expect that the distribution of the Busemann process can be described explicitly using positive temperature analogues of the ideas in \cite{Fan-Sep-18-}.
It is known that this set is not empty in last-passage percolation (LPP), the zero-temperature version of the polymer model.  See Theorem 2.8 and Lemma 5.2 in \cite{Geo-Ras-Sep-17-ptrf-2}. 
%It is plausible to expect that positive temperature adds enough noise that the set is in fact empty for the polymer model. 

Aside from the problems discussed above, we study a number of natural related questions. For example, based on analogies to bi-infinite geodesics in percolation, it is natural to expect that nontrivial  {\sl bi-infinite polymer measures} should not exist. We are able to prove non-existence of shift-covariant bi-infinite polymer measures and of bi-infinite polymer measures satisfying a LLN with a given fixed direction, 
but do not otherwise address non-covariant measures.
%fully resolve the question. 
We further study the {\sl competition interface}, introduced in \cite{Geo-etal-15} as a positive-temperature analogue of the object from last-passage percolation \cite{Fer-Pim-05}. In particular, we prove that the interface satisfies a LLN and characterize its random direction in terms of  the Busemann process. 

Our results can also be interpreted in terms of existence and uniqueness of global 
stationary solutions and pull-back attractors of a discrete viscous stochastic Burgers equation. 
This is the main focus of our companion paper \cite{Jan-Ras-19-pams-}.
See also \cite{Bak-Kha-18} and the discussion in \cite{Bak-Li-18-}, which focuses on this viewpoint.

\subsection{Related works}\label{sec:works}

In his seminal paper \cite{Sin-91} Sinai proved existence and uniqueness of 
stationary global solutions to the stochastic viscous Burgers equation with a forcing that is periodic in space and either also periodic in time or a white noise in time. 
Later, \cite{Gom-etal-05} extended Sinai's results to the multidimensional setting using a 
stochastic control approach and \cite{Dir-Sou-05}  used PDE methods to prove similar results for both viscous and inviscid Hamilton-Jacobi equations with periodic spatial dependence. 
Periodicity was relaxed in \cite{Sin-93,Bak-Kha-10},
where the random potential was assumed to have a special form (not stationary in space) that 
ensures localization of the reference random walk near the origin and makes the situation essentially 
compact so the arguments from \cite{Sin-91} could be used. A similar multidimensional model is treated in \cite{Bak-Kha-10}.  See also \cite{E-etal-00,Hoa-Kha-03,Itu-Kha-03,Bak-13} for zero temperature results using similar methods. 

The connection between solving the stochastic viscous Burgers equation and the existence of Busemann limits in related directed polymer models was observed in  \cite{Kif-97} where they treated the case of 
strong forcing (high viscosity) or, in statistical mechanics terms, weak disorder (high temperature).  See also the Markov chains constructed by Comets-Yoshida \cite{Com-Yos-06}, Yilmaz \cite{Yil-09-aop}, Section 6 in \cite{Ras-Sep-14}, and  Example 7.7 in \cite{Geo-Ras-Sep-16}. The model we consider is in 1+1 space-time dimensions, which is known to be always 
%\note{Technically, \cite{Com-Var-06} assumes that $\E[e^{\beta\w_0}]<\infty$ for every $\beta>0$, and \cite{Lac-10} weakens this assumption to finiteness for some $\beta>0$.} 
in strong disorder \cite{Com-Var-06,Lac-10}.

The recent papers \cite{Bak-Li-18-} and \cite{Geo-etal-15}, mentioned earlier, are more closely related to this work as both study strictly positive temperature polymers in a non-compact setting and in the strong disorder regime. 

%\cite{Bak-Li-18-} studies 1+1 dimensional directed polymers in a discrete-time continuous-space setting where the underlying random walk has Gaussian increments. 
%In \cite{Bak-Li-18-}, the authors prove existence and uniqueness of semi-infinite shift-covariant Gibbs measures satisfying a LLN with a given deterministic direction and explore related results in the dynamical systems framework described above. From the dynamical systems point of view, their results give existence and uniqueness of shift-covariant solutions, with a given deterministic velocity, to the corresponding stochastic viscous Burgers equation with spatially ergodic kick forcing at integer times. Additionally, the authors describe the basins of attraction of these global solutions.

Currently, there are two major approaches to studying the general structure of infinite and semi-infinite directed polymers in zero or positive temperature. The first approach was  introduced by Newman and coauthors \cite{New-95, Lic-New-96, How-New-97, How-New-01} in the context of first-passage percolation (FPP). 
This approach requires control of the curvature of $\fe$.
This property is used to prove straightness estimates for the quenched point-to-point polymer measures. Existence and uniqueness results then come as consequences, as well as existence of {\sl Busemann functions},
which are defined through limits of ratios of partition functions.  This is the approach taken by \cite{Bak-Li-18-}. See also \cite{Bak-16,Bak-Cat-Kha-14,Cat-Pim-11,Cat-Pim-12,Cat-Pim-13,Fer-Pim-05,Wut-02} for other papers following this approach in zero temperature.

In this paper, we take the other, more recent, approach in which Busemann functions are the fundamental object.
The use of Busemann functions to study the structure of semi-infinite geodesics traces back to the seminal work of Hoffman \cite{Hof-05,Hof-08} on FPP.
Here, we construct covariant cocycles which are consistent with the weights on an extension of our probability space and then use a coupling argument and planarity to prove existence and properties of Busemann functions. 
The bulk of the work then goes towards using this process of Busemann functions to prove the results about infinite and semi-infinite polymer measures. This program was first achieved in zero temperature by \cite{Dam-Han-14,Dam-Han-17} in FPP and \cite{Geo-Ras-Sep-17-ptrf-1,Geo-Ras-Sep-17-ptrf-2} in LPP. \cite{Car-Sou-17} also takes this approach to construct correctors, which are the counterparts of Busemann functions, in their study of stochastic homogenization of viscous Hamilton-Jacobi equations.

In \cite{Geo-etal-15} the desired cocycles were constructed using the solvability of the model.
In the present paper we build cocycles using weak subsequential Ces\`aro limits of ratios of 
partition functions, which is a version of the method Damron and Hanson \cite{Dam-Han-14} used in their study of FPP. Our situation requires overcoming some nontrivial technical hurdles not encountered there which arise due to the path directedness in our model. An alternative approach to producing cocycles based on lifting the queueing theoretic arguments of \cite{Mai-Pra-03} to positive temperature is also possible. These queueing theoretic results furnished the desired cocycles in 
\cite{Geo-Ras-Sep-17-ptrf-1, Geo-Ras-Sep-17-ptrf-2}. It is noteworthy that the queuing results rely on a specific choice of admissible path increments, while the weak convergence idea seems to work more generally.

\subsection{Organization}
Our paper is structured as follows. We start with some notation in Section \ref{sub:notation} then introduce the model in Section \ref{sub:model}.
Section \ref{sub:DLR} introduces semi-infinite and bi-infinite polymer measures (DLR solutions).
Our main results are stated in Section \ref{sec:results}.  
In Section \ref{sec:coc} we address existence of covariant cocycles and Busemann functions.
Using these cocycles we 
prove (more general versions of) our main results on semi-infinite DLR solutions in Section \ref{sec:semiDLR}.
In Section \ref{sec:biDLR}, we use these results to show non-existence of covariant or deterministically directed bi-infinite DLR solutions. 
A number of technical results are deferred to the appendix. 
One such result on almost sure coalescence of coupled random walks in a common random environment, Theorem \ref{thm:RWREcoal}, may be of independent interest to some readers.

\section{Setting}\label{sec:setting}
After establishing some notation, we introduce the quenched polymer measures and the 
Gibbs measures formulation.

\subsection{Notation}\label{sub:notation}
Throughout the paper $(\Omega, \sF,\bbP)$ is a Polish probability space equipped with a group of $\sF$-measurable $\bbP$-preserving transformations $T_x:\Omega\to\Omega$,  $x\in\Z^2$, such that $T_0$ is the identity map and  $T_xT_y=T_{x+y}$ for all $x,y\in\Z^2$.  $\E$ is expectation relative to $\P$.
A generic point in this space will be denoted by $\w \in \Omega$. We assume that there exists a family $\{\w_x(\w) : x \in \bbZ^2\}$ of real-valued random variables called {\sl weights} such that  
	\begin{align}\label{main-assump}
	&\text{$\{\w_x\}$ are i.i.d.\ under $\bbP$, $\exists p>2$\,: $\bbE[\abs{\w_0}^p] < \infty$, and $\Var(\w_0)>0$.}
	\end{align}
We assume further that  $\w_y(T_x \w) = \w_{x+y}(\w)$ for all $x,y \in \bbZ^2$. 
An example is the canonical setting of a product space $\Gamma = \bbR^{\bbZ^2}$ equipped with the product topology, product Borel $\sigma$-algebra $\mfS$, the product measure $\P_0^{\otimes\Z^2}$ with $\P_0$ a probability measure on $\R$, 
the natural shift maps, and with $\w_x$ denoting the natural coordinate projection.

We study probability measures on paths with increments $\range = \{e_1, e_2\}$, the standard basis of $\bbR^2$. 
Let $\Uset$ denote the convex hull of $\range$ with $\ri\Uset$ its relative interior.
Write $\ehat = e_1 + e_2$. For $m \in \bbZ$ denote by $\V_m = \{x \in \Z^2 : x \cdot \ehat = m\}$.
% the collection of sites at {\sl level} $m$.
We denote sequences of sites by $x_{m,n}=(x_i:m\le i\le n)$ where $-\infty \leq m\leq n \leq \infty$. 
We require throughout that $x_i \in \V_i$.

For $x \in \V_m$ and $y \in \V_n$ with $m \leq n$, the collection of admissible paths from $x$ to $y$ is denoted 
$\X_x^y = \{x_{m,n} : x_m = x, x_n = y, x_i - x_{i-1} \in \range\}$. This set is empty unless $x \leq y$. ($x\le y$ is understood coordinatewise.) 
The collection of {\sl admissible} paths from $x$ to {\sl level} $n$ is denoted 
$\X_x^{(n)} = \{x_{m,n} : x_m = x, x_{i}-x_{i-1} \in \range\}$. The collection of  {\sl semi-infinite} paths 
{\sl rooted {\rm(}or starting{\rm)} at} $x$ is denoted by 
$\X_x = \{x_{m,\infty} : x_m = x, x_i - x_{i-1} \in \range \}$ and the collection of {\sl bi-infinite} paths is  
$\X = \{x_{-\infty,\infty} : x_i - x_{i-1} \in \range\}$. 
The spaces $\X_x^y, \X_x^{(n)},$ and $\X_x$ are compact and therefore separable. 
The space $\X$ can be viewed naturally as $\V_0 \times \{e_1,e_2\}^{\bbZ}$
which is separable but not compact. We equip these spaces with the associated Borel $\sigma$-algebras 
$\sX^{x,y}, \sX^{x,(n)}, \sX^{x}$ and $\sX$. Given a subset of indices $A$, we denote by $\sX^{x,y}_A, \sX^{x,(n)}_A, \sX^{x}_A$ and $\sX_A$ the associated sub $\sigma$-algebra generated by the coordinate projections $\{x_i : i \in A\}$. 
It will at times be necessary to {\sl concatenate} or {\sl split} admissible paths. These operations will be denoted via the convention $x_{m,n} = x_{m,k}x_{k,n}$, where $x_{m,k} \in \X_{x_m}^{x_k}$ and $x_{k,n} \in \X_{x_k}^{x_n}$. Note that the upper and lower endpoint $x_k$ must match in order for the concatenation to be admissible.

For a $\sigma$-algebra $\cB$, $b\cB$ denotes the set of bounded $\cB$-measurable functions. 
%Quantities of the form $A_{x,y}$ or $A_{x,y}(\w)$ are sometimes written as $A(x,y)$ or $A(x,y,\w)$, and vice-versa.
The space of probability measures on a metric measure space $(\Gamma,\cB)$, equipped with the topology of weak convergence, is denoted $\cM_1(\Gamma,\cB)$. Expectation with respect to a measure $\mu$ is denoted $E^{\mu}$. For $u,v\in\R^2$ we use the notation $[u,v]=\{su+(1-s)v:s\in[0,1]\}$ and $]u,v[=\{su+(1-s)v:s\in(0,1)\}$. The set of extreme points of a convex set $C$ is denoted by $\ext C$.

\subsection{Finite polymer measures}\label{sub:model}
For an {\sl inverse temperature} $\beta \in (0,\infty)$, $x\in\V_m$, and $y \in \V_n$, with $m,n\in\Z$, and $x \leq y$, the {\sl quenched point-to-point partition function} and {\sl free energy} are
\begin{align*}
Z_{x,y}^{\beta} &= \sum_{x_{m,n} \in \X_x^y} e^{\beta \sum_{i=m}^{n-1} \w_{x_i}}\quad\text{and}\quad  F_{x,y}^{\beta} =\frac{1}{\beta} \log  Z_{x,y}^{\beta}.
\end{align*}
We take the convention that $Z_{x,x}^\beta=1$ and $F_{x,x}^\beta=0$ while $Z_{x,y}^\beta = 0$ and $F_{x,y}^{\beta} = -\infty$ whenever we do not have $x \leq y$. 
Similarly, we define the {\sl last passage time} to be the zero temperature $(\beta = \infty)$ free energy:
\begin{align*}
G_{x,y} = F_{x,y}^{\infty} = \max_{x_{m,n} \in \X_x^y} \Bigl\{\sum_{i=m}^{n -1} \w_{x_i}\Bigr\}.
\end{align*}

%For a subset of admissible paths, $A \subset \X_x^y$ we define
%\begin{align*}
%Z_{x,y}^{\beta} (A) &= \sum_{x_{m,n} \in A} e^{\beta \sum_{i=m}^{n-1} \w_{x_i}},
%\end{align*}
%with the convention that an empty sum is $0$.
%For $x\le y$ 
The {\sl quenched point-to-point polymer measure} is the probability measure on $(\X_x^y,\cX^{x,y})$ given by
\begin{align*}
Q_{x,y}^{\w,\beta}(A)= \frac{1}{Z_{x,y}^\beta} \sum_{x_{m,n} \in A} e^{\beta \sum_{i=m}^{n-1} \w_{x_i}}
\end{align*}
for a subset $A \subset \X_x^y$, with the convention that an empty sum is $0$.

For a {\sl tilt} (or {\sl external field}) $h \in \bbR^2$, $n \in \bbZ$ and $x \in \V_m$ with $m \leq n$, the quenched {\sl tilted point-to-line} partition function and free energy are
\begin{align*}
Z_{x,(n)}^{\beta,h} = \sum_{x_{m,n} \in \X_x^{(n)}}e^{ \beta \sum_{i=m}^{n-1}\w_{x_i} + \beta h \cdot(x_n - x_m)} \quad \text{and}\quad F_{x,(n)}^{\beta,h} &=\frac{1}{\beta} \log Z_{x,(n)}^{\beta,h}.
\end{align*}
We take the convention that $Z_{x,(m)}^{\beta,h}=1$ and $F_{x,(m)}^{\beta,h}=0$ while $Z_{x,(n)}^{\beta,h} = 0$ and $F_{x,(n)}^{\beta,h} = -\infty$ if $n < m$. 
Again, we define the point-to-line last passage time to be the zero temperature free energy:
\begin{align*}
G_{x,(n)}^h= F_{x,(n)}^{\infty,h} = \max_{x_{m,n} \in \X_x^{(n)}} \Bigl\{\sum_{i=m}^{n -1} \w_{x_i} + h \cdot(x_n - x_m)\Bigr\}.
\end{align*}
%Following the same convention as above, for $A \subset \X_x^{(n)}$ define
%\begin{align*}
%Z_{x,(n)}^{\beta,h}(A) &= \sum_{x_{m,n} \in A} e^{\beta \sum_{i=m}^{n-1} \w_{x_i} + \beta h \cdot(x_n - x_m)}
%\end{align*}
The quenched tilted point-to-line polymer measure is 
\begin{align*}
Q_{x,(n)}^{\w,\beta,h}(A) &=
\frac1{Z_{x,(n)}^{\beta,h}}\sum_{x_{m,n} \in A} e^{\beta \sum_{i=m}^{n-1} \w_{x_i} + \beta h \cdot(x_n - x_m)}\quad\text{for }A \subset \X_x^{(n)}.
\end{align*}
We will denote by $E_{x,y}^{\w,\beta}$ the expectation with respect to $Q_{x,y}^{\w,\beta}$ and similarly $E_{x,(n)}^{\w,\beta,h}$ will denote the expectation with respect to $Q_{x,(n)}^{\w,\beta,h}$. The random variable given by the natural coordinate projection to level $i$ is denoted by $X_i$.
We will frequently abbreviate the event $\{X_{m,n}=x_{m,n}\}$ by $\{x_{m,n}\}$. 

\subsection{Limiting free energy}\label{sec:fe}
For  $\beta \in (0,\infty]$ 
there are deterministic functions $\fe^\beta:\bbR_+^2 \to \bbR$ and $\fepl^\beta: \bbR^2 \to \bbR$ 
%and an event $\Omshape$ so that 
%$\P(\Omshape)=1$ and for all $\w\in\Omshape$ 
such that  $\P$-a.s.\ for all $0<C<\infty$
\begin{align}\label{shape}
\lim_{n \to \infty} \max_{\substack{x\in\Z_+\\\abs{x}_1\le Cn}} \frac{\abs{F_{0,x}^{\beta} - \fe^\beta(x) }}{n}  &=  \lim_{n\to\infty} \sup_{\abs{h}_1\le C} \frac{\abs{F_{0,(n)}^{\beta,h}  -\fepl^\beta(h)}}{n}=0.
\end{align}
These are called {\sl shape theorems}.
The first limit comes from the point-to-point free energy limit (2.3) in \cite{Ras-Sep-14} and the now standard argument in \cite{Mar-04}.
The second equality comes from the point-to-line free energy limit (2.4) in \cite{Ras-Sep-14} and 
	\begin{align}\label{1-Lip}
	\abs{F_{0,(n)}^{\beta,h}-F_{0,(n)}^{\beta,h'}}\le\abs{h-h'}_1.
	\end{align}

$\fe^\beta$ is concave, $1$-homogenous, and continuous on $\R_+^2$. 
$\fepl^\beta$ is convex and Lipschitz on $\R^2$. 
Lattice symmetry and i.i.d.\ weights imply that
\begin{align*}
\fe^\beta(\xi_1 e_1 + \xi_2 e_2) = \fe^\beta(\xi_2 e_1 + \xi_1 e_2).
\end{align*}
By (4.3-4.4) in \cite{Geo-Ras-Sep-16} $\fe^\beta$ and $\fepl^\beta$ are related  via the  duality
\begin{align}
\fepl^\beta(h)= \sup_{\xi \in \Uset}\{\fe^\beta(\xi) + h \cdot \xi \}\quad\text{and}\quad
\fe^\beta(\xi) = \inf_{h \in \R^2} \{\fepl^\beta(h) - h\cdot\xi\}. \label{eq:ppplduality}
\end{align} 
%We note here that $\fepl^\beta(h)$ is not the Legendre-Fenchel transform of $-\fe^\beta(\xi)$ since the last supremum is over $\Uset$ and not $\bbR^2$.  
$h\in\R^2$ and $\xi\in\ri\Uset$ are said to be in duality if
	\[\fepl^\beta(h)=h\cdot\xi+\fe^\beta(\xi).\]
We denote the set of directions dual to $h$ by $\Uset_h^\beta\subset\ri\Uset$. 
In the arguments that follow, the superdifferential of $\fe^\beta$ at $\xi \in \bbR_+^2$,
\begin{align}
\partial \fe^\beta(\xi) = \bigl\{v \in \bbR^2 : v \cdot (\xi - \zeta) \leq \fe^\beta(\xi) - \fe^\beta(\zeta) \quad \forall \zeta \in \bbR_+^2 \bigr\}, \label{eq:superdef}
\end{align}
will play a key role. We also introduce notation for the image of $\Uset$ under the superdifferential map via
\begin{align*}
\partial \fe^\beta(\Uset) &= \bigl\{v \in \bbR^2 : \exists \xi \in \ri \Uset : v \in \partial \fe^\beta(\xi)\bigr\}.
\end{align*}
The following lemma gives a useful characterization of $\partial \fe^{\beta}(\Uset)$. The proof is a straightforward exercise in convex analysis and can be found in Appendix \ref{app:lemmas}.

\begin{lemma}\label{lem:insub}
For $h \in \bbR^2$,  $-h \in \partial \fe^\beta(\Uset)$ if and only if  $\fepl^\beta(h) = 0$. Moreover, if $-h \in \partial\fe^\beta(\xi)$ for $\xi \in \ri \Uset$, then $\xi \cdot h+\fe^\beta(\xi)=0$.
\end{lemma}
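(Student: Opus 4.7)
The plan is to exploit two facts in tandem: the $1$-homogeneity of $\fe^\beta$ on $\bbR_+^2$, and the duality \eqref{eq:ppplduality}, which writes $\fepl^\beta(h)$ as a supremum over $\Uset$ of $\fe^\beta(\xi) + h\cdot\xi$. The homogeneity produces a rigid pointwise constraint on any $\xi$ that can witness $-h \in \partial\fe^\beta(\cdot)$, and the duality is what transfers information between the point-to-point and point-to-line formulations.

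First I would dispose of the ``moreover'' statement as a standalone observation, since it also drives the forward direction of the equivalence. Assuming $-h \in \partial\fe^\beta(\xi)$ for some $\xi \in \ri\Uset$, I apply the defining superdifferential inequality with the test point $\zeta = t\xi$ for $t>0$ and use $\fe^\beta(t\xi) = t\fe^\beta(\xi)$ to reduce to
\[
(t-1)\bigl(\fe^\beta(\xi)+h\cdot\xi\bigr) \leq 0 \qquad \text{for all } t > 0.
\]
Letting $t$ range above and below $1$ forces $\fe^\beta(\xi) + h\cdot\xi = 0$, which is exactly the ``moreover'' conclusion.

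For the forward direction of the equivalence, given a witness $\xi \in \ri\Uset$ with $-h \in \partial\fe^\beta(\xi)$, the moreover statement yields $\fepl^\beta(h) \geq \fe^\beta(\xi) + h\cdot\xi = 0$, while the superdifferential inequality at arbitrary $\zeta \in \Uset$ gives $\fe^\beta(\zeta) + h\cdot\zeta \leq 0$ and hence $\fepl^\beta(h) \leq 0$. For the reverse direction, $\fepl^\beta(h)=0$ together with $1$-homogeneity yields $\fe^\beta(\zeta)+h\cdot\zeta \leq 0$ on all of $\bbR_+^2$. Continuity of $\fe^\beta$ and compactness of $\Uset$ then produce a maximizer $\xi^* \in \Uset$ at which $\fe^\beta(\xi^*) + h\cdot\xi^* = 0$, and these two bounds together read $\fe^\beta(\zeta) \leq \fe^\beta(\xi^*) - h\cdot(\zeta-\xi^*)$ on $\bbR_+^2$, which is precisely $-h \in \partial\fe^\beta(\xi^*)$.

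The main obstacle, to which I would devote most care, is upgrading $\xi^* \in \Uset$ to $\xi^* \in \ri\Uset$, since $\partial\fe^\beta(\Uset)$ is defined by relative-interior witnesses only. The argmax of $\xi \mapsto \fe^\beta(\xi) + h\cdot\xi$ on $\Uset$ is a closed convex sub-segment of $[e_2,e_1]$, so unless it collapses to a single vertex the desired interior witness exists automatically. To exclude the singleton-vertex scenario I would combine the lattice symmetry $\fe^\beta(\xi_1,\xi_2) = \fe^\beta(\xi_2,\xi_1)$, continuity of $\fe^\beta$ on the closed simplex $\Uset$, and the dual representation $\fe^\beta(\xi) = \inf_{h'}\{\fepl^\beta(h') - h'\cdot\xi\}$ applied at interior $\xi$, to show that any $h$ whose only maximizer is a vertex in fact admits an interior witness as well, possibly after a perturbation argument inside the level set $\{\fepl^\beta = 0\}$ combined with a closure/compactness step.
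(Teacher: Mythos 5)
Your ``moreover'' argument, the forward implication, and the first half of the converse (homogeneity gives $\fe^\beta(\zeta)+h\cdot\zeta\le 0$ on $\bbR_+^2$, continuity gives a maximizer $\xi^*\in\Uset$ with equality, hence $-h\in\partial\fe^\beta(\xi^*)$) coincide with the paper's proof. The genuine gap is in the last step, which you correctly identify as the main obstacle but then only sketch: excluding the possibility that the only maximizers of $\xi\mapsto\fe^\beta(\xi)+h\cdot\xi$ are the vertices $e_1,e_2$. What is actually needed here is the statement $\partial\fe^\beta(e_1)=\partial\fe^\beta(e_2)=\varnothing$ (the paper's Lemma \ref{lem:superdifempty}), and this is \emph{not} a soft convexity fact: it rests on the boundary asymptotics $\fe^\beta(se_1+e_2)=\bbE[\w_0]+2\sqrt{s\Var(\w_0)}+o(\sqrt{s})$ as $s\searrow 0$ (Lemma \ref{lem:polymermartin}, deduced from Martin's LPP result plus path counting), whose square-root singularity forces any putative element of $\partial\fe^\beta(e_1)$ to have an infinite coordinate. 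In particular the nondegeneracy $\Var(\w_0)>0$ from \eqref{main-assump} (or, for $\beta<\infty$, the entropy of the path measure) is essential input.

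The ingredients you propose for this step --- lattice symmetry, continuity on $\Uset$, the dual formula for $\fe^\beta$, and a perturbation inside the level set $\{\fepl^\beta=0\}$ --- cannot close it. Symmetry is perfectly compatible with a concave shape that is linear near both endpoints, and in that situation the claim is simply false: e.g.\ for $\beta=\infty$ with degenerate weights, $\fe^\infty(\xi)=c\,\abs{\xi}_1$, the tilt $h$ with $h\cdot e_1=-c$ and $h\cdot e_2<-c$ satisfies $\fepl^\infty(h)=0$, yet its unique maximizer is $e_1$ and no $\xi\in\ri\Uset$ satisfies $h\cdot\xi+\fe^\infty(\xi)=0$, so $-h\notin\partial\fe^\infty(\Uset)$. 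Thus any correct completion must use quantitative information about the slope of $\fe^\beta$ at the boundary of $\Uset$ (or equivalently the conclusion of Lemma \ref{lem:polymermartin}); a perturbation within $\{\fepl^\beta=0\}$ is circular, since nearby tilts on that level set are not known to have interior witnesses either without exactly this boundary information. So the proposal is incomplete at precisely the one point where the proof has real content.
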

Concavity implies the existence of one-sided derivatives:
\begin{align*}
\nabla \fe^\beta(\xi \pm) \cdot e_1 &= \lim_{\e \searrow 0} \frac{\fe^\beta(\xi \pm \e e_1) - \fe^\beta(\xi)}{\pm \e}\quad{and}\\
\nabla \fe^\beta(\xi \pm) \cdot e_2 &= \lim_{\e \searrow 0} \frac{\fe^\beta(\xi \mp \e e_2) - \fe^\beta(\xi)}{\mp \e}.
\end{align*}
By Lemma \ref{lem:superdiffprop}\eqref{item-c} these are the two extreme points of the convex set $\partial \fe^\beta(\xi).$ 
The collection of directions of differentiability of $\fe^\beta$ will be denoted by
\begin{align*}
\Diff^\beta = \bigl\{\xi \in \ri\Uset  : \fe^\beta \text{ is differentiable at }\xi\bigr\}.
\end{align*}
\cite[Theorem 25.2]{Roc-70} shows that $\xi \in \Diff^\beta$ is the same as $\nabla \fe^\beta(\xi+) = \nabla \fe^\beta(\xi -)$.

Abusing notation, for $\xi \in \ri \Uset$ define the maximal linear segments
\begin{align*}
\Uset_{\xi\pm}^\beta &= \bigl\{\zeta \in \ri\Uset : \fe^\beta(\zeta) - \fe^\beta(\xi) = \nabla \fe^\beta(\xi\pm) \cdot (\zeta - \xi)\bigr\}=\Uset^\beta_{-\nabla\fe^\beta(\xi\pm)}.
\end{align*}
Although we abuse notation, it should be clear from context whether we are referring to sets indexed by directions or tilts. 

%It is possible that either or both of these sets can degenerate to a point. 
We say $\fe^\beta$ is {\sl strictly concave} at $\xi\in\ri\Uset$ if $\Uset_{\xi-}^{\beta}=\Uset_{\xi+}^{\beta}=\{\xi\}$.
The usual notion of strict concavity on an open  subinterval of $\Uset$ is the same as having our 
strict concavity at $\xi$ for all $\xi$ in the interval.
Let
\begin{align*}
\Uset_{\xi}^\beta &= \Uset_{\xi-}^\beta \cup \Uset_{\xi +}^\beta = [\ximin^\beta,\ximax^\beta],\quad\text{with }\ximin^\beta \cdot e_1 \le \ximax^\beta\cdot e_1.
\end{align*}

Lemma \ref{lem:polymermartin} justifies setting $\Uset_{e_i}^\beta = \{e_i\}$ for $i \in \{1,2\}$, since it implies that the free energy is not locally linear near the boundary.

If $\xi\in\Diff^\beta$ then $\Uset_{\xi-}^\beta=\Uset_{\xi+}^\beta=\Uset_\xi^\beta$ while if $\xi\not\in\Diff^\beta$ then $\Uset_{\xi-}^\beta\cap\Uset_{\xi+}^\beta=\{\xi\}$.
For $h\in\R^2$ the set $\Uset_h^\beta$ is either a singleton $\{\xi\}$
or equals $\Uset_{\xi-}^{\beta}$ or $\Uset_{\xi+}^{\beta}$, for some $\xi\in\ri\Uset$ dual to $h$. In particular, it is a closed nonempty interval.  \smallskip

With the exception of Section \ref{sec:cocycles}, our results are for a fixed $\beta<\infty$. Therefore, in the rest of the paper, except in Section \ref{sec:cocycles}, 
we will assume without loss of generality that $\beta = 1$ and will omit the $\beta$ from our notation.

\subsection{Random polymers as semi-infinite Gibbs measures}\label{sub:DLR}
Given $\w\in\Omega$, integers $\ell\ge k\ge m$, $x\in\V_m$,
and up-right paths $x_{m,k}$ and $x_{\ell,\infty}$ with $x_m=x$ and $x_\ell\ge x_k$, use the point-to-point quenched measures to
define a probability measure $\ker^\w_{k,\ell}(x_{m,\infty},dy_{m,\infty})$ on $(\X_x,\cXx)$ via its integrals of $f \in b\cXx$: 
	\begin{align*}
	\ker_{k,\ell}^\w f(x_{m,\infty})
	&=\int f(y_{m,\infty})\, \ker^\w_{k,\ell}(x_{m,\infty},dy_{m,\infty})\\
	&=\sum_{y_{k,\ell} \in \bbX_{x_k}^{x_\ell}}f(x_{m,k}y_{k,\ell}x_{\ell,\infty})Q^\w_{x_k,x_\ell}(y_{k,\ell})\,.
	\end{align*}
$\ker^\w_{k,\ell}$ is a stochastic kernel from 
$(\X_x,\cXx)$ to $(\X_x,\cXx_{(k,\ell)^c})$; see \cite[Section 7.3]{Ras-Sep-15-ldp}. 
It is also $\cXx_{(k,\ell)}$-proper: if $g\in b\cXx_{(k,\ell)^c}$ and $f\in b\cXx$ then
	\begin{align}\label{proper}
	\ker_{k,\ell}^\w(gf)=g\ker_{k,\ell}^\w f.
	\end{align}

Stochastic kernels push measures forward: $\int f\, d\mu \ker_{k,\ell}^\w=E^\mu[\ker_{k,\ell}^\w f]$.
%Namely, given a probability measure
%$\mu$ on $(\X_x,\cXx)$ 
%we define the probability measure $\mu \ker_{k,\ell}^\w$ on $(\X_x,\cXx)$ by 
%	\begin{align*}
%	\int f\, d\mu \ker_{k,\ell}^\w 
%	&=E^\mu[\ker_{k,\ell}^\w f]
%	= \iint f(\bar X_{m,\infty})\, \mu(dX_{m,\infty})\,\ker^\w_{k,\ell}(X_{m,k},X_{\ell,\infty},d\bar X_{m,\infty})\\
%	&= \sum_{x_{m,k}}\int\Bigl(\sum_{x_{k,\ell}} f(x_{m,\ell}X_{\ell,\infty})\,Q^\w_{x_k,x_\ell}(x_{k,\ell})\Bigr)\one\{X_{m,k}=x_{m,k},X_\ell=x_\ell\} \, \mu(dX_{m,\infty})\,,
%	\end{align*}
%for all bounded measurable functions $f$ on $\X_x$.  (The second sum is over up-right paths $x_{k,\ell}$ that start at $x_k$, which is determined by the first sum. In the integral $\mu(dX_{m,\infty})$, 
%$X_{m,k}$ and $X_\ell$ are determined by $x_{m,k}$ and $x_\ell$ from the two sums. $x_{m,\ell}X_{\ell,\infty}$, with $X_\ell=x_\ell$, means we concatenate the two paths into one path in $\X_x$.)
Thus, they can be composed
%: if $m\le k\le r\le s \le \ell$ and $f$ is bounded $\cXx$-measurable then
%	\[\ker_{k,\ell}^\w\ker_{r,s}^\w f(x_{0,k},x_{\ell,\infty})=\int f(\bar X_{m,\infty}) \ker^\w_{k,\ell}(x_{m,k},x_{\ell,\infty},dX_{m,\infty})\ker^\w_{r,s}(X_{m,r},X_{s,\infty},d\bar X_{m,\infty}).\]
and a computation (Appendix \ref{app:lemmas}) checks the following.

\begin{lemma}\label{ker-consist}
Fix $\w\in\Omega$, $m\in\Z$, and $x\in\V_m$. Then the kernels are consistent: $\ker_{k,\ell}^\w \ker_{r,s}^\w=\ker_{k,\ell}^\w$ for all integers $\ell\ge s\ge r\ge k\ge m$.
\end{lemma}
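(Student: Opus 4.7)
The plan is to unwind both kernels against a test function and recognize the resulting double sum as a single application of $\ker^\w_{k,\ell}$. The underlying principle is the Gibbsian/Markov property of the point-to-point polymer measure: conditioning $Q^\w_{x_k,x_\ell}$ on the values of the path at intermediate levels $r$ and $s$ returns (on the segment between $r$ and $s$) the polymer measure $Q^\w_{y_r,y_s}$. This is immediate from the factorization $e^{\beta\sum_{i=k}^{\ell-1}\w_{y_i}}=e^{\beta\sum_{i=k}^{r-1}\w_{y_i}}\cdot e^{\beta\sum_{i=r}^{s-1}\w_{y_i}}\cdot e^{\beta\sum_{i=s}^{\ell-1}\w_{y_i}}$ of the Gibbs weight.

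Concretely, fix $f\in b\cXx$ and $x_{m,\infty}\in\X_x$. Unfolding both layers of the composition gives
\[
\ker^\w_{k,\ell}\ker^\w_{r,s}f(x_{m,\infty})
=\sum_{y_{k,\ell}\in\X^{x_\ell}_{x_k}}\sum_{w_{r,s}\in\X^{y_s}_{y_r}}f\bigl(x_{m,k}\,y_{k,r}\,w_{r,s}\,y_{s,\ell}\,x_{\ell,\infty}\bigr)\,Q^\w_{y_r,y_s}(w_{r,s})\,Q^\w_{x_k,x_\ell}(y_{k,\ell}),
\]
where I have written $y_{k,\ell}=y_{k,r}\,y_{r,s}\,y_{s,\ell}$ using the splitting convention and used that, on the path $x_{m,k}y_{k,\ell}x_{\ell,\infty}$, the coordinate at level $r$ is $y_r$ and at level $s$ is $y_s$. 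The integrand depends on $y_{r,s}$ only through the product $Q^\w_{x_k,x_\ell}(y_{k,r}y_{r,s}y_{s,\ell})$, so I fix $y_{k,r}$, $w_{r,s}$, $y_{s,\ell}$ and sum over $y_{r,s}\in\X^{y_s}_{y_r}$ first. Using the factorization of the Gibbs weight and the definition of $Z^\beta_{y_r,y_s}$,
\[
\sum_{y_{r,s}\in\X^{y_s}_{y_r}}Q^\w_{x_k,x_\ell}(y_{k,r}y_{r,s}y_{s,\ell})
=\frac{e^{\beta\sum_{i=k}^{r-1}\w_{y_i}}\,Z^\beta_{y_r,y_s}\,e^{\beta\sum_{i=s}^{\ell-1}\w_{y_i}}}{Z^\beta_{x_k,x_\ell}}.
\]
Multiplying by $Q^\w_{y_r,y_s}(w_{r,s})=e^{\beta\sum_{i=r}^{s-1}\w_{w_i}}/Z^\beta_{y_r,y_s}$ cancels $Z^\beta_{y_r,y_s}$ and reassembles, after inserting $w_{r,s}$ in place of $y_{r,s}$, to exactly $Q^\w_{x_k,x_\ell}(y_{k,r}w_{r,s}y_{s,\ell})$.

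Substituting back and relabelling $(y_{k,r},w_{r,s},y_{s,\ell})$ as $(y_{k,r},y_{r,s},y_{s,\ell})=y_{k,\ell}$ collapses the double sum into
\[
\sum_{y_{k,\ell}\in\X^{x_\ell}_{x_k}}f\bigl(x_{m,k}\,y_{k,\ell}\,x_{\ell,\infty}\bigr)\,Q^\w_{x_k,x_\ell}(y_{k,\ell})=\ker^\w_{k,\ell}f(x_{m,\infty}),
\]
which is the desired identity. The one point that requires minor care, rather than serious obstacle, is bookkeeping of the splitting and concatenation conventions so that the path $x_{m,k}y_{k,r}w_{r,s}y_{s,\ell}x_{\ell,\infty}$ remains admissible at the four junctions $x_k,y_r,y_s,x_\ell$; this is automatic because the relevant endpoints match by construction.
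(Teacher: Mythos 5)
Your proof is correct and follows essentially the same route as the paper's own argument: expand both kernels against a test function, sum out the middle segment $y_{r,s}$ to produce $Z_{y_r,y_s}$, cancel it against the normalization of $Q^\w_{y_r,y_s}(w_{r,s})$, and reassemble the weight into $Q^\w_{x_k,x_\ell}(y_{k,r}w_{r,s}y_{s,\ell})$. No gaps; keeping $\beta$ explicit is harmless since the paper's convention $\beta=1$ plays no role here.
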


This consistency along with $\ker^\w_{k,\ell}$ being $\cXx_{(k,\ell)}$-proper mean that the kernels $\{\ker^\w_{k,\ell} : m \leq k \leq \ell\}$ form a {\sl specification}. See \cite[Definition 7.8]{Ras-Sep-15-ldp}.

\begin{definition}
Given $\w\in\Omega$ and $x\in\V_m$, $m\in\Z$, 
a probability measure $\Pi_x$ on $(\X_x,\cXx)$ is said to be a {\sl semi-infinite or rooted Gibbs measure} in environment $\w$ {\sl rooted at} $x$ if for all $\ell\ge k\ge m$ and any bounded measurable function on $\X_x$
we have $E^{\Pi_x}[f\,|\,\cXx_{(k,\ell)^c}]=\ker_{k,\ell}^\w f$. The set of Gibbs measures (or DLR solutions) in environment $\w$ rooted at $x$ is denoted $\DLR_x^\w$.
\end{definition}

Next is a standard characterization of Gibbs measures. See Definition 7.12 and Lemma 7.13 in \cite{Ras-Sep-15-ldp}. For the proof see Appendix \ref{app:lemmas}

\begin{lemma}\label{lm:DLR}
Given $\w\in\Omega$ and $x\in\V_m$, $m\in\Z$, $\Pi_x \in \DLR_x^{\w}$ if and only if for all $\ell\ge k\ge m$ 
	\begin{align}\label{eq:DLR}
	\Pi_x \ker_{k,\ell}^\w=\Pi_x.
	\end{align}
\end{lemma}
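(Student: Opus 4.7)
The plan is to establish the standard two-way dictionary between the conditional-expectation form of the DLR condition and the measure-invariance form $\Pi_x \ker_{k,\ell}^\w = \Pi_x$. The properness property \eqref{proper} and the observation that $\ker_{k,\ell}^\w f$ is $\cXx_{(k,\ell)^c}$-measurable (visible directly from the defining formula, since the kernel averages only over the restriction $y_{k,\ell}$ and leaves $x_{m,k}$ and $x_{\ell,\infty}$ untouched) do all the work.

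For the forward implication, I would fix $f\in b\cXx$ and compute
\[
E^{\Pi_x \ker_{k,\ell}^\w}[f] = E^{\Pi_x}[\ker_{k,\ell}^\w f] = E^{\Pi_x}\bigl[E^{\Pi_x}[f\mid \cXx_{(k,\ell)^c}]\bigr] = E^{\Pi_x}[f],
\]
where the first equality is the definition of the push-forward by the kernel, the second uses $\Pi_x\in\DLR_x^\w$, and the third is the tower property. Since this holds for every $f\in b\cXx$, we get $\Pi_x\ker_{k,\ell}^\w=\Pi_x$.

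For the reverse implication, I would verify that $\ker_{k,\ell}^\w f$ serves as a version of $E^{\Pi_x}[f\mid \cXx_{(k,\ell)^c}]$. Measurability is immediate from the kernel's definition. To check the integration identity, fix $A\in\cXx_{(k,\ell)^c}$, so that $\one_A\in b\cXx_{(k,\ell)^c}$. Using properness \eqref{proper} for the first equality, the definition of the push-forward for the second, and the hypothesis $\Pi_x\ker_{k,\ell}^\w=\Pi_x$ for the third:
\[
E^{\Pi_x}\bigl[\one_A\,\ker_{k,\ell}^\w f\bigr] = E^{\Pi_x}\bigl[\ker_{k,\ell}^\w(\one_A f)\bigr] = E^{\Pi_x\ker_{k,\ell}^\w}[\one_A f] = E^{\Pi_x}[\one_A f].
\]
This is exactly the defining property of the conditional expectation, so $E^{\Pi_x}[f\mid \cXx_{(k,\ell)^c}]=\ker_{k,\ell}^\w f$ $\Pi_x$-a.s.

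There is essentially no obstacle beyond careful bookkeeping. The only point that warrants a brief remark is that $\ker_{k,\ell}^\w f$ really is $\cXx_{(k,\ell)^c}$-measurable; this is part of the definition of a stochastic kernel from $(\X_x,\cXx)$ to $(\X_x,\cXx_{(k,\ell)^c})$ as cited from \cite[Section 7.3]{Ras-Sep-15-ldp}, and is also transparent from the explicit formula. Everything else is a one-line computation using properness and the definition of the composition $\Pi_x\ker_{k,\ell}^\w$.
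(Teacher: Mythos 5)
Your proof is correct and follows essentially the same route as the paper: the forward direction is the identical tower-property computation, and the reverse direction is the paper's argument with $g=\one_A$ in place of a general bounded $\cXx_{(k,\ell)^c}$-measurable $g$, plus the (correct and implicit in the paper) remark that $\ker_{k,\ell}^\w f$ is $\cXx_{(k,\ell)^c}$-measurable by definition of the kernel. Nothing is missing.
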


Equations \eqref{eq:DLR} are the {\sl Dobrushin-Lanford-Ruelle {\rm(}DLR{\rm)} equations}. Note that the DLR equations  only involve the weights $\{\w_y:y\ge x\}$. Hence, $\DLR_x^\w=\DLR_x^{\overline\w}$ if 
$\w_y=\overline\w_y$ for $y\ge x$. We call  measurability with respect to $\sigma(\w_v:v\ge x)$
{\sl forward-measurability}. 

The next lemma says that our setting is Markovian. The proof is deferred to Appendix \ref{app:lemmas}.

\begin{lemma}\label{lm:Pi-cons}
Given $\w\in\Omega$ and $x\in\V_m$, $m\in\Z$, $\Pi_x\in\DLR_x^\w$  if and only if for all $n\ge m$ 
and all up-right paths $x_{m,n}$ with $x_m=x$:
	\begin{align}\label{Pi-cons}
	\Pi_x(X_{m,n}=x_{m,n})=\Pi_x(X_n=x_n)\,Q^\w_{x_m,x_n}(X_{m,n}=x_{m,n}). 
	%=\frac{\Pi_x(X_n=x_n)\,e^{\sum_{i=0}^{n-1}\w_{x_i}}}{Z_{x,x_n}}.
	\end{align}
\end{lemma}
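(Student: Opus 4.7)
The plan is to prove both implications by unwinding definitions, using Lemma \ref{lm:DLR} as the bridge so that the DLR condition is encoded by the single invariance identity $\Pi_x\ker_{k,\ell}^\w=\Pi_x$.

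For the forward direction, I assume $\Pi_x\in\DLR_x^\w$ and test the invariance $\Pi_x\ker_{m,n}^\w=\Pi_x$ on the cylinder indicator $f(y_{m,\infty})=\one\{y_{m,n}=x_{m,n}\}$. Writing out $\ker_{m,n}^\w f(y_{m,\infty})$ from its definition, the sum over $z_{m,n}\in\X_{y_m}^{y_n}$ collapses to the single term $z_{m,n}=x_{m,n}$, which is admissible only when $y_m=x$ and $y_n=x_n$. Thus $\ker_{m,n}^\w f(y_{m,\infty})=\one\{y_n=x_n\}\,Q^\w_{x,x_n}(X_{m,n}=x_{m,n})$, and taking $\Pi_x$-expectation (noting that $Q$ is deterministic in $\w$, not in the integration variable) yields \eqref{Pi-cons} immediately.

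For the reverse direction, I assume \eqref{Pi-cons} and aim to verify $\Pi_x\ker_{k,\ell}^\w=\Pi_x$ for arbitrary $\ell\ge k\ge m$; by Lemma \ref{lm:DLR} this suffices. By a monotone class argument it is enough to match both measures on the cylinder events $A=\{X_{m,N}=x_{m,N}\}$ for $N\ge\ell$. On such an event, $\ker_{k,\ell}^\w\one_A(y_{m,\infty})$ is again supported on $\{y_{m,k}=x_{m,k},\,y_{\ell,N}=x_{\ell,N}\}$ and equals $Q^\w_{x_k,x_\ell}(X_{k,\ell}=x_{k,\ell})$ there, so
\begin{align*}
E^{\Pi_x}[\ker_{k,\ell}^\w\one_A]=Q^\w_{x_k,x_\ell}(X_{k,\ell}=x_{k,\ell})\sum_{z_{k,\ell}}\Pi_x(X_{m,N}=x_{m,k}z_{k,\ell}x_{\ell,N}),
\end{align*}
the sum ranging over admissible $z_{k,\ell}\in\X_{x_k}^{x_\ell}$. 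I then apply \eqref{Pi-cons} with $n=N$ to each summand, factor the resulting Boltzmann weights as $e^{\sum_m^{k-1}\w}\cdot e^{\sum_k^{\ell-1}\w}\cdot e^{\sum_\ell^{N-1}\w}$ divided by $Z^\w_{x,x_N}$, and observe that the sum over $z_{k,\ell}$ reconstitutes $Z^\w_{x_k,x_\ell}$ in the numerator. This cancels the normalizer $Z^\w_{x_k,x_\ell}$ in $Q^\w_{x_k,x_\ell}$, leaving $\Pi_x(X_N=x_N)Q^\w_{x,x_N}(X_{m,N}=x_{m,N})$, which is $\Pi_x(A)$ by one more application of \eqref{Pi-cons}.

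The computation is essentially bookkeeping; the only mild subtlety is in the reverse direction, where one must take care that the partition functions of the two different point-to-point segments telescope correctly once \eqref{Pi-cons} is substituted into the sum over the resampled middle segment. No other step should present any real difficulty.
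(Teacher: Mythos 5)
Your proposal is correct and follows essentially the same route as the paper: the forward direction is the paper's one-line evaluation of $\Pi_x\ker_{m,n}^\w$ on the cylinder indicator, and the reverse direction is the paper's verification of $\Pi_x\ker_{k,\ell}^\w=\Pi_x$ on cylinders of length at least $\ell$, applying \eqref{Pi-cons} to each resampled middle segment and letting the partition functions cancel exactly as in the proof of Lemma \ref{ker-consist}. The bookkeeping you describe (reconstituting $Z_{x_k,x_\ell}$ and cancelling the normalizer) matches the paper's "penultimate equality" step.
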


Due to the above, $\Pi_x\in\DLR_x^\w$ are also called {\sl semi-infinite or rooted quenched polymer measures} in environment $\w$, rooted at $x$.
Note that \eqref{Pi-cons} is the positive-temperature analogue of the definition of a semi-infinite geodesic
in percolation.

The DLR equations \eqref{eq:DLR} show that $\DLR_x^\w$ is a closed convex subset of the compact space $\sM_1(\bbX_x,\sX_x)$, which we view as a subspace of the complex Radon measures on $\bbX_x$.
To see this, note that for $y_{m,n} \in \bbX_{y_m}^{y_n}$ the function $x_{m,\infty} \mapsto \one\{x_{m,n}=y_{m,n}\}$ is bounded and continuous on $\bbX_x$. 
%under the metric described in Section \ref{sub:model}. 
Since $\X_x$ is a compact Polish space
the collection of DLR solutions (being a closed subset) is compact. Since the collection of signed measures on paths equipped with the weak-$*$ topology is a locally convex Hausdorff topological vector space and the unit ball is metrizible in this setting we can apply Choquet's theorem.
By Choquet's theorem \cite[Section 3]{Phe-01}, each element in $\DLR_x^\w$ is a convex integral mixture of extremal elements of $\DLR_x^\w$.

\subsection{Bi-infinite Gibbs measures}\label{sub:biDLR}
Given $\w\in\Omega$ and integers $m \leq n$ define the stochastic kernel $\ker^\w_{m,n}$ from $(\X,\cX)$ to  $(\X,\cX_{(m,n)^c})$ by:  
	\[\ker_{m,n}^\w f(x_{-\infty, \infty})
	%=\int f(X_{-\infty,\infty})\, \ker^\w_{m,n}(x_{-\infty,m},x_{n,\infty},dX_{-\infty,\infty})
	=\sum_{y_{m,n}\in\X_{x_m}^{x_n}}f(x_{-\infty,m}y_{m,n}x_{n,\infty})Q^\w_{x_m,x_n}(y_{m,n})\,.\]

%The above formula shows that $\ker^\w$ is in fact Markovian, in the sense that the probability measure it induces on $(\X_{[m,n]},\cX_{[m,n]})$, given the boundary condition $x_{-\infty,m}$ and $x_{n,\infty}$ only depends 
%on $x_m$ and $x_n$: if $f$ is $\cX_{[m,n]}$-measurable, then $\ker^\w_{m,n}f$ is only a function of $x_m$ and $x_n$.

The kernels $\{\ker^\w_{m,n} :  m,n \in \bbZ, m \leq n \}$ form a specification. 
They are also $\cX_{(m,n)}$-proper: 
if $g\in b\cX_{(m,n)^c}$ and $f\in b\cX$ then 
	\begin{align}\label{proper2}
	\ker_{m,n}^\w(gf)=g\ker_{m,n}^\w f.
	\end{align}
%
%Stochastic kernels push measures forward. Namely, given a probability measure
%$\mu$ on $(\X,\cX)$ 
%we define the probability measure $\mu \ker_{m,n}^\w$ on $(\X,\cX)$ by 
%	\begin{align*}
%	\int f\, d\mu \ker_{m,n}^\w 
%	&=E^\mu[\ker_{m,n}^\w f]
%	= \iint f(\bar X_{-\infty,\infty})\, \mu(dX_{-\infty,\infty})\,\ker^\w_{m,n}(X_{-\infty,m},X_{n,\infty},d\bar X_{-\infty,\infty})\\
%	&= \sum_{x_{m,n}}\iint f(X_{-\infty,m}x_{m,n}X_{n,\infty})\,Q^\w_{x_m,x_n}(X_{m,n}=x_{m,n})\one_{\{X_m=x_m,X_n=x_n\}}  \, \mu(dX_{-\infty,m},dX_{n,\infty})\,,
%	\end{align*}
%for all bounded measurable functions $f$ on $\X$.  %(The second sum is over up-right paths $x_{m,n}$ that start at $x_m$, which is determined by the first sum.)
%1
%One can thus compose kernels: if $m\le k\le \ell \le n$ and $f$ is bounded $\cX$-measurable then
%	\[\ker_{m,n}^\w\ker_{k,\ell}^\w f(x_{-\infty,m},x_{n,\infty})=\int f(\bar X_{-\infty,\infty}) \ker^\w_{m,n}(x_{-\infty,m},x_{n,\infty},dX_{-\infty,\infty})\ker^\w_{k,\ell}(X_{-\infty,k},X_{\ell,\infty},d\bar X_{-\infty,\infty}).\]
%
Moreover, they are consistent:  $\ker_{m,n}^\w \ker_{k,\ell}^\w=\ker_{m,n}^\w$ for all $n\ge \ell\ge k\ge m$. 
The proof is identical to that of Lemma \ref{ker-consist}.
%See Lemma \ref{ker-cons2}.

\begin{definition}
Given $\w\in\Omega$, $\Pi \in \sM_1(\X,\cX)$ is said to be a {\sl bi-infinite Gibbs measure} in environment $\w$ if for all $n\ge m$ and any bounded measurable function on $\X$
we have $E^{\Pi}[f\,|\,\cX_{(m,n)^c}]=\ker_{m,n}^\w f$. We denote the set of bi-infinite Gibbs measures in environment $\w$ by $\biDLR^\w$.
\end{definition}

As in the semi-infinite case, Gibbs measures solve the DLR equations.

\begin{lemma}\label{lm:DLR2}
Given $\w\in\Omega$, a probability measure $\Pi \in \sM_1(\bbX, \sX)$ is a Gibbs measure in environment $\w$ if, and only if, for all $n\ge m$ we have 
	\begin{align}\label{eq:DLR2}
	\Pi \ker_{m,n}^\w=\Pi.
	\end{align}
\end{lemma}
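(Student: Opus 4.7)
The plan is to mirror the proof of Lemma \ref{lm:DLR} verbatim, replacing the semi-infinite path space $(\X_x, \cXx)$ and its sub-$\sigma$-algebras by the bi-infinite path space $(\X, \cX)$ and the sub-$\sigma$-algebras $\cX_{(m,n)^c}$. All the required ingredients are already established in the excerpt: the kernels $\ker_{m,n}^\w$ are stochastic kernels from $(\X,\cX)$ to $(\X,\cX_{(m,n)^c})$, they are $\cX_{(m,n)}$-proper (equation \eqref{proper2}), and they are consistent. So no new structural input is needed.

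For the forward implication, I would assume $\Pi \in \biDLR^\w$ and fix $n \ge m$ and $f \in b\cX$. By definition, $E^\Pi[f\mid \cX_{(m,n)^c}] = \ker_{m,n}^\w f$ as $\Pi$-a.s.\ identity. Taking $\Pi$-expectation of both sides and using the tower property,
\[
\int f\,d\Pi \;=\; E^\Pi\!\left[E^\Pi[f\mid \cX_{(m,n)^c}]\right] \;=\; E^\Pi[\ker_{m,n}^\w f] \;=\; \int f\, d(\Pi \ker_{m,n}^\w).
\]
Since this holds for every $f \in b\cX$ and $\cX$ separates points of $\sM_1(\X,\cX)$, we conclude $\Pi \ker_{m,n}^\w = \Pi$.

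For the converse, suppose $\Pi \ker_{m,n}^\w = \Pi$ for all $n \ge m$. Fix $f \in b\cX$. Because $\ker_{m,n}^\w$ is a kernel into $(\X,\cX_{(m,n)^c})$, the function $\ker_{m,n}^\w f$ is $\cX_{(m,n)^c}$-measurable. To identify it as the conditional expectation, I need to check the defining integration identity against test functions $g \in b\cX_{(m,n)^c}$. Using properness \eqref{proper2} in the form $\ker_{m,n}^\w(gf) = g\,\ker_{m,n}^\w f$ together with the hypothesis $\Pi = \Pi\ker_{m,n}^\w$,
\[
\int gf\, d\Pi \;=\; \int gf\, d(\Pi\ker_{m,n}^\w) \;=\; \int \ker_{m,n}^\w(gf)\, d\Pi \;=\; \int g\,\ker_{m,n}^\w f\, d\Pi.
\]
This is precisely the statement $E^\Pi[f\mid \cX_{(m,n)^c}] = \ker_{m,n}^\w f$ $\Pi$-a.s., so $\Pi \in \biDLR^\w$.

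There is no real obstacle; the argument is a standard equivalence in the theory of specifications and is essentially textbook (e.g.\ \cite[Lemma 7.13]{Ras-Sep-15-ldp}). The only things to be careful about are: (i) $\cX$ is generated by bounded continuous coordinate cylinders, so checking the identity for all $f \in b\cX$ suffices to conclude equality of measures; and (ii) one must explicitly invoke the properness relation \eqref{proper2}, which is what allows the test function $g$ to be pulled outside the kernel. Both are already in place in the excerpt.
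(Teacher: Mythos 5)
Your proposal is correct and follows exactly the route the paper takes: the paper simply notes that the proof of Lemma \ref{lm:DLR2} is identical to that of Lemma \ref{lm:DLR}, namely the tower property for the forward direction and properness \eqref{proper2} plus the invariance $\Pi\ker_{m,n}^\w=\Pi$ to verify the conditional-expectation identity for the converse. Nothing is missing; your two care points (testing against all $f\in b\cX$ and invoking properness to pull out $g$) are precisely the content of the paper's argument.
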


Once again we have a Markovian structure. 

%\begin{lemma}\label{DLR-Markov}
%Fix $\w\in\Omega$ and $\mu\in\DLR^\w$. Then for any $m\le n$ in $\Z$,  any admissible path $x_{m,n}$, any $\ell\le m$ in $\Z\cup\{-\infty\}$, and any $k\ge n$ in $\Z\cup\{\infty\}$ we have $\mu$-almost surely
%	\begin{align*}
%	\mu\{X_{m,n}=x_{m,n}\,|\,\cX_{(m,n)^c}\}
%	&=\mu\{X_{m,n}=x_{m,n}\,|\,X_m,X_n\}\\
%	&=\mu\{X_{m,n}=x_{m,n}\,|\,X_{\ell,m},X_{n,k}\}.
%	\end{align*}
%\end{lemma}

%\begin{proof}
%Let $f(\bar x_{-\infty,\infty})=\one\{\bar x_{m,n}=x_{m,n}\}$. Then $\mu\{X_{m,n}=x_{m,n}\,|\,\cX_{(m,n)^c}\}=\ker^\w_{m,n}f$. Since $f$ is $\cX_{[m,n]}$-measurable, $\ker^\w_{m,n}f(\bar x_{-\infty,m},\bar x_{n,\infty})$ is just a function of $\bar x_m$ and $\bar x_n$.
%This means that $\mu\{X_{m,n}=x_{m,n}\,|\,\cX_{(m,n)^c}\}$ is a measurable function of $X_m$ and $X_n$. Now, 
%for any bounded measurable function $g(X_m,X_n)$ we have
%	\begin{align*}
%	E^\mu[g(X_m,X_n)\mu\{X_{m,n}=x_{m,n}\,|\,\cX_{(m,n)^c}\}]
%	&=E^\mu[g(X_m,X_n)\one\{X_{m,n}=x_{m,n}\}]\\
%	&=E^\mu\bigl[g(X_m,X_n)\mu\{X_{m,n}=x_{m,n}\,|\,X_m,X_n\}].
%	\end{align*}
%The first equality in the claim follows. The other equality comes similarly.
%\end{proof}

%The next lemma is a direct consequence of Lemma \ref{DLR-Markov} and the form of the kernel.

\begin{lemma}\label{lm:Pi-cons2}
Given $\w\in\Omega$, a probability measure $\Pi\in\biDLR^\w$  if and only if for all $n\ge m$ 
and any up-right path $x_{m,n}$ the following holds:
	\begin{align}\label{Pi-cons2}
	\Pi(X_{m,n}=x_{m,n})=\Pi(X_m=x_m,X_n=x_n)\,Q^\w_{x_m,x_n}(X_{m,n}=x_{m,n}). 
	%=\frac{\Pi(X_n=x_n)\,e^{\sum_{i=0}^{n-1}\w_{x_i}}}{Z_{x,x_n}}.
	\end{align}
\end{lemma}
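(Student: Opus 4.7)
The strategy mirrors that of Lemma \ref{lm:Pi-cons}: the DLR equation \eqref{eq:DLR2} is the global reformulation of the local Markov property \eqref{Pi-cons2}, and the equivalence comes from a direct kernel computation together with the standard nesting identity for point-to-point polymer measures.

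For the forward implication, assume $\Pi\in\biDLR^\w$, so $\Pi\ker_{m,n}^\w=\Pi$. Fix $m\le n$ and an up-right path $x_{m,n}$, and apply the DLR equation to the cylinder $\{X_{m,n}=x_{m,n}\}$. By the definition of $\ker_{m,n}^\w$, for any $z_{-\infty,\infty}\in\X$,
\[
\ker_{m,n}^\w\bigl(\one\{X_{m,n}=x_{m,n}\}\bigr)(z_{-\infty,\infty})
=\one\{z_m=x_m,\,z_n=x_n\}\,Q^\w_{x_m,x_n}(X_{m,n}=x_{m,n}),
\]
since the concatenation $z_{-\infty,m}y_{m,n}z_{n,\infty}$ agrees with $x_{m,n}$ on $\{m,\dots,n\}$ precisely when $z_m=x_m$, $z_n=x_n$, and $y_{m,n}=x_{m,n}$. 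Integrating against $\Pi$ yields \eqref{Pi-cons2}.

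For the reverse implication, assume \eqref{Pi-cons2} holds. Since the cylinder events $\{X_{k,\ell}=x_{k,\ell}\}$ for integers $k\le m\le n\le\ell$ and admissible paths $x_{k,\ell}$ generate $\sX$, a $\pi$-$\lambda$ argument reduces \eqref{eq:DLR2} to checking
\[
\Pi\ker_{m,n}^\w\bigl(X_{k,\ell}=x_{k,\ell}\bigr)=\Pi\bigl(X_{k,\ell}=x_{k,\ell}\bigr)
\]
for all such cylinders. The same computation as above gives
\[
\ker_{m,n}^\w\bigl(\one\{X_{k,\ell}=x_{k,\ell}\}\bigr)(z)
=\one\{z_{k,m}=x_{k,m},\,z_{n,\ell}=x_{n,\ell}\}\,Q^\w_{x_m,x_n}(X_{m,n}=x_{m,n}).
\]
Summing \eqref{Pi-cons2} over middle paths shows that
\[
\Pi(X_{k,m}=x_{k,m},X_{n,\ell}=x_{n,\ell})=\Pi(X_k=x_k,X_\ell=x_\ell)\,Q^\w_{x_k,x_\ell}(X_{k,m}=x_{k,m},X_{n,\ell}=x_{n,\ell}),
\]
while \eqref{Pi-cons2} applied directly to $x_{k,\ell}$ gives $\Pi(X_{k,\ell}=x_{k,\ell})=\Pi(X_k=x_k,X_\ell=x_\ell)\,Q^\w_{x_k,x_\ell}(X_{k,\ell}=x_{k,\ell})$. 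Matching the two sides therefore reduces to the algebraic identity
\[
Q^\w_{x_k,x_\ell}(X_{k,m}=x_{k,m},X_{n,\ell}=x_{n,\ell})\,Q^\w_{x_m,x_n}(X_{m,n}=x_{m,n})
=Q^\w_{x_k,x_\ell}(X_{k,\ell}=x_{k,\ell}),
\]
which follows by a one-line cancellation using the definitions of the point-to-point partition functions (the $Z^\beta_{x_m,x_n}$ factors cancel, and the weight sums combine).

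The only mildly delicate step is the reduction to cylinders $\{X_{k,\ell}=x_{k,\ell}\}$ with $k\le m$ and $\ell\ge n$ rather than general cylinders; this requires noting that $\sX=\bigvee_{k\le\ell}\sX_{\{k,\dots,\ell\}}$ and that the $\ker_{m,n}^\w$ are $\sX_{(m,n)^c}$-proper by \eqref{proper2}, so cylinders supported outside $(m,n)$ are trivially preserved. Everything else is straightforward bookkeeping.
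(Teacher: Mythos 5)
Your proposal is correct and follows essentially the same route as the paper, which proves the bi-infinite case by the same computation as Lemma \ref{lm:Pi-cons}: verify the DLR equations on path cylinders straddling the resampled interval, using \eqref{Pi-cons2} together with the cancellation of the $Z_{x_m,x_n}$ factors in the quenched point-to-point measures. Your reorganization (first summing \eqref{Pi-cons2} over the middle segment, then matching via the algebraic identity) is just a cosmetic rearrangement of the paper's argument, so there is nothing further to add.
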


%The proof is given in Appendix \ref{app:lemmas}. 
The proofs are identical to those of Lemmas \ref{lm:DLR} and \ref{lm:Pi-cons}.
Due to this last result, measures $\Pi\in\biDLR^\w$ are also called {\sl bi-infinite quenched polymer measures} in environment $\w$. 
Note that \eqref{Pi-cons2} is the positive-temperature analogue of the definition of a bi-infinite geodesic
in percolation.

Naturally, conditioning DLR solutions on passing through a point produces rooted DLR solutions. 
The proof of the following lemma is a straightforward application of \eqref{Pi-cons} and \eqref{Pi-cons2}.
%See Appendix \ref{app:lemmas} for the short proof.

\begin{lemma}\label{lm:cond-bi-to-root}
Fix $\w\in\Omega$. The following hold:
\begin{enumerate}[label={\rm(\alph*)}, ref={\rm\alph*}]
\item Fix $x\in\V_m$, $m\in\Z$, $\Pi_x\in\DLR_x^\w$, and $y\ge x$ with $y\in\V_n$, $n\ge m$. Assume $\Pi_x(X_n=y)>0$.
Let $\Pi_y$ be the probability measure on $(\X_y,\cX^y)$ defined by 
	\[\Pi_y(X_{n,\ell}=x_{n,\ell})=\Pi_x(X_{n,\ell}=x_{n,\ell}\,|\,X_n=y),\]
for any admissible path $x_{n,\ell}$ starting at $x_n=y$.
Then $\Pi_y\in\DLR^\w_y$.
\item Fix $\Pi\in\biDLR^\w$. Fix $x\in\V_m$, $m\in\Z$, such that $\Pi(X_m=x)>0$. Let $\Pi_x$ be the probability measure on $(\X_x,\cXx)$ defined by
\begin{align}\label{mux}
\Pi_x(X_{m,n}=x_{m,n})=\Pi(X_{m,n}=x_{m,n}\,|\,X_m=x),
\end{align} 
for any up-right path $x_{m,n}$ with $x_m=x$.
Then $\Pi_x\in\DLR^\w_x$.
\end{enumerate}
\end{lemma}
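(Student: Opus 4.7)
The plan is to verify in each case that the conditional measure satisfies the finite-dimensional Markov characterization \eqref{Pi-cons}, namely that the probability of every admissible finite path factors as the endpoint marginal times the quenched point-to-point probability. Both parts are essentially immediate bookkeeping once one observes the quenched Markov property of $Q^\w_{x,x_\ell}$ through any intermediate point: for an admissible path $x_{m,\ell}=z_{m,n}x_{n,\ell}$ with $z_n=x_n=y$, splitting the Boltzmann weight at level $n$ and regrouping the three partition functions yields
$$Q^\w_{x,x_\ell}(X_{m,\ell}=z_{m,n}x_{n,\ell}) = Q^\w_{x,x_\ell}(X_n=y)\,Q^\w_{x,y}(z_{m,n})\,Q^\w_{y,x_\ell}(x_{n,\ell}).$$

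Part (b) is the easier direction, so I would do it first. Fix an admissible $x_{m,n}$ with $x_m=x$ and apply \eqref{Pi-cons2} to $\Pi$, then divide both sides by $\Pi(X_m=x)>0$. The left side becomes $\Pi_x(X_{m,n}=x_{m,n})$ by \eqref{mux}, the ratio $\Pi(X_m=x,X_n=x_n)/\Pi(X_m=x)$ becomes $\Pi_x(X_n=x_n)$, and the $Q^\w_{x,x_n}$ factor is unchanged. This is exactly \eqref{Pi-cons} for $\Pi_x$, so Lemma \ref{lm:Pi-cons} gives $\Pi_x\in\DLR^\w_x$.

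For part (a), I would fix an admissible $x_{n,\ell}$ with $x_n=y$ and compute $\Pi_x(X_{n,\ell}=x_{n,\ell})$ by summing over admissible paths $z_{m,n}$ from $x$ to $y$. Apply \eqref{Pi-cons} to $\Pi_x$ to write each joint term as $\Pi_x(X_\ell=x_\ell)\,Q^\w_{x,x_\ell}(X_{m,\ell}=z_{m,n}x_{n,\ell})$, substitute the factorization displayed above, and use $\sum_{z_{m,n}} Q^\w_{x,y}(z_{m,n})=1$ to collapse the sum. This gives
$$\Pi_x(X_{n,\ell}=x_{n,\ell}) = \Pi_x(X_\ell=x_\ell)\,Q^\w_{x,x_\ell}(X_n=y)\,Q^\w_{y,x_\ell}(x_{n,\ell}).$$
The same computation with $x_{n,\ell}$ left unspecified shows $\Pi_x(X_n=y,X_\ell=x_\ell)=\Pi_x(X_\ell=x_\ell)\,Q^\w_{x,x_\ell}(X_n=y)$. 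Dividing the former by $\Pi_x(X_n=y)$ and inserting the latter yields $\Pi_y(X_{n,\ell}=x_{n,\ell})=\Pi_y(X_\ell=x_\ell)\,Q^\w_{y,x_\ell}(x_{n,\ell})$, and a second appeal to Lemma \ref{lm:Pi-cons} places $\Pi_y\in\DLR^\w_y$. No real obstacle is expected; the only step worth care is the quenched factorization, which is immediate from the product form of the Boltzmann weight and the definition of the partition function.
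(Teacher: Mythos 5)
Your proof is correct and follows exactly the route the paper intends: the paper gives no details, stating only that the lemma is a straightforward application of \eqref{Pi-cons} and \eqref{Pi-cons2}, and your argument (dividing \eqref{Pi-cons2} by $\Pi(X_m=x)$ for part (b), and using the factorization of $Q^\w_{x,x_\ell}$ through the intermediate point $y$ together with \eqref{Pi-cons} for part (a)) is precisely that application.
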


%A consequence of the above is that for any $\ell\le m\le n$ and any admissible path $x_{m,n}$ we have $\mu$-almost surely
%	\begin{align}\label{DLR-Markov-1}
%	&\mu\{X_{m,n}=x_{m,n}\,|\, X_m,X_{-\infty,\ell}\}=\mu\{X_{m,n}=x_{m,n}\,|\,X_m,X_\ell\}. %\quad\text{and}\\
%%	&\mu\{X_{m,n}=x_{m,n}\,|\,X_{n,\infty}\}=\mu\{X_{m,n}=x_{m,n}\,|\,X_n\}.
%	\end{align}

We also study consistent and covariant families of DLR solutions, in the sense of the following two definitions.

\begin{definition}
Given $\w\in\Omega$ we say  $\{\Pi_x:x\in\Z^2\}$ is a family of consistent rooted (or semi-infinite) DLR solutions (in environment $\w$) if for all $x\in\Z^2$, $\Pi_x\in\DLR_x^\w$ and the following holds:
For each $y\in\V_m$, $m\in\Z$,  $x\le y$, $n\ge m$, and for each up-right path $x_{m,n}$ with $x_m=y$
	\[\Pi_x(X_{m,n}=x_{m,n}\,|\,X_m=y)=\Pi_y(X_{m,n}=x_{m,n}).\]
We will denote the set of such families by $\siDLR^\w$.
%When $\Pi_x$ is non-degenerate for all $x\in\Z^2$ we say the family is non-generate.
\end{definition}

Define the shift $\theta_z$ acting on up-right paths %$x_{m,n}$, $m,n\in\Z\cup\{\pm\infty\}$, 
by $\theta_z x_{m,n}=z+x_{m,n}$.

\begin{definition}\label{def:cov}
A family $\{\Pi_x^\w:x\in\Z^2,\w\in\Omega\}$ is said to be a $T$-covariant family of consistent rooted (or semi-infinite) DLR solutions if for each $x\in\Z^2$, $\w\mapsto\Pi_x^\w$ is measurable,
there exists a full-measure $T$-invariant event $\Omega'\subset\Omega$ 
such that for each $\w\in\Omega'$, $\{\Pi_x^\w:x\in\Z^2\}$ is consistent in environment $\w$,
and for all $z\in\Z^2$, $\Pi_{x-z}^{T_z\w}\circ\theta_{-z}=\Pi_x^\w$.
\end{definition}

\section{Main results}\label{sec:results}
\subsection{Semi-infinite polymer measures}
We begin with a definition of directedness. 
For $A\subset\R^2$ and $\xi\in\R^2$ let $\dist(\xi,A)=\inf_{\zeta\in A}\abs{\xi-\zeta}_1$.

\begin{definition}
For a set $A\subset\Uset$, a sequence $x_n\in\Z^2$ is said to be $A$-{\sl directed} if $\abs{x_n}_1\to\infty$ and  the set of limit points of $x_n/\abs{x_n}_1$ is included in $A$.
We say that $\Pi$ is {\sl strongly} $A$-directed if
	\[\Pi\bigl\{(X_n)\text{ is $A$-directed}\bigr\}=1.\]
We say that $\Pi$ is {\sl weakly} $A$-directed if for any $\e>0$
	\[\lim_{n\to\infty}\Pi\bigl\{\dist(X_n/n,A)>\e\bigr\}=0.\]
A family of probability measures is said to be weakly/strongly $A$-directed if each member of the family is.
Sometimes we say {\sl directed into} $A$ instead of $A$-directed, almost surely directed instead of strongly directed, and directed in probability instead of weakly directed.
\end{definition}

When $A=\{\xi\}$ is a singleton, weak directedness into $A$ means $\Pi$ satisfies the weak law of large numbers (WLLN) while
strong directedness means the strong law of large numbers (SLLN) holds, with asymptotic direction $\xi$ in either case. We then say that $\Pi$ satisfies \WLLN\ and \SLLN, respectively.

First, we address the existence of directed DLR solutions. Recall at this point that we set $\beta=1$ throughout this section.

\begin{theorem}\label{th:main1}
There exists an event $\Omexist$ such that $\P(\Omexist)=1$ and 
for every $\w\in\Omexist$ and every $\xi\in\Uset$ there exists 
a consistent family in $\siDLR^\w$
that is strongly $\Uset_{\xi-}$-directed and a consistent family in $\siDLR^\w$ that is strongly $\Uset_{\xi+}$-directed. 
If $\xi\not\in\Diff$ then for each $x\in\Z^2$ the members rooted at $x$, from each family, are different. 
\end{theorem}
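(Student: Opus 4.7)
The plan is to construct the required consistent families using the Busemann cocycles $B^{\xi\pm}$ produced in Section \ref{sec:coc}. Recall that these cocycles satisfy the recovery relation $e^{-\w_x}=e^{-B^{\xi\pm}(x,x+e_1)}+e^{-B^{\xi\pm}(x,x+e_2)}$ for all $x\in\bbZ^2$ on a full-measure event $\Omexist$. Using this I would define transition probabilities
\[
p^{\xi\pm}(x,x+e_i)=e^{\w_x-B^{\xi\pm}(x,x+e_i)},\qquad i\in\{1,2\},
\]
which sum to one. Let $\Pi_x^{\xi\pm}$ be the law on $(\bbX_x,\sX^x)$ of the Markov chain started at $x$ with these transition probabilities. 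Verifying that $\Pi_x^{\xi\pm}\in\DLR_x^{\w}$ reduces, via Lemma \ref{lm:Pi-cons}, to showing that for any admissible path $x_{m,n}$ with $x_m=x$ and $x_n=y$ one has $\prod_{i=m}^{n-1}p^{\xi\pm}(x_i,x_{i+1})=e^{\sum_{i=m}^{n-1}\w_{x_i}-B^{\xi\pm}(x,y)}$, which is immediate by telescoping the cocycle. Summing over paths in $\bbX_x^y$ gives $\Pi_x^{\xi\pm}(X_n=y)=Z_{x,y}^{\w}\,e^{-B^{\xi\pm}(x,y)}$, and the quenched polymer measure $Q^{\w}_{x,y}$ emerges as the conditional distribution on $\{X_n=y\}$. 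Consistency of the family $\{\Pi_x^{\xi\pm}:x\in\bbZ^2\}$ in $\siDLR^{\w}$ is then a direct consequence of the Markov property. Covariance under $T$-translation follows from the shift-covariance of the Busemann cocycle established in Section \ref{sec:coc}.

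Next I would establish the strong directedness. Under $\Pi_0^{\xi\pm}$, the joint process $(T_{X_n}\w,X_{n+1}-X_n)_{n\ge 0}$, viewed under the annealed measure $\bbP\otimes\Pi_0^{\xi\pm}$, is a stationary Markov chain on $\Omega\times\{e_1,e_2\}$ whose step distribution is $p^{\xi\pm}(0,e_i)(\w)$. Ergodicity of this annealed chain, together with the ergodic theorem, yields an a.s. constant limit $\zeta^{\pm}=\lim_{n\to\infty}X_n/n$. Identifying $\zeta^{\pm}$ is done by combining the shape theorem \eqref{shape} with $\Pi_x^{\xi\pm}(X_n=y)=Z_{x,y}^{\w}e^{-B^{\xi\pm}(x,y)}$: on exponential scales this forces the typical endpoint $y$ to lie in the argmax of $\xi'\mapsto\fe(\xi')-\bbE[B^{\xi\pm}(0,\cdot)]\cdot\xi'$, which by the duality \eqref{eq:ppplduality} and the identification of $\bbE[B^{\xi\pm}(0,e_i)]$ with $-\nabla\fe(\xi\pm)\cdot e_i$ (Section \ref{sec:coc}) is exactly the segment $\Uset_{\xi\pm}$. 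Transferring back from annealed to quenched gives $\bbP$-a.s. strong directedness into $\Uset_{\xi\pm}$.

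Finally, to see that the rooted members differ when $\xi\notin\Diff$, I note that the asymptotic direction already distinguishes them in the generic case: the annealed directions $\zeta^{-}\in\Uset_{\xi-}$ and $\zeta^{+}\in\Uset_{\xi+}$ satisfy $\zeta^{-}\cdot e_1\le \xi\cdot e_1\le \zeta^{+}\cdot e_1$, with strict inequality except possibly when both coincide with $\xi$. In the remaining degenerate case I would compare the measures directly by looking at the one-step probabilities: on the event $\{B^{\xi-}(x,x+e_1)\ne B^{\xi+}(x,x+e_1)\}$, which has positive $\bbP$-probability precisely because $\xi\notin\Diff$, one has $p^{\xi-}(x,x+e_1)\ne p^{\xi+}(x,x+e_1)$, so the two Markov chains have distinct finite-dimensional distributions. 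The hardest step I expect is pinning down the asymptotic direction in Paragraph 2, specifically justifying that the annealed ergodic limit lies in $\Uset_{\xi\pm}$ and not merely in $\bbR_+^2$; this requires the careful cocycle-to-direction correspondence built in Section \ref{sec:coc} and a shape-theorem-based concentration bound to rule out stray limit points falling outside $\Uset_{\xi\pm}$.
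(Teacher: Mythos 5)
Your construction of the measures is the paper's own: the transition probabilities $e^{\w_x-B^{\xi\pm}(x,x+e_i)}$, the telescoping identity $\Pi_x^{\xi\pm}(X_n=y)=Z_{x,y}\,e^{-B^{\xi\pm}(x,y)}$, and the verification of the DLR equations are exactly Theorem \ref{th:B-DLR}. The directedness step, however, has a genuine gap. You assert that $(T_{X_n}\w,\,X_{n+1}-X_n)$ is a \emph{stationary, ergodic} Markov chain under $\bbP\otimes\Pi_0^{\xi\pm}$, i.e.\ that $\P$ (really $\Phat$) is an invariant measure for the environment seen from the particle of the Busemann-driven walk. Nothing in Section \ref{sec:coc} provides this, it is not obvious, and the paper never uses it. The argument that actually works is the one you sketch in your second half: from $\Pi_x^{\xi\pm}(X_n=y)=Z_{x,y}e^{-B^{\xi\pm}(x,y)}$ and the two shape theorems \eqref{shape} and \eqref{shape-B} one gets a \emph{quenched} large deviation principle for $X_n/n$ with rate $I(\zeta)=-h(B^{\xi\pm})\cdot\zeta-\fe(\zeta)$ (Theorem \ref{th:PiB-lln}), and Borel--Cantelli gives strong directedness into the zero set of the rate. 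For this you must also know that $h(B^{\xi\pm})=-\nabla\fe(\xi\pm)$ \emph{deterministically}, not just in mean; that is Lemma \ref{lm:tilt}, which rests on the extreme-point argument of Lemma \ref{lm:h-xi}\eqref{lm:h-xi:c}. So drop the annealed ergodic-chain claim and run the quenched LDP; no ``transfer from annealed to quenched'' is then needed.

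Two further gaps separate your sketch from the statement as claimed. First, uniformity in $\xi$: your argument produces a full-measure event \emph{per} $\xi$, while the theorem asserts one event $\Omexist$ working for every $\xi\in\Uset$ simultaneously (uncountably many). The paper's device is to run the LDP argument only for a countable set $\Udense$ (a dense subset of $\Diff$ together with all non-differentiability directions) and then use the common-uniform-variable coupling and the path ordering \eqref{path-order} to sandwich the $\xi\pm$ chains between chains with directions in $\Udense$; one also has to descend from the extended space $\Omhat$ to $\Omega$ via regular conditional distributions, since $B^{\xi\pm}$ are not functions of $\w$ alone. Second, the distinctness claim for $\xi\notin\Diff$: positive $\P$-probability that $B^{\xi-}(x,x+e_1)\ne B^{\xi+}(x,x+e_1)$ at a fixed site does not give that the two measures rooted at \emph{every} $x$ differ for a.e.\ $\w$, and directedness cannot help, since $\Uset_{\xi-}\cap\Uset_{\xi+}=\{\xi\}$ so both chains may well have the same asymptotic direction $\xi$. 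The paper instead uses the mean gap \eqref{diff-mean} and the ergodic theorem to get, on one full-measure event (the non-differentiability directions being countable), for every $x$ some $y\ge x$ with $B^{\xi-}(y,y+e_1)\ne B^{\xi+}(y,y+e_1)$, which forces $\Pi_x^{\xi-}\ne\Pi_x^{\xi+}$.
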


The following is an immediate corollary.

\begin{corollary}\label{cor:main1}
For any $\w\in\Omexist$ and for any $\xi\in\ri\Uset$ at which $\fe$ is strictly concave, there exists at least one consistent family in $\siDLR^\w$ satisfying \SLLN.
If, furthermore, $\xi\not\in\Diff$, then there exist at least two such families. 
\end{corollary}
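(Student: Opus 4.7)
My plan is to deduce Corollary \ref{cor:main1} as a direct specialization of Theorem \ref{th:main1}, with strict concavity serving only to collapse the one-sided linear-segment sets to a point. First I would unpack the definition: strict concavity of $\fe$ at $\xi\in\ri\Uset$ is exactly the assertion $\Uset_{\xi-}=\Uset_{\xi+}=\{\xi\}$. Under this hypothesis, the notions of "strongly $\Uset_{\xi-}$-directed" and "strongly $\Uset_{\xi+}$-directed" each reduce to "strongly $\{\xi\}$-directed", which by the discussion of directedness immediately after the definition is precisely \SLLN. For any $\w\in\Omexist$, invoking Theorem \ref{th:main1} at this $\xi$ (on either side) therefore produces a consistent family in $\siDLR^\w$ strongly directed into $\{\xi\}$, establishing the first assertion.

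For the second statement I would additionally assume $\xi\not\in\Diff$ and apply both halves of Theorem \ref{th:main1}. This yields two consistent families in $\siDLR^\w$, one strongly $\Uset_{\xi-}$-directed and the other strongly $\Uset_{\xi+}$-directed, and the final clause of that theorem asserts that their members rooted at each $x\in\Z^2$ are genuinely different. Strict concavity at $\xi$ ensures each family satisfies \SLLN, and distinctness of the members at even one root forces distinctness of the two consistent families (the consistency relation then propagates the difference to all other roots). Hence we obtain at least two distinct consistent families in $\siDLR^\w$ with the required asymptotic direction.

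There is essentially no obstacle in this corollary, as the entire content is packaged inside Theorem \ref{th:main1}. The only conceptual point worth checking is that the two hypotheses "strictly concave at $\xi$" and "$\xi\not\in\Diff$" are compatible: $\fe$ may have a kink at $\xi$, so that $\nabla\fe(\xi-)\ne\nabla\fe(\xi+)$ and the superdifferential $\partial\fe(\xi)$ is a nontrivial interval, while still admitting no linear segment through $\xi$, in which case both maximal one-sided segments $\Uset_{\xi\pm}$ collapse to the singleton $\{\xi\}$. This is precisely the regime the second half of the corollary targets.
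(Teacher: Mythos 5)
Your proposal is correct and matches the paper's (implicit) argument: the paper presents the corollary as an immediate consequence of Theorem \ref{th:main1}, exactly via the observation that strict concavity at $\xi$ means $\Uset_{\xi-}=\Uset_{\xi+}=\{\xi\}$, so the strongly $\Uset_{\xi\pm}$-directed families of the theorem satisfy \SLLN, and the non-differentiability clause supplies two distinct such families. Nothing further is needed.
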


%\begin{remark}
%In both Theorem \ref{th:main1} and Corollary \ref{cor:main1}, for each fixed $\xi\in\Uset$ 
%the $\Uset_{\xi-}$-directed and the $\Uset_{\xi+}$-directed families are shift-covariant,
%but the event $\Omega'$ in Definition \ref{def:cov} depends on $\xi$.
%\end{remark}

For $x\in\V_m$, $m\in\Z$, two  trivial (and degenerate) elements of $\DLR_x^\w$ are given by $\Pi_x^{e_i}=\delta_{x_{m,\infty}}$ with $x_k=x+(k-m)e_i$, $k\ge m$, $i\in\{1,2\}$. 
These two solutions are clearly extreme in $\DLR_x^\w$.
%If not, then $\Pi_x^{e_i}=\alpha\mu+(1-\alpha)\nu$ with $\mu,\nu\in\DLR_x^\w$ and $\alpha\in(0,1)$.
%But $\Pi_x^{e_i}$ being a delta mass forces $\mu$ and $\nu$ to equal that same delta mass.

We say that $\Pi_x\in\DLR_x^\w$ is {\sl nondegenerate} if it satisfies
%\addmath{The definition $\mu(x_{m,n}) > 0$ for all admissible finite paths is a stronger condition and the theorem is still true. If we ever re-visit the global Gibbs stuff, that should be the definition, since we know how to build degenerate Gibbs measures which are not degenerate in this sense in that setting.}
%	\begin{align}\label{nondeg}
%	\mu(y)>0\quad\text{for all }y\ge x.  %\mu(y\in X_\bbullet)
%	\end{align}
	\begin{align}\label{nondeg}
	\Pi_x(x_{m,n})>0\quad\text{for all admissible finite paths with $x_m=x$}.  %\mu(y\in X_\bbullet)
	\end{align}

By \eqref{Pi-cons} this definition 
%This definition 
is equivalent to the weaker condition that every point $y\ge x$ is accessible, i.e.\ $\Pi_x(y)>0$ for all $y\ge x$.
%Recall definition \eqref{nondeg} of non-degeneracy.
%It is easy to see that non-degeneracy 
%\addmath{is equivalent?}
%implies that any finite admissible path has a positive probability.
%
%\begin{lemma}\label{nondeg-path}
%Fix $x\in\V_m$, $m\in\Z$. Then $\mu\in\DLR_x^\w$ is nondegenerate if and only if $\mu(y)>0$ for all $y\ge x$.
%\end{lemma}	
%
%\begin{proof}
%The claim follows from observing that for any up-right path $x_{m,n}$ from $x$ to $y$
%	\[\mu(x_{m,n})=\mu(y)\,Q^\w_{x,y}(x_{m,n})\]
%and that $Q^\w_{x,y}(x_{m,n})>0$. 
%\end{proof}

The next lemma states that outside one null set of weights $\w$, convex combinations of $\Pi_x^{e_i}$ are the only degenerate DLR solutions. 

\begin{lemma}\label{lm:nondeg}
There exists an event $\Omnondeg$ such that $\P(\Omnondeg)=1$ and for all 
$\w\in\Omnondeg$ and $x\in\Z^2$, any solution $\Pi_x\in\DLR_x^\w$ that is not a convex combination of $\Pi_x^{e_i}$, $i\in\{1,2\}$,
is nondegenerate. 
\end{lemma}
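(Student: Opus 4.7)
The plan is to argue by contradiction: suppose $\Pi_x\in\DLR_x^\w$ is degenerate yet not a convex combination of $\Pi_x^{e_1}$ and $\Pi_x^{e_2}$, and extract a constraint on $\w$ that holds only on a null event. Set $A_x=\{y\ge x:\Pi_x(X_n=y)>0,\ n=y\cdot\ehat\}$. Lemma \ref{lm:Pi-cons} combined with positivity of $Q^\w_{x,y}$ on every admissible finite path yields the rectangle-downward-closure $y\in A_x\Rightarrow\{z:x\le z\le y\}\subseteq A_x$. Since $\Pi_x$ is not a convex combination of axis measures, it charges a path with both $e_1$ and $e_2$ steps, so some $z\in A_x$ has $z_i>x_i$ for $i=1,2$. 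Degeneracy forces $A_x\ne\{y\ge x\}$, and choosing $(k_0,j_0)$ with $k_0+j_0$ minimal among those satisfying $z+k_0e_1+j_0e_2\notin A_x$ produces an interior frontier point $u\in A_x$ (both $u_i>x_i$) with $u+e_i\notin A_x$ for some $i$. Assume $u+e_1\notin A_x$ (the $e_2$ case is symmetric). If $w\in A_x$ had $w\ge u$ and $w_1>u_1$, then $u+e_1\in\{z:x\le z\le w\}\subseteq A_x$, a contradiction; hence $A_x\cap\{w\ge u\}\subseteq\{u+ke_2:k\ge 0\}$, and by Lemma \ref{lm:cond-bi-to-root}(a) the conditional measure $\Pi_u:=\Pi_x(\cdot\mid X_n=u)$ must equal $\Pi_u^{e_2}$. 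With $p:=\Pi_x(X_n=u)>0$, Lemma \ref{lm:Pi-cons} applied at the endpoint $u+Ne_2$ gives
\[p=\Pi_x(X_n=u,X_{n+N}=u+Ne_2)=\Pi_x(X_{n+N}=u+Ne_2)\,Q^\w_{x,u+Ne_2}(X_n=u),\]
so $Q^\w_{x,u+Ne_2}(X_n=u)\ge p$ for every $N$. The proof reduces to showing $Q^\w_{x,u+Ne_2}(X_n=u)\to 0$ almost surely.

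For the reduction I would use the factorization
\[Q^\w_{x,u+Ne_2}(X_n=u)=\frac{Z_{x,u}\,e^{\beta S_N}}{Z_{x,u+Ne_2}},\qquad S_N=\sum_{k=0}^{N-1}\w_{u+ke_2},\]
and bound the denominator from below by the contribution of paths passing through $u+e_2-e_1\in\V_n$ (legitimate because $u_1>x_1$):
\[\frac{Z_{x,u+Ne_2}}{Z_{x,u}\,e^{\beta S_N}}\ge\frac{Z_{x,u+e_2-e_1}}{Z_{x,u}}\cdot\frac{Z_{u+e_2-e_1,u+Ne_2}}{e^{\beta S_N}}.\]
Parametrizing the $N$ admissible paths from $u+e_2-e_1$ to $u+Ne_2$ by the level at which the single required $e_1$-step is taken, the second factor equals
\[\sum_{\ell=0}^{N-1}\exp\!\Bigl(\beta\sum_{k=0}^{\ell}\eta_k\Bigr),\qquad \eta_k:=\w_{(u_1-1,u_2+k+1)}-\w_{(u_1,u_2+k)}.\]
Because the $\eta_k$ involve pairwise disjoint pairs of sites, they are i.i.d., mean zero, and nondegenerate by \eqref{main-assump}; their partial sums $T_\ell$ form a mean-zero random walk with $\limsup T_\ell=+\infty$ almost surely, so $\sum_\ell e^{\beta T_\ell}\ge e^{\beta\max T_\ell}\to\infty$ a.s. A countable intersection of the resulting full-measure events over $x\in\Z^2$, interior $u>x$, and the two directional choices delivers $\Omnondeg$.

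The main obstacle is that the leading asymptotics of $\log Z_{x,u+Ne_2}$ and $\beta S_N$ coincide (both are $\sim\beta N\,\E[\w_0]$), so the shape theorem alone cannot produce the needed divergence. What rescues the argument is that only a qualitative subleading gain is required, and this is supplied cleanly by the recurrence of a mean-zero i.i.d.\ random walk; no fine KPZ-type fluctuation estimates are needed.
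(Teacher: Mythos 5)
Your proof is correct, and it rests on the same underlying mechanism as the paper's: DLR consistency forces a uniform-in-$N$ positive lower bound on the weight of an eventually straight ray emanating from a point strictly inside the quadrant, and this is contradicted by the almost sure oscillation ($\limsup = +\infty$) of a centered, nondegenerate i.i.d.\ walk built from differences of weights in two distinct columns. The implementations differ, though, and yours is arguably cleaner at the estimate stage. The paper does not introduce the support set $A_x$; it shows directly that degeneracy confines the path to a half-plane $\{X_n\cdot e_i\le y\cdot e_i\}$, isolates an event of probability $\delta>0$ on which the path is eventually straight in direction $e_2$ from some $v$ with $(v-x)\cdot e_1>0$, and then lower-bounds $Z_{x,v+(n-k)e_2}$ by a single comparison path that goes straight up the column of $x$ and then straight across. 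That choice produces, besides the centered-walk term, an $n$-dependent boundary sum which has to be absorbed into the definition of $\Omnondeg$, making the full-measure event somewhat more involved. Your route instead extracts a frontier point $u$ of the rectangle-closed set $A_x$, conditions to see that the measure from $u$ is the straight-$e_2$ point mass, and branches the comparison path only one step sideways through $u+e_2-e_1$; this yields the exact identity $Z_{u+e_2-e_1,u+Ne_2}\,e^{-\beta S_N}=\sum_{j<N}e^{\beta T_j}$ with $(T_j)$ a genuine i.i.d.\ mean-zero walk and an $N$-independent prefactor $Z_{x,u}/Z_{x,u+e_2-e_1}$, so the event $\Omnondeg$ is just the countable intersection of the unboundedness events for these walks. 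One small step you should make explicit: degeneracy only gives some $y^*\ge x$ with $y^*\notin A_x$, not a priori a missing point above $z$; but the rectangle closure you already proved shows $z\vee y^*\notin A_x$ (otherwise $y^*\in A_x$), so the set over which you minimize $k_0+j_0$ is indeed nonempty. With that sentence added, the argument is complete.
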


The next result is on directedness of DLR solutions.

\begin{theorem}\label{th:main2}
There exists an event $\Omdir$ 
such $\P(\Omdir)=1$ and for all $\w\in\Omdir$, all $x\in\Z^2$ and any extreme nondegenerate solution $\Pi_x\in\DLR_x^\w$ there exists a $\xi\in\ri\Uset$ such that 
one of the following three holds: 
\begin{enumerate}[label={\rm(\alph*)}, ref={\rm\alph*}]
\item\label{case-a} $\Pi_x$ satisfies \WLLN\ and is strongly $\Uset_{\ximax}$-directed or strongly $\Uset_{\ximin}$-directed,
\item\label{case-b} $\Pi_x$ is strongly $\Uset_\xi$-directed, or
\item\label{case-c} $\xi\in\Diff$ and $\Pi_x$ is weakly $\Uset_\xi$-directed and strongly directed into $\Uset_{\ximin}\cup\Uset_{\ximax}$.
\end{enumerate}
\end{theorem}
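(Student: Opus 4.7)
The plan is to attach to each extreme nondegenerate $\Pi_x\in\DLR_x^\w$ a stationary edge cocycle extracted from its one-step transition probabilities, identify that cocycle with one of the Busemann cocycles built in Section \ref{sec:coc}, and then read off the directedness of $\Pi_x$ from the known directedness of the associated Busemann random walk.

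For the extraction, the Markov property \eqref{Pi-cons} expresses the one-step transition $\Pi_x(X_{n+1}=X_n+e_i\mid X_{m,n})$ as the partition-function ratio $e^{\beta\w_{X_n}}Z^\w_{X_n+e_i,z}/Z^\w_{X_n,z}$ for any reachable future endpoint $z$. Extremality of $\Pi_x$ forces triviality of the tail $\sigma$-algebra of the path, and nondegeneracy (Lemma \ref{lm:nondeg}) ensures that every $z\ge X_n$ is reachable. Taking the limit as $z$ recedes to infinity, the ratios stabilize to a measurable function $\eta^\w(y,y+e_i)$ on the forward octant from $x$ satisfying the recovery identity $\eta^\w(y,y+e_1)+\eta^\w(y,y+e_2)=e^{\beta\w_y}$ inherited from $Z^\w_{y,z}=e^{\beta\w_y}(Z^\w_{y+e_1,z}+Z^\w_{y+e_2,z})$, so that $B^\w(y,y+e_i):=-\beta^{-1}\log\eta^\w(y,y+e_i)$ is an edge cocycle recovering the weights. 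To identify $B^\w$ with a Busemann function, I would sandwich it between two Busemann cocycles corresponding to directions $\zeta_1,\zeta_2\in\ri\Uset$ bracketing its mean tilt (a monotonicity step using planarity), then apply the coalescence theorem for coupled polymer walks in a common environment (Theorem \ref{thm:RWREcoal}) together with the uniqueness of covariant cocycles established in Section \ref{sec:coc} to force equality of $B^\w$ with a Busemann cocycle at some $\xi\in\ri\Uset$ and some one-sided choice. The direction $\xi$ is the one dual to the mean tilt of $B^\w$ via Lemma \ref{lem:insub}.

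Having represented $\Pi_x$ via a Busemann cocycle at $\xi$, the trichotomy follows by reading off the directedness of the Busemann random walk, already proven in Section \ref{sec:coc} to be strongly $\Uset_{\xi\pm}$-directed. If $\fe$ is strictly concave at $\xi$ then $\Uset_{\xi-}=\Uset_{\xi+}=\{\xi\}$ and case (b) holds with SLLN at $\xi$. If $\xi\not\in\Diff$ then the two one-sided Busemann cocycles at $\xi$ differ, and $\Pi_x$ matches exactly one; the common mean tilt produces the WLLN at $\xi$ while almost sure limits concentrate on the corresponding one-sided flat segment, yielding case (a). Finally, if $\xi\in\Diff$ but $\Uset_\xi=[\ximin,\ximax]$ is a nondegenerate flat segment, the walk is weakly $\Uset_\xi$-directed, and the competition-interface dichotomy at the two endpoints sharpens the almost sure behavior to $\Uset_{\ximin}\cup\Uset_{\ximax}$, producing case (c).

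The hard part is the identification: showing that any cocycle $B^\w$ arising from an extreme nondegenerate $\Pi_x$ must coincide with a Busemann cocycle, rather than some other translation-covariant cocycle recovering the weights. This requires the full Busemann process indexed by all directions, the coalescence theorem \ref{thm:RWREcoal}, and the planar monotonicity of the Busemann process, none of which are light. A secondary delicate point is case (c): upgrading the almost sure directedness from $\Uset_\xi$ down to $\Uset_{\ximin}\cup\Uset_{\ximax}$, rather than to a single endpoint, must come from the interplay between Busemann monotonicity and the forward tail structure inherited from $\Pi_x$; this parallels the analogous dichotomy in the zero-temperature LPP setting of \cite{Geo-Ras-Sep-17-ptrf-1,Geo-Ras-Sep-17-ptrf-2}.
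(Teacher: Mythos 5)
Your opening step is sound and matches the paper's strategy: for an extreme nondegenerate $\Pi_x$ the ratios $Z_{y+e_i,X_n}/Z_{y,X_n}$ converge $\Pi_x$-a.s.\ to $e^{-B^{\Pi_x}(y,y+e_i)}$ (the paper gets this by a backward-martingale argument plus extremality, Theorem \ref{th:B-lim}), and the comparison inequalities \eqref{comparison} together with the Busemann limits of Theorem \ref{thm:Buselim1} let you compare $B^{\Pi_x}$ with the cocycles $B^{\zeta\pm}$. The genuine gap is your identification step. The cocycle $B^{\Pi_x}$ is attached to one fixed $\w$ and one root $x$; it is not shift-covariant and carries no stationary structure, so it has no ``mean tilt'' to which Lemma \ref{lem:insub} could be applied, and nothing in Section \ref{sec:coc} classifies recovering cocycles: there is no ``uniqueness of covariant cocycles'' result there (Lemma \ref{lm:h-xi} only locates the mean of a covariant recovering cocycle in $-\partial\fe(\Uset)$). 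Theorem \ref{thm:RWREcoal} is likewise of no help here: it is a coalescence statement for walks driven by a \emph{common} stationary kernel and is used in the paper only for the bi-infinite results (Lemma \ref{lem:unifto0}); it cannot force $B^{\Pi_x}$ to equal some $B^{\xi\pm}$. Indeed, whether every extreme nondegenerate DLR solution comes from a Busemann cocycle is exactly what is \emph{not} proved without regularity of $\fe$, and the three-case form of the theorem exists precisely because such an identification is unavailable.

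What the paper does instead is weaker and suffices: whenever $\varlimsup_n\Pi_x\{X_n\cdot e_1>n\zeta\cdot e_1\}>0$, the monotonicity \eqref{comparison} plus \eqref{cv-as} give the one-sided inequalities $B^{\Pi_x}(y,y+e_1)\le B^{\zetamin-}(y,y+e_1)$ and $B^{\Pi_x}(y,y+e_2)\ge B^{\zetamin-}(y,y+e_2)$ for all $y\ge x$; the uniform-variables coupling of Appendix \ref{sub:coupling} then pins the $\Pi_x$-path to the right of the $\Pi_x^{\zetamin-}$-path, whose strong $\Uset_{\zetamin-}$-directedness is known from Theorem \ref{th:DLR-dir1}. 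Defining $\xi'$ (resp.\ $\xi''$) as the extreme direction for which such right (resp.\ left) excursions have positive probability and comparing $\xi',\xi''$ with $\ximin'$ and $\ximax''$ yields the trichotomy by elementary case analysis of \eqref{liminf}--\eqref{limsup}; in particular case \eqref{case-c} comes directly from these liminf/limsup bounds, not from any competition-interface dichotomy (the competition interface does not enter this proof at all). So you should replace the identification-plus-coalescence step by this one-sided comparison and squeezing argument; as written, the central step of your proposal would not go through.
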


If $\w\in\Omnondeg$, then Lemma \ref{lm:nondeg} says the only extreme degenerate solutions of the DLR equations are $\Pi_x^{e_i}$, $i\in\{1,2\}$, which are $\{e_i\}$-directed.
Theorem \ref{th:main2} shows that if $\w\in\Omdir$, then there are no nondegenerate extreme DLR solutions directed weakly into $\{e_1\}$ or $\{e_2\}$.

Note that when $\fe$ is differentiable on $\ri\Uset$ we have $\Uset_\xi=\Uset_{\xi\pm}=\Uset_{\ximin}=\Uset_{\ximax}$ for all $\xi\in\Uset$.
When $\fe$ is strictly concave at a point $\xi$ we have $\Uset_\xi=\Uset_{\xi\pm}=\Uset_{\ximin}=\Uset_{\ximax}=\{\xi\}$. 
Thus, the following is an immediate corollary. 

\begin{corollary}\label{cor:main2}
The following hold.
\begin{enumerate}[label={\rm(\alph*)}, ref={\rm\alph*}]
\item Assume $\fe$ is differentiable on $\ri\Uset$. For any $\w\in\Omdir$, for all $x\in\Z^2$, any extreme solution in $\DLR_x^\w$  is strongly $\Uset_\xi$-directed for some $\xi\in\Uset$.
\item Assume $\fe$ is strictly concave on $\ri\Uset$. 
Then for any $\w\in\Omdir$, for all $x\in\Z^2$, any extreme solution in $\DLR_x^\w$ satisfies \SLLN\ for some $\xi\in\Uset$.
\end{enumerate} 
\end{corollary}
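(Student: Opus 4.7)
The plan is to read the corollary off directly from Theorem \ref{th:main2} after using Lemma \ref{lm:nondeg} to handle degenerate extreme DLR solutions, then specializing the three cases of the theorem under each hypothesis on $\fe$.

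First I would restrict to the full-measure event $\Omdir \cap \Omnondeg$. Fix such an $\w$, a site $x\in\Z^2$, and an extreme $\Pi_x\in\DLR_x^\w$. By Lemma \ref{lm:nondeg}, either $\Pi_x$ is nondegenerate, or $\Pi_x$ is a convex combination of $\Pi_x^{e_1}$ and $\Pi_x^{e_2}$. Extremality rules out nontrivial combinations, so in the degenerate case $\Pi_x = \Pi_x^{e_i}$ for some $i\in\{1,2\}$. This $\Pi_x$ is trivially strongly $\{e_i\}$-directed, and since $\Uset_{e_i}=\{e_i\}$ (as recorded just after Lemma \ref{lem:polymermartin}), the conclusion of both (a) and (b) holds with $\xi=e_i$.

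If instead $\Pi_x$ is nondegenerate, Theorem \ref{th:main2} produces a $\xi\in\ri\Uset$ and places us in one of the cases (\ref{case-a})--(\ref{case-c}). Under the differentiability hypothesis of part (a) of the corollary, $\nabla\fe(\xi-) = \nabla\fe(\xi+)$, so $\Uset_{\ximin} = \Uset_{\ximax} = \Uset_{\xi\pm} = \Uset_\xi$. Hence case (\ref{case-a}) yields strong $\Uset_\xi$-directedness; case (\ref{case-b}) already says this; and in case (\ref{case-c}), strong directedness into $\Uset_{\ximin}\cup\Uset_{\ximax}=\Uset_\xi$ is again strong $\Uset_\xi$-directedness. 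Under the strict-concavity hypothesis of part (b), the same chain of equalities collapses further to $\Uset_{\ximin}=\Uset_{\ximax}=\Uset_\xi=\{\xi\}$, and all three cases reduce to \SLLN.

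There is essentially no obstacle in this argument; all of the genuine content has been packaged into Theorem \ref{th:main2} and Lemma \ref{lm:nondeg}. The one point that deserves a careful look is case (\ref{case-a}) of Theorem \ref{th:main2}, which asserts only \WLLN\ together with strong directedness into $\Uset_{\ximin}$ or $\Uset_{\ximax}$; the reduction works precisely because under each hypothesis on $\fe$ these two sets coincide (with $\Uset_\xi$ in part (a) and with $\{\xi\}$ in part (b)), so strong directedness into either one already delivers the stated conclusion.
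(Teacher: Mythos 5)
Your proposal is correct and follows essentially the same route as the paper: the paper derives the corollary immediately from Theorem \ref{th:main2} together with the remark that degenerate extreme solutions are the $\Pi_x^{e_i}$ (Lemma \ref{lm:nondeg}) and the observation that differentiability gives $\Uset_\xi=\Uset_{\xi\pm}=\Uset_{\ximin}=\Uset_{\ximax}$ while strict concavity collapses all of these to $\{\xi\}$. Your intersection with $\Omnondeg$ is harmless and in fact redundant, since the paper constructs $\Omdir$ inside $\Omnondeg$.
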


We next show existence and uniqueness of DLR solutions.

\begin{theorem}\label{th:main3}
Fix $\xi\in\Diff$ such that $\ximin,\ximax\in\Diff$. There exists a $T$-invariant event $\OmBusPi\subset\Omega$ such that $\P(\OmBusPi)=1$ and for every $\w\in\OmBusPi$ and $x\in\Z^2$, 
there exists a unique weakly $\Uset_\xi$-directed solution $\Pi_x^{\xi,\w}\in\DLR_x^\w$. This $\Pi_x^{\xi,\w}$ is strongly $\Uset_\xi$-directed and for any $\Uset_\xi$-directed sequence $(x_n)$ 
the sequence of quenched point-to-point polymer measures $Q^\w_{x,x_n}$ converges weakly to $\Pi_x^{\xi,\w}$.  
The family $\{\Pi_x^{\xi,\w}:x\in\Z^2,\w\in\Omega\}$ is consistent and $T$-covariant.
\end{theorem}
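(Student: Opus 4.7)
The plan is to build $\Pi_x^{\xi,\w}$ explicitly from the Busemann process that the paper constructs in Section~\ref{sec:coc}, then deduce existence of the weak limit of the $Q^\w_{x,x_n}$, and finally use Theorem~\ref{th:main2} together with Choquet decomposition to deduce uniqueness.

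\smallskip\noindent\textbf{Step 1 (construction).} Because $\xi,\ximin,\ximax\in\Diff$, the Busemann process indexed by the superdifferential has no ambiguity at $\xi$: the cocycles $B^{\xi\pm}$ coincide, giving a single covariant cocycle $B^\xi(\cdot,\cdot,\w)$ defined on a $T$-invariant full-measure event, together with the ``recovery'' identity $e^{-\w_x}=e^{-B^\xi(x,x+e_1,\w)}+e^{-B^\xi(x,x+e_2,\w)}$. Intersecting with $\Omexist,\Omnondeg,\Omdir$ and with the events where the Busemann limits exist along every $\Uset_\xi$-directed sequence, define $\OmBusPi$. For $\w\in\OmBusPi$ and an admissible finite path $x_{m,n}$ with $x_m=x$, set
\begin{align*}
\Pi_x^{\xi,\w}(X_{m,n}=x_{m,n}) \;=\; \exp\Bigl(\sum_{i=m}^{n-1}\w_{x_i}-B^\xi(x,x_n,\w)\Bigr).
\end{align*}
The cocycle property of $B^\xi$ plus recovery makes this consistent, telescoping the increments $B^\xi(x_i,x_{i+1},\w)$ to $B^\xi(x,x_n,\w)$, and the total mass at each level sums to~$1$. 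By construction, $\Pi_x^{\xi,\w}(X_n=x_n)\,Q^\w_{x,x_n}(X_{m,n}=x_{m,n})$ reproduces the same formula, so the Markov test \eqref{Pi-cons} holds and $\Pi_x^{\xi,\w}\in\DLR_x^\w$.

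\smallskip\noindent\textbf{Step 2 (weak convergence and strong directedness).} For any $\Uset_\xi$-directed $x_N\in\V_N$, Markov factorization gives $Q^\w_{x,x_N}(X_{m,n}=x_{m,n})=\exp\bigl(\sum_{i=m}^{n-1}\w_{x_i}\bigr)\cdot Z^\w_{x_n,x_N}/Z^\w_{x,x_N}$. The Busemann limit along $\Uset_\xi$-directed sequences (this is where $\xi\in\Diff$ and the endpoints $\ximin,\ximax\in\Diff$ enter, guaranteeing the limit is $e^{-B^\xi(x,x_n,\w)}$ independent of the subsequence) yields weak convergence $Q^\w_{x,x_N}\Rightarrow\Pi_x^{\xi,\w}$ on each cylinder $\{X_{m,n}=x_{m,n}\}$, hence weakly on $\sM_1(\X_x,\cX^x)$. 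In particular $\Pi_x^{\xi,\w}$ is weakly $\Uset_\xi$-directed, and since under our differentiability hypotheses $\Uset_\xi=\Uset_{\xi-}=\Uset_{\xi+}=\Uset_{\ximin}=\Uset_{\ximax}$, all three cases in Theorem~\ref{th:main2} collapse into strong $\Uset_\xi$-directedness. Thus $\Pi_x^{\xi,\w}$ is strongly $\Uset_\xi$-directed (after intersecting $\OmBusPi$ with $\Omdir$).

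\smallskip\noindent\textbf{Step 3 (uniqueness).} Let $\Pi_x'\in\DLR_x^\w$ be any weakly $\Uset_\xi$-directed solution. Since $\xi\in\ri\Uset$, the degenerate solutions $\Pi_x^{e_i}$ are not weakly $\Uset_\xi$-directed, so in the Choquet decomposition of $\Pi_x'$ the degenerate extremes carry no mass on $\OmBusPi\cap\Omnondeg$; all extremes appearing in the decomposition are therefore nondegenerate. Weak $\Uset_\xi$-directedness is preserved under mixture disintegration (if a mixture puts mass outside a neighborhood of $\Uset_\xi$, some extreme must too), so almost every extreme in the decomposition is weakly $\Uset_\xi$-directed, and Theorem~\ref{th:main2} (with $\Uset_{\xi\pm}=\Uset_\xi$) upgrades this to strong $\Uset_\xi$-directedness. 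Apply Theorem~\ref{th:main2} again to each such nondegenerate extreme $\Pi_x^{\rm ext}$: it is strongly $\Uset_\xi$-directed, so for any bounded continuous cylinder function $f=\one\{X_{m,n}=x_{m,n}\}$, conditioning on $X_N$ and using the DLR property,
\begin{align*}
\Pi_x^{\rm ext}(f) \;=\; \sum_{y\in\V_N}\Pi_x^{\rm ext}(X_N=y)\,Q^\w_{x,y}(f) \;\longrightarrow\; \Pi_x^{\xi,\w}(f)
\end{align*}
by Step~2 and dominated convergence along the random asymptotic direction in $\Uset_\xi$. Hence $\Pi_x^{\rm ext}=\Pi_x^{\xi,\w}$, and so is $\Pi_x'$.

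\smallskip\noindent\textbf{Step 4 (consistency, covariance).} Consistency of $\{\Pi_x^{\xi,\w}:x\in\Z^2\}$ follows from the cocycle identity for $B^\xi$: conditioning on passage through $y$ multiplies the formula in Step~1 at base point $x$ by $e^{B^\xi(x,y,\w)}$, which yields exactly the formula at base point~$y$. Covariance $\Pi_{x-z}^{\xi,T_z\w}\circ\theta_{-z}=\Pi_x^{\xi,\w}$ is inherited directly from the corresponding covariance of the Busemann cocycle $B^\xi(x-z,y-z,T_z\w)=B^\xi(x,y,\w)$.

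\smallskip

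The hardest step is Step~3: turning a weak LLN into strong convergence along the random directional sequence so that Step~2 can be applied. The key leverage is that $\ximin,\ximax\in\Diff$ eliminates the direction ambiguity, making the Busemann-based weak limit in Step~2 hold uniformly in the subsequential direction within $\Uset_\xi$, so the sum over $y\in\V_N$ can be controlled by tightness on $\Uset_\xi$.
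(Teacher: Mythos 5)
Your Step 1, Step 4, and the weak-convergence half of Step 2 follow the paper's route (the correspondence between recovering cocycles and DLR solutions in Theorem \ref{th:B-DLR}, the computation \eqref{Q-cv}, and covariance of $B^\xi$), but there is a genuine gap in Step 2. Weak convergence of $Q^\w_{x,x_N}$ on cylinder sets says nothing about the asymptotic direction of $X_n$ under the limit measure, so ``in particular $\Pi_x^{\xi,\w}$ is weakly $\Uset_\xi$-directed'' is a non sequitur; and you then invoke Theorem \ref{th:main2}, which applies only to \emph{extreme} nondegenerate solutions, while extremality of $\Pi_x^{\xi,\w}$ is not available at this stage (in the paper it is deduced only \emph{after} uniqueness, in Theorem \ref{th:main4}(\ref{th:main4:e})). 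The correct source of directedness is the cocycle shape theorem: Theorem \ref{th:PiB-lln} gives a quenched LDP for $X_n/n$ under $\Pi_x^{B^\xi}$ with rate $\zeta\mapsto -h(B^\xi)\cdot\zeta-\fe(\zeta)$, and together with Lemma \ref{lm:tilt} this yields strong $\Uset_\xi$-directedness with no extremality needed (this is exactly Theorem \ref{th:DLR-dir1}).

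Step 3 has a second gap. That $\nu$-a.e.\ extreme in the Choquet decomposition of a weakly $\Uset_\xi$-directed $\Pi_x'$ is itself weakly $\Uset_\xi$-directed does not follow from ``if the mixture puts mass outside a neighborhood of $\Uset_\xi$, some extreme must too'': weak directedness is a limit statement and the exceptional set of extremes may change with $n$. The paper's analogous transfer (the proof of Theorem \ref{th:main4}(\ref{th:main4:d})) first needs to know that extremes are \emph{strongly} directed, so that the integrated probabilities have $0$--$1$ limits and bounded convergence applies, and it is carried out under the extra hypothesis \eqref{La-reg}, which Theorem \ref{th:main3} does not assume. Moreover, your final ``dominated convergence along the random asymptotic direction'' needs a uniformity statement (convergence of $Q^\w_{x,y}(f)$ uniformly over $y\in\V_N$ with $y/N$ near $\Uset_\xi$), which you assert but do not prove; it can be extracted from the fact that Theorem \ref{thm:Buselim1} holds simultaneously for all directed sequences, but that argument has to be made. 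The paper's proof of uniqueness (Theorem \ref{th:DLR=B}) sidesteps both problems: for an \emph{arbitrary} weakly $\Uset_\xi$-directed $\Pi_x$, the comparison inequalities \eqref{comparison} together with Theorem \ref{thm:Buselim1} show that $Z_{y+e_i,X_n}/Z_{y,X_n}\to e^{-B^\xi(y,y+e_i)}$ in $\Pi_x$-probability, and then the uniform bound \eqref{Z-bd}, the identity \eqref{B-Z} from Theorem \ref{th:B-DLR}, nondegeneracy (Lemma \ref{lm:nondeg}), and bounded convergence identify the cocycle of $\Pi_x$ as $B^\xi$, hence $\Pi_x=\Pi_x^{\xi,\w}$ — no Choquet decomposition and no uniformity over endpoints required. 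If you replace your Step 3 by this cocycle-identification argument (or apply your averaging identity directly to $\Pi_x'$ with the uniformity argument supplied), and repair Step 2 as indicated, the proposal becomes a correct proof.
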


Our next result shows existence of Busemann functions in directions $\xi$ with $\xi,\ximin,\ximax \in \Diff$ or, equivalently, $\partial \fe(\zeta) = \{h\}$ for some $h$ and all $\zeta \in \Uset_{\xi}$.

\begin{theorem}\label{thm:main:Bus}
Fix $\xi\in\Diff$ such that $\ximin,\ximax\in\Diff$ and let $\{h\} = \partial \fe(\xi)$. There exists a $T$-invariant event $\OmBus$ with $\P(\OmBus)=1$ such that for all  $\w\in\OmBus$, $x,y\in\Z^2$, and all $\Uset_{\xi}$-directed sequences $x_n\in\V_n$, the following limits exist and are equal
	\begin{align}
	B^\xi(x,y,\w)&=\lim_{n\to\infty}\bigl(\log Z_{x,x_n}-\log Z_{y,x_n}\bigr) \label{Bus-lim-nice} \\
	&= \lim_{n\to\infty}\bigl(\log Z_{x,(n)}^h-\log Z_{y,(n)}^h\bigr)-h\cdot(y-x). \label{Bus-lim-p2l-nice}
	\end{align}
Additionally, if $\zeta\in\Diff$ is such that $\zetamin,\zetamax\in\Diff$ and $\xi\cdot e_1<\zeta\cdot e_1$, then for
$\w\in\OmBus\cap\OmBuszeta$ and $x\in\Z^2$, we have
	\begin{align}\label{mono-nice}
	\begin{split}
	&B^\xi(x,x+e_1,\w)\ge B^\zeta(x,x+e_1,\w)\quad\text{and}\\
	&B^\xi(x,x+e_2,\w)\le B^\zeta(x,x+e_2,\w).
	\end{split}
	\end{align}
\end{theorem}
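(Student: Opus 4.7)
The plan is to derive Theorem \ref{thm:main:Bus} from the Busemann cocycle machinery developed in Section \ref{sec:cocycles}, where one-sided cocycles $B^{\xi\pm}(x,y,\w)$ are produced as almost sure subsequential Ces\`aro limits of partition function ratios; these cocycles are shift-covariant, consistent with the weights, and monotone in the direction parameter.

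The first step is to collapse the one-sided limits. By Lemma \ref{lem:insub} and the surrounding discussion, the hypothesis $\xi,\ximin,\ximax\in\Diff$ forces $\partial\fe(\zeta)=\{-h\}$ with $h=\nabla\fe(\xi)$ for every $\zeta\in\Uset_\xi=[\ximin,\ximax]$, so the entire linearity segment carries a single supergradient. Since the one-sided cocycles depend only on this supergradient datum, there is a $T$-invariant full-measure event on which $B^{\xi-}(x,y,\w)=B^{\xi+}(x,y,\w)$ for all $x,y\in\Z^2$. Denote the common value by $B^\xi(x,y,\w)$; this is the candidate for the limit in \eqref{Bus-lim-nice}.

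Next I would deduce \eqref{Bus-lim-nice} along an arbitrary $\Uset_\xi$-directed sequence $x_n\in\V_n$ by a sandwich argument. Choose countably many pairs $\zeta_-,\zeta_+\in\Diff$ with their linearity-segment endpoints also in $\Diff$ and
\begin{equation*}
\zeta_-\cdot e_1<\ximin\cdot e_1\le\ximax\cdot e_1<\zeta_+\cdot e_1,
\end{equation*}
accumulating on $\ximin$ and $\ximax$ respectively; density of such directions follows from the concavity of $\fe$ and the countability of its points of non-differentiability. For each such pair, the direction-monotonicity of the cocycles provided by Section \ref{sec:cocycles}, combined with a planarity-based comparison, yields that for $y-x=e_1$ and all sufficiently large $n$,
\begin{equation*}
B^{\zeta_+}(x,y,\w)\le\log Z_{x,x_n}-\log Z_{y,x_n}\le B^{\zeta_-}(x,y,\w),
\end{equation*}
with the inequalities reversed when $y-x=e_2$. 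Letting $\zeta_\pm\to\ximin,\ximax$ along the chosen countable family and using monotone convergence together with the equality $B^{\xi-}=B^{\xi+}=B^\xi$ collapses the sandwich to \eqref{Bus-lim-nice}; taking the intersection of the full-measure events defines $\OmBus$.

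For the identity \eqref{Bus-lim-p2l-nice}, the duality \eqref{eq:ppplduality} together with Lemma \ref{lem:insub} tells us that the tilted point-to-line Gibbs measure $Q_{y,(n)}^{\w,h}$ concentrates exponentially on endpoints $x_n$ with $x_n/n$ close to $\Uset_h=\Uset_\xi$. Writing
\begin{equation*}
\log Z_{x,(n)}^h-\log Z_{y,(n)}^h=h\cdot(y-x)+\log\sum_{x_n}\frac{Z_{y,x_n}\,e^{h\cdot(x_n-y)}}{Z_{y,(n)}^h}\cdot\frac{Z_{x,x_n}}{Z_{y,x_n}}
\end{equation*}
expresses the log-ratio, modulo $h\cdot(y-x)$, as the logarithm of a $Q_{y,(n)}^{\w,h}$-average of the point-to-point ratios; concentration of the tilted measure on $\Uset_\xi$-directed endpoints combined with the uniformity of the convergence in \eqref{Bus-lim-nice} over such endpoints then yields \eqref{Bus-lim-p2l-nice}. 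The monotonicity \eqref{mono-nice} is inherited directly from the direction-monotonicity of the cocycle construction. The main obstacle is the sandwich inequality for arbitrary $\Uset_\xi$-directed sequences: controlling $\log Z_{x,x_n}-\log Z_{y,x_n}$ by cocycles built from distinguished directional sequences is precisely where the planarity-based coalescence argument that is the technical core of the paper must enter.
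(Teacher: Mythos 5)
Your proposal follows essentially the same route as the paper: the differentiability hypotheses collapse $B^{\xi-}=B^{\xi+}$ on the cocycle continuity event, the limit \eqref{Bus-lim-nice} along arbitrary $\Uset_\xi$-directed sequences comes from the monotone sandwich between nearby directions (this is exactly the paper's Theorem \ref{thm:Buselim1}), \eqref{mono-nice} is inherited from \eqref{mono}, and \eqref{Bus-lim-p2l-nice} is obtained by writing the point-to-line ratio as a tilted-endpoint average and using concentration of the endpoint on $\Uset_h=\Uset_\xi$ together with the point-to-point limit. Two small corrections: the sandwich input is established in the paper not by a coalescence argument but by the comparison inequality \eqref{comparison} together with the shape theorems applied to stationary partition functions built from the cocycles, and in the point-to-line step one should orient the ratio so that the integrand is the endpoint probability $Q^\w_{x,X_n}(x+e_i)\le 1$ (as the paper does, via the weak convergence of Theorem \ref{th:main3} and bounded convergence), since the reciprocal ratio $Z_{x,X_n}/Z_{x+e_i,X_n}$ is not uniformly bounded and your appeal to ``uniformity'' would otherwise need an extra truncation argument on the exponentially unlikely non-directed endpoints.
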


As a consequence of the above theorem, the unique DLR measures from Theorem \ref{th:main3} have  a concrete structure, as the next corollary shows.

\begin{corollary}\label{cor:main3+Bus}
Fix $\xi\in\Diff$ such that $\ximin,\ximax\in\Diff$ and $\w\in\OmBusPi\cap\OmBus$. Then $\Pi_x^{\xi,\w}$ is a Markov chain starting at $x$, with transition probabilities $\pi^{\xi,\w}_{y,y+e_i}=e^{\w_y-B^\xi(y,y+e_i,\w)}$, $y\in\Z^2$, $i\in\{1,2\}$.
The family $\{\Pi_x^{\xi,\w}:x\in\Z^2,\w\in\OmBusPi\cap\OmBus\}$ is $T$-covariant.
\end{corollary}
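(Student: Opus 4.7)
The plan is to read off the finite-dimensional distributions of $\Pi_x^{\xi,\w}$ explicitly, by combining the weak convergence of quenched point-to-point measures from Theorem \ref{th:main3} with the Busemann ratio limit from Theorem \ref{thm:main:Bus}. First I would fix $\w\in\OmBusPi\cap\OmBus$, an admissible path $x_{m,n}$ with $x_m=x$, and any $\Uset_\xi$-directed sequence $(x_k)$. The indicator of $\{X_{m,n}=x_{m,n}\}$ is bounded and continuous on $\X_x$ (as noted in Section \ref{sub:DLR}), so Theorem \ref{th:main3} gives
\begin{align*}
\Pi_x^{\xi,\w}(X_{m,n}=x_{m,n})=\lim_{k\to\infty}Q^\w_{x,x_k}(X_{m,n}=x_{m,n}).
\end{align*}

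Next I would exploit the elementary factorization
\begin{align*}
Q^\w_{x,x_k}(X_{m,n}=x_{m,n})=\prod_{i=m}^{n-1}e^{\w_{x_i}}\,\frac{Z_{x_{i+1},x_k}}{Z_{x_i,x_k}},
\end{align*}
valid once $x_n\le x_k$. Since $x_{i+1}-x_i\in\range$, \eqref{Bus-lim-nice} forces each ratio to tend to $e^{-B^\xi(x_i,x_{i+1},\w)}$, so each factor converges to $\pi^{\xi,\w}_{x_i,x_{i+1}}$ and the whole product to $\prod_{i=m}^{n-1}\pi^{\xi,\w}_{x_i,x_{i+1}}$, which is the Markov-chain law. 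Dividing the one-step identity $Z_{y,x_k}=e^{\w_y}(Z_{y+e_1,x_k}+Z_{y+e_2,x_k})$ by $Z_{y,x_k}$ and taking $k\to\infty$ gives $\pi^{\xi,\w}_{y,y+e_1}+\pi^{\xi,\w}_{y,y+e_2}=1$ for every $y\in\Z^2$, so these are bona fide transition probabilities.

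For the $T$-covariance I would use the shift identity $Z_{u,v}^{T_z\w}=Z_{u+z,v+z}^\w$, immediate from $\w_v(T_z\w)=\w_{v+z}(\w)$ after relabeling paths, together with the elementary fact that shifts preserve $\Uset_\xi$-directedness of sequences in $\Z^2$. Applying \eqref{Bus-lim-nice} in the two environments along $(x_k)$ and $(x_k+z)$ respectively yields $B^\xi(y,y+e_i,T_z\w)=B^\xi(y+z,y+z+e_i,\w)$ whenever both $\w$ and $T_z\w$ lie in $\OmBus$, a $T$-invariant full-measure event by Theorem \ref{thm:main:Bus}. Hence $\pi^{\xi,T_z\w}_{y,y+e_i}=\pi^{\xi,\w}_{y+z,y+z+e_i}$, and the Markov-chain representation from the first step transports this pointwise identity of transition probabilities into the required $\Pi_{x-z}^{\xi,T_z\w}\circ\theta_{-z}=\Pi_x^{\xi,\w}$ of path measures on a single $T$-invariant full-measure event. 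The proof is expected to be essentially bookkeeping once Theorems \ref{th:main3} and \ref{thm:main:Bus} are available; no real obstacle arises, since the product in the factorization involves only a fixed finite number of factors, making factor-by-factor convergence automatic.
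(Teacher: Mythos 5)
Your argument is correct. It differs in mechanics, though not in the key idea, from the paper's proof: the paper disposes of the corollary in one line by observing that the limit in \eqref{Bus-lim-nice} is exactly the covariant cocycle $B^{\xi}$ constructed in Theorem \ref{thm:cocyexist}, so that $\Pi_x^{\xi,\w}$ is the measure $\Pi_x^{B^\xi}$ of Theorem \ref{th:B-DLR} (which already packages the Markov structure with transitions $e^{\w_y-B(y,y+e_i)}$), and covariance is inherited from the cocycle property \eqref{cov-prop}; in effect the heavy lifting sits in Theorems \ref{th:B-DLR} and \ref{th:DLR=B}. You instead recompute the cylinder probabilities directly: weak convergence from Theorem \ref{th:main3} plus the telescoped factorization of $Q^\w_{x,x_k}(x_{m,n})$ and the ratio limits from \eqref{Bus-lim-nice} (this is essentially the computation \eqref{Q-cv} redone on $\Omega$ rather than the extended space), you get stochasticity of the transition matrix from the one-step recursion $Z_{y,x_k}=e^{\w_y}(Z_{y+e_1,x_k}+Z_{y+e_2,x_k})$ rather than from the recovery property \eqref{rec-prop1}, and you get covariance from $Z^{T_z\w}_{u,v}=Z^\w_{u+z,v+z}$ together with the $T$-invariance of $\OmBusPi\cap\OmBus$, rather than from \eqref{cov-prop}. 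What your route buys is self-containedness: it uses only the statements of Theorems \ref{th:main3} and \ref{thm:main:Bus} and never touches the cocycle apparatus on $\Omhat$; what it costs is re-deriving material the paper already has. Two small bookkeeping points you should still record to match Definition \ref{def:cov} in full: measurability of $\w\mapsto\Pi_x^{\xi,\w}$ (immediate since $B^\xi(\cdot,\cdot,\w)$ is a pointwise limit of measurable functions of $\w$) and consistency of the family (immediate since all the measures use the same transition kernel $\pi^{\xi,\w}$, or by citing Theorem \ref{th:main3}).
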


In contrast to Theorem \ref{th:main3}, Theorem \ref{th:main1} demonstrated non-uniqueness at points of non\hyp{}differentiability of $\fe$. It is conjectured that $\Diff=\ri\Uset$; if true, then Theorem \ref{th:main3} would cover all directions in $\ri\Uset$ and there would not exist directions to which the non-uniqueness claim in Theorem \ref{th:main1} would apply. The event on which Theorem \ref{th:main3} holds, however, depends on the direction chosen. It leaves open the possibility of random directions of non-uniqueness. Our next result says that under a mild regularity assumption, with the exception of one null set of environments, uniqueness holds for all but countably many points in $\Uset$. The assumption we need for this is:

\begin{align}\label{La-reg}
\begin{split}
&\text{$\fe$ is strictly concave at all $\xi\not\in\Diff$, or equivalently}\\
&\text{$\fe$ is differentiable at the endpoints of its linear segments.}
\end{split}
\end{align}

The above condition is also equivalent to the existence of a countable dense set $\Ddense\subset\Diff$ 
such that for each $\zeta\in\Ddense$ we also have $\zetamin,\zetamax\in\Diff$. 

Assume \eqref{La-reg} and fix such a set $\Ddense$.
Using monotonicity \eqref{mono-nice} we define processes $B^{\xi\pm}(x,x+e_i,\w)$ for $\xi\in\ri\Uset$ and \label{OmBusnice}$\w\in\OmBusnice=\bigcap_{\xi\in\Ddense}\OmBus$:
\begin{align}
\label{eq:BuseProcess} 
\begin{split}
B^{\xi+}(x,x+e_i)&=\lim_{\substack{\zeta \cdot e_1 \searrow \xi \cdot e_1\\ \zeta \in \Ddense}}B^\zeta(x,x+e_i)\quad\text{and}\\ 
B^{\xi-}(x,x+e_i)&=\lim_{\substack{\zeta \cdot e_1 \nearrow \xi \cdot e_1\\ \zeta \in \Ddense}}B^\zeta(x,x+e_i).
\end{split}
\end{align}
For $\w\in\OmBusnice$ let
 	\begin{align}\label{Usetnonuniq}
	\Usetnonuniq=\bigl\{\xi\in\ri\Uset:\exists y\ge x:B^{\xi-}(y,y+e_1,\w)\ne B^{\xi+}(y,y+e_1,\w)\bigr\}.
	\end{align}
For $\w\not\in\OmBusnice$  set $\Usetnonuniq=\varnothing$. Note that for any $\w\in\Omega$, $\Usetnonuniq$ is countable. 

The following theorem can be viewed as our main result. Its primary content is contained in part III, which shows that the discontinuity set of the Busemann processes ahead of $x$ defined in \eqref{Usetnonuniq} is exactly the set of directions for which uniqueness of DLR solutions rooted at $x$ fails. This connection has not been observed before in the positive or zero temperature literature. As a consequence, we  obtain that the set of directions for which uniqueness may fail is countable, which is new in positive temperature. As noted in the introduction, this connection also provides an avenue for  answering the question of whether or not on a single event of full measure uniqueness holds simultaneously in all directions.
%If true, that result would be new in any setting.

\begin{theorem}\label{th:main4}
Assume \eqref{La-reg}. There exists an event $\OmBusPiall$ with $\P(\OmBusPiall)=1$ such that the following hold for all $x\in\Z^2$.\smallskip

\noindent{\rm I. Structure of $\Usetnonuniq$:}

\begin{enumerate}[label={\rm(\alph*)}, ref={\rm\alph*}]
\item\label{th:main4:b} For any $\w\in\OmBusPiall$, $(\ri\Uset)\setminus\Diff\subset\Usetnonuniq$. 
For each $\xi\in\Diff$,  $\P\{\xi\in\Usetnonuniq\}=0$.
\item\label{th:main4:b'} For any $\w\in\OmBusPiall$, $\Usetnonuniq$ is supported outside the linear segments of $\fe$:
For any $\xi\in\ri\Uset$ with $\ximin\ne\ximax$, $[\ximin,\ximax]\cap\Usetnonuniq=\varnothing$. 
\item\label{th:main4:b''} For any distinct $\eta,\zeta\in\Uset$,  $\P([\eta,\zeta]\cap\Usetnonuniqz\neq\varnothing)\in\{0,1\}$. If $[\eta,\zeta]\cap\ri\Uset\subset\Diff$ and $\P\{[\eta,\zeta]\cap\,\Usetnonuniq\neq\varnothing\}=1$,
%then $\P\bigl\{[\eta,\zeta]\cap\,\Usetnonuniq\text{ is dense in }[\eta,\zeta]\bigr\}=1.$
%then $\P\bigl\{[\eta,\zeta]\cap\,\Usetnonuniq\text{ is infinite}\bigr\}=1.$    %HERE
then the set of $\xi \in [\eta,\zeta]$ satisfying $\P\bigl\{\xi \text{ is an accumulation point of } \Usetnonuniqz\}=1$ is infinite and has no isolated points.
\end{enumerate}

\noindent{\rm II. Directedness of DLR solutions:}

\begin{enumerate}[resume,label={\rm(\alph*)}, ref={\rm\alph*}]
\item\label{th:main4:c} For any $\w\in\OmBusPiall$, every nondegenerate extreme solution is strongly $\Uset_\xi$-directed for some $\xi\in\ri\Uset$. The only degenerate extreme solutions are $\Pi_x^{e_i}$, $i\in\{1,2\}$.
\item\label{th:main4:d} For any $\w\in\OmBusPiall$ and $\xi\in\Uset$ 
any weakly $\Uset_\xi$-directed solution is strongly $\Uset_\xi$-directed.
\end{enumerate}\smallskip

\noindent{\rm III. $\Usetnonuniq$ and the uniqueness of DLR solutions:}

\begin{enumerate}[resume,label={\rm(\alph*)}, ref={\rm\alph*}]
\item\label{th:main4:e} For any $\w\in\OmBusPiall$ and $\xi\in\Uset\setminus\Usetnonuniq$ there exists a unique strongly $\Uset_\xi$-directed solution $\Pi_x^{\xi,\w}\in\DLR_x^\w$. Moreover, $\Pi_x^{\xi,\w}$ is an extreme point of $\DLR_x^\w$
and for any $\Uset_\xi$-directed sequence $(x_n)$ 
the sequence  $Q^\w_{x,x_n}$ converges weakly to $\Pi_x^{\xi,\w}$. The family $\{\Pi_x^{\xi,\w}:x\in\Z^2\}$ is consistent.
\item\label{th:main4:f} For any $\w\in\OmBusPiall$ and $\xi\in\Usetnonuniq$ there exist at least two extreme strongly $\Uset_\xi$-directed solutions in $\DLR_x^\w$.
\end{enumerate}
\end{theorem}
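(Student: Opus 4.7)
The plan is to let $\OmBusPiall$ be the $T$-invariant full-measure event obtained by intersecting $\Omexist$, $\Omnondeg$, $\Omdir$, $\OmBusnice$, and the countable intersection $\bigcap_{\zeta\in\Ddense}(\OmBusPi\cap\OmBus)$, then further intersecting with a translation-invariant full-measure event needed to support the $0$--$1$ law in \eqref{th:main4:b''}. On this event, Theorems \ref{th:main1}--\ref{thm:main:Bus} apply simultaneously along every $\zeta\in\Ddense$, and the monotone limits \eqref{eq:BuseProcess} are well defined by \eqref{mono-nice}. The overall strategy is to identify the transition kernel of any strongly $\Uset_\xi$-directed extreme solution $\Pi_x\in\DLR_x^\w$ as a Markov kernel of the form $\pi_{y,y+e_i}=e^{\w_y-B(y,y+e_i)}$ for some cocycle $B$ pointwise squeezed between $B^{\xi-}$ and $B^{\xi+}$, so that uniqueness of DLR solutions in direction $\xi$ becomes equivalent to continuity of the Busemann process at $\xi$.

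For Part I, \eqref{th:main4:b'} rests on the observation that when $\ximin\ne\ximax$ the superdifferential of $\fe$ is constant across the relative interior of $[\ximin,\ximax]$, so all $B^\zeta$ with $\zeta\in\Ddense$ lying in that interior share the same dual tilt $h$ and coincide by the point-to-line representation \eqref{Bus-lim-p2l-nice}, killing the one-sided jumps. For \eqref{th:main4:b}, if $\xi\in\Diff$ then \eqref{La-reg} forces $\ximin=\ximax=\xi\in\Diff$, so Theorem \ref{thm:main:Bus} produces $B^\xi$ on an event of full probability where it is sandwiched between $B^{\xi\pm}$, giving $\P\{\xi\in\Usetnonuniq\}=0$; conversely at $\xi\notin\Diff$ Theorem \ref{th:main1} produces two distinct consistent families in $\siDLR^\w$, which (by the uniqueness statement \eqref{th:main4:e} proved below) can only coexist when $B^{\xi-}\ne B^{\xi+}$ at some $y$. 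The $0$--$1$ law in \eqref{th:main4:b''} follows because $\{[\eta,\zeta]\cap\Usetnonuniqz\ne\varnothing\}$ is $T$-invariant by shift-covariance of $B^{\xi\pm}(y,y+e_1,\cdot)$ in $y$, so $T$-ergodicity of $\P$ gives the dichotomy; the accumulation-point claim will follow by subdividing $[\eta,\zeta]$ and using translation invariance of the joint law of $(B^{\xi\pm}(y,y+e_1))_{y}$ to show that infinitely many subintervals must carry full-probability discontinuities, while the monotone-cocycle structure of $\xi\mapsto B^{\xi\pm}$ rules out isolated full-probability discontinuities.

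For Parts II and III, the key structural remark is that under \eqref{La-reg} the sets $\Uset_{\xi\pm}$, $\Uset_{\ximin}$, $\Uset_{\ximax}$ all collapse to the common interval $\Uset_\xi$ for every $\xi\in\ri\Uset$, since endpoints of linear segments of $\fe$ lie in $\Diff$. Theorem \ref{th:main2} then reduces uniformly to case \eqref{case-b}, yielding \eqref{th:main4:c}; Lemma \ref{lm:nondeg} disposes of the degenerate extremes. For \eqref{th:main4:d}, Choquet-decomposing a weakly $\Uset_\xi$-directed solution into extreme nondegenerate pieces forces every piece itself to be $\Uset_\xi$-directed and hence strongly so by \eqref{th:main4:c}, which upgrades the mixture to strong directedness. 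For \eqref{th:main4:e}, a strongly $\Uset_\xi$-directed $\Pi_x$ is Markov by Lemma \ref{lm:Pi-cons}, and its one-step transitions at each $y$ can be recovered as subsequential limits of $Q^\w_{y,x_n}(X_{m+1}=y+e_i)$ along any $\Uset_\xi$-directed $x_n$; comparing with auxiliary $\zeta$-directed sequences for $\zeta\in\Ddense$ flanking $\xi$ and invoking Theorem \ref{thm:main:Bus} squeezes every such subsequential limit between $e^{\w_y-B^{\xi+}(y,y+e_1)}$ and $e^{\w_y-B^{\xi-}(y,y+e_1)}$. When $\xi\notin\Usetnonuniq$ these bounds agree, pinning the kernel uniquely and giving weak convergence of $Q^\w_{x,x_n}$; when $\xi\in\Usetnonuniq$, the two Markov chains with kernels $e^{\w_y-B^{\xi\pm}(y,y+e_i)}$ separately define strongly $\Uset_\xi$-directed extreme DLR solutions by the cocycle and recovery properties of $B^{\xi\pm}$, proving \eqref{th:main4:f}.

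The main obstacle will be the monotonicity-squeeze behind \eqref{th:main4:e}: showing that every weak subsequential limit of the one-step partition-function ratios $Z_{y,x_n}/Z_{y+e_i,x_n}$ along an arbitrary $\Uset_\xi$-directed sequence lies between $e^{B^{\xi+}(y,y+e_1)}$ and $e^{B^{\xi-}(y,y+e_1)}$. This will require careful deterministic comparison of these ratios with the corresponding ratios along $\zeta$-directed auxiliary sequences for $\zeta\in\Ddense$ on either side of $\xi$, exploiting the sign information encoded in the construction of the Busemann process together with the monotonicity of $\zeta\mapsto B^\zeta$. A secondary hurdle is the no-isolated-points assertion in \eqref{th:main4:b''}, where translation-invariance of the Busemann-process law must be combined with the monotone-cocycle structure across $y$ to exclude a single direction in $[\eta,\zeta]$ being an isolated discontinuity with probability one.
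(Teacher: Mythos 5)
Your overall architecture matches the paper's (Busemann process, squeeze of partition-function ratios, kernel identification, discontinuity set $=$ non-uniqueness set), but two steps as written have genuine gaps. The more serious one is the identification step behind \eqref{th:main4:e}. You assert that the one-step transitions of an arbitrary strongly $\Uset_\xi$-directed solution ``can be recovered as subsequential limits of $Q^\w_{y,x_n}(X_{m+1}=y+e_i)$ along any $\Uset_\xi$-directed $x_n$.'' For a deterministic sequence $x_n$ this is unjustified and essentially presupposes the conclusion (that the solution is a limit of the finite-volume measures); at a discontinuity direction it is in fact false, since a mixture of $\Pi_x^{\xi-,\w}$ and $\Pi_x^{\xi+,\w}$ has transitions strictly between the two one-sided limits. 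The squeeze of the ratios $Z_{y+e_i,x_n}/Z_{y,x_n}$ between $e^{-B^{\xi\mp}}$, which you flag as the main obstacle, is not the missing ingredient\textemdash it is already Theorem \ref{thm:Buselim1}. What is missing is the mechanism tying the kernel of a given solution $\Pi_x$ to those ratios: in the paper this is the backward-martingale structure of $Z_{y,X_n}/Z_{x,X_n}$ under $\Pi_x$ (Lemma \ref{lm:mgale1}, identity \eqref{B-Z}, Theorem \ref{th:B-lim}), applied along the \emph{random} path $X_n$, which is a.s.\ a $\Uset_\xi$-directed sequence, followed by bounded convergence using \eqref{Z-bd}; only then does Theorem \ref{thm:Buselim1} pin $B^{\Pi_x}=B^\xi$ and hence $\Pi_x=\Pi_x^{\xi,\w}$. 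Without this (or the equivalent conditioning-on-the-endpoint average $\pi^x_{y,y+e_i}=E^{\Pi_x}[Q^\w_{y,X_n}(y+e_i)\,|\,X_k=y]$ for every $n$), your uniqueness argument does not close. Relatedly, in \eqref{th:main4:f} you claim the chains driven by $B^{\xi\pm}$ are themselves extreme ``by the cocycle and recovery properties''; that needs a separate argument (the paper proves it in a follow-up lemma), though for \eqref{th:main4:f} it suffices, as in the paper, to note that two distinct strongly directed solutions force at least two distinct strongly directed extremes in the Choquet decomposition.

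The second gap is the accumulation-point claim in \eqref{th:main4:b''}. Subdividing $[\eta,\zeta]$ and the $0$--$1$ law do give, at every scale, some subinterval meeting $\Usetnonuniqz$ with probability one, and a nested selection plus $\P\{\xi\in\Usetnonuniqz\}=0$ for deterministic $\xi\in\Diff$ yields \emph{one} direction that is a.s.\ an accumulation point; but your appeal to ``the monotone-cocycle structure ruling out isolated full-probability discontinuities'' gives no actual mechanism for the ``infinite with no isolated points'' conclusion. The paper's proof needs a genuinely finer argument: it introduces the set $A$ of $\xi$ such that $\P(]\xi,\xi']\cap\Usetnonuniqz\neq\varnothing)=1$ for every $\xi'$, produces a point of $A$ via a supremum construction that uses part \eqref{th:main4:b} at the (differentiable) threshold direction, and then shows recursively that every point of $A$ is an accumulation point of $A$ by re-running the construction inside $[\xi'',\xi']$. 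Something of this nature has to be supplied. Two smaller inaccuracies: the event $\{[\eta,\zeta]\cap\Usetnonuniqz\neq\varnothing\}$ is not literally $T$-invariant (the paper first applies ergodicity to the shift-invariant event that \emph{some} site $y\in\Z^2$ carries a discontinuity in $[\eta,\zeta]$, then uses positive density of such sites to land in $\Z_+^2$); and \eqref{La-reg} does not force $\ximin=\ximax=\xi$ for $\xi\in\Diff$ (such $\xi$ may lie inside a linear segment)\textemdash what you actually need, and do get from \eqref{La-reg}, is $\ximin,\ximax\in\Diff$ so that the relevant continuity events apply.
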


When $\fe$ is strictly concave, i.e.~$\Uset_\xi=\{\xi\}$ for all $\xi\in\Uset$, the above theorem 
states that outside one null set of weights $\w$, and except for an $\w$-dependent set of directions (countable and possibly empty), 
there is a unique DLR solution in environment $\w$ satisfying \WLLN\ (and in fact \SLLN). 
%HERE
%When $\fe$ is differentiable at all points 
%%we can take $[\eta,\zeta]=\Uset$ in Theorem \ref{th:main4}\eqref{th:main4:b''} and 
%we get that if $\Usetnonuniq$ is not almost surely empty, then it is almost surely infinite. %is dense in $\Uset$.

\subsection{The competition interface}

An easy computation, done in Appendix \ref{app:lemmas}, checks:

\begin{lemma}\label{lm:p2pQ-pi} 
For $x\le y$ the quenched polymer measure $Q^\w_{x,y}$ is the same as the distribution of the backward Markov chain 
 starting at $y$ and taking steps in $\{-e_1,-e_2\}$ with transition probabilities
 	\[\backpix_{u,u-e_i}(\w)=\frac{e^{\w_{u-e_i}}Z_{x,u-e_i}}{Z_{x,u}},\quad
	%\text{and}\quad\backpix_{x+ke_i,x+(k-1)e_i}(\w)=1,\quad u>x,\ \ k\in\N,\ i\in\{1,2\}.\]
	u\ge x.\]
\end{lemma}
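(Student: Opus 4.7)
The plan is a direct computation in two short steps: first verify that the backward transition probabilities are genuinely a probability kernel, then show that the product of these transition probabilities along any admissible path telescopes to the point-to-point polymer weight.

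First, I would record the one-step recursion that the point-to-point partition function satisfies. Splitting any admissible path from $x$ to $u$ according to whether its final step comes from $u-e_1$ or $u-e_2$ (and using $\beta=1$ per the convention at the end of Section \ref{sec:fe}) gives
\begin{equation*}
Z_{x,u} \;=\; e^{\w_{u-e_1}}Z_{x,u-e_1} \;+\; e^{\w_{u-e_2}}Z_{x,u-e_2}
\qquad \text{for } u\ge x,\ u\ne x,
\end{equation*}
with the convention $Z_{x,u-e_i}=0$ when $u-e_i\not\ge x$. Dividing by $Z_{x,u}>0$ shows that $\backpix_{u,u-e_1}(\w)+\backpix_{u,u-e_2}(\w)=1$, so the formula in the statement does define a backward stochastic kernel on the lattice points $\{u:x\le u\le y\}$, absorbed at $x$ (where both numerators vanish except for the degenerate loop case handled by $Z_{x,x}=1$).

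Next, fix an admissible path $x_{m,n}\in\bbX_x^y$ and compute the probability that the backward chain starting at $y=x_n$ traverses the reversed path $(x_n,x_{n-1},\dots,x_m)$. By the Markov property it equals the product
\begin{equation*}
\prod_{k=m}^{n-1}\backpix_{x_{k+1},x_k}(\w)
\;=\;\prod_{k=m}^{n-1}\frac{e^{\w_{x_k}}\,Z_{x,x_k}}{Z_{x,x_{k+1}}}.
\end{equation*}
The ratios of partition functions telescope: $\prod_{k=m}^{n-1}Z_{x,x_k}/Z_{x,x_{k+1}}=Z_{x,x_m}/Z_{x,x_n}=1/Z_{x,y}$, using $Z_{x,x}=1$. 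What is left is $e^{\sum_{k=m}^{n-1}\w_{x_k}}/Z_{x,y}$, which is exactly $Q^\w_{x,y}(x_{m,n})$. Since both laws are supported on $\bbX_x^y$ and agree on cylinders (equivalently, on singletons since $\bbX_x^y$ is finite), they coincide as probability measures.

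There is no real obstacle here; the only things to watch are the boundary bookkeeping (paths that hit one of the lattice axes $\{u-e_i\not\ge x\}$ are automatically handled because the corresponding backward transition probability is $0$, matching the fact that $Q^\w_{x,y}$ puts no mass on non-admissible paths) and the convention $Z_{x,x}=1$, which is what makes the telescoping end cleanly at $1/Z_{x,y}$.
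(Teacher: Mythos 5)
Your proof is correct and follows essentially the same route as the paper: the paper's argument is exactly the telescoping identity $Q^\w_{x,y}(x_{m,n})=\prod_{k=m}^{n-1}e^{\w_{x_k}}Z_{x,x_k}/Z_{x,x_{k+1}}=\prod_{k=m}^{n-1}\backpix_{x_{k+1},x_k}(\w)$, which is your second step. Your preliminary check that the backward transitions sum to one via the recursion $Z_{x,u}=e^{\w_{u-e_1}}Z_{x,u-e_1}+e^{\w_{u-e_2}}Z_{x,u-e_2}$ is a harmless (and implicitly needed) addition that the paper leaves unstated.
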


Couple the backward Markov chains $\{Q^\w_{x,y}:y\ge x\}$ by a quenched probability measure $Q_x^\w$ on the space $\T_x$ of 
trees that span $x+\Z_+^2$. Precisely, for each $y\in x+\Z_+^2\setminus\{0\}$ choose a parent 
$\gamma(y)=y-e_i$ with probability $\backpix_{y,y-e_i}(\w)$, $i\in\{1,2\}$. We denote the random tree by $\cT^\w_x\in\T_x$.
%
%Let $\{\unif(y):y\in\Z^2\}$ be i.i.d.\ uniform($0,1$) random variables with probability distribution denoted by $\bfP$ defined on the probability space $\Unif=[0,1]^{\Z^2}$, equipped with the product topology and Borel $\sigma$-algebra. 
%Given $x\in\Z^2$, $\w\in\Omega$, and $\unif\in\Unif$ let $\cT^\w_x(\unif)$ denote the $x+\N^2$-indexed $\{0,1\}$-valued sequence where at each site $y\in x+\N^2$ we put $0$ if $\unif(y)<\backpix_{y,y-e_1}(\w)$ and $1$ otherwise.
%Endow $\{0,1\}^{\N^2}$ with the product topology and the corresponding Borel $\sigma$-algebra. Then $(\w,\unif)\in\Omega\times\Gamma\mapsto\cT^\w_x(\unif)\in\{0,1\}^{\Z^2}$ is measurable.
%
%By viewing a $0$ at $y$ as an arrow from $y$ to $y-e_1$ and a $1$ as an arrow from $y$ to $y-e_2$, and by adding arrows from $x+ke_i$ pointing to $x+(k-1)e_i$, $i\in\{1,2\}$, $k\in\N$, we can view
%$\cT^\w_x(\unif)$ as a spanning tree of $x+\Z_+^2$.
For any $y\ge x$ there is a unique up-right path from $x$ to $y$ on $\cT^\w_x$. Lemma \ref{lm:p2pQ-pi} implies that the distribution of this path under $Q_x^\w$ is exactly the polymer measure $Q^\w_{x,y}$.
%Therefore, intuitively speaking, this tree should contain all the information about the model.

Fix the starting point to be $x=0$. Consider the two (random) subtrees $\cT^\w_{0,e_i}$ of $\cT^\w_0$, rooted at $e_i$, $i\in\{1,2\}$. Following \cite{Geo-etal-15}, define the path $\CI_n^\w$ such that 
$\CI_0^\w=0$ and for each $n\in\N$ and $i\in\{1,2\}$, $\CI_{n}^\w-\CI_{n-1}^\w\in\{e_1,e_2\}$ and $\{\CI_n^\w + ke_i : k\in\N\} \subset \cT^\w_{0,e_i}$.
The path $\{(1/2,1/2)+\CI_n^\w:n\in\Z_+\}$ threads in between the two trees $\cT^\w_{0,e_i}$, $i\in\{1,2\}$, and is hence called the {\sl competition interface}. See Figure \ref{fig:cif}.

\begin{figure}[h]
\begin{tikzpicture}[>=latex,scale=0.5]
%\draw[->] (0,0)--(10,0);
%\draw[->] (0,0)--(0,10);

\draw(-0.37,-0.35)node{$0$};
\draw(-0.57,1)node{$e_2$};
\draw(1,-0.5)node{$e_1$};

% TREE

\draw[line width=1pt](9,0)--(0,0)--(0,9);     %width of tree lines
\draw[line width=1pt](2,0)--(2,4)--(5,4)--(5,7)--(7,7)--(7,9)--(9,9);
\draw[line width=1pt](1,0)--(1,3);
\draw[line width=1pt](0,4)--(1,4)--(1,5)--(2,5)--(2,7);
\draw[line width=1pt](2,5)--(3,5);
\draw[line width=1pt](4,4)--(4,5);
\draw[line width=1pt](2,6)--(3,6)--(3,8)--(5,8)--(5,9)--(6,9);
\draw[line width=1pt](3,6)--(4,6);
\draw[line width=1pt](3,7)--(4,7);
\draw[line width=1pt](6,7)--(6,8);
\draw[line width=1pt](0,6)--(1,6)--(1,8)--(2,8)--(2,9)--(3,9);
\draw[line width=1pt](4,8)--(4,9);
\draw[line width=1pt](0,9)--(1,9);
\draw[line width=1pt](5,4)--(7,4)--(7,5)--(9,5)--(9,8);
\draw[line width=1pt](6,4)--(6,6)--(7,6);
\draw[line width=1pt](8,5)--(8,8);
\draw[line width=1pt](2,1)--(8,1)--(8,2)--(9,2);
\draw[line width=1pt](2,2)--(4,2)--(4,3)--(6,3);
\draw[line width=1pt](9,0)--(9,1);

\draw[line width=1pt](5,1)--(5,2)--(7,2)--(7,3);
\draw[line width=1pt](8,2)--(8,4)--(9,4);
\draw[line width=1pt](3,2)--(3,3);
\draw[line width=1pt](8,3)--(9,3);

%%% Competition interface

\draw[line width=2.0pt](.5,.5)--(0.5,3.5)--(1.5,3.5)--(1.5, 4.5)--(3.5,4.5)--(3.5,5.5)--(4.5,5.5)--(4.5,7.5)--(5.5,7.5)--(5.5,8.5)--(6.5,8.5)--(6.5, 9.5);   %width of comp int
%\draw[line width=3.5pt, densely dotted](.5,.5)--(0.5,3.5)--(1.5,3.5)--(1.5, 4.5)--(4.5,4.5)--(4.5,7.5)--(5.5,7.5)--(5.5,8.5)--(6.5,8.5)--(6.5, 9.5);    %style of comp int line
%\draw[line width=3.5pt, dashed](.5,.5)--(0.5,3.5)--(1.5,3.5)--(1.5, 4.5)--(4.5,4.5)--(4.5,7.5)--(5.5,7.5)--(5.5,8.5)--(6.5,8.5)--(6.5, 9.5);
\foreach \x in {0,...,9}{
              \foreach \y in {0,...,9}{
\shade[ball color=light-gray](\x,\y)circle(1.7mm);    %size of bullet
}} 

\end{tikzpicture}
\caption{\small The competition interface shifted by $(1/2, 1/2)$ {\rm(}solid line{\rm)} separating
the   subtrees $\cT_{0,e_1}^\w$ and $\cT_{0,e_2}^\w$. }
\label{fig:cif}  
\end{figure}
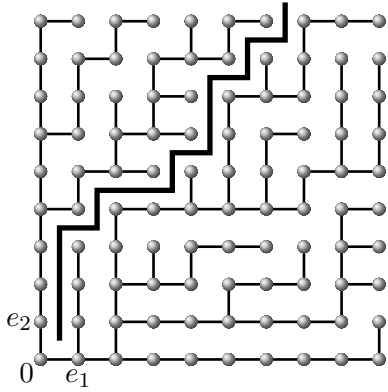

By Lemma 2.2 in \cite{Geo-etal-15} there exists a unique such path and its distribution under $Q_0^\w$ is that of a Markov chain that starts at $0$ and has transitions
	\begin{align*}
	\pici_{y,y+e_i}=\frac{e^{-\w_{y+e_i}}/Z_{0,y+e_i}}{e^{-\w_{y+e_1}}/Z_{0,y+e_1}+e^{-\w_{y+e_2}}/Z_{0,y+e_2}}\,.
	\end{align*}
The partition functions $Z_{0,y}$ in \cite{Geo-etal-15} include the weight $\w_y$ and exclude $\w_0$, while we do the opposite. 
This is the reason for which our formula for $\pici$ is not as clean as the one in \cite{Geo-etal-15}.

The above says that $\phi_n^\w$ is in fact a random walk in random environment, but with highly correlated transition probabilities.
Our next result concerns the law of large numbers.

\begin{theorem}\label{th:cif}
Assume \eqref{La-reg}.
There exists a measurable $\cid:\Omega\times\T_0\to\ri\Uset$ 
and an event $\Omcif$ such that $\P(\Omcif)=1$ and
for every $\w\in\Omcif$:
\begin{enumerate}[label={\rm(\alph*)}, ref={\rm\alph*}]
\item\label{th:cif:a} The competition interface has a strong law of large numbers: 
	\[Q_0^\w\{\CI^\w_n/n\to\cid\}=1.\]
\item\label{th:cif:c} $\cid$ has cumulative distribution function  
	\begin{align}\label{cid-CDF}
	Q_0^\w\{\cid\cdot e_1\le \xi\cdot e_1\}=e^{\w_0-B^{\xi+}(0,e_1,\w)}\quad\text{for $\xi\in\ri\Uset$}.
	\end{align}
Thus, $Q_0^\w(\cid=\xi)>0$ if and only if $B^{\xi-}(0,e_1,\w)\ne B^{\xi+}(0,e_1,\w)$. 
\item\label{th:cif:b} $\cid(\w,\acdot)$ is supported outside the linear segments of $\fe$: If $\eta,\zeta\in\ri\Uset$ are such that $\fe$ is linear on $]\eta,\zeta[$, then $Q_0^\w(\eta\cdot e_1<\cid\cdot e_1<\zeta\cdot e_1)=0$.
\item\label{th:cif:d} For any $\xi\in\ri\Uset$, $\E Q_0^\w(\cid=\xi)>0$ if and only if $\xi\in(\ri\Uset)\setminus\Diff$. 
\end{enumerate}
\end{theorem}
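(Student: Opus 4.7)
The plan is to identify the events $\{\CI_n \cdot e_1 \le m\}$ with partition function ratios, use Busemann convergence to read off the distribution of the asymptotic direction, and then extract the structural properties (c) and (d) from Theorem \ref{th:main4}. The first step rests on planarity of the tree $\cT^\w_0$: two backward paths from distinct starting points on $\V_n$, being paths in a tree, coalesce before reaching $0$, and they cannot cross each other in the grid embedding without coalescing first. Consequently, if $(m, n-m) \in \cT^\w_{0,e_1}$ then also $(m', n-m') \in \cT^\w_{0,e_1}$ for all $m' \ge m$, so $\cT^\w_{0,e_1} \cap \V_n$ is a right-closed interval and
\begin{align*}
\{\CI_n \cdot e_1 \le m - 1\} = \{(m, n-m) \in \cT^\w_{0,e_1}\}.
\end{align*}
Under $Q_0^\w$ this event has probability $Q_{0,(m,n-m)}^\w(X_1 = e_1) = e^{\w_0} Z_{e_1,(m,n-m)}/Z_{0,(m,n-m)}$.

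For each $\xi \in \Ddense$ (the countable dense set furnished by \eqref{La-reg}), Theorem \ref{thm:main:Bus} yields $Z_{e_1, x_n}/Z_{0, x_n} \to e^{-B^\xi(0, e_1, \w)}$ as $x_n/n \to \xi$, on a full-measure event. Using the monotonicity \eqref{mono-nice} in $\xi \cdot e_1$, the right-continuous extension \eqref{eq:BuseProcess} defines a candidate CDF $\xi \mapsto e^{\w_0 - B^{\xi+}(0, e_1, \w)}$ on $\ri\Uset$, proving (b) at the level of one-dimensional marginals of $\CI_n \cdot e_1 / n$. To upgrade this to the $Q_0^\w$-almost sure convergence of $\CI_n/n$ in (a), exploit the Markov chain representation of $\CI$ under $Q_0^\w$ with transitions $\pici$, together with the fact that as $y$ grows to infinity in direction $\xi \in \Ddense$ these transitions converge to those of the Busemann-based Markov chain of Corollary \ref{cor:main3+Bus}. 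Combined with the right-interval identity above and a tail $0$-$1$ argument run over the countable set $\Ddense$, this yields $\CI_n / n \to \cid$ $Q_0^\w$-a.s.\ with the stated CDF.

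Parts (c) and (d) are then read off from the structure of the Busemann process recorded in Theorem \ref{th:main4}. For (c): on any linear segment $]\eta, \zeta[$ of $\fe$, Theorem \ref{th:main4}(\ref{th:main4:b'}) gives $[\ximin, \ximax] \cap \Usetnonuniqz = \varnothing$ for $\xi$ in the segment, and moreover $B^\xi(0, e_1, \w)$ depends only on the maximal linear segment containing $\xi$ (any $\Uset_\xi$-directed sequence yields the same limit in Theorem \ref{thm:main:Bus}), so the CDF is constant on $]\eta, \zeta[$, assigning no mass. For (d): if $\xi \in \Diff$, Theorem \ref{th:main4}(\ref{th:main4:b}) gives $\P(\xi \in \Usetnonuniqz) = 0$, so $B^{\xi-}(0, e_1) = B^{\xi+}(0, e_1)$ $\P$-a.s., yielding $\E Q_0^\w(\cid = \xi) = 0$; if $\xi \in (\ri\Uset) \setminus \Diff$, the same theorem forces $\xi \in \Usetnonuniqz$ $\P$-a.s., so $B^{\xi-}(0, e_1) > B^{\xi+}(0, e_1)$ and the CDF has a positive jump at $\xi$.

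The main obstacle will be the $Q_0^\w$-almost sure convergence in (a): the level-$n$ identity and the marginal CDF follow cleanly from Busemann convergence, but promoting these to almost-sure convergence of the interface path requires a careful coupling of the inhomogeneous Markov chain $\CI$ with the stationary Busemann-based chains across the dense set $\Ddense$ of directions.
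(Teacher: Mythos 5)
Your reduction of the level-$n$ events to partition-function ratios, together with Theorem \ref{thm:main:Bus}, does give the convergence in distribution of $\CI^\w_n\cdot e_1/n$ under $Q_0^\w$ to the law with CDF $e^{\w_0-B^{\xi+}(0,e_1,\w)}$, and your derivation of parts (\ref{th:cif:b}) and (\ref{th:cif:d}) from the Busemann process is essentially the paper's (for (\ref{th:cif:d}) note one slip: $\xi\in\Usetnonuniqz$ only gives a jump at \emph{some} $y\ge0$, not at the origin; the paper instead uses $\E Q_0^\w(\cid=\xi)=\E\bigl[e^{\w_0}(e^{-B^{\xi+}(0,e_1)}-e^{-B^{\xi-}(0,e_1)})\bigr]$ and the fact that the means $\nabla\fe(\xi\pm)\cdot e_1$ differ off $\Diff$). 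The genuine gap is exactly where you flag it: part (\ref{th:cif:a}). The mechanism you propose does not work. The interface transitions $\pici_{y,y+e_i}$ are built from the ratios $Z_{0,y+e_1}/Z_{0,y+e_2}$, in which the \emph{far} endpoints vary and the root $0$ is fixed; these are not the Busemann functions of Theorem \ref{thm:main:Bus} or Corollary \ref{cor:main3+Bus}, which fix the nearby pair and send the far endpoint to infinity. They are reversed Busemann-type quantities, measurable with respect to the weights between $0$ and $y$, and neither the paper nor your proposal establishes their convergence, so the claimed convergence of $\pici$ to the transitions of the Busemann chain is unjustified. Moreover, even granting such convergence in fixed directions, a ``tail $0$--$1$ argument'' under $Q_0^\w$ cannot yield almost sure convergence: $\cid$ is a tail variable whose quenched law is nondegenerate (by \eqref{cid-CDF} and recovery its CDF lies strictly in $(0,1)$ at every interior direction), so there is no quenched zero--one law to invoke, and controlling transitions along a deterministic direction cannot pin down a path whose own asymptotic direction is random.

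The paper closes this gap with a sandwiching argument rather than through the chain $\pici$. For each $\zeta$ in the countable set $\Ddense$ one considers the path on the tree $\cT^\w_0$ from $0$ through $\fl{n\zeta}$; its law is $Q^\w_{0,\fl{n\zeta}}$, so a subsequential weak limit of the joint law of the tree and these paths exists, and Theorem \ref{th:main3} identifies the limiting path laws as $\Pi_0^{\zeta,\w}$, hence $\Uset_\zeta$-directed. Planarity gives the key deterministic implication: if such a tree path takes first step $e_2$, it lies in $\cT^\w_{0,e_2}$ and the interface stays weakly to its right forever, so $\varliminf_n \CI^\w_n\cdot e_1/n\ge\zetamin\cdot e_1$ on that event. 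This yields, for all $\zeta\in\Ddense$,
\begin{align*}
Q_0^\w\Bigl\{\varliminf_{n\to\infty}\CI^\w_n\cdot e_1/n<\zetamin\cdot e_1\Bigr\}\;\le\; e^{\w_0-B^\zeta(0,e_1,\w)}\;\le\; Q_0^\w\Bigl\{\varlimsup_{n\to\infty}\CI^\w_n\cdot e_1/n\le\zetamax\cdot e_1\Bigr\},
\end{align*}
and letting $\zeta$ approach a given direction from both sides, using \eqref{La-reg} and the constancy/continuity of $B^\zeta$ across linear segments, forces the liminf and limsup to have the same quenched distribution function on a dense set, whence $\CI^\w_n/n$ converges $Q_0^\w$-almost surely and \eqref{cid-CDF} follows. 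Your level-$n$ identity by itself only identifies the limit law; without this coupling step (or an equivalent substitute) the almost sure statement in (\ref{th:cif:a}) remains unproven.
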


Part \eqref{th:cif:d} in the above theorem says that if $\fe$ is differentiable at all points then the distribution of $\cid$ induced by the averaged measure $Q_0^\w(d\cT_0^\w)\P(d\w)$ is continuous. Even in this case, part \eqref{th:cif:c} leaves open the possibility that for a fixed $\w$ the distribution of $\cid$ under the quenched measure $Q_0^\w$ has atoms. 
%Recall though that $\Uset_0^\w$ is expected to be empty and hence part \eqref{th:cif:c} would imply that for each $\w\in\Omcif$, the distribution of $\cid$ under $Q_0^\w$ is continuous.

\subsection{Bi-infinite polymer measures}
Theorem \ref{th:main3} and a variant of the Burton-Keane lack of space argument \cite{Bur-Kea-89} allow us to prove that deterministically $\Uset_{\xi}$-directed bi-infinite Gibbs measures do not exist if $\Uset_{\xi}\subset \Diff$.

\begin{theorem}\label{thm:nodirbi}
Suppose that $\xi,\ximin,\ximax\in\Diff$.
Then there exists an event $\Ombi$ with $\P(\Ombi) =1$ such that for all $\w \in \Ombi$ there is no weakly $\Uset_{\xi}$-directed measure  $\Pi \in \biDLR^{\w}$.
\end{theorem}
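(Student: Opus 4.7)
I would argue by contradiction. Fix $\w$ in the intersection of $\OmBusPi$ (from Theorem~\ref{th:main3}), $\Omnondeg$ (from Lemma~\ref{lm:nondeg}), and a full-measure coalescence event furnished by Theorem~\ref{thm:RWREcoal}, and assume $\Pi\in\biDLR^\w$ is weakly $\Uset_\xi$-directed. By Lemma~\ref{lm:cond-bi-to-root}(b), for every $m\in\Z$ and every $x\in\V_m$ with $\Pi(X_m=x)>0$, the forward conditional $\Pi_x(\,\cdot\,)=\Pi(X_{m,\infty}\in\cdot\mid X_m=x)$ lies in $\DLR_x^\w$. The bound
\[
\Pi\{\dist(X_n/n,\Uset_\xi)>\e\mid X_m=x\}\le\frac{\Pi\{\dist(X_n/n,\Uset_\xi)>\e\}}{\Pi(X_m=x)}\xrightarrow[n\to\infty]{}0
\]
shows that $\Pi_x$ inherits weak $\Uset_\xi$-directedness, so the uniqueness clause of Theorem~\ref{th:main3} forces $\Pi_x=\Pi_x^{\xi,\w}$. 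Consequently the future of $\Pi$, conditioned on any prefix, is the explicit covariant Markov chain of Corollary~\ref{cor:main3+Bus}, whose transitions $\pi^{\xi,\w}_{y,y+e_i}=e^{\w_y-B^\xi(y,y+e_i,\w)}$ are strictly positive; in particular $\Pi$ is nondegenerate.

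Second, I invoke coalescence: on a further full-measure subevent, for every $x,x'\in\Z^2$ the natural coupling of $\Pi_x^{\xi,\w}$ with $\Pi_{x'}^{\xi,\w}$ produces paths that almost surely merge (Theorem~\ref{thm:RWREcoal}). Coupling two independent samples $X,X'\sim\Pi\otimes\Pi$ through the same randomness and identifying their conditional futures via the first step, the coalescence level $N=\min\{n:X_k=X'_k\text{ for all }k\ge n\}$ is a.s.\ finite on $\{X\ne X'\}$. Nondegeneracy of $\Pi_x^{\xi,\w}$ rules out $\Pi$ being a Dirac mass, so $\Pi\otimes\Pi\{X\ne X'\}>0$ and $N$ is a genuine random variable: at the vertex $X_N=X'_N$ two distinct predecessors $X_{N-1}\ne X'_{N-1}$ meet, producing a backward bifurcation with two semi-infinite backward continuations.

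The final step is a Burton–Keane lack-of-space counting. After measurable selection of $\w\mapsto\Pi^\w$ on the hypothesized positive-probability event $E=\{\w:\exists\Pi\text{ as above}\}$, I would average the joint law of $(\w,X,X')$ over lattice shifts using the $T_z$-covariance of $\Pi_x^{\xi,\w}$ and the $T$-invariance of $\P$, thereby producing a shift-invariant joint distribution with a strictly positive per-site density $\rho>0$ of \emph{essential} backward bifurcations (i.e.\ bifurcations whose two backward continuations remain disjoint for all time, which has positive probability by nondegeneracy). Each essential bifurcation in a box of side $L$ contributes two disjoint backward semi-infinite rays that must exit the past-boundary of the box, while that boundary has only $O(L)$ lattice sites. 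Comparing the density lower bound $\rho L^2$ with the $O(L)$ available exits yields the classical lack-of-space contradiction. The main obstacle is this last step: one must extract shift-invariance from $\P$ together with covariance of $\Pi_x^{\xi,\w}$ in the absence of a priori covariance of $\Pi^\w$, must isolate a positive density of \emph{essential} bifurcations that handles potential transient backward re-coalescence between the two rays, and must verify that weak (rather than strong) $\Uset_\xi$-directedness together with the hypothesis $\ximin,\ximax\in\Diff$ suffices to confine the backward rays to a boundary region respecting the $O(L)$ exit bound and to prevent $\Uset_\xi$ from leaking toward the degenerate boundary directions $e_1,e_2$.
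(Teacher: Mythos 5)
Your first step is exactly right and matches the paper: condition $\Pi$ on $\{X_m=x\}$, check that weak $\Uset_\xi$-directedness passes to the conditional law, and invoke Lemma \ref{lm:cond-bi-to-root} together with the uniqueness in Theorem \ref{th:main3} to conclude that every forward conditional law is $\Pi_x^{\xi,\w}$. The gap is in your final Burton--Keane step. You propose to extract a positive per-site density of ``essential backward bifurcations'' by shift-averaging the joint law of $(\w,X,X')$ for a \emph{single} pair of samples from $\Pi\otimes\Pi$. That cannot work: one pair of forward-coalescing bi-infinite paths contributes at most boundedly many junction points, so after averaging over shifts in a box of side $L$ the probability that the origin is such a point is $O(L^{-2})$, and any weak limit has density zero of bifurcations --- there is nothing for lack-of-space to contradict. (Positive density in a Burton--Keane argument has to come from a translation-\emph{covariant family} of paths, one from every site, not from two samples of a single, a priori non-covariant, measure $\Pi$.) Your step also needs a measurable selection $\w\mapsto\Pi^\w$ on the analytic set $\{\w:\exists\,\Pi\}$, an obstacle you name but do not resolve; the claim that essentialness of the bifurcation has positive probability ``by nondegeneracy'' is likewise unsupported, since nondegeneracy of the forward transitions says nothing about backward re-coalescence.

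The paper's route avoids all of this by running coalescence (Theorem \ref{thm:RWREcoal}) and the lack-of-space count entirely on the covariant coupled family of semi-infinite measures $\Pi_y^{B^\xi(\w)}=\Pi_y^{\xi,\w}$, $y\in\Z^2$: junction points of that family have positive density if some site is hit from infinitely many starting points, which is impossible, and this yields the uniform decay $\max_{y\le x,\,\abs{x-y}_1=n}\Pi_y^{B^\xi(\w)}(x)\to0$ (Lemma \ref{lem:unifto0}) on a deterministic full-measure event. The hypothetical $\Pi$ is then killed by an elementary pigeonhole observation you are missing: if $\Pi(x)=c>0$, then for every past level $n\le x\cdot\ehat$ there must exist $y_n\in\V_n$ with $\Pi(y_n)>0$ and $\Pi(x\,|\,y_n)>c/2$ (otherwise summing $\Pi(y_n,x)\le c\,\Pi(y_n)/2$ over $y_n$ gives $c\le c/2$); by your own first step $\Pi(x\,|\,y_n)=\Pi_{y_n}^{\xi,\w}(x)$, and this contradicts the uniform decay. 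So the contradiction should be extracted from the backward marginals of $\Pi$ against a decay statement for the covariant family, not from a bifurcation count performed on $\Pi$ itself.
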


%The lack of space argument requires covariance, so we are not able to extend this result to all directions in $\Usetnonuniq$ under condition \eqref{La-reg}. 
 
We now turn to non-existence of covariant bi-infinite Gibbs measures. A similar question has been studied for spin systems including the random field Ising model; see \cite{Aiz-Weh-90, Arg-etal-14, New-97, Weh-Was-16}.
\begin{definition}\label{de:scGibbs}
A $T$-\emph{covariant bi-infinite Gibbs measure} or \emph{metastate} is an $\sM_1(\bbX, \cX)$-valued random variable $\Pi^{\w}$ satisfying the following:
\begin{enumerate}[label={\rm(\alph*)}, ref={\rm\alph*}]
\item \label{de:scGibbs1} The map $\Omega\to\sM_1(\bbX,\cX):\w \mapsto \Pi^{\w}$ is measurable.
%$((\Omega, \sF),(\sM_1(\bbX, \cX), \sB_{\sM_1(\bbX, \cX)}))$ measurable.
\item \label{de:scGibbs2} $\bbP\bigl(\Pi^{\w} \in \biDLR^{\w}\bigr) = 1$.
\item  \label{de:scGibbs3}For each $z \in \bbZ^2$, $\bbP\left(\Pi^{T_z \w}\circ \theta_{-z} = \Pi^{\w}\right) = 1.$
\end{enumerate}
\end{definition}
A quick proof checks that not only do metastates not exist, but in fact there are no shift-covariant measures on $\bbX$. This can be compared to the corresponding result showing non-existence of metastates for the random field Ising model, proven in \cite{Weh-Was-16}, where the mechanism is different.
\begin{lemma}\label{lem:nocovbi}
There does not exist a random variable satisfying Definition \ref{de:scGibbs}\eqref{de:scGibbs1} and Definition \ref{de:scGibbs}\eqref{de:scGibbs3}.
\end{lemma}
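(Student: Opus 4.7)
The plan is to derive a contradiction from the covariance condition \eqref{de:scGibbs3} alone---the DLR property plays no role. For each vertex $v\in\Z^2$, let $A_v=\{x_{-\infty,\infty}\in\X:x_{v\cdot\ehat}=v\}$ be the event that the bi-infinite path passes through $v$, and set
\[f(v,\w)=\Pi^\w(A_v).\]
Unpacking how the shift $\theta_z$ acts on a bi-infinite path (it translates the path by $z$ and reindexes so the new path lies at the correct levels, i.e.\ $(\theta_{-z}x)_j=x_{j+z\cdot\ehat}-z$), one verifies $\theta_{-z}^{-1}A_v=A_{v+z}$. Combined with $\Pi^{T_z\w}\circ\theta_{-z}=\Pi^\w$, this yields the covariance relation
\[f(v+z,T_z\w)=f(v,\w)\quad\text{for all }v,z\in\Z^2.\]

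Taking $\P$-expectations and invoking the $T$-invariance of $\P$ gives $\E[f(v+z,\w)]=\E[f(v,\w)]$ for all $z$. Since $\Z^2$ acts transitively on itself, $\E[f(v,\w)]$ equals some constant $c\ge0$ independent of $v$. On the other hand, for each $\w$ and each level $n\in\Z$, the sets $\{A_v:v\in\V_n\}$ form a countable partition of $\X$ (each bi-infinite path visits exactly one vertex at level $n$), so $\sum_{v\in\V_n}f(v,\w)=1$ identically. Applying Tonelli (all terms non-negative),
\[1=\E\Bigl[\sum_{v\in\V_n}f(v,\w)\Bigr]=\sum_{v\in\V_n}c,\]
which is impossible since $\V_n$ is countably infinite: $c=0$ makes the sum $0$ and $c>0$ makes it $\infty$.

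The only delicate point is fixing the correct convention for how $\theta_z$ reindexes a bi-infinite path so that $\theta_{-z}^{-1}A_v=A_{v+z}$; once this is in place, the argument is immediate. Morally, we are just using the nonexistence of a shift-invariant probability measure on $\Z$, applied to the $\P$-averaged marginal distribution of $X_n$ under $\Pi^\w$.
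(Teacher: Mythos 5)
Your argument is correct and is essentially the paper's own proof: the paper likewise uses covariance to show that $\E[\Pi^\w(X_0=z)]$ is the same for all $z\in\V_0$ and then derives the contradiction from the fact that $\{X_0=z\}$, $z\in\V_0$, is a countably infinite partition of $\X$. Your version merely phrases this at a general level $n$ and spells out the action of $\theta_z$; the content is identical.
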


\section{Shift-covariant cocycles}\label{sec:coc}
We now introduce our main tools, cocycles and correctors, and address their existence and regularity properties.

\begin{definition}\label{def:covcoc}
A {\sl shift-covariant cocycle} is a Borel-measurable function $B: \Z^2 \times \Z^2 \times \Omega \to \bbR$ which satisfies the following for all $x,y,z\in\Z^2$:
\begin{enumerate}[label={\rm(\alph*)}, ref={\rm\alph*}]
\item (Shift-covariance) $\P\bigl\{B(x+z,y+z,\w) = B(x,y,T_z \w)\bigr\}=1$.
\item\label{def:covcoc:b} (Cocycle property) $\P\bigl\{B(x,y) + B(y,z)=B(x,z) \bigr\}=1$.
\end{enumerate}
\end{definition}

\begin{remark}
We will also use the term cocycle to denote a function satisfying 
Definition \ref{def:covcoc}\eqref{def:covcoc:b} only when $x,y,z\ge u$ for some $u\in\Z^2$.
\end{remark}

As has already been done in the above definition, we will typically suppress the $\w$ from the arguments unless it adds clarity. 
A shift-covariant cocycle is said to be $L^p(\bbP)$ if $\bbE[\abs{B(0,e_i)}^p]< \infty$ for  $i\in\{1,2\}$. 

We are interested in cocycles that are consistent with the weights $\w_x(\w)$ in the following sense:
\begin{definition}
For $\beta \in (0,\infty]$, a shift-covariant cocycle $B$ satisfies $\beta$-{\sl recovery} if for all $x\in\Z^2$ and $\P$-almost every $\w$:
	\begin{align}
	&e^{-\beta B(x,x+e_1,\w)}+e^{-\beta B(x,x+e_2,\w)}=e^{-\beta\w_x(\w)},\quad\text{if }0<\beta<\infty,\label{beta-rec}\\
	&\min\bigl\{B(x,x+e_1,\w),B(x,x+e_2,\w)\bigr\}=\w_x(\w),\quad\text{if }\beta=\infty.\notag
	\end{align}
Such cocycles are called {\sl correctors}.
\end{definition}

For a shift-covariant $L^1(\P)$ cocycle define the random vector $h(B)\in\R^2$ via 
	\[h(B)\cdot e_i=-\E[B(0,e_i)\,|\,\sI]\] 
where $\sI$ is the $\sigma$-algebra generated by $T$-invariant events.

The next result is a special case of an extension of Theorem A.3 of \cite{Geo-etal-15} to the stationary setting. 
See Appendix \ref{app:BuseShape}. 
Alternatively, one could pass through the ergodic decomposition theorem.  

\begin{theorem}\label{thm:BuseShape}
Fix $\beta\in(0,\infty]$.
Suppose  $B$ is a shift-covariant $L^1(\bbP)$ $\beta$-recovering cocycle.
%Then there exists a random vector $h(B):\Omega \to \R^2$ in $L^1(\P)$  which satisfies $\P\{h = h\circ T_x\}=1$
Then 
\begin{align}\label{shape-B}
\lim_{n\to\infty} \max_{x\in n\Uset\cap\Z^2_+} \frac{\abs{B(0,x) + h(B)\cdot x}}{n} = 0\quad\P\text{-almost surely}.
\end{align} 
\end{theorem}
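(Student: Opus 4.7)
My plan is to follow the classical shape-theorem argument for shift-covariant cocycles (adapting Theorem A.3 of \cite{Geo-etal-15} from the i.i.d.\ to the stationary setting), in three steps.

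\emph{Step 1 (ray limits).} Pick a countable dense set $D\subset\Uset$ of directions $\xi=v/|v|_1$ with $v\in\Z_+^2$ primitive. The cocycle and shift-covariance give $B(0,nv)=\sum_{k=0}^{n-1}B(0,v)\circ T_{kv}$, so Birkhoff for $T_v$ yields $B(0,nv)/n\to\Phi_v$ a.s.\ for some $T_v$-invariant $\Phi_v$. Because $B(0,e_i)\in L^1$, Borel--Cantelli gives $B(0,e_i)\circ T_{nv}/n\to 0$ a.s., and the identity $\Phi_v\circ T_{e_i}=\Phi_v+o(1)$ a.s.\ then implies that $\Phi_v$ is $\sI$-measurable. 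The tower property together with the cocycle decomposition of $B(0,v)$ into $|v|_1$ axis increments forces
\[
\Phi_v\;=\;\E[B(0,v)\mid\sI]\;=\;v_1\,\E[B(0,e_1)\mid\sI]+v_2\,\E[B(0,e_2)\mid\sI]\;=\;-h(B)\cdot v.
\]
Intersecting over $D$ gives a single full-measure event on which $B(0,n\xi)/n\to -h(B)\cdot\xi$ simultaneously for all $\xi\in D$.

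\emph{Step 2 (mesoscopic oscillation).} For every $\e>0$, the goal is
\[
\limsup_{n\to\infty}\,\max\bigl\{\tfrac{|B(x,y)|}{n}:x\le y\text{ in }\Z_+^2,\;|x|_1\le 2n,\;|y-x|_1\le\e n\bigr\}\le C\e \quad \P\text{-a.s.},
\]
with $C=\E[|B(0,e_1)|+|B(0,e_2)|\mid\sI]$. Shift-covariance reduces this to controlling $\sup_{|w|_1\le\e n}|B(0,w)|\circ T_x$; expanding along an axis-staircase path from $0$ to $w$ bounds $|B(0,w)|$ by $\sum_{z\in P(0,w)}(|B(0,e_1)|+|B(0,e_2)|)\circ T_z$, and a planar Wiener maximal ergodic theorem applied to the $L^1$ functions $|B(0,e_i)|$ delivers the uniform bound over base points $x$.

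\emph{Step 3 (uniformization).} Given $\e>0$, pick a finite $\e$-net $\{\xi_1,\dots,\xi_M\}\subset D$. For $x\in n\Uset\cap\Z_+^2$, choose $\xi_j$ with $|x/n-\xi_j|_1\le\e$ and a lattice approximant $x^\ast=n_\ast v_j\in\Z_+^2$ (where $v_j$ is primitive with $\xi_j=v_j/|v_j|_1$) satisfying $|x^\ast-n\xi_j|_1=O(1)$. The cocycle gives
\[
B(0,x)-B(0,x^\ast)\;=\;B(x\wedge x^\ast,x)\;-\;B(x\wedge x^\ast,x^\ast),
\]
(coordinatewise meet), whose two terms connect lattice points at $\ell^1$-distance $O(\e n)$ and are bounded via Step 2 (after translating the base point to $x\wedge x^\ast$). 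Step 1 controls $B(0,x^\ast)/n\to -h(B)\cdot\xi_j$, and $|h(B)\cdot(x/n-\xi_j)|=O(\e)$; letting $\e\downarrow 0$ yields \eqref{shape-B}. The crux is Step 1's identification of $\Phi_v$ with $-h(B)\cdot v$: ergodicity would make both deterministic constants, but in the stationary setting one must upgrade $T_v$-invariance to full $T$-invariance using the $L^1$ Borel--Cantelli estimate on $B(0,e_i)\circ T_{nv}/n$; a secondary technicality is that Step 2 must rely on only $L^1$ integrability, which forces use of a maximal ergodic theorem rather than a higher-moment argument.
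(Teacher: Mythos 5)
Your Step 1 is sound and is essentially the first lemma of the paper's Appendix B (upgrading $T_v$-invariance of the Birkhoff limit to $\sI$-measurability and identifying it with $-h(B)\cdot v$). The gap is in Step 2. You claim a uniform bound on $\max\{|B(x,y)|/n : |x|_1\le 2n,\ |y-x|_1\le \e n\}$ using only $L^1$ integrability of $B(0,e_i)$ and a maximal ergodic theorem. This cannot work: the Wiener/Birkhoff maximal inequality controls averages over boxes at a fixed base point (a weak $(1,1)$ bound), not the maximum over the $\sim n^2$ translated segment-sums you need. Already the single-increment case shows the failure: for i.i.d.\ site variables $Y_x$ with finite mean but infinite second moment, $\max_{|x|_1\le 2n}|Y_x|/n$ does not tend to $0$ almost surely (Borel--Cantelli requires $\sum_n n\,\bbP(|Y|>\e n)<\infty$, i.e.\ a second moment, in two dimensions). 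Consequently the statement you are actually proving --- a shape theorem for arbitrary shift-covariant $L^1$ cocycles --- is false: take $B(x,y)=f(T_y\w)-f(T_x\w)$ with $f$ an $L^1$ but not $L^2$ function of $\w_0$; then $h(B)=0$ while $\max_{x\in n\Uset\cap\Z_+^2}|B(0,x)|/n$ does not vanish. This is why the theorem carries the $\beta$-recovery hypothesis, which your proof never invokes; recovery (together with the $2+\e$ moment assumption on $\w_0$) is exactly what rules out such examples.

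The paper's route (Appendix B, Theorem \ref{thm:CocShp}) is built to avoid taking maxima of the cocycle itself. Recovery gives the one-sided bound $B(x,x+e_i)\ge\w_x$, i.e.\ $-B(0,e_i)\le|\w_0|$, and $|\w_0|$ lies in class $\sL$ because the weights are i.i.d.\ with $p>2$ moments; the class-$\sL$ maximal control is applied only to this dominating function, never to $B$. For the cocycle itself, pointwise values at mesoscopic scale are replaced by \emph{averages} over a bounded-in-$n$ family of mesoscopic boxes $C_{n,\bfk}$, and these box averages vanish by the multidimensional ergodic theorem (Lemmas \ref{lem:apest0}--\ref{lem:apest1}); the two inequalities (liminf and limsup) are then obtained by connecting $x$ to the box with short admissible paths whose increments are bounded by the class-$\sL$ function. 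To repair your argument you would have to replace Step 2 by this kind of one-sided, recovery-based control plus box-averaging; as written, Step 2 (and hence Step 3, which depends on it) does not go through.
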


The next lemma shows that $\beta$-recovering covariant cocycles are naturally indexed by elements of the superdifferential $\partial \fe^{\beta}(\Uset)$. This explains why we only consider cocycles with mean vectors lying in the superdifferential when we construct recovering cocycles in the next subsection. A similar observation in FPP appears in \cite[Theorem 4.6]{Dam-Han-14}.

\begin{lemma}\label{lm:h-xi}
Assume the setting of Theorem \ref{thm:BuseShape}.
The following hold.
\begin{enumerate}[label={\rm(\alph*)}, ref={\rm\alph*}]
\item $-h(B) \in \partial \fe^\beta (\Uset)$ almost surely.
\item If $-\bbE[h(B)] \in\partial\fe^\beta (\xi)$ for  $\xi \in \Uset$ then $-h(B) \in \partial \fe^\beta (\xi)$ almost surely.
\item\label{lm:h-xi:c} If $-\bbE[h(B)]\in\ext\partial\fe^\beta(\xi)$ for some $\xi \in \Uset$ then $h(B) = \bbE[h(B)]$ $\P$-a.s.
\end{enumerate}
\end{lemma}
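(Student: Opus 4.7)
The plan is to treat (a), (b), (c) in sequence: (a) carries the only analytic input, while (b) and (c) are convex-analysis consequences. The single identity driving the argument is that $\beta$-recovery forces a cocycle-reweighted partition function to collapse.

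For (a), I would establish, on the full-$\bbP$-measure event where both the cocycle property from Definition \ref{def:covcoc}\eqref{def:covcoc:b} and \eqref{beta-rec} hold for every relevant triple (resp.\ every $x$), the path identity
\[
\sum_{x_{0,n}\in\X_0^{(n)}}\exp\!\Bigl(\beta\sum_{i=0}^{n-1}\w_{x_i}-\beta B(0,x_n)\Bigr)=1\qquad\text{for all }n\ge 0,
\]
by induction on $n$. The base $n=1$ is \eqref{beta-rec} at $x=0$ multiplied by $e^{\beta\w_0}$. For the step $n\to n+1$, I split each level-$(n{+}1)$ path at its level-$n$ vertex, use the cocycle splitting $B(0,x_{n+1})=B(0,x_n)+B(x_n,x_{n+1})$, and collapse the two possible final increments via $e^{\beta\w_{x_n}}\bigl(e^{-\beta B(x_n,x_n+e_1)}+e^{-\beta B(x_n,x_n+e_2)}\bigr)=1$, which reduces the level-$(n{+}1)$ sum to the inductive hypothesis. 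Inserting the cocycle shape theorem $B(0,x)+h(B)\cdot x=o(n)$ uniformly on $n\Uset\cap\Z_+^2$ (Theorem \ref{thm:BuseShape}) rewrites the identity as $Z_{0,(n)}^{\beta,h(B)}=e^{o(\beta n)}$; applying the point-to-line shape theorem in \eqref{shape} uniformly on compact tilt sets (with the a.s.\ finite random tilt $h(B)$ accommodated by partitioning $\Omega$ into $\{|h(B)|_1\le C\}$, $C\in\N$) then gives $\fepl^\beta(h(B))=0$ a.s. By Lemma \ref{lem:insub} this is the claim $-h(B)\in\partial\fe^\beta(\Uset)$ a.s.

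For (b), combine (a) with the variational formula in \eqref{eq:ppplduality}: $\fepl^\beta(h(B))=0$ a.s.\ gives the pointwise bound $\fe^\beta(\xi)+h(B)\cdot\xi\le 0$ a.s., and integrating in $\bbP$ yields $\fe^\beta(\xi)+\bbE[h(B)]\cdot\xi\le 0$. The hypothesis $-\bbE[h(B)]\in\partial\fe^\beta(\xi)$ together with Lemma \ref{lem:insub} forces this integrated inequality to hold with equality, so the a.s.\ inequality must itself be an a.s.\ equality $\fe^\beta(\xi)+h(B)\cdot\xi=0$; with $\fepl^\beta(h(B))=0$ and \eqref{eq:ppplduality} this characterizes $-h(B)\in\partial\fe^\beta(\xi)$ a.s. For (c), recall from the discussion following \eqref{eq:superdef} that in $\R^2$ the convex set $\partial\fe^\beta(\xi)$ is the line segment whose two extreme points are $\nabla\fe^\beta(\xi\pm)$. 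By (b) I may parametrize $-h(B)=(1-T)\nabla\fe^\beta(\xi+)+T\,\nabla\fe^\beta(\xi-)$ with $T\in[0,1]$ a.s.; the hypothesis that $-\bbE[h(B)]$ is an endpoint of the segment forces $\bbE[T]\in\{0,1\}$, and $T\in[0,1]$ then forces $T$ to be a.s.\ constant, giving $h(B)=\bbE[h(B)]$ a.s.

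The main obstacle is setting up the path identity in (a): once it is available, (b) and (c) are clean convex-analysis consequences, but (a) is the unique place where the $\beta$-recovery structure is used. Careful null-set bookkeeping (countably many triples for the cocycle property, countably many $x$ for \eqref{beta-rec}) and use of the uniformity in \eqref{shape} to evaluate the free-energy shape theorem at the random tilt $h(B)$ are the only points that require attention.
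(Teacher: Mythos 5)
Your proposal is correct and takes essentially the same route as the paper: iterating the recovery relation with the cocycle property to get $\sum Z_{0,x}^{\beta}e^{-\beta B(0,x)}=1$, combining the shape theorems with Lemma \ref{lem:insub} to conclude $\fepl^{\beta}(h(B))=0$ a.s., and then the same expectation-forces-equality step for (b) and the barycenter/extreme-point step for (c). The only cosmetic difference is in (a), where you evaluate the point-to-line shape theorem at the random tilt $h(B)$ (after partitioning on $\{\abs{h(B)}_1\le C\}$), while the paper uses the point-to-point branch of \eqref{shape} together with the duality \eqref{eq:ppplduality}; both are valid.
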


\begin{proof}
Iterating the recovery property shows that almost surely
\begin{align*}
1 &=  \sum_{x \in n \Uset \cap \bbZ_+^2} Z_{0,x}^{\beta}e^{-\beta B(0,x)},\quad\text{if }\beta<\infty,\text{ and}\\
0 &= \max_{x \in n \Uset \cap \bbZ_+^2}\left\{G_{0,x} - B(0,x)\right\},\quad\text{if }\beta=\infty.
\end{align*}
Take logs, divide by $n \beta $ if $\beta < \infty$ and $n$ if $\beta = \infty$ then send $n\to\infty$ to get
\begin{align*}
0 &= \max_{\xi \in \Uset}\bigl\{\fe^\beta(\xi)+h(B)\cdot\xi\bigr\} = \fepl^\beta(h(B))\quad\P\text{-almost surely}.
\end{align*}
The first equality comes by an application of \eqref{shape} and Theorem \ref{thm:BuseShape} and a fairly standard argument (e.g.\ the proof of Lemma 2.9 in \cite{Ras-Sep-14}). The second equality is \eqref{eq:ppplduality}.
By Lemma \ref{lem:insub}, the above implies $-h(B) \in \partial \fe^\beta(\Uset)$.

Since $\fepl^\beta(h(B)) = 0$, we have almost surely $\xi \cdot h(B) + \fe^\beta(\xi) \leq 0$ for any $\xi\in\Uset$.
If now $\xi$ is such that $-\bbE[h(B)]\in \partial \fe^\beta(\xi)$, then again by Lemma \ref{lem:insub}
$\xi \cdot \bbE[ h(B)] + \fe^\beta(\xi) = 0$,
and therefore we must have $\xi \cdot h(B) + \fe^\beta(\xi) = 0$ almost surely. Again, we deduce that  $-h(B) \in \partial\fe^\beta(\xi)$ almost surely. 

If in addition we know  that $-\bbE[h(B)]\in\ext\partial\fe^\beta(\xi)$ then we must have $h(B) = \bbE[h(B)]$ almost surely by definition of an extreme point. 
\end{proof}

Before discussing existence of shift-covariant cocycles, we mention a few more basic properties of the superdifferential $\partial \fe^{\beta}(\Uset)$. 

\begin{lemma}\label{euler}
For all $\xi\in\ri\Uset$ we have $\xi\cdot\nabla\fe^\beta(\xi\pm)=\fe^\beta(\xi)$.
\end{lemma}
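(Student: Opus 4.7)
The plan is to prove Euler's identity for a 1-homogeneous concave function using only the two facts at hand: $\fe^\beta$ is 1-homogeneous on $\R_+^2$, and by Lemma \ref{lem:superdiffprop}\eqref{item-c} (cited just above the statement) the one-sided gradients $\nabla\fe^\beta(\xi\pm)$ are the two extreme points of the superdifferential $\partial\fe^\beta(\xi)$. In particular, each of $\nabla\fe^\beta(\xi\pm)$ is a supergradient at $\xi$, so it will suffice to show that every $v\in\partial\fe^\beta(\xi)$ satisfies $v\cdot\xi=\fe^\beta(\xi)$.

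Fix $\xi\in\ri\Uset$ and $v\in\partial\fe^\beta(\xi)$. Since $\xi\in\ri\Uset$ lies in the interior of $\R_+^2$, for all sufficiently small $\e>0$ both perturbed points $(1\pm\e)\xi$ lie in $\R_+^2$ where $\fe^\beta$ is defined. By 1-homogeneity,
\[
\fe^\beta\bigl((1\pm\e)\xi\bigr)-\fe^\beta(\xi)=\pm\e\,\fe^\beta(\xi).
\]
The supergradient inequality \eqref{eq:superdef} applied to $\zeta=(1\pm\e)\xi$ gives
\[
\fe^\beta\bigl((1\pm\e)\xi\bigr)-\fe^\beta(\xi)\le v\cdot\bigl((1\pm\e)\xi-\xi\bigr)=\pm\e\,v\cdot\xi.
\]
Comparing the two displays and dividing by the positive quantity $\e$ (once for the $+$ case, once for the $-$ case with an attendant sign flip) produces $\fe^\beta(\xi)\le v\cdot\xi$ and $\fe^\beta(\xi)\ge v\cdot\xi$, hence equality.

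Applying this equality to the two supergradients $v=\nabla\fe^\beta(\xi+)$ and $v=\nabla\fe^\beta(\xi-)$ yields the statement. There is no real obstacle here; the only point requiring a line of care is that the perturbation $(1\pm\e)\xi$ stays inside the domain of $\fe^\beta$, which is immediate from $\xi\in\ri\Uset\subset\mathrm{int}\,\R_+^2$.
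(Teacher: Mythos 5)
There is a genuine gap: your argument is circular within the logic of the paper. The reduction step rests on the claim that $\nabla\fe^\beta(\xi\pm)$ are elements (indeed extreme points) of $\partial\fe^\beta(\xi)$, which you import from Lemma \ref{lem:superdiffprop}\eqref{item-c}. But that claim is not free: the one-sided gradients are \emph{defined} coordinatewise, via one-sided partial derivatives in the $e_1$ and $-e_2$ directions with particular sign conventions, and it is a nontrivial fact that the resulting vector is a supergradient of the two-variable function $\fe^\beta$. In the paper this fact is exactly the first assertion of Lemma \ref{lem:superdiffprop}\eqref{item-c}, and its proof explicitly invokes Lemma \ref{euler} (Euler's identity is used to pass from $\fe^\beta(\zeta)-\fe^\beta(\xi)\le(\zeta-\xi)\cdot\nabla\fe^\beta(\xi+)$ to the characterization \eqref{superdiff2} of supergradients). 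So you may not use Lemma \ref{lem:superdiffprop}\eqref{item-c} to prove Lemma \ref{euler}; the forward reference in Section \ref{sec:fe} does not change the order of proof. For a general concave function of two variables, a vector assembled from one-sided partial derivatives need not be a supergradient, so this step genuinely needs the homogeneity argument you are trying to establish.

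The part of your argument that is sound is the claim that \emph{every} $v\in\partial\fe^\beta(\xi)$ satisfies $v\cdot\xi=\fe^\beta(\xi)$: taking $\zeta=(1\pm\e)\xi$ in \eqref{eq:superdef} and using $1$-homogeneity does give the two inequalities, and this is essentially the computation appearing in the proof of Lemma \ref{lem:insub}. What is missing is a proof, independent of Lemma \ref{lem:superdiffprop}\eqref{item-c}, that $\nabla\fe^\beta(\xi\pm)$ lies in $\partial\fe^\beta(\xi)$. The paper avoids the issue entirely by working directly from the definition of the one-sided derivatives: for $\e>0$ one writes $\xi+\e e_1=t(\xi-\delta e_2)$ with $t=(\e+\xi\cdot e_1)/(\xi\cdot e_1)$ and $\delta=\e(\xi\cdot e_1)/(\xi\cdot e_1+\e)$, uses homogeneity to rearrange into
\begin{align*}
(\xi\cdot e_1)\,\e^{-1}\bigl(\fe^\beta(\xi+\e e_1)-\fe^\beta(\xi)\bigr)
+(\xi\cdot e_2)\,\delta^{-1}\bigl(\fe^\beta(\xi)-\fe^\beta(\xi-\delta e_2)\bigr)=\fe^\beta(\xi),
\end{align*}
and lets $\e,\delta\downarrow0$ to get $\xi\cdot\nabla\fe^\beta(\xi+)=\fe^\beta(\xi)$, with the $-$ case symmetric. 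To repair your proof you would either need to reproduce an argument of this kind (at which point the superdifferential detour is unnecessary) or first prove the membership $\nabla\fe^\beta(\xi\pm)\in\partial\fe^\beta(\xi)$ by some route that does not pass through Lemma \ref{euler}.
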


\begin{proof}
Fix $\e>0$ and let $t=(\e+\xi\cdot e_1)/(\xi\cdot e_1)$ and $\delta=\e(\xi\cdot e_1)/(\xi\cdot e_1+\e)$.
Then $\xi+\e e_1=t(\xi-\delta e_2)$.  Rearranging terms and using homogeneity of $\fe^\beta$ we get
	\[(\xi\cdot e_1)\,\e^{-1}\bigl(\fe^\beta(\xi+\e e_1)-\fe^\beta(\xi)\bigr)
	+(\xi\cdot e_2)\,\delta^{-1}\bigl(\fe^\beta(\xi)-\fe^\beta(\xi-\delta e_2)\bigr)=\fe^\beta(\xi).\]
Take $\e$ and $\delta$ to $0$ to get $\xi\cdot\nabla\fe^\beta(\xi+)=\fe^\beta(\xi)$. The other identity is similar.
\end{proof}

\begin{lemma}\label{lem:superdiffprop} The superdifferential map has the following properties:
\begin{enumerate}[label={\rm(\alph*)}, ref={\rm\alph*}]
\item\label{item-b} Let $\xi,\xi'\in\ri\Uset$, $-h\in\partial\fe^\beta(\xi)$, and $-h' \in \partial \fe^\beta(\xi')$. 
If for some $i \in \{1,2\}$, $h \cdot e_i < h' \cdot e_i$ then $h \cdot e_{3-i} > h' \cdot e_{3-i}$. Consequently, if $h \cdot e_i = h' \cdot e_i$ then $h = h'$. If for some $i \in \{1,2\}$, $\xi\cdot e_i<\xi'\cdot e_i$, then $h\cdot e_i\le h'\cdot e_i$.  
\item\label{item-a} $\partial \fe^\beta(\Uset)$ is closed in $\bbR^2$; if $\xi_n \to \xi \in \ri\Uset$ and $h_n \to h$ with $-h_n \in \partial \fe^\beta(\xi_n)$, then $-h \in \partial \fe^\beta(\xi)$.  If $h \in-\partial \fe^\beta(\Uset)$ and $\e >0$, there exist $h',h'' \in -\partial \fe^\beta(\Uset)$ with $h\cdot e_1 - \e<h'\cdot e_1<h\cdot e_1<h''\cdot e_1<h\cdot e_1 + \e$.
\item\label{item-c} For each $\xi \in  \ri\Uset$, $\partial \fe^\beta(\xi)=[\nabla\fe^\beta(\xi+),\nabla\fe^\beta(\xi-)]$. This line segment is nontrivial for countably many $\xi \in \ri\Uset$.
\item\label{item-d} For each $h \in \bbR^2$ there exists a unique $t \in \bbR$ so that $-h+t(e_1 + e_2) \in \partial \fe^\beta(\Uset)$.
\end{enumerate}
\end{lemma}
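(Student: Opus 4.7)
All four claims are convex-analytic statements about the $1$-homogeneous concave function $\fe^\beta$ restricted to $\Uset$. The proof is organized around three tools: Lemma~\ref{euler} (the Euler identity $\xi\cdot v=\fe^\beta(\xi)$ for $v\in\partial\fe^\beta(\xi)$ and $\xi\in\ri\Uset$); the defining supergradient inequality $\fe^\beta(\zeta)\le\fe^\beta(\xi)+v\cdot(\zeta-\xi)$; and Lemma~\ref{lem:insub}, which identifies $-\partial\fe^\beta(\Uset)$ with the zero level set of the Lipschitz convex dual $\fepl^\beta$. Part \eqref{item-d} is the cleanest of all: using $\xi\cdot(e_1+e_2)=1$ on $\Uset$ in the duality formula~\eqref{eq:ppplduality} yields
\[
\fepl^\beta(h-t(e_1+e_2))=\fepl^\beta(h)-t,
\]
so Lemma~\ref{lem:insub} identifies the unique $t$ as $\fepl^\beta(h)$.

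For part~\eqref{item-b}, I apply the supergradient inequality for $-h\in\partial\fe^\beta(\xi)$ at $\zeta=\xi'$ and for $-h'\in\partial\fe^\beta(\xi')$ at $\zeta=\xi$. Substituting $\xi\cdot h=-\fe^\beta(\xi)$ and $\xi'\cdot h'=-\fe^\beta(\xi')$ from Lemma~\ref{euler} collapses both to
\[
\xi\cdot(h'-h)\le 0\le \xi'\cdot(h'-h).
\]
Since $\xi\cdot e_1+\xi\cdot e_2=1$ with both coordinates strictly positive, the first inequality immediately forces the implication $(h'-h)\cdot e_1>0\Rightarrow(h'-h)\cdot e_2<0$; combining the two inequalities, $(h'-h)\cdot e_1=0\Rightarrow(h'-h)\cdot e_2=0$ as well. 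Subtracting yields $(\xi'-\xi)\cdot(h'-h)\ge 0$; writing $\xi'-\xi=\delta(e_1-e_2)$ with $\delta>0$ when $\xi'\cdot e_1>\xi\cdot e_1$ and combining with the first claim then forces $h'\cdot e_1\ge h\cdot e_1$.

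For part~\eqref{item-a}, closedness is immediate from continuity of $\fepl^\beta$ via Lemma~\ref{lem:insub}, and the version with $\xi_n\to\xi\in\ri\Uset$ follows by passing to the limit in the supergradient inequality using continuity of $\fe^\beta$. For density I combine part~\eqref{item-d} with the injectivity from part~\eqref{item-b}. The duality formula produces an explicit continuous parameterization $\psi:\R\to -\partial\fe^\beta(\Uset)$ via
\[
\psi(r)=\tfrac{r}{2}(e_1-e_2)-\fepl^\beta\bigl(\tfrac{r}{2}(e_1-e_2)\bigr)(e_1+e_2),
\]
sending $r=h\cdot(e_1-e_2)$ to the unique $h\in -\partial\fe^\beta(\Uset)$ with that $(e_1-e_2)$-component. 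Part~\eqref{item-b} then shows $r\mapsto\psi(r)\cdot e_1$ is strictly monotone, hence a continuous bijection of $\R$ onto an open interval, so every $h\in -\partial\fe^\beta(\Uset)$ has $h\cdot e_1$ in the interior of this interval; the desired $h',h''$ are read off as $\psi$-images of values of $r$ slightly below and above $h\cdot(e_1-e_2)$.

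For part~\eqref{item-c}, the inclusion $\partial\fe^\beta(\xi)\subseteq[\nabla\fe^\beta(\xi+),\nabla\fe^\beta(\xi-)]$ follows by testing the supergradient inequality at $\zeta=\xi\pm\e e_1$, which brackets $v\cdot e_1$ in $[\nabla\fe^\beta(\xi+)\cdot e_1,\nabla\fe^\beta(\xi-)\cdot e_1]$; the constraint $v\cdot\xi=\fe^\beta(\xi)$ from Lemma~\ref{euler} then determines $v\cdot e_2$ uniquely as the matching convex combination. Conversely, concavity of the single-variable map $t\mapsto\fe^\beta(\xi+t(e_1-e_2))$ makes its right and left tangents supporting lines, verifying the supergradient inequality for $\zeta\in\Uset$; $1$-homogeneity together with Lemma~\ref{euler} extends it to $\zeta\in\R_+^2$ by reducing to the normalized representative in $\Uset$. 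The countability statement is the classical fact that a one-variable concave function has at most countably many non-differentiability points. The only genuine bookkeeping hurdle is the paper's nonstandard sign convention, whereby $\nabla\fe^\beta(\xi\pm)\cdot e_2$ denotes a one-sided derivative in the $e_2$-direction opposite to the naive one; tracking this carefully is what makes the affine-combination identification in part~\eqref{item-c} come out correctly. Beyond that, no substantial new idea is needed.
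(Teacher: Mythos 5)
Your proposal is correct, and for parts \eqref{item-b} and \eqref{item-d} it is essentially the paper's argument (the same two inequalities $\xi\cdot(h'-h)\le 0\le\xi'\cdot(h'-h)$, and the same identity $\fepl^\beta(h-t(e_1+e_2))=\fepl^\beta(h)-t$ with Lemma \ref{lem:insub}). The differences are in execution rather than substance. For closedness in \eqref{item-a} you read it off the level set $\{\fepl^\beta=0\}$ via Lemma \ref{lem:insub}, which is cleaner than the paper's route of extracting a convergent subsequence $\xi_n\to\xi$ and passing to the limit in \eqref{eq:superdef}; for density, your parameterization $\psi(r)=\tfrac r2(e_1-e_2)-\fepl^\beta(\tfrac r2(e_1-e_2))(e_1+e_2)$, with injectivity and monotonicity of $r\mapsto\psi(r)\cdot e_1$ supplied by \eqref{item-b}, is the paper's explicit perturbation $h\pm\e e_1-\fepl^\beta(h\pm\e e_1)(e_1+e_2)$ repackaged; both rest on \eqref{item-d}, \eqref{item-b} and the $1$-Lipschitz bound \eqref{1-Lip}/continuity of $\fepl^\beta$. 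In \eqref{item-c} you work with the single concave chord function $g(t)=\fe^\beta(\xi+t(e_1-e_2))$ inside $\Uset$, where the paper uses the two coordinate-direction functions $t\mapsto\fe^\beta(\xi+te_i)$; your route hinges on the identity $g'(0\pm)=\nabla\fe^\beta(\xi\pm)\cdot(e_1-e_2)$, which you flag as sign-convention bookkeeping but do not carry out — it does hold, by exactly the scaling computation behind Lemma \ref{euler} (e.g.\ $\fe^\beta(\xi+\e e_1)=(1+\e)\,g\bigl(\tfrac{\e\,\xi\cdot e_2}{1+\e}\bigr)$ gives $\nabla\fe^\beta(\xi+)\cdot e_1=\fe^\beta(\xi)+(\xi\cdot e_2)g'(0+)$, and similarly for $e_2$), so spell that out if you write this up. Two small attribution points: the identity $v\cdot\xi=\fe^\beta(\xi)$ for a general $v\in\partial\fe^\beta(\xi)$ is the second claim of Lemma \ref{lem:insub} (Lemma \ref{euler} covers only the extreme points $\nabla\fe^\beta(\xi\pm)$), and to pass from the two endpoints to the full segment in \eqref{item-c} you should invoke convexity of $\partial\fe^\beta(\xi)$, as the paper does.
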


\begin{proof}
Fix $h \in \bbR^2$. Lemma \ref{lem:insub} and the identity
\begin{align*}
\fepl^\beta(h -t(e_1 + e_2)) = \fepl^\beta(h) - t
\end{align*}
imply that the unique $t$ for which $-h+t(e_1 + e_2) \in \partial \fe^\beta(\Uset)$ is given by $t=\fepl^\beta(h)$. This proves \eqref{item-d}.

Take $\xi,\xi',h,h'$ as in \eqref{item-b}. By Lemma \ref{lem:insub} we have $h\cdot \xi+\fe^\beta(\xi)=0$. Then 
	\[0 = \fepl^{\beta}(h')\ge h'\cdot\xi+\fe^\beta(\xi) = h\cdot\xi+\fe^\beta(\xi)+(h'-h)\cdot\xi=(h'-h)\cdot\xi.\] 
That is, $h\cdot\xi\ge h'\cdot\xi$. Similarly, $h\cdot\xi'\le h'\cdot\xi'$. Part \eqref{item-b} follows. 
For example, if 
%$h\cdot e_1=h'\cdot e_1$ then the two inequalities say that $h\cdot e_2\ge h'\cdot e_2$ and $h\cdot e_2\le h'\cdot e_2$, i.e.\ that $h\cdot e_2=h'\cdot e_2$.  If, on the other hand, 
$h>h'$ coordinatewise, then we would have $h\cdot\xi'>h'\cdot\xi'$, a contradiction. 
And now if we have at the same time $\xi\cdot e_1<\xi'\cdot e_1$ and $h\cdot e_1>h'\cdot e_1$, then 
we would have $h\cdot e_2<h'\cdot e_2$ and hence $0\ge(h-h')\cdot\xi'>(h-h')\cdot\xi\ge0$, again a contradiction.

Suppose that $h_n$ is a Cauchy sequence such that $-h_n\in\partial \fe^\beta(\Uset)$; let $h$ be its limit point in $\bbR^2$. By definition, there exist $\xi_n \in \Uset$ with $-h_n \in \partial \fe^\beta(\xi_n)$. Since the $\xi_n$ lie in a compact set, there is a further subsequence along which $\xi_n$ converges to some $\xi \in \Uset$. Since $\fe^\beta$ is continuous on $\Uset$, we may pass to the limit in (\ref{eq:superdef}) to see that $-h \in \partial \fe^\beta(\xi)$. This shows that $\partial\fe^\beta(\Uset)$ is closed as well as the last statement in
\eqref{item-a}.

Now suppose that $-h \in \partial \fe^{\beta}(\xi)$ for $\xi \in \ri\Uset$ and take $\e>0$. 
Let 
	\[h_\e=h+\e e_1-\fepl^{\beta}(h+\e e_1)(e_1 + e_2).\]
Then part \eqref{item-d} says that $-h_\e\in\partial\fe^\beta(\Uset)$. Furthermore, taking $n\to\infty$ in \eqref{1-Lip} implies that
	\[\abs{\fepl^\beta(h+\e e_1)}=\abs{\fepl^\beta(h)-\fepl^\beta(h+\e e_1)}\le \e.\]
Hence, $h_\e\cdot e_1=h\cdot e_1+\e-\fepl^{\beta}(h+\e e_1)$ satisfies
	\[h\cdot e_1\le h_\e\cdot e_1\le h\cdot e_1+2\e.\]
The first inequality must be strict for otherwise, if $h\cdot e_1=h_\e\cdot e_1$, then part \eqref{item-b} would imply that $h_\e\cdot e_2=h\cdot e_2$ which implies 
$\fepl^\beta(h+\e e_1)=0$ and hence $h_\e=h+\e e_1$. But then this would imply that $h_\e\cdot e_1>h\cdot e_1$,  
a contradiction.  A similar reasoning shows that
	\[h_{-\e}=h-\e e_1-\fepl^{\beta}(h-\e e_1)(e_1 + e_2)\]
satisfies $-h_{-\e}\in\partial\fe^\beta(\Uset)$ and
	\[h\cdot e_1-2\e\le h_{-\e}\cdot e_1<h\cdot e_1.\]
Furthermore, $\abs{h_{\pm\e}\cdot e_2-h\cdot e_2}=\abs{\fepl^\beta(h\pm\e e_1)}\le\e$ and part \eqref{item-b} implies that $h_\e\cdot e_2<h\cdot e_2$ and $h_{-\e}\cdot e_1>h\cdot e_1$.
Part \eqref{item-a} is thus proved.

For $\xi\in\ri\Uset$, Lemma \ref{lem:insub} along with the definition implies that $v\in\partial\fe^\beta(\xi)$ if and only if 
	\begin{align}\label{superdiff2}
		v \cdot \xi = \fe^{\beta}(\xi) \quad \text{ and }\quad v\cdot\zeta\ge\fe^\beta(\zeta)\quad\text{for all $\zeta\in\R_+^2$.}
	\end{align}
Consider the function $f(t)=\fe^\beta(\xi+t e_1)$, $t\ge -\xi\cdot e_1$. This is a concave function and its right-derivative $f'(0+)$ at $t=0$ exists and equals $\nabla\fe^\beta(\xi+)$. Concavity then implies that for all $t\ge-\xi\cdot e_1$
	\[f(t)-f(0)\le f'(0+)t.\]
This means that for $\zeta\in(\xi+\R e_1)\cap\R_+^2$ 
	\[\fe^\beta(\zeta)-\fe(\xi)\le(\zeta-\xi)\cdot\nabla\fe^\beta(\xi+).\]
Applying Lemma \ref{euler} this becomes
	\[\fe^\beta(\zeta)\le\zeta\cdot\nabla\fe^\beta(\xi_+)\quad\text{for }\zeta\in(\xi+\R e_1)\cap\R_+^2.\]
Homogeneity of $\fe^\beta$ extends this inequality to all of $\R_+^2$. This shows that $v=\nabla\fe^\beta(\xi+)$ satisfies
\eqref{superdiff2} and is hence in $\partial\fe^\beta(\xi)$. A similar proof works for $\nabla\fe^\beta(\xi-)$.

The definition of $\partial\fe^\beta(\xi)$ now implies that 
$[\nabla\fe^\beta(\xi-),\nabla\fe^\beta(\xi+)]\subset\partial\fe^\beta(\xi)$. On the other hand, if $v\in\partial\fe^\beta(\xi)$, then
for any $\e>0$ we have
	\[-\e v\cdot e_1\le\fe^\beta(\xi)-\fe^\beta(\xi+\e e_1).\]
This implies that $\nabla\fe^\beta(\xi+)\cdot e_1\le v\cdot e_1$. Similar inequalities work for $e_2$ in place of $e_1$ and also for $\nabla\fe^\beta(\xi-)$ and give us that $v\in[\nabla\fe^\beta(\xi+),\nabla\fe^\beta(\xi-)]$. The first claim in \eqref{item-c} is proved.  By concavity, existence of two-sided $e_1$ and $e_2$ directional derivatives at a point is equivalent to differentiability at that point; see \cite[Theorem 25.2]{Roc-70}. For $\alpha>0$, by homogeneity $\partial \fe^{\beta}(\alpha\xi) = \partial \fe^{\beta}(\xi)$; the second claim in \eqref{item-c} follows from the fact that (by concavity) $f$ defined above and the corresponding function for $e_2$ have  countably many points $t$ where the left and right derivatives disagree.
\end{proof}

\subsection{Existence and regularity of shift-covariant correctors}\label{sec:cocycles}

%First, let us describe the extended space on which we will construct the correctors. 
Fix a probability space $(\Omega, \sF,\bbP)$ as in Section \ref{sub:notation}.  Let $\sB_0$ be the union of $\{\infty\}$ and a dense countable subset of $(0,\infty)$. For $\beta\in(0,\infty]$ 
recall the limiting free energy $\fe^\beta$ from Section \ref{sec:fe} and
let $\sH^\beta=-\partial\fe^\beta(\Uset)$. 
Let $\sH^\beta_0$ be  a countable dense subset of $\sH^\beta$.
%with the property that if for some $\xi\in\ri\Uset$, $\partial \fe^\beta(\xi)$ does not consist of a single point, then (minus) both extreme points of $\partial \fe^\beta(\xi)$ are included in $\sH^\beta_0$. This is possible because there are at most countably many such points. 
Let $\sB_0 \times \sH^\bbullet_0 = \{(\beta,h) : \beta \in \sB_0 , h \in \sH^{\beta}_0\}$ and define $\sB_0\times\sH^\bbullet$ similarly.
Let $\widehat{\Omega} = \Omega \times \bbR^{\bbZ^2 \times \{1,2\}\times (\sB_0\times\sH^\bbullet_0) }$ be equipped with the product topology and product Borel $\sigma$-algebra, $\widehat{\sG}$. This space satisfies the conditions in Section \ref{sub:notation} if $\Omega$ does. 
Let $\That=\{\That_z:z\in\Z^2\}$  be the $\wh \sG$-measurable group of transformations 
that map $(\w, \{t_{x,i,\beta,h}:(x,i,\beta,h)\in\Z^2\times\{1,2\}\times(\cB_0\times\sH^\bbullet_0)\})$ to 
$(T_z\w, \{t_{x+z,i,\beta,h}:(x,i,\beta,h)\in\Z^2\times\{1,2\}\times(\cB_0\times\sH^\bbullet_0)\} )$.
Denote by $\pi_{\Omega}$ the projection map to the $\Omega$ coordinate. We will write $\w$ for $\pi_\Omega(\what)$ and 
the usual $\w_x$ for $\w_x(\w)$.

The next theorem furnishes the covariant, recovering cocycles used in \cite{Geo-Ras-Sep-17-ptrf-1,Geo-Ras-Sep-17-ptrf-2} without the condition $\P(\w_0\ge c)=1$ which was inherited from queueing theory; see \cite[(2.1)]{Geo-Ras-Sep-17-ptrf-1}. In \cite{Geo-Ras-Sep-17-ptrf-1} the authors also prove ergodicity of these cocycles.
As one can see from the proofs in this paper, ergodicity can be replaced by stationarity without losing the conclusions of \cite{Geo-Ras-Sep-17-ptrf-1}. We do not need ergodicity in the present project and so do not prove it here. These questions are addressed in our companion paper \cite{Jan-Ras-18-ecp-}.
%under the assumptions in this paper. 

Our construction of cocycles follows ideas from \cite{Dam-Han-14}. 
However, there is a novel technical difficulty stemming from the directedness of the paths, boiling down to a lack of uniform integrability of pre-limit Busemann functions. Essentially the same issue is resolved in the zero temperature queueing literature by an argument which relies on Prabhakar's \cite{Pra-03} rather involved result showing that ergodic fixed points of the corresponding 
$\acdot/G/1/\infty$ queue are attractive. Instead, we handle this problem by appealing to the variational formulas for the 
free energy derived in \cite{Geo-Ras-Sep-16}.

For a subset $I\subset\Z^2$ let	$I^{<}=\{x\in\Z^2:x\not\ge z\ \forall z\in I\}$.    

\begin{theorem}\label{thm:cocyexist}
There are a $\That$-invariant probability measure $\Phat$ on $(\wh \Omega, \wh \sG)$ and real-valued
measurable functions $B^{\beta,h+}(x,y,\what)$ and $B^{\beta,h-}(x,y,\what)$ of $(\beta,h,x,y,\what)\in(\sB_0\times\sH^\bbullet)\times\Z^2\times\Z^2\times\Omhat$
 such that:
\begin{enumerate}[label={\rm(\alph*)}, ref={\rm\alph*}]
\item\label{thm:cocyexist:a} For any event $A \in \sF$,
$\Phat(\pi_{\Omega}(\wh \w)\in A) = \bbP(A)$.
\item\label{thm:cocyexist:b} For any $I \subset \bbZ^2,$ the variables
\begin{align*}
\{(\w_x, B^{\beta,h\pm}(\wh\w,x,y)): x \in I,y\ge x,\beta\in\sB_0,h\in \sH^\beta\}	
\end{align*}
are independent of $\{\w_x : x \in I^{<}\}$.
\item\label{thm:cocyexist:c} For each $\beta\in\sB_0$, $h\in\sH^\beta$, and  $x,y\in\Z^2$,
$B^{\beta,h\pm}(x,y)$ are  integrable and 
\begin{align}\label{E[h(B)]}
\Ehat\bigl[ B^{\beta,h\pm}(x,x+e_i)\bigr] &= - h \cdot e_i.
\end{align}
\item\label{thm:cocyexist:d} There exists a $\That$-invariant event  $\Ommonohat$ with $\Phat(\Ommonohat)=1$ such that for each $\what\in\Ommonohat$,  $x,y,z\in\Z^2$, $\beta\in\sB_0$,   
$h \in \sH^\beta$, and $\epsilon\in\{-,+\}$ 
	\begin{align}
	&B^{\beta,h\epsilon}(x+z,y+z,\what)=B^{\beta,h\epsilon}(x,y,\That_z\what),\label{cov-prop}\\
	&B^{\beta,h\epsilon}(x,y,\what)+B^{\beta,h\epsilon}(y,z,\what)=B^{\beta,h\epsilon}(x,z,\what),
	\quad\text{and}\label{coc-prop}\\
	&e^{-\beta B^{\beta,h\epsilon}(x,x+e_1,\what)}+e^{-\beta B^{\beta,h\epsilon}(x,x+e_2,\what)}=e^{-\beta\w_x},\text{ if $\beta<\infty$,}\label{rec-prop1}\\
	&\min\bigl\{B^{\beta,h\epsilon}(x,x+e_1,\what),B^{\beta,h\epsilon}(x,x+e_2,\what)\bigr\}=\w_x,\text{ if $\beta=\infty$.}\label{rec-prop2}
	\end{align}
\item\label{thm:cocyexist:e} For each $\what\in\Ommonohat$, $x\in\Z^2$, $\beta\in\sB_0$, and $h,h' \in \sH^\beta$ with $h \cdot e_1 \leq h' \cdot e_1$, 
\begin{align}\label{mono}
\begin{split}
&B^{\beta,h-}(x,x+e_1,\what) \ge B^{\beta,h+}(x,x+e_1,\what)\\
&\qquad\ge B^{\beta,h'-}(x,x+e_1,\what)\ge B^{\beta,h'+}(x,x+e_1,\what)\quad\text{and} \\
&B^{\beta,h-}(x,x+e_2,\what) \le B^{\beta,h+}(x,x+e_2,\what)\\
&\qquad\le B^{\beta,h'-}(x,x+e_2,\what)\le B^{\beta,h'+}(x,x+e_2,\what).
\end{split}
\end{align} 
\item\label{thm:cocyexist:f} For each $\what\in\Ommonohat$, $\beta\in\sB_0$, $h\in\sH^\beta$, and $x,y\in\Z^2$, 
	\begin{align}\label{Busemann-limits}
	\begin{split}
	&B^{\beta,h-}(x,y,\what)=\lim_{\substack{\sH^\beta\ni h' \to h\\ h' \cdot e_1 \nearrow h \cdot e_1}}B^{\beta,h'\pm}(x,y,\what)\quad\text{and}\\	
	&B^{\beta,h+}(x,y,\what)=\lim_{\substack{\sH^\beta\ni h' \to h\\ h' \cdot e_1 \searrow h \cdot e_1}}B^{\beta,h'\pm}(x,y,\what).
	\end{split}
	\end{align}
When $B^{\beta,h+}(x,y,\what)=B^{\beta,h-}(x,y,\what)$ we drop the $+/-$ and 
write $B^{\beta,h}(x,y,\what)$ and then for any $\epsilon\in\{-,+\}$
	\begin{align}\label{B:cont}
	\lim_{\sH^\beta\ni h' \to h} B^{\beta,h'\epsilon}(x,y,\what) = B^{\beta,h}(x,y,\what).
	\end{align}
\item\label{thm:cocyexist:g} For each $\beta\in\sB_0$ and $h \in \sH^\beta$ 
there exists an event \label{Omconthat}$\Omconthat{\beta,h}\subset\Ommonohat$ with $\Phat(\Omconthat{\beta,h})=1$ and 
%{\rm(}and hence \eqref{B:cont} holds{\rm)} 
for all $\what\in\Omconthat{\beta,h}$ and all $x,y \in \bbZ^2$
	\[B^{\beta,h+}(x,y,\what)=B^{\beta,h-}(x,y,\what)=B^{\beta,h}(x,y,\what).\] 
\end{enumerate}	
\end{theorem}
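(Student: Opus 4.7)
The plan is to adapt the subsequential weak-limit construction of Damron--Hanson \cite{Dam-Han-14} to the directed polymer setting (with the obvious zero-temperature modifications when $\beta=\infty$). On the original probability space $\Omega$, define for each $(\beta,h)\in\sB_0\times\sH_0^\beta$ the prelimit cocycles
$$\widetilde B_n^{\beta,h}(x,y,\w):=\log Z_{x,(n)}^{\beta,h}(\w)-\log Z_{y,(n)}^{\beta,h}(\w)-h\cdot(y-x),$$
replacing $\log Z$ by $G^h$ when $\beta=\infty$. From the one-step recursion $Z_{x,(n)}^{\beta,h}=e^{\beta\w_x}\bigl(e^{-\beta h\cdot e_1}Z_{x+e_1,(n)}^{\beta,h}+e^{-\beta h\cdot e_2}Z_{x+e_2,(n)}^{\beta,h}\bigr)$, valid for $x\cdot\hat e<n$, the prelimits satisfy exact analogues of the recovery identities \eqref{rec-prop1}--\eqref{rec-prop2}, an exact cocycle identity in $(x,y)$, and monotonicity in $h$ via the standard coupling of tilted polymer measures. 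They are also forward-measurable: when $y\ge x$, $\widetilde B_n^{\beta,h}(x,y)$ depends only on $\{\w_z:z\ge x\}$.

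\textbf{Weak limits on the extended space.} Consider the joint law $\mu_n$ on $\widehat\Omega$ of $(\w,\{\widetilde B_n^{\beta,h}(x,x+e_i,\w)\}_{(x,i,\beta,h)})$. A direct computation gives $\widetilde B_n^{\beta,h}(x+u,y+u,\w)=\widetilde B_{n-u\cdot\hat e}^{\beta,h}(x,y,T_u\w)$, so under $\That_u$ the prelimit shifts only by a bounded change in the index $n$. After establishing tightness of $\{\mu_n\}$ (the main technical point, discussed below), extract a subsequential weak limit $\Phat$ and pass to a Ces\`aro average in space to ensure $\Phat$ is genuinely $\That$-invariant. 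Item \eqref{thm:cocyexist:a} is immediate from the marginals; forward independence \eqref{thm:cocyexist:b} passes to the limit from the disjointness of the underlying weight blocks; the cocycle identity \eqref{coc-prop}, recovery identities \eqref{rec-prop1}--\eqref{rec-prop2}, and monotonicity \eqref{mono} are continuous algebraic identities in the coordinates and so pass to the weak limit, with covariance \eqref{cov-prop} then following from $\That$-invariance. The mean identity \eqref{E[h(B)]} combines the shape theorem \eqref{shape}, which gives $n^{-1}\log Z_{0,(n)}^{\beta,h}\to\fepl^\beta(h)=0$ (using Lemma \ref{lem:insub} together with $-h\in\partial\fe^\beta(\Uset)$), with uniform integrability of $\widetilde B_n^{\beta,h}(0,e_i)$ to conclude $\Ehat[B^{\beta,h}(0,e_i)]=-h\cdot e_i$.

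\textbf{The $\pm$ versions and continuity.} Having constructed cocycles $B^{\beta,h}$ on $\widehat\Omega$ for $h\in\sH_0^\beta$, extend to general $h\in\sH^\beta$ by monotone one-sided limits through $\sH_0^\beta$: define $B^{\beta,h+}(x,x+e_1)$ and $B^{\beta,h-}(x,x+e_1)$ as the limits as $h'\cdot e_1\searrow h\cdot e_1$ and $h'\cdot e_1\nearrow h\cdot e_1$ respectively, and symmetrically for the $e_2$ edge. Monotonicity \eqref{mono} guarantees existence and measurability, and covariance, cocycle, and recovery transfer by continuity of the defining identities, giving items \eqref{thm:cocyexist:d}--\eqref{thm:cocyexist:f}. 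For the continuity statement \eqref{thm:cocyexist:g}, dominated convergence backed by the uniform integrability of the previous step shows $\Ehat[B^{\beta,h-}(0,e_1)]=\Ehat[B^{\beta,h+}(0,e_1)]=-h\cdot e_1$; combined with the pointwise inequality $B^{\beta,h-}(0,e_1)\ge B^{\beta,h+}(0,e_1)$ from \eqref{mono}, the equal means force $\Phat$-almost-sure equality on the $e_1$-edge (and symmetrically on the $e_2$-edge), and the cocycle identity then extends equality to all $(x,y)$ on a single full-measure event $\Omconthat{\beta,h}$.

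\textbf{Main obstacle.} The principal difficulty is tightness of $\{\mu_n\}$ together with uniform integrability of $\widetilde B_n^{\beta,h}(0,e_i)$ in $n$. The Lipschitz bound \eqref{1-Lip} controls $\widetilde B_n^{\beta,h}$ well as a function of $h$ but not as a function of $n$, and the crude pointwise bound coming from recovery alone is insufficient. In the zero-temperature queueing approach of \cite{Geo-Ras-Sep-17-ptrf-1} this uniform integrability is obtained from Prabhakar's attractiveness theorem \cite{Pra-03}; in positive temperature I plan to use the variational representation for the point-to-line free energy from \cite{Geo-Ras-Sep-16} to bound $\E\bigl[\bigl|\widetilde B_n^{\beta,h}(0,e_i)\bigr|\bigr]$ uniformly in $n$ in terms of moments of $\w_0$. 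Combined with the $2+\varepsilon$ moment hypothesis \eqref{main-assump}, this closes the tightness argument and supplies the dominated-convergence input needed to identify the cocycle means.
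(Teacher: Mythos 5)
Your overall architecture matches the paper's: the same point-to-line prelimits $F_{x,(n)}^{\beta,h}-F_{x+e_i,(n)}^{\beta,h}-h\cdot e_i$, weak limit points of their joint law on the extended space, monotone one-sided limits through $\sH_0^\beta$ to define $B^{\beta,h\pm}$, and the equal-means-plus-monotonicity argument for part \eqref{thm:cocyexist:g}. The genuine gap is in how you propose to close the mean identity \eqref{E[h(B)]} and tightness. You plan to prove uniform integrability of $\widetilde B_n^{\beta,h}(0,e_i)$ in $n$ by using the variational formulas of \cite{Geo-Ras-Sep-16} to bound $\E\bigl[\abs{\widetilde B_n^{\beta,h}(0,e_i)}\bigr]$ uniformly. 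This is precisely the point the paper identifies as \emph{not} available: the directedness of the paths causes a lack of uniform integrability of the prelimit Busemann functions, and no uniform-in-$n$ moment bound on fixed-horizon increments is proved or needed. Note that even $L^1$-boundedness of the fixed-$n$ prelimits is unclear: $\E[\widetilde B_n^{\beta,h}(0,e_i)]=\E F_{0,(n)}^{\beta,h}-\E F_{0,(n-1)}^{\beta,h}-h\cdot e_i$ is an increment of a sequence that is only known to converge in the Ces\`aro sense, and an upper bound on $\widetilde B_n^{\beta,h}(0,e_1)$ reduces to controlling the transverse difference $F_{e_2,(n)}^{\beta,h}-F_{e_1,(n)}^{\beta,h}$ uniformly in $n$, which requires fluctuation/curvature input you do not have. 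For the same reason, your ordering (tightness of the fixed-$n$ laws first, Ces\`aro averaging afterwards only to restore $\That$-invariance) does not work as stated: the averaging over the horizon is what produces the $L^1$ bound in the first place.

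The paper's actual resolution is different and you should adopt it. First average over the horizon (a uniform random $N_n\in\{1,\dots,n\}$, equivalently a Ces\`aro average of the laws), so that the telescoping identity together with the one-sided bound $\wh B_n^{\beta,h}(x,x+e_i)\ge\w_x$ coming from recovery gives $\widetilde\E_n\bigl[\abs{\wh B_n^{\beta,h}(x,x+e_i)}\bigr]\le \tfrac1n\E F_{0,(n-x\cdot\ehat)}^{\beta,h}-\tfrac{n-x\cdot\ehat}{n}h\cdot e_i+2\E\abs{\w_0}$, which is bounded since $\fepl^\beta(h)=0$; this yields tightness and, by Fatou, only the one-sided inequality $\Ehat[B^{\beta,h}(0,e_i)]\le -h\cdot e_i$. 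The reverse inequality is the nontrivial step and is obtained not from prelimit moment bounds but from the \emph{limit} object: condition $B^{\beta,h}$ on $\mfS=\sigma(\w_x:x\in\Z^2)$, use Jensen to show the conditioned cocycle $\wt B^{\beta,h}$ still satisfies $e^{-\beta\wt B^{\beta,h}(0,e_1)}+e^{-\beta\wt B^{\beta,h}(0,e_2)}\le e^{-\beta\w_0}$, and then apply the cocycle variational formula \cite[Theorem 4.4]{Geo-Ras-Sep-16} in the i.i.d.\ (ergodic) setting together with the duality $-h\in\partial\fe^\beta(\xi)$ to get $-h\cdot\xi=\fe^\beta(\xi)\le-\hbar\cdot\xi$, forcing $\hbar=h$ since $\xi$ has strictly positive coordinates. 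With \eqref{E[h(B)]} established this way, your argument for \eqref{thm:cocyexist:g} (equal means of $B^{\beta,h-}$ and $B^{\beta,h+}$ plus the ordering \eqref{mono}) goes through via monotone convergence, without any appeal to uniform integrability.
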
%\med

\begin{remark}
The proofs of parts \eqref{thm:cocyexist:a} and (\ref{thm:cocyexist:c}-\ref{thm:cocyexist:f}) work word-for-word if 
the distribution of $\{\w_x(\w):x\in\Z^2\}$ induced by $\P$ is $T$-ergodic and $\w_0(\w)$ belongs to class $\mathcal L$, defined in \cite[Definition 2.1]{Geo-Ras-Sep-16}. 
\end{remark}

\begin{remark}
In the rest of the paper we will construct various full-measure events. By shift-invariance of $\P$ and $\Phat$, replacing any such event with the intersection of all its shifts we can assume these full-measure events to also be shift-invariant. This will be implicit in the proofs that follow.
\end{remark}

\begin{proof}[Proof of Theorem \ref{thm:cocyexist}]
For $\beta\in(0,\infty]$, $h\in\R^2$, $n\in\N$, $x\in\Z^2$, and $i\in\{1,2\}$  define 
	\[B_n^{\beta,h}(x,x+e_i)=F^{\beta,h}_{x,(n)}-F^{\beta,h}_{x+e_i,(n)}-h\cdot e_i\]
if $x\cdot\ehat<n$ and $B_n^{\beta,h}(x,x+e_i)= 0$ otherwise. A direct computation shows that if $x \cdot \ehat<n$ then
\begin{align}
&B_n^{\beta,h}(x,x+e_i)	= B_{n-x \cdot\ehat}^{\beta,h}(0,e_i) \circ T_x,\label{aux-cov}\\
&e^{-\beta \w_x} = 	e^{-\beta B_n^{\beta,h}(x,x+e_1)} + e^{-\beta B_n^{\beta,h}(x,x+e_2)},\quad\text{if }\beta<\infty,\text{ and}\label{aux-rec<infty}\\
&\w_x =\min\bigl(B_n^{\beta,h}(x,x+e_1), B_n^{\beta,h}(x,x+e_2)\bigr),\quad\text{if }\beta=\infty.\label{aux-rec=infty}
\end{align}
Moreover, if $n>x \cdot \ehat+1$, then
\begin{align}\label{aux-coc}
\begin{split}
&B_n^{\beta,h}(x,x+e_1) + B_n^{\beta,h}(x+e_1,x+e_1+e_2) \\
&\qquad = B_n^{\beta,h}(x,x+e_2) + B_n^{\beta,h}(x+e_2,x+e_1+e_2).
\end{split}
\end{align}

We also prove the following in Appendix \ref{sub:coupling}.

\begin{lemma}\label{lem:mono}
Suppose $n > x \cdot \ehat$ and that $h \cdot e_1 \le h' \cdot e_1$ and $h \cdot e_2 \ge h' \cdot e_2$. Then for each $\beta \in (0,\infty]$, each $n$, each $x \in \bbZ^2$, $\bbP$-almost surely
\begin{align}\label{eq:mono}
\begin{split}
&B_n^{\beta,h}(x,x+e_1) \geq B_n^{\beta,h'}(x,x+e_1)\quad\text{and}\\ 
&B_n^{\beta,h}(x,x+e_2) \leq B_n^{\beta,h'}(x,x+e_2).
\end{split}
\end{align}
\end{lemma}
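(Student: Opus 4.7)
The plan is a simultaneous induction on $k = n - x\cdot\ehat \ge 1$ that proves both inequalities in \eqref{eq:mono} at once.

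For the base case $k=1$ we have $x \in \V_{n-1}$, so $Z^{\beta,h}_{x+e_i,(n)} = 1$ and a direct computation gives
\begin{align*}
B^{\beta,h}_n(x, x+e_1) = \w_x + \tfrac{1}{\beta}\log\bigl(1 + e^{\beta(h\cdot e_2 - h\cdot e_1)}\bigr),
\end{align*}
together with the analogous formula (swap $e_1 \leftrightarrow e_2$) for $B^{\beta,h}_n(x,x+e_2)$. The $\beta = \infty$ version is obtained by replacing $\tfrac{1}{\beta}\log(1+e^{\beta t})$ with $t^+$. In either case, both expressions are manifestly monotone in $h \cdot e_2 - h \cdot e_1$ in exactly the required directions.

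For the inductive step at $k \ge 2$, write $a = B^{\beta,h}_n(x,x+e_1)$, $b = B^{\beta,h}_n(x,x+e_2)$, and let $a', b'$ be the corresponding quantities at $h'$. The key observation is that the recovery identities \eqref{aux-rec<infty}--\eqref{aux-rec=infty} determine $(a,b)$ from $\w_x$ and the single number $a - b$: for $\beta < \infty$ one solves $e^{-\beta a}+e^{-\beta b} = e^{-\beta \w_x}$ to get $a = \w_x + \tfrac{1}{\beta}\log(1+e^{\beta(a-b)})$, which is strictly increasing in $a-b$ while $b$ is strictly decreasing; for $\beta = \infty$, $a = \w_x + (a-b)^+$ and $b = \w_x + (b-a)^+$ are monotone in the same directions. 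Since $(a',b')$ satisfies the same identity with the same $\w_x$, both inequalities of \eqref{eq:mono} reduce to the single claim $a-b \ge a' - b'$.

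To establish this, I would invoke the plaquette cocycle identity \eqref{aux-coc} (which holds precisely when $k \ge 2$) in the form
\begin{align*}
a - b = B^{\beta,h}_n(x+e_2, x+\ehat) - B^{\beta,h}_n(x+e_1, x+\ehat),
\end{align*}
and likewise for $a'-b'$. The two terms on the right are cocycle increments at base points $x+e_1, x+e_2 \in \V_{x\cdot\ehat + 1}$, so the inductive hypothesis applies at each: the $e_1$-step from $x+e_2$ gives $B^{\beta,h}_n(x+e_2, x+\ehat) \ge B^{\beta,h'}_n(x+e_2, x+\ehat)$, and the $e_2$-step from $x+e_1$ gives $B^{\beta,h}_n(x+e_1, x+\ehat) \le B^{\beta,h'}_n(x+e_1, x+\ehat)$. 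Subtracting yields $a-b \ge a'-b'$ and closes the induction. The whole argument is essentially bookkeeping once one sees that the recovery--plaquette pair decouples the $h$-dependence at each step into the single scalar $a-b$; no curvature or regularity of $\fe^\beta$ is required, and both the finite-temperature and zero-temperature cases are handled uniformly.
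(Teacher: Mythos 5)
Your argument is correct, and it is genuinely different from the one in the paper. You run a backward induction on $k=n-x\cdot\ehat$ using only two deterministic algebraic facts: the recovery identities \eqref{aux-rec<infty}--\eqref{aux-rec=infty}, which let you parametrize the pair $\bigl(B_n^{\beta,h}(x,x+e_1),B_n^{\beta,h}(x,x+e_2)\bigr)$ monotonically by the single scalar $B_n^{\beta,h}(x,x+e_1)-B_n^{\beta,h}(x,x+e_2)$ once $\w_x$ is fixed, and the plaquette identity \eqref{aux-coc}, which expresses that scalar through increments rooted at $x+e_1,x+e_2$ where the inductive hypothesis applies; the base case is an explicit computation. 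The paper instead differentiates $B_n^{\beta,h}(x,x+e_j)$ in $h$, identifies $\partial_{h_i}B_n^{\beta,h}(x,x+e_j)$ with $\bfE\bigl[e_i\cdot(X_n^{x,\beta,h,\w}-X_n^{x+e_j,\beta,h,\w})\bigr]$ for the coupled tilted point-to-line polymer paths of Appendix \ref{sub:coupling}, and reads off the signs from the monotone coupling and planarity (Corollary \ref{cor:coupcoal}), obtaining $\beta=\infty$ by a limit. What your route buys: it needs no coupling construction, no differentiation of the free energy, it treats $\beta<\infty$ and $\beta=\infty$ uniformly, and it in fact yields \eqref{eq:mono} for every $\w$ (not merely almost surely), since both identities you invoke are pathwise. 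What the paper's route buys: it produces the derivative formula for $B_n^{\beta,h}$ as a by-product and leans on the RWRE coupling machinery that the paper needs elsewhere anyway, so no new tools are introduced. Both proofs are complete; yours is arguably the more elementary and self-contained one for this particular lemma.
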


Next, we employ an averaging procedure previously used by \cite{Dam-Han-14,Hof-08,Gar-Mar-05,Lig-85}, among others.
For each $n \in \bbN$, let $N_n$ be uniformly distributed on $\{1,\dots,n\}$ and independent of everything else. 
Let $\bfP_n$ be its distribution and abbreviate $\widetilde\P_n=\P\otimes\bfP_n$ with expectation $\wt\E_n$.
Define
\begin{align*}
\wh B_n^{\beta,h}(x,x+e_i) &= B_{N_n}^{\beta,h}(x,x+e_i).
\end{align*}
Then whenever $n > x \cdot \ehat$, 
\begin{align}
%&\bbE[\wh B_n^{\beta,h}(x,x+e_i)] =\frac{1}{n}\sum_{j=x\cdot(e_1+e_2)}^n \bbE[B_j^{\beta,h}(x,x+e_i)]-\frac{x\cdot(e_1+e_2)-1}n\, h\cdot e_i\\
\widetilde{\E}_n\bigl[\wh B_n^{\beta,h}(x,x+e_i)\bigr] 
%=\frac{1}{n}\sum_{j=x\cdot\ehat+1}^n \bbE\bigl[F^{\beta,h}_{x,(j)}-F^{\beta,h}_{x+e_i,(j)}\bigr]-h\cdot e_i\notag\\
%&\qquad
&= \frac{1}{n}\sum_{j=x\cdot\ehat+1}^n \bigl(\bbE\bigl[F_{0,(j-x\cdot\ehat)}^{\beta,h} - F_{0,(j-x\cdot\ehat-1)}^{\beta,h}\bigr] - h\cdot e_i\bigr)\notag\\
&= \frac{1}{n}\bbE\bigl[F_{0,(n-x\cdot\ehat)}^{\beta,h}\bigr] -  \left(\frac{n - x \cdot \ehat}{n}\right) h\cdot e_i.\label{for fatou}
\end{align}
By (\ref{aux-rec<infty}-\ref{aux-rec=infty}) we have $\wh B_n^{\beta,h}(x,x+e_i) \geq \w_x$ on the event 
$\{N_n > x \cdot \ehat\}$. On the complementary event we have $\wh B_n^{\beta,h}(x,x+e_i)=0$.
Whenever $n> x \cdot\ehat$,
\begin{align*}
&\widetilde{\E}_n\bigl[\abs{\wh B_n^{\beta,h}(x,x+e_i)}\bigr]
=\widetilde{\E}_n\bigl[\wh B_n^{\beta,h}(x,x+e_i)\bigr]-2\widetilde{\E}_n\bigl[\min\bigl(0,\wh B_n^{\beta,h}(x,x+e_i)\bigr)\bigr] \\
&\qquad\leq \frac{1}{n}\bbE\bigl[F_{0,(n-x\cdot\ehat)}^{\beta,h}\bigr]  - \left(\frac{n - x \cdot \ehat}{n}\right) h\cdot e_i + 2 \bbE[\abs{\w_0}].
\end{align*}
The first term converges to $\fepl^{\beta}(h)$, which equals zero if $h\in\sH^\beta$ by Lemma \ref{lem:insub}. Then the right-hand side is bounded by a finite constant $c(x,\beta,h)$. 
If we denote by $\bbP_n$ the law of
	\[\Bigl(\w,\bigl\{\wh B_n^{\beta,h}(x,x+e_i):  x\in \bbZ^2, i \in \{1,2\}, \beta\in\sB_0, h \in \sH^\beta_0\bigr\}\Bigr)\] 
induced by $\widetilde{\P}_n$ on $(\Omhat,\wh\sG)$, then the family $\{\bbP_n : n \in \bbN\}$ is tight.    Let $\Phat$ denote any weak subsequential limit point of this family of measures. 
$\Phat$ is then $\That$-invariant because of \eqref{aux-cov} and the $T$-invariance of $\P$. 
We prove next that such a measure satisfies all of the conclusions of the theorem. 

Let $B^{\beta,h}(x,x+e_i,\what)$ be the $(x,i,\beta,h)$-coordinate of $\what\in\Omhat$. Since inequalities \eqref{eq:mono} hold for every $n$ 
there exists an event $\Omtemphat'$ (which can be assumed to be $\That$-invariant) with $\Phat(\Omtemphat')=1$ such that 
for any $\beta\in\sB_0$, $h,h'\in\sH^\beta_0$ with $h \cdot e_1 \le h' \cdot e_1$, $x \in \bbZ^2$,
and $\what\in\Omtemphat'$,
\begin{align}\label{eq:mono2}
\begin{split}
&B^{\beta,h}(x,x+e_1,\what) \ge B^{\beta,h'}(x,x+e_1,\what)\quad\text{and}\\ 
&B^{\beta,h}(x,x+e_2,\what) \le B^{\beta,h'}x,x+e_2,\what).
\end{split}
\end{align}
Due to this monotonicity we can define
	\begin{align*}
	&B^{\beta,h-}(x,x+e_i,\what)=\lim_{h'\in\sH_0^\beta, h'\cdot e_1\nearrow h\cdot e_1}B^{\beta,h}(x,x+e_i)\quad\text{and}\\
	&B^{\beta,h+}(x,x+e_i,\what)=\lim_{h'\in\sH_0^\beta, h'\cdot e_1\searrow h\cdot e_1}B^{\beta,h}(x,x+e_i).
	\end{align*}
Then parts \eqref{thm:cocyexist:e} and \eqref{thm:cocyexist:f} come immediately.
	
Since \eqref{aux-coc} holds for every $n$ we get the existence of a $\That$-invariant event $\Omtemphat''\subset\Omtemphat'$ 
with $\Phat(\Omtemphat'')=1$ and
\begin{align}\label{aux-coc2}
\begin{split}
&B^{\beta,h}(x,x+e_1,\what) + B^{\beta,h}(x+e_1,x+e_1+e_2,\what) \\
&\qquad = B^{\beta,h}(x,x+e_2,\what) + B^{\beta,h}(x+e_2,x+e_1+e_2,\what)
\end{split}
\end{align}
for all $x\in\Z^2$, $\beta\in\sB_0$, $h\in\sH^\beta_0$, and $\what\in\Omtemphat''$.
This equality transfers to $B^{\beta,h\pm}$. 
Set $B^{\beta,h\pm}(x+e_i,x,\what)=-B^{\beta,h\pm}(x,x+e_i,\what)$ and for $x,y\in\Z^2$ and $\what\in\Omtemphat''$ 
	\[B^{\beta,h\pm}(x,y,\what)=\sum_{k=0}^{m-1} B^{\beta,h\pm}(x_k,x_{k+1},\what),\]
where $x_{0,m}$ is any path from $x$ to $y$ with $\abs{x_{k+1}-x_k}_1=1$. The sum does not depend on the path we choose,
due to \eqref{aux-coc2}. Property \eqref{coc-prop} follows.

For each $n$ and each $A \in \sF$, $\bbP_n(\pi_{\Omega}(\wh \w) \in A) = \bbP(\w \in A)$. Moreover, for each $n$ and each $I \subset \bbZ^2$, the family $\bigl\{\w_x, \wh B_n^{\beta,h}(x,x+e_i) : x \in I, \beta\in\sB_0,h \in \sH^\beta_0, i \in \{1,2\}\bigr\}$ is independent of 
$\{\w_x:x \in I^{<}\}$. These properties  transfer to $\Phat$ and parts \eqref{thm:cocyexist:a} and \eqref{thm:cocyexist:b} follow. 
%\P_n(O) converges to \Phat(O) for open sets and so \Phat=\P on open sets and that's enough

Again, since (\ref{aux-cov}-\ref{aux-rec=infty}) hold for each $n$, there exists a $\That$-invariant full $\Phat$-measure event 
\label{Ommonohat}$\Ommonohat\subset\Omtemphat''$ on which \eqref{cov-prop} and  (\ref{rec-prop1}-\ref{rec-prop2}) hold.
 \eqref{thm:cocyexist:d} is proved.

Recall \eqref{for fatou} and that the right-hand side converges to $\fe^\beta(h)-h\cdot e_i=-h\cdot e_i$.
We have also seen that
 	\begin{align*}
	\wh B_n^{\beta,h}(x,x+e_i) 
	\ge \w_x\one\{N_n>x\cdot\ehat\}.
	\end{align*}
Fatou's lemma then implies that $B^{\beta,h}(x,x+e_i)$ is integrable under $\Phat$ and
\begin{align}
\Ehat\bigl[B^{\beta,h}(x,x+e_i)\bigr] \leq - h \cdot e_i\quad\text{for }\beta\in\sB_0,h\in\sH^\beta_0.\label{eq:fatou}
\end{align}

The reverse inequality is the nontrivial step in this construction. We spell out the argument in the case $\beta < \infty$, with the $\beta = \infty$ case being similar. 

Let $\hbar=-\Ehat[B^{\beta,h}(x,x+e_1)]e_1-\Ehat[B^{\beta,h}(x,x+e_2)]e_2$,
$\mfS = \sigma(\w_x : x \in \bbZ^2)$, and define
$\wt{B}^{\beta,h}(x,x+e_i) = \Ehat[ B^{\beta,h}(x,x+e_i)\,|\mfS]$. Then $\wt B^{\beta,h}$ satisfies an equation like \eqref{aux-coc2} which we can use to define a cocycle $\wt B^{\beta,h}(x,y)$, $x,y\in\Z^2$.
Note that in general this cocycle will not recover the potential, even if $B^{\beta,h}$ does; it does however have the same mean vector $\hbar$ as $B^{\beta,h}$. By Jensen's inequality and recovery,
\begin{align}\label{for Timo}
\begin{split}
e^{-\beta \wt{B}^{\beta,h}(0,e_1)} + e^{-\beta \wt{B}^{\beta,h}(0,e_2)}
&\le\Ehat\bigl[e^{-\beta B^{\beta,h}(0,e_1)} + e^{-\beta B^{\beta,h}(0,e_2)}\,|\mfS\bigr]\\
&= \Ehat[ e^{-\beta \w_0}\, | \mfS] = e^{-\beta \w_0}.
\end{split}
\end{align}
For $h\in\sH^\beta$ let $\xi \in \ri \Uset$ be such that $-h \in \partial \fe^{\beta}(\xi)$. Having conditioned on $\mfS$, we are back in the canonical setting where 
$\wt B^{\beta,h}$ can be viewed as defined on the product space $\R^{\Z^2}$ with its Borel $\sigma$-algebra and an i.i.d.\ probability measure $\P_0^{\otimes\Z^2}$, where $\P_0$ is the distribution of $\w_0$ under $\P$.
This setting is ergodic. Apply the duality of $\xi$ and $h$, the variational formula of \cite[Theorem 4.4]{Geo-Ras-Sep-16}, and \eqref{for Timo} to obtain
\begin{align*}
-h \cdot \xi = \fe^{\beta}(\xi) \leq \P\text{-}\esssup_\w \frac{1}{\beta} \log \sum_{i=1,2} e^{\beta \w_0 - \beta \tilde{B}^{\beta,h}(0,e_i) - \beta \hbar \cdot \xi}
\le -\hbar \cdot \xi.
\end{align*}
This, inequality \eqref{eq:fatou}, and the fact that $\xi$ has positive coordinates imply $\hbar \cdot e_i = h \cdot e_i$ for $i =1,2$. In other words,
\begin{align*}
\Ehat\bigl[ B^{\beta,h}(x,x+e_i)\bigr] &= - h \cdot e_i\quad\text{for }\beta\in\sB_0,h\in\sH^\beta_0.
\end{align*}
Part \eqref{thm:cocyexist:c} follows from this, monotonicity \eqref{eq:mono2}, and the monotone convergence theorem.
Then part \eqref{thm:cocyexist:g} follows from monotonicity \eqref{mono} and the fact that for $i\in\{1,2\}$, $B^{\beta,h\pm}(x,x+e_i)$ have the same mean $\hbar$.
\end{proof}

It will be convenient to also define the process indexed by $\xi\in\ri\Uset$:
	\[B^{\beta,\xi\pm}(x,y)=B^{\beta,-\nabla\fe^\beta(\xi\pm)\pm}(x,y).\]

\begin{remark}\label{B-xi}
Parts (\ref{thm:cocyexist:b}-\ref{thm:cocyexist:f}) of Theorem \ref{thm:cocyexist} transfer to this process in the obvious way.
For example the first set of inequalities in \eqref{mono} becomes
\begin{align*}%\label{mono}
\begin{split}
&B^{\beta,\xi-}(x,x+e_1) \ge B^{\beta,\xi+}(x,x+e_1)
\ge B^{\beta,\zeta-}(x,x+e_1)\ge B^{\beta,\zeta+}(x,x+e_1)%\quad\text{and} \\
%&B^{\beta,\xi-}(x,x+e_2) \le B^{\beta,\xi+}(x,x+e_2)\le B^{\beta,\zeta-}(x,x+e_2)\le B^{\beta,\zeta+}(x,x+e_2)
\end{split}
\end{align*} 
for $\xi,\zeta\in\ri\Uset$ with $\xi\cdot e_1\le\zeta\cdot e_1$. 
\eqref{thm:cocyexist:g} becomes the following: for each $\beta\in\sB_0$ and $\xi\in\Diff^\beta$ there exists an event 
$\Omconthat{\beta,\xi}=\Omconthat{\beta,-\nabla\fe^\beta(\xi)}$ with $B^{\beta,\xi+}(x,y,\what)=B^{\beta,\xi-}(x,y,\what)=B^{\beta,\xi}(x,y,\what)$ for all $\what\in\Omconthat{\beta,\xi}$ and all $x,y\in\Z^2$.
\end{remark}

We will need two lemmas in what follows. 

\begin{lemma}\label{lm:tilt}
For each $\xi\in\ri\Uset$, there exists  an event \label{Omconsthat}$\Omconsthat{\xi+}$ such that $\Phat(\Omconsthat{\xi+})=1$ and $h(B^{\beta,\xi+})=-\nabla\fe^\beta(\xi+)$ on $\Omconsthat{\xi+}$ for all $\beta\in\sB_0$. A similar statement holds for $\xi-$.
\end{lemma}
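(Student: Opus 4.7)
The plan is to reduce to Lemma \ref{lm:h-xi}\eqref{lm:h-xi:c}, using the fact that the one-sided gradient $\nabla\fe^\beta(\xi+)$ is by construction an extreme point of the superdifferential $\partial\fe^\beta(\xi)$.

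Fix $\xi\in\ri\Uset$ and $\beta\in\sB_0$. By definition $B^{\beta,\xi+}=B^{\beta,h+}$ with $h=-\nabla\fe^\beta(\xi+)\in\sH^\beta$, so Theorem \ref{thm:cocyexist}\eqref{thm:cocyexist:c}--\eqref{thm:cocyexist:d} give a $\That$-invariant full-measure event on which $B^{\beta,\xi+}$ is a shift-covariant, $\beta$-recovering, $L^1(\Phat)$ cocycle with
\begin{align*}
\Ehat\bigl[B^{\beta,\xi+}(0,e_i)\bigr]=\nabla\fe^\beta(\xi+)\cdot e_i,\qquad i\in\{1,2\}.
\end{align*}
In particular $\Ehat[h(B^{\beta,\xi+})]=-\nabla\fe^\beta(\xi+)$. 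By Lemma \ref{lem:superdiffprop}\eqref{item-c}, $\nabla\fe^\beta(\xi+)$ is an extreme point of the line segment $\partial\fe^\beta(\xi)$, hence $-\Ehat[h(B^{\beta,\xi+})]\in\ext\partial\fe^\beta(\xi)$.

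Now apply Lemma \ref{lm:h-xi}\eqref{lm:h-xi:c} to $B=B^{\beta,\xi+}$: it yields a $\Phat$-full-measure event on which $h(B^{\beta,\xi+})=\Ehat[h(B^{\beta,\xi+})]=-\nabla\fe^\beta(\xi+)$. Since $\sB_0$ is countable, intersecting these events (and the underlying $\Ommonohat$) over $\beta\in\sB_0$ produces a single $\That$-invariant event $\Omconsthat{\xi+}$ of full $\Phat$-measure on which the claimed identity holds for every $\beta\in\sB_0$ simultaneously. The argument for $\xi-$ is identical, with $\nabla\fe^\beta(\xi-)$ replacing $\nabla\fe^\beta(\xi+)$ (which is the other extreme point of $\partial\fe^\beta(\xi)$).

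The only nonroutine step is verifying that the hypothesis of Lemma \ref{lm:h-xi}\eqref{lm:h-xi:c} is met, and this is exactly where the choice of the one-sided objects $B^{\beta,\xi\pm}$ pays off: by the correspondence $B^{\beta,\xi+}=B^{\beta,-\nabla\fe^\beta(\xi+)+}$ built into Remark \ref{B-xi}, the mean vector lands at an endpoint of $\partial\fe^\beta(\xi)$ rather than somewhere in its interior, which is precisely what allows us to conclude that $h(B^{\beta,\xi+})$ is deterministic. No ergodicity of $\Phat$ is needed — Lemma \ref{lm:h-xi}\eqref{lm:h-xi:c} already packages the extreme-point argument at the level of the invariant $\sigma$-algebra.
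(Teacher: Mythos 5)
Your proof is correct and follows the paper's own argument: the paper likewise deduces $-\Ehat[h(B^{\beta,\xi\pm})]=\nabla\fe^\beta(\xi\pm)$ from \eqref{E[h(B)]} and then invokes Lemma \ref{lm:h-xi}\eqref{lm:h-xi:c}, with the extreme-point fact supplied by Lemma \ref{lem:superdiffprop}\eqref{item-c}. Your additional remarks (countable intersection over $\sB_0$, no ergodicity needed) are accurate but routine.
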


\begin{proof}
By \eqref{E[h(B)]} we have $-\Ehat[h(B^{\beta,\xi\pm})]=\nabla\fe^\beta(\xi\pm)$. The claim then follows from Lemma \ref{lm:h-xi}\eqref{lm:h-xi:c}. 
\end{proof}

\begin{lemma}\label{lm:B-ei}
There exists a $\That$-invariant event \label{Omeihat}$\Omeihat$ so that for $\what\in \Omeihat$, $x \in \Z^2$, $\beta\in \sB_0\backslash\{\infty\}$, and $i \in \{1,2\}$,
\begin{align*}
\lim_{\ri\Uset\ni\xi \to e_i} B^{\beta,\xi \pm,\what}(x,x+e_i) = \w_x \quad\text{and}\quad\lim_{\ri\Uset\ni\xi \to e_i} B^{\beta,\xi \pm}(x,x+e_{3-i},\what) = \infty.
\end{align*}
\end{lemma}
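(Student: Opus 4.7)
The plan is to combine three ingredients: the monotonicity of $B^{\beta,\xi\pm}(x,x+e_i,\what)$ in $\xi \cdot e_1$ (Remark \ref{B-xi} coming from \eqref{mono}), the recovery relation \eqref{rec-prop1}, and the mean identity $\Ehat[B^{\beta,\xi\pm}(x,x+e_i)] = \nabla\fe^\beta(\xi\pm)\cdot e_i$ (following from \eqref{E[h(B)]} and Lemma \ref{lm:tilt} via Remark \ref{B-xi}). It suffices to treat the limit $\xi \to e_1$, the case $\xi \to e_2$ being symmetric.

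First I would fix $\what \in \Ommonohat$, $x \in \Z^2$, and $\beta \in \sB_0 \backslash \{\infty\}$. As $\xi \in \ri\Uset$ approaches $e_1$, equivalently $\xi \cdot e_1 \nearrow 1$, monotonicity gives that $B^{\beta,\xi\pm}(x,x+e_1,\what)$ is nonincreasing and $B^{\beta,\xi\pm}(x,x+e_2,\what)$ is nondecreasing, and recovery \eqref{rec-prop1} implies $B^{\beta,\xi\pm}(x,x+e_i,\what) \ge \w_x$ for $i=1,2$. Hence the pointwise limits
\begin{align*}
L_1(\what) &= \lim_{\ri\Uset \ni \xi \to e_1} B^{\beta,\xi\pm}(x,x+e_1,\what), \\
L_2(\what) &= \lim_{\ri\Uset \ni \xi \to e_1} B^{\beta,\xi\pm}(x,x+e_2,\what)
\end{align*}
exist in $[\w_x, +\infty]$ and, by monotonicity, do not depend on the approaching sequence.

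The main step is to show $L_1 = \w_x$ via a mean computation. The key input is $\nabla\fe^\beta(\xi\pm)\cdot e_1 \to \fe^\beta(e_1) = \E[\w_0]$ as $\xi\cdot e_1 \nearrow 1$. By $1$-homogeneity of $\fe^\beta$, $\nabla\fe^\beta$ is constant along rays, so one may rescale to reduce to the half-line $\{(t,1) : t > 0\}$, and as $\xi \to e_1$ the corresponding $t \to \infty$. The function $t \mapsto \fe^\beta(t,1)$ is concave with $\fe^\beta(t,1)/t \to \fe^\beta(e_1) = \E[\w_0]$ as $t \to \infty$ by continuity of $\fe^\beta$ on $\R_+^2$, and a standard concavity argument then forces its one-sided derivatives to converge monotonically to $\E[\w_0]$. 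With this in hand, since $B^{\beta,\xi\pm}(x,x+e_1) - \w_x \ge 0$ is nonincreasing in $\xi\cdot e_1$ with mean $\nabla\fe^\beta(\xi\pm)\cdot e_1 - \E[\w_0] \to 0$, monotone convergence yields $\Ehat[L_1 - \w_x] = 0$ and hence $L_1 = \w_x$ $\Phat$-almost surely. Passing to the limit in the recovery relation then forces $e^{-\beta L_2} = 0$, i.e.\ $L_2 = +\infty$ $\Phat$-almost surely.

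Finally I would intersect the resulting full-measure events over countable $x \in \Z^2$, $\beta \in \sB_0 \backslash \{\infty\}$, $i \in \{1,2\}$, and the two sign choices $\pm$ to produce a single $\That$-invariant set $\Omeihat$ on which all four limits hold. The main technical obstacle is the boundary convergence $\nabla\fe^\beta(\xi\pm)\cdot e_1 \to \E[\w_0]$; once this is resolved using $1$-homogeneity and concavity of $\fe^\beta$, the rest reduces to recovery and monotone convergence.
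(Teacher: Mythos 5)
Your proof is correct and follows the same skeleton as the paper's: monotonicity \eqref{mono} gives existence of the limits, recovery \eqref{rec-prop1} gives the lower bound $B^{\beta,\xi\pm}(x,x+e_i)\ge\w_x$ and reduces the second limit to the first, and the mean identity \eqref{E[h(B)]} (you do not actually need Lemma \ref{lm:tilt}, which concerns the conditional mean) turns the problem into showing $\nabla\fe^\beta(\xi\pm)\cdot e_i\to\E[\w_0]$ as $\xi\to e_i$. The one place you diverge is in how that boundary fact is justified: the paper gets $\inf_{\xi\in\ri\Uset}\nabla\fe^\beta(\xi\pm)\cdot e_i=\E[\w_0]$ by citing Lemma \ref{lem:polymermartin} (the Martin-type expansion $\fe^\beta(se_1+e_2)=\E[\w_0]+2\sqrt{s\Var(\w_0)}+o(\sqrt s)$), whereas you deduce it from $1$-homogeneity, concavity, and the continuity of $\fe^\beta$ on all of $\R_+^2$ asserted in Section \ref{sec:fe}: with $f(t)=\fe^\beta(te_1+e_2)$ one has $f(t)/t=\fe^\beta(e_1+t^{-1}e_2)\to\fe^\beta(e_1)=\E[\w_0]$, and the sandwich $\frac{f(2t)-f(t)}{t}\le f'(t\pm)\le\frac{f(t)-f(1)}{t-1}$ forces $f'(t\pm)\to\E[\w_0]$; since the superdifferential is constant along rays, $f'(t\pm)=\nabla\fe^\beta(\xi\pm)\cdot e_1$. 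This is sound and in fact slightly more economical, since only boundary continuity of the shape function is used rather than the finer $\sqrt s$ asymptotics — though the two routes are close in substance, as boundary continuity is exactly the kind of statement Lemma \ref{lem:polymermartin} refines. The remaining differences are cosmetic: you invoke monotone/dominated convergence where the paper simply uses $\Ehat[\inf_\xi(\cdot)]\le\inf_\xi\Ehat[\cdot]$, and both proofs finish by intersecting countably many full-measure events and their shifts to get a $\That$-invariant $\Omeihat$.
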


\begin{proof}
Suppose $\beta < \infty$ and take $\what\in\Ommono$. Then the claimed limits exist due to the above monotonicity. The second limit follows from the first by 
recovery (\ref{rec-prop1}-\ref{rec-prop2}). Recovery also implies that $B^{\beta,\xi\pm}(x,x+e_i,\what)-\w_x\ge0$. 
 But then
 	\begin{align*}
	0&\le\Ehat\Bigl[\lim_{\xi\to e_i} B^{\beta,\xi\pm}(x,x+e_i)-\w_x\Bigr]
	=\Ehat\Bigl[\inf_{\xi\in\ri\Uset} B^{\beta,\xi\pm}(x,x+e_i)-\w_x\Bigr]\\
	&\le\inf_{\xi\in\ri\Uset}\Ehat[B^{\beta,\xi\pm}(x,x+e_i)]-\E[\w_x]
	=\inf_{\xi\in\ri\Uset}\nabla\fe^\beta(\xi\pm)\cdot e_i-\E[\w_0]
	=0,
	\end{align*}
where the last equality follows from Lemma \ref{lem:polymermartin}.
\end{proof}

\begin{remark}
Lemma \ref{lm:B-ei} also holds when $\beta = \infty$. The proof in \cite[Lemma 5.1]{Geo-Ras-Sep-17-ptrf-2} does not use any of the additional hypotheses in that paper.
\end{remark}

As mentioned earlier, in the rest of the paper 
we assume $\beta=1$ and omit it from the notation. In particular, we write $\fe$ and $\sH$ instead of $\fe^1$ and $\sH^1$.

\subsection{Ratios of partition functions}
%Following similar steps to the proofs of (4.3) in \cite{Geo-etal-15} and Theorem 6.1 in \cite{Geo-Ras-Sep-17-ptrf-1} we obtain the following.
Following similar steps to the proofs of (4.3) of \cite{Geo-etal-15} and Theorem 6.1 in \cite{Geo-Ras-Sep-17-ptrf-1} we obtain the next theorem. 
Our more natural definition of the $B^{\xi\pm}$ processes makes the claim hold on one full-measure event, in contrast
with \cite{Geo-etal-15,Geo-Ras-Sep-17-ptrf-1} where the events depend on $\xi$.

\begin{theorem}\label{thm:Buselim1}
%Fix $\xi\in\ri\Uset$ such that $\Uset_{\xi-}=\Uset_{\xi+}=[\ximin,\ximax]$. 
%There exists a shift-invariant  event $\OmBushat$ such that $\Phat(\OmBushat)=1$ and for all $\what\in\OmBushat$, any  $x\in\Z^2$, and any {\rm(}possibly $\what$-dependent{\rm)} $\Uset_\xi$-directed sequence $x_n\in\V_n$:
There exists a shift-invariant event $\OmBusallhat$ such that $\Phat(\OmBusallhat)=1$ and  
for all $\what\in\OmBusallhat$, any {\rm(}possibly $\what$-dependent{\rm)} $\xi\in\ri\Uset$, $x\in\Z^2$, and $\Uset_\xi$-directed sequence $x_n\in\V_n$:
	\begin{align}\label{fixed-xi-lim}
	\begin{split}
	&e^{-B^{\ximin-}(x,x+e_1,\what)}\le \varliminf_{n\to\infty}\frac{Z_{x+e_1,x_n}}{Z_{x,x_n}} \le \varlimsup_{n\to\infty}\frac{Z_{x+e_1,x_n}}{Z_{x,x_n}} \le e^{-B^{\ximax+}(x,x+e_1,\what)}\ \text{and}\\
	&e^{-B^{\ximax+}(x,x+e_2,\what)}\le \varliminf_{n\to\infty}\frac{Z_{x+e_2,x_n}}{Z_{x,x_n}} \le \varlimsup_{n\to\infty}\frac{Z_{x+e_2,x_n}}{Z_{x,x_n}} \le e^{-B^{\ximin-}(x,x+e_2,\what)}.
	\end{split}
	\end{align}
\end{theorem}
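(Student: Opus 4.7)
The plan is to prove the upper $e_1$-bound $\limsup_n Z_{x+e_1,x_n}/Z_{x,x_n}\le e^{-B^{\ximax+}(x,x+e_1,\what)}$; the corresponding lower bound is symmetric, and the two $e_2$-bounds follow from the $e_1$-bounds via the identity $Z_{x+e_1,x_n}/Z_{x,x_n}+Z_{x+e_2,x_n}/Z_{x,x_n}=e^{-\w_x}$ combined with the recovery property \eqref{rec-prop1} of the cocycle.

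The argument will rest on three ingredients. First, writing $R(y):=Z_{x+e_1,y}/Z_{x,y}$, a standard non-crossing coupling (the FKG-type inequality $Z_{x+e_1,y'}Z_{x+e_2,y}\ge Z_{x+e_1,y}Z_{x+e_2,y'}$ for $y,y'\in\V_n$ with $y\cdot e_1<y'\cdot e_1$) will show that $y\mapsto R(y)$ is non-decreasing in $y\cdot e_1$ on $\V_n$. Second, a direct expansion gives the identity
\begin{align*}
e^{-B_n^h(x,x+e_1)}=\sum_{y\in\V_n,\,y\ge x+e_1}\mu_{h,n}(y)\,R(y),\qquad \mu_{h,n}(y):=\frac{e^{h\cdot(y-x)}Z_{x,y}}{Z_{x,(n)}^h},
\end{align*}
which exhibits $e^{-B_n^h(x,x+e_1)}$ as an $R$-weighted average. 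Third, the shape theorem \eqref{shape} together with $\fepl(h)=0$ for $h\in\sH$ (Lemma \ref{lem:insub}) forces $\mu_{h,n}$ to concentrate exponentially fast on $y/n\in\Uset_h$. Picking $\zeta\in\ri\Uset$ with $\zeta\cdot e_1>\ximax\cdot e_1$ strictly east of the flat edge $\Uset_\xi$ and $h\in\sH_0$ dual to $\zeta$, we therefore have $\mu_{h,n}(Y\cdot e_1<x_n\cdot e_1)\to0$ along any $\Uset_\xi$-directed sequence $x_n$, and monotonicity gives
\begin{align*}
R(x_n)\bigl(1-\mu_{h,n}(Y\cdot e_1<x_n\cdot e_1)\bigr)\le e^{-B_n^h(x,x+e_1)},
\end{align*}
so $\limsup_n R(x_n)\le\limsup_n e^{-B_n^h(x,x+e_1)}$.

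It then remains to transfer this prelimit bound, involving $B_n^h$ on $\Omega$, to the cocycle $B^{\zeta+}$ on $\Omhat$. Using the Ces\`aro construction of $\Phat$ in the proof of Theorem \ref{thm:cocyexist} together with the monotonicity \eqref{mono} of the cocycle family, one can upgrade the weak Ces\`aro convergence to the almost-sure inequality $\limsup_n e^{-B_n^h(x,x+e_1)}\le e^{-B^{\zeta+}(x,x+e_1)}$ on a $\Phat$-full event. Sending $\zeta\cdot e_1\searrow\ximax\cdot e_1$ along a countable dense subset of $\sH$ and invoking the right-continuity \eqref{Busemann-limits} will then close the upper bound; the lower bound follows by the symmetric argument with $\zeta\cdot e_1\nearrow\ximin\cdot e_1$. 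The event $\OmBusallhat$ will be taken to be the intersection of the full-measure events arising over this countable parameter set, securing the conclusion simultaneously for every (possibly $\what$-dependent) $\xi\in\ri\Uset$. I expect the main technical obstacle to be precisely this almost-sure upgrade from the Ces\`aro convergence of the prelimits $B_n^h$ to a pointwise inequality against $B^{\zeta+}$: the monotonicity-based inequality is pointwise in $\omega$, yet $B^{\zeta+}$ is built only as a weak subsequential limit on the extended space, so the step requires care with the measure-theoretic structure of $\Omhat$. This parallels the delicate step in the proofs of (4.3) of \cite{Geo-etal-15} and Theorem 6.1 in \cite{Geo-Ras-Sep-17-ptrf-1}.
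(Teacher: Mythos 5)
Your first three ingredients are sound: the monotonicity of $y\mapsto Z_{x+e_1,y}/Z_{x,y}$ along $\V_n$ is exactly \eqref{comparison}, the weighted-average identity for $e^{-B_n^h(x,x+e_1)}$ is correct, and the reduction of the $e_2$-bounds to the $e_1$-bounds via recovery is fine. The genuine gap is the step you yourself flag as the "main technical obstacle": upgrading the bound $\varlimsup_n R(x_n)\le\varlimsup_n e^{-B_n^h(x,x+e_1)}$ to $\varlimsup_n R(x_n)\le e^{-B^{\zeta+}(x,x+e_1,\what)}$ $\Phat$-a.s. The measure $\Phat$ is only a weak subsequential limit of the laws of $(\w,\wh B_{N_n}^{\beta,h})$; the coordinate $B^{h\pm}(x,y,\what)$ on $\Omhat$ is \emph{not} coupled pathwise to the prelimit sequence $B_n^h(x,y,\w)$, so there is no almost-sure statement relating $\varlimsup_n B_n^h$ (a function of $\w$ alone) to $B^{h+}(\cdot,\cdot,\what)$, and the monotonicity inputs \eqref{eq:mono} and \eqref{mono} are monotonicity in $h$ within the prelimit family and within the limit family respectively, never across the two. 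In fact an a.s. inequality $\varlimsup_n e^{-B_n^h(x,x+e_1)}\le e^{-B^{h+}(x,x+e_1)}$ (with its symmetric liminf companion) would essentially be the a.s. existence of point-to-line Busemann limits, i.e. a statement of the same strength as \eqref{Bus-lim-p2l-nice}, which the paper only obtains \emph{after} Theorem \ref{thm:Buselim1}, as a consequence of the uniqueness theory (Theorem \ref{th:main3} / Theorem \ref{thm:main:Bus}). So as written your plan is circular at the decisive step. The paper's proof avoids the prelimits entirely: for $\zeta\in\Ddense\subset\Diff$ (so $B^{\zeta+}=B^{\zeta-}=B^\zeta$ on $\Omconthat{\zeta}$) it builds the stationary north-east boundary partition function $\Zne_{x,y}$ of \eqref{Zne}, shows $\Zne_{x,y}=e^{B^\zeta(x,y)}$ because both solve the same recursion with the same boundary data, sandwiches $Z_{x+e_1,x_n}/Z_{x,x_n}$ against $\Zne$-ratios via \eqref{comparison}, and then kills the unwanted boundary contribution using the cocycle shape theorem \eqref{shape-B} together with a homogeneity/concavity estimate that needs only $\zeta\notin\Uset_\xi$. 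That stationary-boundary comparison is the missing idea, and it is precisely what replaces your "upgrade from Ces\`aro convergence".

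A secondary, fixable point: your concentration claim "$\mu_{h,n}(Y\cdot e_1<x_n\cdot e_1)\to0$" does not follow from the LDP when $\fe$ has a linear segment whose left endpoint is $\ximax$. In that case a $\zeta$ with $\zeta\cdot e_1>\ximax\cdot e_1$ can have $\zetamin=\ximax$, the rate function vanishes on all of $\Uset_h\ni\ximax$, and for $x_n/n\to\ximax$ the event $\{Y\cdot e_1<x_n\cdot e_1\}$ carries no exponential cost. You must instead take $h'\in\sH$ with $h'\cdot e_1$ strictly larger than $-\nabla\fe(\ximax+)\cdot e_1$; then Lemma \ref{lem:superdiffprop}\eqref{item-b} forces $\Uset_{h'}$ to lie strictly to the right of $\ximax$, the concentration holds, and the limit $h'\cdot e_1\searrow-\nabla\fe(\ximax+)\cdot e_1$ recovers $B^{\ximax+}$ via \eqref{Busemann-limits}. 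This repairs the geometric step, but it does not touch the prelimit-to-limit gap described above.
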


\begin{proof}
Let $\Ddense$ be a countable dense subset of $\Diff$. Let $\xi \in \ri \Uset$ and
	\[\what\in\OmBusallhat=\Ommonohat\cap\bigcap_{\zeta\in\Ddense}\Omconthat{\zeta}.\label{OmBusallhat}\]
First, consider $\bar x_n=\bar x_n(\xi)$ that is the (leftmost) closest point in $\V_n$ to $n\xi$. 
Then $\bar x_n/n\to\xi$ as $n\to\infty$.
Let $\zeta\in\Ddense$ be such that $\zeta\cdot e_1>\ximax\cdot e_1$. Since $\what\in\Omconthat{\zeta}$ we have $B^{\zeta\pm}=B^\zeta$.
For $x\in\V_k$, $y\in\V_\ell$, $k,\ell\in\Z$, and $x\le y$, define the point-to-point partition function
	\begin{align}\label{Zne}
	\Zne_{x,y}=\sum_{x_{k,\ell}\in\bbX_x^y} e^{\sum_{i=k}^{\ell-1}\wbar_{x_i}},
	\end{align}
where $\wbar_u=B^{\zeta}(u,u+e_i,\what)$ if $y-u\in\N e_i$, $i\in\{1,2\}$ and $\wbar_u= \w_u$ otherwise. 
%Recovery and the cocycle property of $B^{\zeta}$ imply that  $Z_{x,y}^{\beta,NE} =\text{exp}\{B^{\zeta}(x,y)\}$ for all $x \leq y$ because they satisfy the same recursion with the same boundary conditions on $y-x\in\N e_i$, $i\in\{1,2\}$.
%
For $x \leq y - e_1 - e_2$, use the cocycle property to rewrite the recovery property as
\begin{align*}
e^{- \omega_x}&=  e^{- B^{\zeta}(x,x+e_1)} + e^{- B^{\zeta}(x,x+e_2)} = e^{-\bigl(B^{\zeta}(x,y) - B^{\zeta}(x+e_1,y)\bigr)} + e^{-\bigl(B^{\zeta}(x,y) - B^{\zeta}(x+e_2,y)\bigr)}.
\end{align*}
This implies the recursion
\begin{align*}
e^{ B^{\zeta}(x,y)} &= e^{\omega_x} \Bigl[e^{ B^{\zeta}(x+e_1,y)} + e^{ B^{\zeta}(x+e_2,y)}\Bigr].
\end{align*}
$\Zne_{x,y}$, $x \leq y-e_1-e_2$, solves the same recursion with the same boundary conditions on  $y-x\in\N e_i$, $i\in\{1,2\}$ and therefore $\Zne_{x,y}=e^{B^{\zeta}(x,y)}$ for all $x \leq y$. 
Then
	\[\frac{\Zne_{x,\bar x_n+e_1+e_2}}{\Zne_{x+e_1,\bar x_n+e_1+e_2}}=e^{B^{\zeta}(x,x+e_1,\what)}.\]

For $v$ with $x\le v\le y$ let $\Zne_{x,y}(v)$ be defined as in \eqref{Zne} but with the sum being only over admissible paths that go through $v$.
Apply the first inequality in \eqref{comparison} with $\wtil$  such that 
$\w_y(\wtil)=\w_y$ for $y\le \bar x_n$,  $\w_y(\wtil)=B^{\zeta}(y,y+e_i,\what)$ 
for $y$ with $\bar x_n+e_1+e_2-y\in\N e_i$, $i\in\{1,2\}$, $v=\bar x_n$, and $u=\bar x_n+e_1$ to get 
	\begin{align}
	\frac{Z_{x,\bar x_n}}{Z_{x+e_1,\bar x_n}}
	&\ge
	\frac{\Zne_{x,\bar x_n+e_1+e_2}(\bar x_n+e_1)}{\Zne_{x+e_1,\bar x_n+e_1+e_2}(\bar x_n+e_1)}\notag\\
	&\ge
	\frac{\Zne_{x,\bar x_n+e_1+e_2}(\bar x_n+e_1)}{\Zne_{x,\bar x_n+e_1+e_2}}\cdot
	\frac{\Zne_{x,\bar x_n+e_1+e_2}}{\Zne_{x+e_1,\bar x_n+e_1+e_2}}\notag\\
	&=
	\frac{\Zne_{x,\bar x_n+e_1+e_2}(\bar x_n+e_1)}{\Zne_{x,\bar x_n+e_1+e_2}}\cdot
	e^{B^{\zeta}(x,x+e_1)}.\label{frac-aux}	
%     &=	
%	\Bigl(1-\frac{\Zne_{x,\bar x_n+e_1+e_2}(\bar x_n+e_2)}{\Zne_{x,\bar x_n+e_1+e_2}}\Bigr)\,
%	e^{B^{\hu(\zeta)}(x,x+e_1)}.	
	\end{align}

Using the shape theorems \eqref{shape} and \eqref{shape-B} and a standard argument, given for example in the proof 
of \cite[Lemma 6.4]{Geo-Ras-Sep-17-ptrf-1}, we have
	\begin{align*}
	\lim_{n\to\infty} n^{-1}\log\Zne_{x,\bar x_n+e_1+e_2}(\bar x_n+e_1)
	%&=\sup\{\fe(\xi_1,t)+(\xi_2-t)\nabla\fe(\zeta)\cdot e_2:0\le t\le\xi_2\}\\
	&=\sup\bigl\{\fe(\eta)+(\xi-\eta)\cdot\nabla\fe(\zeta):\eta\in[(\xi\cdot e_1)e_1,\xi]\bigr\}\quad\text{and}\\
	\lim_{n\to\infty} n^{-1}\log\Zne_{x,\bar x_n+e_1+e_2}(\bar x_n+e_2)
	%&=\sup\{\fe(s,\xi_2)+(\xi_1-s)\nabla\fe(\zeta)\cdot e_1:0\le s\le\xi_1\},
	&=\sup\bigl\{\fe(\eta)+(\xi-\eta)\cdot\nabla\fe(\zeta):\eta\in[(\xi\cdot e_2)e_2,\xi]\bigr\}.
	\end{align*}
%where we write $\xi=\xi_1 e_1+\xi_2 e_2$ and view $\fe$ as a function of the two coordinates.

By Lemma \ref{lem:superdiffprop}\eqref{item-b} $\nabla\fe(\zeta)\cdot e_1\le\nabla\fe(\xi)\cdot e_1$ and hence for $\eta\in[(\xi\cdot e_2)e_2,\xi]$, 
	\[(\xi-\eta)\cdot\nabla\fe(\zeta)\le(\xi-\eta)\cdot\nabla\fe(\xi)\le\fe(\xi)-\fe(\eta).\]
Thus, the second supremum in the above is achieved at $\eta=\xi$ and the limit is equal to $\fe(\xi)$. Set $\eta_0=(\xi\cdot e_1/\zeta\cdot e_1)\zeta\in[(\xi\cdot e_1)e_1,\xi]$. For $\eta\in[(\xi\cdot e_1)e_1,\xi]$
	\begin{align*}
	(\eta_0-\eta)\cdot\nabla\fe(\zeta)
	%&=(\eta_0\cdot e_2-\eta\cdot e_2)\nabla\fe(\zeta)\cdot e_2\\
	&=\frac{\xi\cdot e_1}{\zeta\cdot e_1}\Bigl(\zeta-\frac{\zeta\cdot e_1}{\xi\cdot e_1}\eta\Bigr)\cdot\nabla\fe(\zeta)\\
	&\le\frac{\xi\cdot e_1}{\zeta\cdot e_1}\Bigl(\fe(\zeta)-\frac{\zeta\cdot e_1}{\xi\cdot e_1}\fe(\eta)\Bigr)
		=\fe(\eta_0)-\fe(\eta).
	\end{align*}
Rearranging, we have $\fe(\eta)+(\xi-\eta)\cdot\nabla\fe(\zeta)\le\fe(\eta_0)+(\xi-\eta_0)\cdot\nabla\fe(\zeta).$ Hence, the first supremum is achieved at $\eta_0$. But if equality also held for $\eta=\xi$, then concavity of $\fe$ would imply that $\fe$ is linear on $[\eta_0,\xi]$ and hence on $[\zeta,\xi]$. 
%Pf: Recall that $\partial \fe(\eta) = \partial \fe(\alpha \eta)$ for $\alpha>0$. $\fe$ is linear on $[\eta_0,\xi]$ then there is a vector $-h$ which lies in the superdifferential of all of the vectors $\eta \in [\eta_0,\xi]$. But then for any such $\eta$, we have $-h \in \partial \fe(\eta/\eta \cdot \ehat)$ where $\eta/\eta\cdot \ehat \in \Uset$ and $\eta_0/\eta_0 \cdot \ehat = \zeta$. 
This cannot be the case since $\zeta\not\in\Uset_\xi$.  We therefore have
	\[\fe(\eta_0)+(\xi-\eta_0)\cdot\nabla\fe(\zeta)>\fe(\xi).\]
This implies that $\Zne_{x,\bar x_n+e_1+e_2}(\bar x_n+e_2)/\Zne_{x,\bar x_n+e_1+e_2}(\bar x_n+e_1)\to0$ 
as $n\to\infty$. Since $\Zne_{x,\bar x_n+e_1+e_2}=\Zne_{x,\bar x_n+e_1+e_2}(\bar x_n+e_1)+\Zne_{x,\bar x_n+e_1+e_2}(\bar x_n+e_2)$ we conclude that the fraction in \eqref{frac-aux} converges to $1$. Consequently,
	\[\varlimsup_{n\to\infty}\frac{Z_{x+e_1,\bar x_n}}{Z_{x,\bar x_n}}\le e^{-B^\zeta(x,x+e_1)}.\]
Taking $\zeta\to\ximax$ we get the right-most inequality in the first line of \eqref{fixed-xi-lim}. The other inequalities come similarly.

Next, we prove the full statement of the theorem, namely that  \eqref{fixed-xi-lim} holds for all sequences $x_n\in\V_n$, directed into $\Uset_\xi$.  To this end, take such a sequence and let $\eta_\ell,\zeta_\ell\in\ri\Uset$ be two sequences such that $\eta_\ell\cdot e_1<\ximin\cdot e_1\le\ximax\cdot e_1<\zeta_\ell\cdot e_1$, $\eta_\ell\to\ximin$, and $\zeta_\ell\to\ximax$. For a fixed $\ell$ and a large $n$ we have
	\[\bar x_n(\eta_\ell)\cdot e_1<x_n\cdot e_1<\bar x_n(\zeta_\ell)\cdot e_1
	\quad\text{and}\quad
	\bar x_n(\eta_\ell)\cdot e_2>x_n\cdot e_2>\bar x_n(\zeta_\ell)\cdot e_2.\]
Applying \eqref{comparison} we have
	\[\frac{Z_{x+e_1,\bar x_n(\eta_\ell)}}{Z_{x,\bar x_n(\eta_\ell)}}\le\frac{Z_{x+e_1,x_n}}{Z_{x,x_n}}\le\frac{Z_{x+e_1,\bar x_n(\zeta_\ell)}}{Z_{x,\bar x_n(\zeta_\ell)}}\,.\]
Take $n\to\infty$ and apply the already proved version of \eqref{fixed-xi-lim} for the sequences $\bar x_n(\eta_\ell)$ and $\bar x_n(\zeta_\ell)$ to get for each $\ell$
	\[e^{-B^{\etamin_\ell-}(x,x+e_1,\what)}\le \varliminf_{n\to\infty}\frac{Z_{x+e_1,x_n}}{Z_{x,x_n}} \le \varlimsup_{n\to\infty}\frac{Z_{x+e_1,x_n}}{Z_{x,x_n}} \le e^{-B^{\zetamax_\ell+}(x,x+e_1,\what)}.\]
Send $\ell\to\infty$ to get the first line of \eqref{fixed-xi-lim}. The second line is similar.
\end{proof}

\section{Semi-infinite polymer measures}\label{sec:semiDLR}
In this section we prove general versions of our main results on rooted solutions, starting with Lemma \ref{lm:nondeg}.

\begin{proof}[Proof of Lemma \ref{lm:nondeg}]
Fix $x\in\V_m$, $m\in\Z$. Suppose $\Pi_x$ is degenerate. By \eqref{Pi-cons}
there exist $y\ge x$ and $n\ge m$ with $y\in\V_n$ and $\Pi_x(X_n=y)=0$. 
Then for $v\ge y$ with $v\cdot\ehat=k$
	\[0=\Pi_x(X_n=y)\ge\Pi_x(X_n=y,X_k=v)=\Pi_x(X_k=v)\,Q^\w_{x,v}(X_n=y).\]
Hence, $\Pi_x(X_k=v)=0$. This means that 
	\[\Pi_x\{\forall n\ge m: X_n\cdot e_1\le y\cdot e_1\}+\Pi_x\{\forall n\ge m:X_n\cdot e_2\le y\cdot e_2\}=1.\]
Denote the first probability by $\alpha$. We will show that 
		\begin{align}\label{alpha}
		\begin{split}
		&\Pi_x\{\forall n\ge m:X_n=x+(n-m)e_2\}=\alpha\quad\text{and}\\ 
		&\Pi_x\{\forall n\ge m:X_n=x+(n-m)e_1\}=1-\alpha.
		\end{split}
		\end{align} 
If \eqref{alpha} holds, then $\Pi_x=\alpha\Pi_x^{e_2}+(1-\alpha)\Pi_x^{e_1}$.  
Let us now prove \eqref{alpha}.

If $\alpha=0$, then also $\Pi_x\{\forall n\ge m:X_n=x+(n-m) e_2\}=0=\alpha$. If, on the other hand, $\alpha>0$, then either again $\Pi_x\{\forall n\ge m:X_n=x+(n-m) e_2\}=\alpha$ or 
there exist $k\ge m$ and $v\ge x$ with $v\cdot e_1\in(0, y\cdot e_1]$ such that
	\begin{align}\label{contra}
	\Pi_x\{\forall n\ge k:X_n=v+(n-k)e_2\}=\delta\in(0,\alpha].
	\end{align}
Let $\ell=(v-x)\cdot e_2$. Then $v-x=(k-m-\ell)e_1+\ell e_2$. For any $n\ge k$
	\begin{align*}
	\delta
	&\le\Pi_x\{X_i=v+(i-k)e_2,\,k\le i\le n\}\\
	&=\Pi_x\{X_n=v+(n-k)e_2\}\,Q^\w_{x,v+(n-k)e_2}\{X_i=v+(i-k)e_2,\,k\le i\le n\}\\
	&\le\frac{Z_{x,v} e^{\sum_{i=k}^{n-1}\w_{v+(i-k)e_2}}}{Z_{x,v+(n-k)e_2}}\\
	&\le\frac{Z_{x,v} \exp\Bigl\{\sum_{i=k}^{n-1}\w_{v+(i-k)e_2}\Bigr\}}{\exp\Bigl\{\sum_{i=0}^{\ell+n-k-1}\w_{x+ie_2}\Bigr\}\exp\Bigl\{\sum_{i=0}^{k-m-\ell-1}\w_{x+(\ell+n-k)e_2+ie_1}\Bigr\}}\\
	&=\frac{Z_{x,v} \exp\Bigl\{\sum_{i=k}^{n-1}(\w_{v+(i-k)e_2}-\w_{x+(i+\ell-k)e_2})\Bigr\}}{\exp\Bigl\{\sum_{i=0}^{\ell-1}\w_{x+ie_2}\Bigr\}\exp\Bigl\{\sum_{i=0}^{k-m-\ell-1}\w_{x+(\ell+n-k)e_2+ie_1}\Bigr\}}\,.
	\end{align*}
	
Let \label{Omnondeg}$\Omnondeg$ be the intersection of the events 
	\[\biggl\{\exists n\ge k:\frac{\exp\Bigl\{\sum_{i=k}^{n-1}(\w_{v+(i-k)e_j}-\w_{x+(i+\ell-k)e_j})\Bigr\}}{\exp\Bigl\{\sum_{i=0}^{k-m-\ell-1}\w_{x+(\ell+n-k)e_j+ ie_{3-j}}\Bigr\}}\le e^{-r}\biggr\}\]
over all $x,v\in\Z^2$ such that $v\ge x$, $r\in\N$, $j\in\{1,2\}$, and integers $k\ge\ell=(v-x)\cdot e_j$ and $m=(v-x)\cdot(e_1+e_2)$.
The event $\Omnondeg$ has full $\P$-probability. Indeed, for each $r\in\N$
	\begin{align*}
%	&\P\biggl(\exists n\ge k:\frac{\exp\Bigl\{\sum_{i=k}^{n-1}(\w_{v+\epsilon(i-k)e_2}-\w_{x+\epsilon(i+\ell-k)e_2})\Bigr\}}{\exp\Bigl\{\sum_{i=0}^{k-\ell-1}\w_{x+\epsilon(\ell+n-k)e_2+\epsilon ie_1}\Bigr\}}\le e^{-r}\biggr)\\ %\label{aux0101}\\
%	&\qquad=\P\biggl(\exists n\ge0:\frac{\exp\Bigl\{\sum_{i=0}^{n-1}(\w_{e_1+ie_2}-\w_{ie_2})\Bigr\}}{\exp\Bigl\{\sum_{i=0}^{k-\ell-1}\w_{(i+2)e_1}\Bigr\}}\le e^{-r}\biggr)=1.%\notag
	&\P\biggl(\exists n\ge k:\frac{\exp\Bigl\{\sum_{i=k}^{n-1}(\w_{v+(i-k)e_j}-\w_{x+(i+\ell-k)e_j})\Bigr\}}{\exp\Bigl\{\sum_{i=0}^{k-m-\ell-1}\w_{x+(\ell+n-k)e_j+ ie_{3-j}}\Bigr\}}\le e^{-r}\biggr)\\ %\label{aux0101}\\
	&\qquad=\P\biggl(\exists n\ge0:\frac{\exp\Bigl\{\sum_{i=0}^{n-1}(\w_{e_1+ie_2}-\w_{ie_2})\Bigr\}}{\exp\Bigl\{\sum_{i=0}^{k-m-\ell-1}\w_{(i+2)e_1}\Bigr\}}\le e^{-r}\biggr)=1.%\notag
	\end{align*}
The first equality is because weights are i.i.d.\ and hence the distribution of the two ratios is the same. 
The second equality holds
because $\sum_{i=0}^{n-1}(\w_{e_1+ie_2}-\w_{ie_2})$ is a sum of i.i.d.\ centered 
nondegenerate random variables and hence has liminf $-\infty$.  
	
For $\w\in\Omnondeg$ we have
	\[\delta\le \frac{Z_{x,v} e^{-r}}{e^{\sum_{i=0}^{\ell-1}\w_{x+ie_2}}}\]
for all $r\in\N$. Taking $r\to\infty$ gives a contradiction. Therefore, \eqref{contra} cannot hold. The first equality in \eqref{alpha} is proved. The other one is similar.
\end{proof}

Since $\{\forall n\ge m:X_n=x+(n-m)e_{3-i}\}\subset\{\forall n\ge m: X_n\cdot e_i\le y\cdot e_i\}$, \eqref{alpha} implies 
that for any $\w\in\Omnondeg$, $x\in\V_m$, $m\in\Z$, $\Pi_x\in\DLR_x^\w$, $y\in x+\Z^2_+$, and $i\in\{1,2\}$ we have
	\begin{align}\label{cross}
	\Pi_x\Bigl\{\{\forall n\ge m: X_n\cdot e_i\le y\cdot e_i\}\setminus\{\forall n\ge m:X_n=x+(n-m)e_{3-i}\}\Bigr\}=0.
	\end{align}

\begin{lemma}\label{lm:DLR-MC}
Fix $\w\in\Omega$ and $x\in\Z^2$. Let $\Pi_x\in\DLR_x^\w$ be a nondegenerate solution.
Then $\Pi_x$ is a Markov chain with transition probabilities
	\begin{align}\label{pi-Pi}
	\pi^x_{y,y+e_i}(\w)=\frac{\Pi_x(y+e_i)\,Z_{x,y}\,e^{\w_y}}{\Pi_x(y)\,Z_{x,y+e_i}}\,,\quad y\ge x,i\in\{1,2\}.
	\end{align}
\end{lemma}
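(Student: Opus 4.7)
My plan is to derive both the Markov property of $\Pi_x$ and the explicit form of its transition kernel simultaneously, using only the Markov-consistency identity \eqref{Pi-cons} from Lemma \ref{lm:Pi-cons}. The key step is to apply \eqref{Pi-cons} at two consecutive levels: for an admissible path $x_{m,n+1}$ with $x_n=y$ and $x_{n+1}=y+e_i$, I would write out
\[
\Pi_x(X_{m,n+1}=x_{m,n+1}) = \Pi_x(X_{n+1}=y+e_i)\, Q^\w_{x,y+e_i}(X_{m,n+1}=x_{m,n+1})
\]
and
\[
\Pi_x(X_{m,n}=x_{m,n}) = \Pi_x(X_n=y)\, Q^\w_{x,y}(X_{m,n}=x_{m,n}),
\]
then divide. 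Since $Q^\w_{x,z}(x_{m,\ell}) = e^{\sum_{j=m}^{\ell-1}\w_{x_j}}/Z_{x,z}$, the common exponential weight $e^{\sum_{j=m}^{n-1}\w_{x_j}}$ along the shared portion of the path cancels, and the resulting ratio collapses exactly to the expression in \eqref{pi-Pi}. Because this ratio depends on the past only through $x_n=y$, the Markov property drops out of the same computation that identifies the transition probabilities.

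Nondegeneracy is what makes the preceding step rigorous: it guarantees $\Pi_x(y)>0$ for every $y\ge x$, so the conditioning event and every denominator in sight is strictly positive. This equivalence between \eqref{nondeg} and positivity of all one-site marginals is observed in the paragraph after \eqref{nondeg} and follows directly from \eqref{Pi-cons}. To finish the proof I would verify that $\pi^x_{y,y+e_1}+\pi^x_{y,y+e_2}=1$ by summing the identity $\Pi_x(X_n=y,X_{n+1}=y+e_i) = \pi^x_{y,y+e_i}(\w)\Pi_x(y)$ over $i$ and comparing to $\Pi_x(X_n=y)$, and check by a short induction on $n-m$ that the joint law $\Pi_x(X_{m,n}=x_{m,n})$ equals the Markov chain expression $\Pi_x(X_m=x)\prod_{k=m}^{n-1}\pi^x_{x_k,x_{k+1}}(\w)$.

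There is no substantial obstacle here; the lemma reduces to direct manipulation of \eqref{Pi-cons} together with the explicit form of $Q^\w_{x,z}$. The one conceptual point worth highlighting is that nondegeneracy is precisely the hypothesis needed to upgrade \eqref{Pi-cons}, which only describes the conditional law of a path given its endpoints, into a genuine Markov chain description with a well-defined transition kernel at every state $y\ge x$.
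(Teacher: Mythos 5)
Your proposal is correct and follows essentially the same route as the paper's proof: both divide the consistency identity \eqref{Pi-cons} at levels $n+1$ and $n$, cancel the common weight $e^{\sum_{j=m}^{n-1}\w_{x_j}}$, and observe that the resulting ratio $\Pi_x(y+e_i)\,Z_{x,y}\,e^{\w_y}/(\Pi_x(y)\,Z_{x,y+e_i})$ depends on the past only through $x_n=y$, which yields the Markov property and the kernel \eqref{pi-Pi} at once. Your additional checks (positivity of denominators via nondegeneracy, normalization, and the product formula) are fine but routine and are left implicit in the paper.
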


\begin{proof}%[Proof of Lemma \ref{lm:DLR-MC}]
Let $x\in\V_m$ and $y\in\V_n$, $n\ge m$. Fix an admissible path $x_{m,n}$ with $x_m=x$ and $x_n=y$. Compute for $i\in\{1,2\}$
	\begin{align*}
	\Pi_x(X_{n+1}=y+e_i\,|\,X_{m,n}=x_{m,n})
	%=\frac{\Pi_x(X_{m,n}=x_{m,n},X_{n+1}=y+e_i)}{\Pi_x(X_{m,n}=x_{m,n})}\\
	&=\frac{\Pi_x(X_{n+1}=y+e_i)\,Z_{x,y}\,e^{\sum_{i=m}^{n}\w_{x_i}}}{\Pi_x(X_n=y)\,Z_{x,y+e_i}\,e^{\sum_{i=m}^{n-1}\w_{x_i}}}\\
	=\frac{\Pi_x(X_{n+1}=y+e_i)\,Z_{x,y}\,e^{\w_y}}{\Pi_x(X_n=y)\,Z_{x,y+e_i}}
%	=\frac{\Pi_x(X_{n+1}=y+e_i)Q_{x,y+e_i}^\w(X_n=y)}{\Pi_x(X_n=y)}\\
%	%=\frac{\Pi_x(X_n=y,X_{n+1}=y+e_i)}{\Pi_x(X_n=y)}\\
	&=\Pi_x(X_{n+1}=y+e_i\,|\,X_n=y).\qedhere
%	=\pi^x_{y,y+e_i}(\w).\qedhere
%%	=\frac{\ddd\sum_{x_{0,n}:x_n=y}\Pi_x(X_{0,n}=x_{0,n},X_{n+1}=y+e_i)}{\ddd\sum_{x_{0,n}:x_n=y}\Pi_x(X_{0,n}=x_{0,n})}\\
%%	&\quad\ =\frac{\Pi_x(X_{n+1}=y+e_i)\,Z_{x,y}\ddd\sum_{x_{0,n}:x_n=y}e^{\sum_{i=0}^n\w_{x_i}}}{\Pi_x(X_n=y)\,Z_{x,y+e_i}\ddd\sum_{x_{0,n}:x_n=y}e^{\sum_{i=0}^{n-1}\w_{x_i}}}
%%	=\frac{\Pi_x(X_{n+1}=y+e_i)\,Z_{x,y}\,e^{\w_y}}{\Pi_x(X_n=y)\,Z_{x,y+e_i}}\\
	\end{align*}
\end{proof}

\begin{remark}
The above makes sense even for degenerate solutions. Transitions $\pi^x_{y,y+e_i}$ are then only defined at points $y\ge x$ that are reachable from $x$ with positive $\Pi_x$-probability, i.e.\ such that 
$\Pi_x(y)>0$. One can check that these transitions keep the chain within this class of %reachable 
points.
\end{remark}

Next, we relate nondegenerate DLR solutions in environment $\w$ and cocycles that recover the potential $\{\w_x(\w)\}$.

\begin{theorem}\label{th:B-DLR}
%\normalmarginpar\note{corresponds to Lm 4.1(a) of \cite{Geo-Ras-Sep-17-ptrf-2}, but also gives reverse direction!}
Fix $\w\in\Omega$ and $x\in\V_m$, $m\in\Z$. Then $\Pi_x$ is a nondegenerate DLR solution in environment $\w$ if, and only if, there exists a cocycle $\{B(u,v):u,v\ge x\}$ that satisfies recovery \eqref{beta-rec} 
and
	\begin{align}\label{Pi-B-cons}
	\Pi_x(X_{m,n}=x_{m,n})=e^{\sum_{k=m}^{n-1}\w_{x_k}-B(x,x_n)}
	\end{align}
for every admissible path $x_{m,n}$ starting at $x_m=x$.   This cocycle is uniquely determined by the formula
	\begin{align}\label{Pi-B}
	e^{-B(u,v)}=\frac{\Pi_x(v)}{\Pi_x(u)}\cdot\frac{Z_{x,u}}{Z_{x,v}},\quad u,v\ge x.
	\end{align}
It  satisfies
	\begin{align}\label{B-Z}
	e^{-B(x,y)}= E^{\Pi_x}\Bigl[\frac{Z_{y,X_n}}{Z_{x,X_n}}\Bigr]\quad\text{for all }y\ge x\text{ and }n\ge y\cdot\ehat\,.
	\end{align}
The transition probabilities of $\Pi_x$ are then given by
	\begin{align}\label{pi-B}
	\pi^x_{y,y+e_i}(\w)=e^{\w_y-B(y,y+e_i)},\quad y\ge x,i\in\{1,2\}.
	\end{align}
\end{theorem}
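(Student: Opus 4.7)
The plan is to prove both implications by showing that the cocycle $B$ is determined by the formula \eqref{Pi-B}, then verifying recovery, cocycle property, and the identity \eqref{Pi-B-cons} by direct computation using Lemma \ref{lm:Pi-cons} and Lemma \ref{lm:DLR-MC}. Since every point $y \ge x$ is accessible under a nondegenerate $\Pi_x$, the right-hand side of \eqref{Pi-B} is well-defined, and the cocycle property $B(u,v)+B(v,w)=B(u,w)$ is immediate from the telescoping product structure. Uniqueness of $B$ will follow because, inverting \eqref{Pi-B-cons} with the single-path case $n = u\cdot\ehat$ gives $\Pi_x(X_n = v) = Z_{x,v}e^{-B(x,v)}$, which forces $B(x,\,\cdot\,)$ and hence $B(u,v)=B(x,v)-B(x,u)$ to be whatever \eqref{Pi-B} prescribes.

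For the forward direction, I would start from Lemma \ref{lm:DLR-MC}, which says $\Pi_x$ is a Markov chain with transitions $\pi^x_{y,y+e_i}$ given by \eqref{pi-Pi}. Defining $B$ via \eqref{Pi-B}, the transition formula becomes exactly $\pi^x_{y,y+e_i}=e^{\w_y-B(y,y+e_i)}$, which yields \eqref{pi-B}. The normalization $\pi^x_{y,y+e_1}+\pi^x_{y,y+e_2}=1$, which holds because $\Pi_x$ is nondegenerate so $\Pi_x(y)>0$ and the chain must step somewhere, then rearranges to $e^{-B(y,y+e_1)}+e^{-B(y,y+e_2)}=e^{-\w_y}$, i.e.\ the recovery property \eqref{beta-rec}. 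Finally, \eqref{Pi-B-cons} drops out by combining \eqref{Pi-cons} of Lemma \ref{lm:Pi-cons} with $Q^\w_{x,x_n}(X_{m,n}=x_{m,n})=e^{\sum_{k=m}^{n-1}\w_{x_k}}/Z_{x,x_n}$ and the special case $u=x$ of \eqref{Pi-B}, which reads $\Pi_x(X_n=x_n)=Z_{x,x_n}\,e^{-B(x,x_n)}$ since $\Pi_x(X_m=x)=1$ and $Z_{x,x}=1$.

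For the reverse direction, given a cocycle $B$ satisfying recovery, I would define $\Pi_x$ by \eqref{Pi-B-cons} and check that it is a nondegenerate DLR measure. The main computation is to verify that $\Pi_x$ is a probability measure, which reduces to showing
\begin{equation*}
a_n(y):=\sum_{v\in\V_n,\,v\ge y} Z_{y,v}\,e^{-B(y,v)}=1\qquad \text{for all } y\le\V_n.
\end{equation*}
This I would prove by downward induction on $n-y\cdot\ehat$: the base case $y\in\V_n$ is trivial, and the inductive step uses the partition-function recursion $Z_{y,v}=e^{\w_y}(Z_{y+e_1,v}+Z_{y+e_2,v})$ together with the cocycle property to get $a_n(y)=e^{\w_y}\bigl(e^{-B(y,y+e_1)}a_n(y+e_1)+e^{-B(y,y+e_2)}a_n(y+e_2)\bigr)$, which equals $1$ by recovery. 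The same identity $a_n(y)=1$ then immediately yields \eqref{B-Z}, since
\begin{equation*}
E^{\Pi_x}\bigl[Z_{y,X_n}/Z_{x,X_n}\bigr]=\sum_{v\in\V_n,\,v\ge y}Z_{x,v}e^{-B(x,v)}\cdot\frac{Z_{y,v}}{Z_{x,v}}=e^{-B(x,y)}a_n(y)=e^{-B(x,y)}
\end{equation*}
after factoring out $e^{-B(x,y)}$ via the cocycle property. Nondegeneracy is then clear since $\Pi_x(v)=Z_{x,v}e^{-B(x,v)}>0$ for all $v\ge x$, and the DLR property follows from Lemma \ref{lm:Pi-cons} by matching $\Pi_x(X_{m,n}=x_{m,n})=e^{\sum\w-B(x,x_n)}$ against $\Pi_x(X_n=x_n)\cdot Q^\w_{x,x_n}(X_{m,n}=x_{m,n})$.

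I do not anticipate any substantial obstacle; every step is essentially bookkeeping once the correct definition \eqref{Pi-B} is in place. The only step requiring mild care is the inductive proof that $a_n(y)=1$, which is the mechanism by which recovery of $B$ encodes that $\Pi_x$ truly normalizes to a probability measure — this is the place where the hypothesis \eqref{beta-rec} on $B$ does the real work, and where conversely the recovery property drops out of the constraint that $\Pi_x$ be a probability measure in the forward direction.
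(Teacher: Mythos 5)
Your proposal is correct, and its forward direction is essentially the paper's argument: define $B$ by \eqref{Pi-B}, get the cocycle property from the telescoping/gradient form, obtain recovery from the fact that the one-step transition probabilities of the nondegenerate chain (Lemma \ref{lm:DLR-MC}) sum to one, and then deduce \eqref{pi-B} and \eqref{Pi-B-cons}; your uniqueness remark (summing \eqref{Pi-B-cons} over paths to a fixed endpoint forces $\Pi_x(v)=Z_{x,v}e^{-B(x,v)}$, hence $B$ is pinned down) matches the paper's use of \eqref{Pi-Z-B}.

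Where you genuinely diverge is the converse. The paper does not define $\Pi_x$ through the cylinder formula \eqref{Pi-B-cons}; it defines $\Pi_x$ as the law of the Markov chain with transitions $\pi^x_{y,y+e_i}=e^{\w_y-B(y,y+e_i)}$, which recovery makes into bona fide transition probabilities, so the existence of the probability measure and its consistency are automatic, and \eqref{Pi-B-cons}, \eqref{Pi-Z-B}, and the DLR property then follow; \eqref{B-Z} is obtained by recognizing $Z_{x,y}Z_{y,v}/Z_{x,v}=Q^\w_{x,v}(X_{y\cdot\ehat}=y)$ and invoking \eqref{Pi-cons}. You instead take \eqref{Pi-B-cons} as the definition and prove the normalization $a_n(y)=\sum_{v\in\V_n,\,v\ge y}Z_{y,v}e^{-B(y,v)}=1$ by induction on $n-y\cdot\ehat$, which then gives \eqref{B-Z} in one line without passing through the quenched point-to-point measures; that derivation of \eqref{B-Z} is arguably cleaner, and it also makes transparent that \eqref{B-Z} holds in the forward direction as well, since it only uses recovery, the cocycle property, and $\Pi_x(v)=Z_{x,v}e^{-B(x,v)}$. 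The one point to tighten in your route: to know that the cylinder prescription \eqref{Pi-B-cons} defines a measure on $(\X_x,\cXx)$ you need, besides level-by-level normalization $a_n(x)=1$, the Kolmogorov consistency across levels, i.e.\ $\Pi_x(x_{m,n})=\sum_{i}\Pi_x(x_{m,n}(x_n+e_i))$; this is exactly the identity $e^{\w_{x_n}}\bigl(e^{-B(x_n,x_n+e_1)}+e^{-B(x_n,x_n+e_2)}\bigr)=1$, i.e.\ recovery plus the cocycle property, so it is the same one-line computation as your inductive step, but it should be stated. The paper's Markov-chain construction gets this for free, which is the main thing its route buys; yours buys a more direct proof of \eqref{B-Z}.
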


When $\Pi_x$ is given we denote the corresponding cocycle by $B^{\Pi_x}(u,v)$. Conversely, when $B$ is given, we denote the corresponding DLR solution in environment $\w$ (that $B$ recovers) by $\Pi_x^{B}$. 

\begin{proof}[Proof of Theorem \ref{th:B-DLR}]
Given a nondegenerate solution $\Pi_x\in\DLR_x^\w$ define $B$ via \eqref{Pi-B}. Telescoping products check that this is a cocycle.  
To check the recovery property write $Q^\w_{x,u+e_i}(u) = Z_{x,u}e^{\w_u} (Z_{x,u+e_i})^{-1}$. Hence, 
	\begin{align*}
	&e^{-B(u,u+e_1)}+e^{-B(u,u+e_2)}\\
	&\quad=\frac{e^{-\w_u}}{\Pi_x(u)}\Bigl(\Pi_x(u+e_1)Q^\w_{x,u+e_1}(u)+\Pi_x(u+e_2)Q^\w_{0,u+e_2}(u)\Bigr)\\
	&\quad=\frac{e^{-\w_u}}{\Pi_x(u)}\Bigl(\Pi_x(u,u+e_1\in X_\bbullet)+\Pi_x(u,u+e_2\in X_\bbullet)\Bigr)=e^{-\w_u}.
	\end{align*}
\eqref{pi-B} follows from \eqref{pi-Pi} and \eqref{Pi-B} and then \eqref{Pi-B-cons} follows from \eqref{pi-B}, the Markov property of $\Pi_x$, and the cocycle property of $B$.

Conversely, given a cocycle $B$ that recovers the potential, define $\pi^x$ via \eqref{pi-B}. Recovery implies that $\pi^x$ are transition probabilities. Let $\Pi_x$ be the distribution of the Markov chain
with these transition probabilities. Again, \eqref{pi-B} and the cocycle property imply \eqref{Pi-B-cons}. 
In particular, $\Pi_x$ is not degenerate. 
For $y\ge x$ adding \eqref{Pi-B-cons} over all admissible paths from $x$ to $y$ gives
	\begin{align}\label{Pi-Z-B}
	\Pi_x(y)=Z_{x,y}e^{-B(x,y)}.
	\end{align}   
This and the cocycle property of $B$ imply \eqref{Pi-B}. Using $y=x_n$ 
and solving for $e^{-B(x,y)}$ in \eqref{Pi-Z-B} then plugging back into \eqref{Pi-B-cons} 
gives 
	\[\Pi_x(x_{m,n})=e^{\sum_{k=m}^{n-1}\w_{x_k}-B(x,x_n)}=\Pi_x(y)\frac{e^{\sum_{k=m}^{n-1}\w_{x_k}}}{Z_{x,y}}
 	=\Pi_x(y)\,Q^\w_{x,y}(x_{m,n}),\]
which says $\Pi_x$ is a DLR solution in environment $\w$.

Lastly, we prove \eqref{B-Z}. Let $k=y\cdot\ehat\ge m$. Then
\begin{align*}%\label{aux0001}
Z_{x,y}E^{\Pi_x}\Bigl[\frac{Z_{y,X_n}}{Z_{x,X_n}}\Bigr]
&=\sum_{\substack{v\ge y\\v\in\V_n}}\Pi_x(X_n=v)\frac{Z_{x,y}Z_{y,v}}{Z_{x,v}}\\
&=\sum_{\substack{v\ge y\\v\in\V_n}}\Pi_x(X_n=v)Q_{x,v}^\w(X_k=y)\\
&=\sum_{\substack{v\ge y\\v\in\V_n}}\Pi_x(X_k=y,X_n=v)
=\Pi_x(X_k=y).
\end{align*}

%	\begin{align}
%%	\pi^x_{y,y+e_i}(\w)
%%	&=\frac{\Pi_x(X_{m+1}=y+e_i)\,Z_{x,y}\,e^{\w_y}}{\Pi_x(X_m=y)\,Z_{x,y+e_i}}\notag\\
%%	&=\frac{Z_{x,y}\,e^{\w_y}\sum_{v\in x+D_n}\Pi_x(X_n=v)Q^\w_{x,v}(X_{m+1}=y+e_i)}{Z_{x,y+e_i}\sum_{v\in x+D_n} \Pi_x(X_n=v)Q^\w_{x,v}(X_m=y)}\notag\\
%%	&=\frac{Z_{x,y}\,e^{\w_y}\sum_{v\in x+D_n}\Pi_x(X_n=v)Z_{x,y+e_i}Z_{y+e_i,v}/Z_{x,v}}{Z_{x,y+e_i}\sum_{v\in x+D_n} \Pi_x(X_n=v)Z_{x,y}Z_{y,v}/Z_{x,v}}\notag\\
%%	&=\frac{e^{\w_y}\sum_{v\in x+D_n}\Pi_x(X_n=v)Z_{y+e_i,v}/Z_{x,v}}{\sum_{v\in x+D_n} \Pi_x(X_n=v)Z_{y,v}/Z_{x,v}}\notag\\
%	\Pi_x(X_{m,k}=x_{m,k})
%	&=\sum_{v\in x+D_n}\Pi_x(X_n=v)\,Q_{x,v}^\w(X_{m,k}=x_{m,k})\notag\\
%	&=\sum_{v\in x+D_n}\Pi_x(X_n=v)\frac{e^{\sum_{i=m}^{k-1}\w_{x_i}}\,Z_{y,v}}{Z_{x,v}}\notag\\
%	&=e^{\sum_{i=m}^{k-1}\w_{x_i}}E^{\Pi_x}\Bigl[\frac{Z_{y,X_n}}{Z_{x,X_n}}\Bigr].\label{aux0001}
%%	&=e^{\sum_{i=0}^{m-1}\w_{x_i}}E^{\Pi_x}\Bigl[\prod_{i=0}^{m-1}\frac{Z_{x_{i+1},X_n}}{Z_{x_i,X_n}}\Bigr]\notag\\
%	\end{align}
Then \eqref{B-Z} follows from this and \eqref{Pi-Z-B}.
The theorem is proved.
\end{proof}  

\begin{remark}
We can make sense of the above theorem even for degenerate solutions if we allow cocycle $B$ to take the value $\infty$. %See for example the proof of Theorem \ref{th:B-lim}.
\end{remark}

\begin{remark}
The above theorem gives the following interesting fact.  If $B_1(u,v)$ and $B_2(u,v)$, $u,v\ge x$, are two cocycles that recover the potential,  then for any $s\in[0,1]$
	\[B(x,y)=-\log\bigl( s e^{-B_1(x,y)} + (1-s) e^{-B_2(x,y)} \bigr)\quad\text{and}\quad B(u,v)=B(x,v)-B(x,u),\quad u,v\ge x\]
is also a cocycle that recovers the potential. This is in fact not limited to a convex combination of two recovering cocycles and works for any convex mixture of them.
\end{remark}    

%These correspond to 
%degenerate cocycles $B^{e_i}(x+ke_i,x+(k+1)e_i)=\w_{x+ke_i}$, $k\in\Z_+$, and $B^{e_i}(u,u+e_j)=\infty$ if $u\not\in x+\Z_+ e_i$ or $j=3-i$.
%Q^\w_{x,x+ke_i}(x_{0,n})=1
The DLR solutions that correspond to cocycles $B^{\xi\pm}$, $\xi\in\ri\Uset$, will play a key role in what follows. We will denote these by $\Pi_x^{\xi\pm,\what}$ and the corresponding transition probabilities by 
$\pi^{\xi\pm,\what}$. These transition probabilities do not depend on the starting point $x$. When $B^{\xi-}=B^{\xi+}=B^\xi$  we also write $\Pi_x^{\xi,\what}$ and $\pi^{\xi,\what}$. In addition to recovering the potential, the $B^{\xi\pm}$ cocycles are also $\That$-covariant when $\xi$ is deterministic.
We next show how these observations relate to the law of large numbers for the corresponding DLR solution.

%Given a configuration $\w$, a cocycle $B$ that recovers the weights $\w$, and a starting point $x\in\Z^2$ let $\Pi^{B,\what}_x$ be the Markov chain that starts at $x$ and has transition probabilities
%	\[\pi^B_{y,y+e_i}(\w)=e^{\w_y-B(y,y+e_i)},\quad y\in\Z^2,i\in\{1,2\}.\]
%%Then, 
%%	\[\Pi^{B,\what}_x(X_{0,n}=x_{0,n})=e^{\sum_{i=0}^{n-1}\w_{x_i}-B(x,x_n)}=Q_{x,x_n}^\w(X_{0,n}=x_{0,n})\,e^{-B(x,x_n)}.\]
%
%\begin{lemma}\label{B-DLR}
%\addmath{corresponds of Lm 4.1(a) of \cite{Geo-Ras-Sep-17-ptrf-2}}
%$\Pi^{B,\what}_x$ solves the DLR equations in environment $\w$. 
%\end{lemma}
%
%\begin{proof}
%Apply the Markov and cocycle properties to get
%%\Pi_x(X_{0,n}=x_{0,n})=\Pi_x(X_n=x_n)Q^\w_{x,x_n}(X_{0,n}=x_{0,n})=\frac{\Pi_x(X_n=x_n)\,e^{\sum_{i=0}^{n-1}\w_{x_i}}}{Z_{x,x_n}}.
%	\[\Pi^{B,\what}_x(X_{0,n}=x_{0,n})=\prod_{i=0}^{n-1}\pi^B_{x_i,x_{i+1}}(\w)=e^{\sum_{i=0}^{n-1}\w_{x_i}-B(x,x_n)}.\]
%Adding over all admissible paths $x_{0,n}$ from $x_0=x$ to $x_n=y$ gives
%	\begin{align}\label{Pi-B}
%	\Pi^{B,\what}_x(X_n=y)=Z_{x,y}\,e^{-B(x,y)}.
%	\end{align}
%Therefore
%	\[\frac{\Pi^{B,\what}_x(X_n=x_n)\,e^{\sum_{i=0}^{n-1}\w_{x_i}}}{Z_{x,x_n}}=e^{\sum_{i=0}^{n-1}\w_{x_i}-B(x,x_n)}=\Pi^{B,\what}_x(X_{0,n}=x_{0,n}).\qedhere\]
%%and we have shown \eqref{Pi-cons}.
%\end{proof}

\begin{theorem}\label{th:PiB-lln}
Let $B$ be an  $L^1(\Omhat,\Phat)$ $\That$-covariant cocycle that recovers the weights $(\w_x)$. There exists an event \label{OmBhat}$\OmBhat{B}\subset\Omhat$ such that $\Phat(\OmBhat{B})=1$ and for every $\what\in\OmBhat{B}$ and $x\in\Z^2$ the 
distribution of $X_n/n$ under $\Pi^{B(\what)}_x$ satisfies a large deviation principle with convex rate function $I_B(\xi)=-h(B)\cdot\xi-\fe(\xi)$, $\xi\in\Uset$.
Consequently, $\Pi_x^{B(\what)}$ is strongly directed into $\Uset_{h(B)}$.
%the interval of directions dual to $h(B)$:
%	\fixtext{define the supergradient $\partial\Lambda(\xi)$}
%	\[\{\xi\in\ri\Uset:h(B)\cdot\xi+\fe(\xi)=0\}=\{\xi\in\ri\Uset:-h(B)\in\partial\Lambda(\xi)\}.\] 
\end{theorem}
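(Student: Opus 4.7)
The plan is to exploit the explicit formula
\[\Pi_x^{B(\what)}(X_n = y) = Z_{x,y}\,e^{-B(x,y,\what)}, \qquad y \in \V_n \cap (x+\Z_+^2),\]
coming from \eqref{Pi-Z-B}, together with the point-to-point shape theorem \eqref{shape} and the cocycle shape theorem \eqref{shape-B} (Theorem \ref{thm:BuseShape}). I would let $\OmBhat{B}$ be the $\That$-invariant intersection over $x\in\Z^2$ of the corresponding full-$\Phat$-measure events, using $T$-invariance of $\P$, $\That$-invariance of $\Phat$, and the $\That$-invariance of $h(B)$ (as a conditional expectation with respect to the $\That$-invariant $\sigma$-algebra) to reduce each shift to the origin. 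On $\OmBhat{B}$, uniformly in $y/n$ ranging over any compact subset of $\Uset$,
\[\tfrac{1}{n}\bigl(\log Z_{x,y} - B(x,y,\what)\bigr) \;\longrightarrow\; \fe(y/n) + h(B)\cdot(y/n) = -I_B(y/n).\]
By Lemma \ref{lem:insub}, $\fepl(h(B)) = 0$, so $I_B \geq 0$ on $\Uset$; $I_B$ is continuous since $\fe$ is, and convex since $-\fe$ is convex and $-h(B)\cdot\xi$ is linear.

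For the LDP upper bound with closed $F \subset \Uset$, I would use
\[\Pi_x^{B(\what)}(X_n/n \in F) \;\leq\; (n - x\cdot\ehat+1)\max_{\substack{y \in \V_n \cap (x+\Z_+^2)\\ y/n \in F}} Z_{x,y}\,e^{-B(x,y,\what)},\]
since $|\V_n \cap (x+\Z_+^2)| = n - x\cdot\ehat + 1$. Applying $\tfrac{1}{n}\log$ and the uniform limit above yields $\limsup \tfrac{1}{n}\log \Pi_x^{B(\what)}(X_n/n \in F) \leq -\inf_{\xi \in F}I_B(\xi)$. For the lower bound with open $G \subset \Uset$, given $\xi \in G \cap \ri\Uset$ I would pick $y_n \in \V_n \cap (x+\Z_+^2)$ with $y_n/n \to \xi$ and $y_n/n \in G$ for large $n$; then $\tfrac{1}{n}\log\Pi_x^{B(\what)}(X_n = y_n) \to -I_B(\xi)$, and continuity of $I_B$ on $\Uset$ lets me pass from $G \cap \ri\Uset$ to $G$ in the infimum, establishing the LDP with rate function $I_B$.

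For the final consequence, Lemma \ref{lem:insub} identifies $\Uset_{h(B)} = \{\xi \in \Uset : I_B(\xi) = 0\}$; for any open neighborhood $U$ of this compact set, $c_U := \inf_{\xi \in \Uset \setminus U}I_B(\xi) > 0$ by continuity and compactness. The LDP upper bound then gives $\Pi_x^{B(\what)}(X_n/n \notin U) \leq e^{-nc_U/2}$ for $n$ large, which is summable, so Borel--Cantelli under $\Pi_x^{B(\what)}$ yields $\Pi_x^{B(\what)}$-almost sure eventual entry of $X_n/n$ into $U$. Intersecting over countably many shrinking neighborhoods of $\Uset_{h(B)}$ gives the strong directedness. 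The main technical point is coordinating the two shape theorems on a single $\what$-event that works simultaneously for all starting points $x \in \Z^2$; this is handled by combining $\That$-covariance of $B$, $\That$-invariance of $\Phat$, and the $\That$-invariance of $h(B)$ modulo a $\Phat$-null set.
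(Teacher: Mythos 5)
Your proposal is correct and follows essentially the same route as the paper's own (much terser) proof: the identity \eqref{Pi-Z-B} combined with the shape theorems \eqref{shape} and \eqref{shape-B} gives $n^{-1}\log\Pi_x^{B}(X_n=x_n)\to\fe(\xi)+h(B)\cdot\xi$, the LDP follows since $\V_n$ contains only polynomially many accessible sites, and Borel--Cantelli with strict positivity of $I_B$ off $\Uset_{h(B)}$ yields strong directedness. Your extra care with the lattice-point counting, the identification of the zero set via Lemma \ref{lem:insub}, and the shift-covariance argument producing a single full-measure event for all $x$ simply makes explicit what the paper leaves implicit.
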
 

\begin{proof}
From equation \eqref{Pi-Z-B} and the shape theorems \eqref{shape} and \eqref{shape-B} for the free energy and shift-covariant cocycles we get that $\Phat$-almost surely, 
for all $x\in\Z^2$, all $\xi\in\Uset$, and any sequence $x_n\ge x$ with $x_n\in\V_n$ and $x_n/n\to\xi$
	\[n^{-1}\log \Pi^{B}_x(X_n=x_n) = n^{-1}\log Z_{x,x_n} - n^{-1} B(x,x_n)\mathop{\longrightarrow}_{n\to\infty}\fe(\xi)+h(B)\cdot\xi.\]
The large deviation principle follows.   
Then Borel-Cantelli and strict positivity of $I_B$ off of $\Uset_{h(B)}$ imply the directedness claimed in the theorem.
%Directedness also follows.  Indeed, if we denote the interval of directions dual to $h(B)$ by $[\eta,\zeta]$, then we see that $\Pi^{B(\what)}_x\{X_n\cdot e_1\le n(\eta\cdot e_1-\e)\}$ decays exponentially fast for every $\e>0$ and the Borel-Cantelli lemma implies
%that $\varliminf n^{-1}X_n\cdot e_1\ge\eta\cdot e_1$, $\Pi_x^{B(\what)}$-almost surely. Similarly, we get $\varlimsup n^{-1}X_n\cdot e_1\le\zeta\cdot e_1$, $\Pi_x^{B(\what)}$-almost surely. 
\end{proof}

Next, couple $\Pi_x^{\xi\pm,\what}$, $x\in\Z^2$, $\xi\in\ri\Uset$, pathwise, as described in Section \ref{sub:coupling}. Denote the coupled up-right paths by $X^{x,\xi\pm,\what}_{m,\infty}$, $x\in\V_m$, $m\in\Z$, $\xi\in\ri\Uset$.
%Let $\Omdir$ be the full measure event from Theorem \ref{th:DLR-dir1}. \addmath{may need to intersect this with the event where monotonicity of $B^\xi_\pm$ holds}
%To this end, let $\{\unif(y):y\in\Z^2\}$ be i.i.d.\ Uniform$(0,1)$ random variables, with joint probability distribution $\bfP$ and corresponding expectation operator $\bfE$.
%Define admissible paths $X^{x,\xi\pm,\what}_{m,\infty}$ as follows.  For all $\xi\in\Uset$ and $x\in\V_m$, $m\in\Z$, set $X_m^{x,\xi\pm,\what}=x$. 
%For $k\ge m$ and $\xi\in\ri\Uset$ let 
%	\begin{align}\label{couple}
%	X^{x,\xi\pm,\what}_{k+1}=X^{x,\xi\pm,\what}_k+\begin{cases}e_1&\text{if }\unif(X^{x,\xi\pm,\what}_k)<\pi^{\xi\pm,\what}(X^{x,\xi\pm,\what}_k,X^{x,\xi\pm,\what}_k+e_1),\\e_2&\text{otherwise.}\end{cases}
%	\end{align}
When $B^{\xi-}=B^{\xi_+}=B^\xi$  we write $X^{x,\xi,\what}$.
For $i\in\{1,2\}$ set $X_k^{x,e_i\pm,\what}=X_k^{x,e_i,\what}=x+(k-m)e_i$, $k\ge m$.  
%For $\what\in\Omhat$, $x\in\Z^2$, and $\xi\in\Uset$, $\Pi_x^{\xi\pm,\what}$ is the distribution of $X^{x,\xi\pm,\what}$ under $\bfP$. 

When $\what\in\Ommonohat$, the event from Theorem \ref{thm:cocyexist} on which \eqref{mono} holds, and $x\in\V_m$, $m\in\Z$, paths $X^{x,\xi\pm,\what}$ are ordered: For any $\xi,\zeta\in\Uset$ with $\xi\cdot e_1<\zeta\cdot e_1$ and any $k\ge m$, 
	\begin{align}\label{path-order}
	X_k^{x,\xi-,\what}\cdot e_1\le X_k^{x,\xi+,\what}\cdot e_1\le X_k^{x,\zeta-,\what}\cdot e_1 \le X_k^{x,\zeta+,\what}\cdot e_1.
	\end{align}
%To see this, note that the above paths all start at the same point $x$, and then their moves are ordered due to 
%monotonicity \eqref{??} of $B^\xi_\pm$. 
%For example, say $X_k^{x,\xi-,\what}=X_k^{x,\xi+,\what}=y$ and $X_{k+1}^{x,\xi+,\what}=y+e_2$. This means $\unif(y)\ge\pi^{\xi+,\what}_{y,y+e_1}$.
%Then $B^{\xi-}(y,y+e_1)\ge B^{\xi+}(x,x+e_1)$ %and $B^\xi_-(y,y+e_2)\ge B^\xi_+(x,x+e_2)$ imply 
%implies that $\pi^{\xi-,\what}_{y,y+e_1}\le\pi^{\xi+,\what}_{y,y+e_1}\le \unif(y)$ and
%therefore $X_{k+1}^{x,\xi-,\what}=y+e_2$ as well.

\begin{theorem}\label{th:DLR-dir1}
There exists an event $\Omexisthat\subset\Omhat$ such that $\Phat(\Omexisthat)=1$ and 
for every $\what\in\Omexisthat$, $x\in\Z^2$,  and $\xi\in\ri\Uset$, 
$\Pi_x^{\xi\pm,\what}\in\DLR_x^\w$ and are, respectively, strongly $\Uset_{\xi\pm}$-directed.
%there exist $\Uset_{\xi-}$- and $\Uset_{\xi+}$-directed 
For $i\in\{1,2\}$, the trivial polymer measure $\Pi_x^{e_i}$ gives a DLR solution that is strongly $\Uset_{e_i}$-directed.
\end{theorem}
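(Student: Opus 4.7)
I would combine the existence result for DLR solutions from recovering cocycles (Theorem~\ref{th:B-DLR}) with the LLN for cocycle-generated measures (Theorem~\ref{th:PiB-lln}), and then extend the latter from a countable dense set of deterministic directions to all $\xi\in\ri\Uset$ using the pathwise monotonicity \eqref{path-order}.

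\emph{First, existence of the DLR solutions.} On the shift-invariant full-measure event $\Ommonohat$ from Theorem~\ref{thm:cocyexist}(\ref{thm:cocyexist:d}), the functions $B^{\xi\pm}(\cdot,\cdot,\what)$ are additive cocycles satisfying the recovery relation for every $\xi\in\ri\Uset$, so Theorem~\ref{th:B-DLR} produces the nondegenerate $\Pi_x^{\xi\pm,\what}\in\DLR_x^{\w}$ with transitions $\pi_{y,y+e_i}^{\xi\pm,\what}=e^{\w_y-B^{\xi\pm}(y,y+e_i,\what)}$. The trivial measures $\Pi_x^{e_i}$ manifestly satisfy \eqref{Pi-cons} and are $\{e_i\}=\Uset_{e_i}$-directed by Lemma~\ref{lem:polymermartin}, which handles the last sentence of the theorem.

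\emph{Second, directedness at a countable dense set of deterministic directions.} I would fix a countable dense $\Ddense\subset\Diff$ (possible since, by Lemma~\ref{lem:superdiffprop}(\ref{item-c}), $\ri\Uset\setminus\Diff$ is countable). For each $\zeta\in\Ddense$: on $\Omconthat{\zeta}$ one has $B^{\zeta-}=B^{\zeta+}=B^\zeta$ by Theorem~\ref{thm:cocyexist}(\ref{thm:cocyexist:g}); on $\Omconsthat{\zeta+}\cap\Omconsthat{\zeta-}$ Lemma~\ref{lm:tilt} gives $h(B^\zeta)=-\nabla\fe(\zeta)$; and Theorem~\ref{th:PiB-lln} applied to $B^\zeta$ furnishes a full-measure event $\OmBhat{B^\zeta}$ on which $\Pi_x^{\zeta,\what}$ is strongly $\Uset_{-\nabla\fe(\zeta)}=\Uset_\zeta$-directed for every $x\in\Z^2$. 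I would define $\Omexisthat$ as the intersection of $\Ommonohat$ with these events over all $\zeta\in\Ddense$; this countable intersection has full $\Phat$-measure.

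\emph{Third, squeezing to arbitrary $\xi$.} Working in the pathwise coupling of Section~\ref{sub:coupling}, \eqref{path-order} holds sample-pathwise. For $\what\in\Omexisthat$, $x\in\Z^2$, and any (possibly $\what$-dependent) $\xi\in\ri\Uset$, I would choose $\zeta,\eta\in\Ddense$ with $\zeta\cdot e_1<\xi\cdot e_1<\eta\cdot e_1$ and use \eqref{path-order} to get
\begin{align*}
X_n^{x,\zeta,\what}\cdot e_1\le X_n^{x,\xi-,\what}\cdot e_1\le X_n^{x,\xi+,\what}\cdot e_1\le X_n^{x,\eta,\what}\cdot e_1,
\end{align*}
yielding a.s.\ under the coupling $\liminf_n X_n^{x,\xi-,\what}\cdot e_1/n\ge\ximin_\zeta\cdot e_1$ and $\limsup_n X_n^{x,\xi+,\what}\cdot e_1/n\le\ximax_\eta\cdot e_1$. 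Sending $\zeta\cdot e_1\nearrow\xi\cdot e_1$ and $\eta\cdot e_1\searrow\xi\cdot e_1$ along $\Ddense$ should collapse these bounds onto $\Uset_{\xi-}$ and $\Uset_{\xi+}$. \emph{The main obstacle} is the endpoint convergence $\ximin_{\zeta_k}\cdot e_1\to\ximin_{\xi-}\cdot e_1$ (and symmetrically for $\eta_k$): if $\xi$ lies in the interior of a linear piece $L$ of $\fe$ this is automatic once $\zeta\in L$, while if $\xi$ is a point of one-sided strict concavity I would invoke a pigeonhole remark (only finitely many disjoint linear pieces of length $\ge\delta$ can fit in a bounded interval) to force $\zeta_k$ eventually into pieces of vanishing length, giving $\ximin_{\zeta_k}\cdot e_1\to\xi\cdot e_1=\ximin_{\xi-}\cdot e_1$.
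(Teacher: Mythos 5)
Your skeleton (build $\Pi_x^{\xi\pm,\what}$ from the recovering cocycles via Theorem~\ref{th:B-DLR}, get the LLN from Theorem~\ref{th:PiB-lln} together with Lemma~\ref{lm:tilt} on a countable collection of directions, then extend by the pathwise ordering \eqref{path-order}) is the same as the paper's. But there is a genuine gap in the third step: you only place \emph{differentiable} directions $\Ddense\subset\Diff$ into the countable index set and then try to reach every $\xi\in\ri\Uset$, including $\xi\notin\Diff$, by squeezing. At a non-differentiability point the two paths $X^{x,\xi-,\what}$ and $X^{x,\xi+,\what}$ are sandwiched between the \emph{same} family of comparison paths $X^{x,\zeta,\what}$, $\zeta\in\Ddense$, so the squeeze can never separate them: sending $\zeta\cdot e_1\nearrow\xi\cdot e_1$ gives $\liminf_n n^{-1}X_n\cdot e_1\ge\lim\zetamin\cdot e_1$, which equals $\ximinmin\cdot e_1$ (the left endpoint of $\Uset_{\xi-}$), not $\xi\cdot e_1$, whenever $\fe$ has a linear segment ending at $\xi$ on the left; symmetrically on the right. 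Since the theorem assumes nothing like \eqref{La-reg}, such configurations (a kink at $\xi$ adjacent to a linear piece) cannot be excluded, and in that case your argument only yields that \emph{both} $\Pi_x^{\xi-,\what}$ and $\Pi_x^{\xi+,\what}$ are strongly directed into $\Uset_{\xi-}\cup\Uset_{\xi+}$, which is strictly weaker than the claimed respective $\Uset_{\xi\pm}$-directedness (and that finer statement is exactly what feeds the non-uniqueness claim in Theorem~\ref{th:main1}). Your pigeonhole remark about endpoint convergence is fine, but it only helps where $\fe$ is strictly concave on the relevant side of $\xi$.

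The fix is what the paper does: take the countable set to be $\Udense=\bigl((\ri\Uset)\setminus\Diff\bigr)\cup(\text{countable dense subset of }\Diff)$ (this is countable by Lemma~\ref{lem:superdiffprop}\eqref{item-c}), include $\OmBhat{B^{\xi+}}\cap\Omconsthat{\xi+}\cap\OmBhat{B^{\xi-}}\cap\Omconsthat{\xi-}$ for every $\xi\in\Udense$ in the definition of $\Omexisthat$, and apply Theorem~\ref{th:PiB-lln} \emph{directly} to $B^{\xi\pm}$ at each such $\xi$: since $-\nabla\fe(\xi\pm)$ are extreme points of $\partial\fe(\xi)$, Lemma~\ref{lm:h-xi}\eqref{lm:h-xi:c} (via Lemma~\ref{lm:tilt}) pins down $h(B^{\xi\pm})=-\nabla\fe(\xi\pm)$ deterministically, and the rate function vanishes exactly on $\Uset_{\xi\pm}$, giving the respective directedness at every kink. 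The squeeze via \eqref{path-order} is then only needed for $\xi\in\Diff\setminus\Udense$, where $\Uset_{\xi-}=\Uset_{\xi+}=\Uset_\xi$ and no separation of the $+$ and $-$ measures is required; there your endpoint-convergence argument (and your choice of $\zeta,\eta$ inside or just outside the linear piece through $\xi$) goes through as in the paper.
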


\begin{proof}
Let $\Udense$ be a countable subset of $\ri\Uset$ that contains all of $(\ri\Uset)\setminus\Diff$ and a countable dense subset of $\Diff$.
Let 
	\[\Omexisthat=\Ommonohat\cap\bigcap_{\xi\in\Udense\cap\Diff}\Omconthat{\xi}\cap\bigcap_{\xi\in\Udense}\bigl(\OmBhat{B^{\xi+}}\cap\Omconsthat{\xi+}\cap\OmBhat{B^{\xi-}}\cap\Omconsthat{\xi-}\bigr).\label{Omexisthat}\]
When $\xi\in\Udense$ and $\what\in\OmBhat{B^{\xi+}}\cap\Omconsthat{\xi+}$ Lemma \ref{lm:tilt} says $h(B^{\xi+})=-\nabla\fe(\xi+)$ and then
Theorem \ref{th:PiB-lln} says that $\Pi_x^{\xi+,\what}$ is strongly $\Uset_{\xi+}$-directed, for all $x\in\Z^2$. A similar argument works for $\Pi_x^{\xi-,\what}$.

Now fix $\xi\in(\ri\Uset)\setminus\Udense$ and $\what\in\Omexisthat$. 
If $\xi\cdot e_1<\ximax\cdot e_1$ then pick $\zeta\in\Udense\cap\Diff$ such that $\xi\cdot e_1<\zeta\cdot e_1<\ximax\cdot e_1$.
Then $\ximax=\zetamax$. 
The ordering of paths \eqref{path-order} implies that $X_k^{x,\xi+,\what}\cdot e_1\le X_k^{x,\zeta,\what}\cdot e_1$ for all $k\ge m$ (there is no need for the $\pm$ distinction for $\zeta\in\Udense\cap\Diff$).
Since the distribution of the latter path is $\Pi_x^{\zeta,\what}$ and it is strongly $\Uset_\zeta$-directed, we deduce that 
	\[\varlimsup_{n\to\infty}n^{-1}X_n\cdot e_1\le\zetamax\cdot e_1=\ximax\cdot e_1,\quad\Pi_x^{\xi+,\what}\text{-almost surely}.\]

If $\xi\in(\ri\Uset)\setminus\Udense$ is such that $\xi=\ximax$, then let $\e>0$ and pick $\zeta\in\Udense\cap\Diff$ such that 
$\xi\cdot e_1<\zeta\cdot e_1\le\zetamax\cdot e_1<\xi\cdot e_1+\e=\ximax\cdot e_1+\e$.
This is possible because $\nabla\fe(\zeta)$ converges to but never equals $\nabla\fe(\xi)$ as $\zeta\cdot e_1\searrow\xi\cdot e_1$.
(Note that $\xi\in\Diff$.)
The same ordering argument as above implies
	\[\varlimsup_{n\to\infty}n^{-1}X_n\cdot e_1\le\zetamax\cdot e_1\le\ximax\cdot e_1+\e,\quad\Pi_x^{\xi+,\what}\text{-almost surely}.\]
Take $\e\to0$. Similarly, 
	\[\varliminf_{n\to\infty}n^{-1}X_n\cdot e_1\ge\ximin\cdot e_1,\quad\Pi_x^{\xi-,\what}\text{-almost surely}.\]
Appealing once again to the path ordering, we see now that both $\Pi_x^{\xi\pm,\what}$ are strongly directed into $\Uset_\xi$.  
Since $\xi\in\Diff$ we have $\Uset_{\xi+}=\Uset_{\xi-}=\Uset_\xi$. The theorem is proved.
\end{proof}

\begin{proof}[Proof of Theorem \ref{th:main1}]
Recall the set $\Udense$ from the proof of Theorem \ref{th:DLR-dir1}. %that $(\ri\Uset)\setminus\Diff\subset\Udense$. Hence, 
When $\xi\in(\ri\Uset)\setminus\Diff$ and $\what\in\Omexisthat$, $\what\in\Omconsthat{\xi+}\cap\Omconsthat{\xi-}$ and we have by Lemma \ref{lm:tilt}
	\begin{align}\label{diff-mean}
	\Ehat[B^{\xi-}(0,e_1)\,|\,\sI]=e_1\cdot\nabla\fe(\xi-)>e_1\cdot\nabla\fe(\xi+)=\Ehat[B^{\xi+}(0,e_1)\,|\,\sI].
	\end{align}
By the ergodic theorem there exists a full $\Phat$-measure event $\Omtemphat'''$ %\subset\Omexisthat\cap\bigcap_{\xi\in(\ri\Uset)\setminus\Diff}(\Omconsthat{\xi+}\cap\Omconsthat{\xi-})$ 
such that 
for each $\what\in\Omtemphat'''$, $\xi\in(\ri\Uset)\setminus\Diff$, and $x\in\Z^2$ there is a $y\ge x$ such that $B^{\xi-}(y,y+e_1)\ne B^{\xi+}(y,y+e_1)$. This implies $\Pi_x^{\xi-,\what}\ne\Pi_x^{\xi+,\what}$. 

Recall the projection $\pi_\Omega$ from $\Omhat$ onto $\Omega$. 
There exists a family of regular conditional distributions $\mu_\w(\acdot)=\Phat(\acdot\,|\,\pi_\Omega^{-1}(\w))$ and a Borel set 
\label{Omreg}$\Omreg\subset\Omega$ such that $\P(\Omreg)=1$ and for every $\w\in\Omreg$, 
$\mu_\w(\pi_\Omega^{-1}(\w))=1$. See Example 10.4.11 in \cite{Bog-07}.
Since
	%\begin{align}\label{Omexisthat}
	\[\int\mu_\w(\Omtemphat''')\,\P(d\w)=\Phat(\Omtemphat''')=1\]
	%\end{align}
we see that $\mu_\w(\Omtemphat''')=1$, $\P$-almost surely. 
Set
	\begin{align}\label{eq:Omexist}
	\Omexist=\Omreg\cap\bigl\{\w\in\Omega:\mu_\w(\Omtemphat''')=1\bigr\}.
	\end{align}\label{Omexist}
Then $\P(\Omexist)=1$. We take $\w\in\Omexist$ so that $\mu_\w(\pi_\Omega^{-1}(\w)\cap\Omtemphat''')=\mu_\w(\Omtemphat''')=1$.
%An event with positive probability cannot be empty 
There exists $\what\in\Omtemphat'''$ with $\pi_\Omega(\what)=\w$. 
For $\xi\in\Uset$, the $\Uset_{\xi\pm}$-directed solutions in the claim are $\Pi_x^{\xi\pm,\what}$. 
\end{proof}

\begin{theorem}\label{th:DLR=B}
Fix $\xi\in\Diff$. Assume $\ximin,\ximax\in\Diff$. There exists a $\That$-invariant event
$\OmBusPihat\subset\Omhat$ such that $\Phat(\OmBusPihat)=1$ and for every $\what\in\OmBusPihat$ and $x\in\Z^2$, 
$\Pi_x^{\xi\pm,\what}=\Pi_x^{\xi,\what}$ is the unique weakly $\Uset_\xi$-directed solution in $\DLR_x^\w$.
It is also strongly directed into $\Uset_\xi$ and for any $\Uset_\xi$-directed sequence $(x_n)$ 
%$x_n\in x+D_n$ such that
%	\[\ximin\cdot e_1\le\varliminf_{n\to\infty} n^{-1}x_n\cdot e_1 \le \varlimsup_{n\to\infty} n^{-1}x_n\cdot e_1\le\ximax\cdot e_1\]
the sequence of quenched point-to-point polymer measures $Q^\w_{x,x_n}$ converges weakly to $\Pi_x^{\xi,\what}$.
The family $\{\Pi_x^{\xi,\what}:x\in\Z^2,\what\in\Omhat\}$ is consistent and $\That$-covariant.
\end{theorem}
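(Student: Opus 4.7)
The plan is to first collapse all the relevant Busemann cocycles to a single object and then deduce existence/directedness from Theorem \ref{th:DLR-dir1} while extracting uniqueness from Theorem \ref{thm:Buselim1}. Set
\begin{align*}
\OmBusPihat = \Ommonohat \cap \Omexisthat \cap \OmBusallhat \cap \Omconthat{\xi} \cap \Omconthat{\ximin} \cap \Omconthat{\ximax},
\end{align*}
which is $\That$-invariant after replacement by the intersection of its shifts. On this event the hypothesis $\xi,\ximin,\ximax \in \Diff$ forces $\nabla\fe(\ximin) = \nabla\fe(\xi) = \nabla\fe(\ximax)$, so Lemma \ref{lm:tilt} gives $\Ehat[h(B^{\eta\pm})] = -\nabla\fe(\xi)$ for every $\eta \in \{\ximin,\xi,\ximax\}$. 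Combined with the monotonicity chain \eqref{mono}, equality of these means forces $B^{\ximin\pm}, B^{\xi\pm}, B^{\ximax\pm}$ all to coincide almost surely with a single $\That$-covariant recovering cocycle $B^\xi$; in particular $\Pi_x^{\xi+,\what} = \Pi_x^{\xi-,\what} =: \Pi_x^{\xi,\what}$. Theorem \ref{th:DLR-dir1} then immediately yields $\Pi_x^{\xi,\what} \in \DLR_x^\w$ and strong $\Uset_\xi$-directedness.

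For uniqueness, let $\Pi_x$ be any weakly $\Uset_\xi$-directed DLR solution. Since $\xi \in \ri\Uset$ we have $e_i \notin \Uset_\xi$, so any convex combination of the trivial $\Pi_x^{e_i}$ fails weak $\Uset_\xi$-directedness; Lemma \ref{lm:nondeg} therefore forces $\Pi_x$ to be nondegenerate, and Theorem \ref{th:B-DLR} associates to it a recovering cocycle $B^{\Pi_x}$ with
\begin{align*}
e^{-B^{\Pi_x}(y,y+e_i)} = E^{\Pi_x}\Bigl[\tfrac{Z_{y+e_i,X_n}}{Z_{y,X_n}}\Bigr],
\end{align*}
where the integrand is uniformly bounded by $e^{-\w_y}$ via the one-step recursion $Z_{y,z} = e^{\w_y}(Z_{y+e_1,z} + Z_{y+e_2,z})$. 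Now pick $\eta_1,\eta_2 \in \Diff$ with $\eta_1\cdot e_1 < \ximin\cdot e_1 \le \ximax\cdot e_1 < \eta_2\cdot e_1$ and both arbitrarily close to $\Uset_\xi$; the comparison inequality \eqref{comparison} from Appendix \ref{sub:coupling} gives, for any $z \in \V_n$ with $\bar x_n(\eta_1)\cdot e_1 \le z\cdot e_1 \le \bar x_n(\eta_2)\cdot e_1$,
\begin{align*}
\frac{Z_{y+e_1,\bar x_n(\eta_2)}}{Z_{y,\bar x_n(\eta_2)}} \le \frac{Z_{y+e_1,z}}{Z_{y,z}} \le \frac{Z_{y+e_1,\bar x_n(\eta_1)}}{Z_{y,\bar x_n(\eta_1)}},
\end{align*}
whose two endpoints converge by Theorem \ref{thm:Buselim1} to $e^{-B^{\eta_2-}(y,y+e_1)}$ and $e^{-B^{\eta_1+}(y,y+e_1)}$ respectively. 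Weak $\Uset_\xi$-directedness of $\Pi_x$ places $X_n$ in the sandwich region with probability tending to $1$, and the uniform bound $e^{-\w_y}$ controls the complement. Letting $\eta_1\cdot e_1 \nearrow \ximin\cdot e_1$ and $\eta_2\cdot e_1 \searrow \ximax\cdot e_1$ and invoking the continuity \eqref{Busemann-limits}, both bounds collapse to $e^{-B^\xi(y,y+e_1)}$. The $e_2$ argument is analogous, so $B^{\Pi_x} = B^\xi$ and Theorem \ref{th:B-DLR} gives $\Pi_x = \Pi_x^{\xi,\what}$.

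Weak convergence of $Q^\w_{x,x_n}$ to $\Pi_x^{\xi,\what}$ along any $\Uset_\xi$-directed sequence $(x_n)$ reduces to cylinders: expand $Q^\w_{x,x_n}(X_{m,k}=x_{m,k}) = \prod_{j=m}^{k-1} e^{\w_{x_j}} Z_{x_{j+1},x_n}/Z_{x_j,x_n}$ and apply Theorem \ref{thm:Buselim1} to each factor, which converges to $e^{\w_{x_j} - B^\xi(x_j,x_{j+1})}$; the product then equals $\Pi_x^{\xi,\what}(X_{m,k}=x_{m,k})$ by Theorem \ref{th:B-DLR}. Consistency of $\{\Pi_x^{\xi,\what} : x \in \Z^2\}$ is automatic because the transitions $\pi^{\xi,\what}_{y,y+e_i} = e^{\w_y - B^\xi(y,y+e_i,\what)}$ do not depend on the root $x$, and $\That$-covariance of the family is inherited from \eqref{cov-prop} applied to $B^\xi$. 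I expect the main obstacle to be the sandwich step in uniqueness: rigorously justifying that the boundary contribution from paths escaping the neighborhood of $n\Uset_\xi$ vanishes in the expectation, and that the external approximants $B^{\eta_i\pm}$ do converge to $B^\xi$. This is exactly where the hypothesis that both endpoints $\ximin,\ximax$ lie in $\Diff$ is essential, via Theorem \ref{thm:cocyexist}\eqref{thm:cocyexist:g} and the one-sided continuity \eqref{Busemann-limits}.
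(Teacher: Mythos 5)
Your overall route is the paper's route: collapse $B^{\ximin\pm},B^{\xi\pm},B^{\ximax+}$ to a single cocycle $B^\xi$ on a continuity event, get existence and strong $\Uset_\xi$-directedness of $\Pi_x^{\xi,\what}$ (you via Theorem \ref{th:DLR-dir1}, the paper via $\OmBhat{B^{\xi+}}$ and Theorem \ref{th:PiB-lln} — equivalent), prove uniqueness by sandwiching nearest-neighbor partition-function ratios between Busemann limits from directions strictly outside $[\ximin,\ximax]$, and get weak convergence, consistency and covariance exactly as in \eqref{Q-cv}. However, three concrete points need repair. First, you invoke Lemma \ref{lm:nondeg} to rule out degenerate weakly $\Uset_\xi$-directed solutions, but that lemma only holds on $\Omnondeg$, which you did not put into $\OmBusPihat$; the paper includes $\pi_\Omega^{-1}(\Omnondeg)$ in the event, and you must too. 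Second, your displayed sandwich is oriented backwards: by \eqref{comparison} the ratio $Z_{y+e_1,\cdot}/Z_{y,\cdot}$ \emph{increases} as the endpoint moves in the $e_1$ direction, so the $\eta_1$-ratio is the lower bound and the $\eta_2$-ratio the upper bound (as written, your asymptotic lower bound exceeds your upper bound). This is only a labeling slip, but it matters for matching the one-sided bounds of Theorem \ref{thm:Buselim1} ($B^{\etamin-}$ below, $B^{\zetamax+}$ above).

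The third point is the substantive gap: the identity you use to identify the cocycle, $e^{-B^{\Pi_x}(y,y+e_i)}=E^{\Pi_x}\bigl[Z_{y+e_i,X_n}/Z_{y,X_n}\bigr]$, is not what Theorem \ref{th:B-DLR} gives. Formula \eqref{B-Z} reads $e^{-B^{\Pi_x}(x,y)}=E^{\Pi_x}\bigl[Z_{y,X_n}/Z_{x,X_n}\bigr]$ with the ratio rooted at the root $x$; for $y\neq x$ the quantity you wrote is a ratio of expectations, not the expectation of a ratio, and moreover your integrand is undefined on $\{X_n\not\ge y\}$ where both partition functions vanish, so the "uniform bound $e^{-\w_y}$" does not by itself close the argument. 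The repair is exactly the paper's: from the sandwich conclude that $Z_{y+e_i,X_n}/Z_{y,X_n}\to e^{-B^{\xi}(y,y+e_i)}$ in $\Pi_x$-probability (on $\{X_n\ge y\}$, whose probability tends to $1$ by weak directedness together with \eqref{cross}), telescope along a fixed path from $x$ to $y$ to get $Z_{y,X_n}/Z_{x,X_n}\to e^{-B^{\xi}(x,y)}$ in $\Pi_x$-probability, use the deterministic bound $Z_{y,X_n}/Z_{x,X_n}\le 1/Z_{x,y}$ of \eqref{Z-bd}, and apply bounded convergence in \eqref{B-Z} to conclude $B^{\Pi_x}(x,y)=B^{\xi}(x,y)$ for all $y\ge x$, hence $\Pi_x=\Pi_x^{\xi,\what}$. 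With these corrections your proposal coincides with the paper's proof.
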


\begin{proof}
Let $\eta_k,\zeta_k\in\ri\Uset$ be such that $\eta_k\cdot e_1$ strictly increases to $\ximin\cdot e_1$ and $\zeta_k\cdot e_1$ strictly decreases to $\ximax\cdot e_1$.
Let 
	\[\OmBusPihat=\pi_\Omega^{-1}(\Omnondeg)\cap\Ommonohat\cap\Omconthat{\ximin}\cap\Omconthat{\ximax}\cap\OmBusallhat\cap\OmBhat{B^{\xi+}}.\label{OmBusPihat}\] 
	%\cap\bigcap_{k\in\N}\bigl(\Omhat_{[\etamin_k,\etamax_k]}\cap\Omhat_{[\zetamin_k,\zetamax_k]}\bigr).\label{OmBusPihat}\]
%	$, $k\in\N$, and  the event on which 
%$B^{\etamin_k}_-(y,y+e_i,\what)\to B^{\ximin}(y,y+e_i,\what)$, $B^{\zetamax_k}_+(y,y+e_i,\what)\to B^{\ximax}(y,y+e_i,\what)$, 
%and
%	\begin{align}\label{B-lim}
%	\begin{split}
%	B^{\ximin}(y,y+e_i,\what)
%	&=B^{\ximax}(y,y+e_i,\what)=B^\xi(y,y+e_i,\what)\\
%	&=\lim_{n\to\infty}(\log Z_{y,x_n}-\log Z_{y+e_i,x_n}),
%	\end{split}
%	\end{align}
%for all $y\in \Z^2$, $i\in\{1,2\}$, and $\Uset_\xi$-directed sequences $(x_n)$.
%Note that since $\ximin,\xi,\ximax\in\Diff$, there is no $\pm$ distinction in $B^{\ximin}$, $B^{\ximax}$, and $B^\xi$. 
%Also, the limit in \eqref{B-lim} shows that we can consider these cocycles as functions of $\w$ instead of $\what$.

Take $\what\in\OmBusPihat$. Since $\what\in\OmBusallhat$, Theorem \ref{thm:Buselim1} implies that for all $y\in\Z^2$ and $k\in\N$
	\[\varliminf_{n\to\infty}\frac{Z_{y+e_1,\fl{n\eta_k}}}{Z_{y,\fl{n\eta_k}}}\ge e^{-B^{\etamin_k-}(y,y+e_1,\what)}
	\ \ \text{and}\ \ \varlimsup_{n\to\infty}\frac{Z_{y+e_2,\fl{n\eta_k}}}{Z_{y,\fl{n\eta_k}}}\le e^{-B^{\etamin_k-}(y,y+e_2,\what)}.\]

Fix $x\in\V_m$, $m\in\Z$, and $y\ge x$.
Fix $\e>0$. Since the choice of $\what$ guarantees continuity as $\etamin_k\to\ximin$ we can choose $k$ large then $n$ large so that 
%	\[e^{-B^{\etamin_k-}(y,y+e_2,\what)}\le e^{-B^{\ximin}(y,y+e_2,\what)}+\e/2.\]
%Then for $n$ large 
	\[\frac{Z_{y+e_2,\fl{n\eta_k}}}{Z_{y,\fl{n\eta_k}}}\le e^{-B^{\etamin_k-}(y,y+e_2,\what)}+\e/2\le e^{-B^{\ximin}(y,y+e_2,\what)}+\e.\]
%Note that in the last term we switched to $\w$ since $B^{\ximin}(\what)=B^{\ximin}(\w)$.

Let $\Pi_x\in\DLR_x^\w$ be weakly $\Uset_\xi$-directed.  Since $\eta_k,\zeta_k\in\ri\Uset$, both $n\eta_k\ge y$ and $n\zeta_k\ge y$ for large $n$. 
Applying \eqref{comparison} in the first inequality we have
	\begin{align*}
	&\Pi_x\bigl\{X_n\cdot e_1>\fl{n\eta_k\cdot e_1},\,X_n\ge y\bigr\}
	\le\Pi_x\Bigl\{\frac{Z_{y+e_2,X_n}}{Z_{y,X_n}}\le \frac{Z_{y+e_2,\fl{n\eta_k}}}{Z_{y,\fl{n\eta_k}}},X_n\ge y\Bigr\}\\
	&\qquad\qquad\le\Pi_x\Bigl\{\frac{Z_{y+e_2,X_n}}{Z_{y,X_n}}\le e^{-B^{\ximin}(y,y+e_2,\what)}+\e,X_n\ge y\Bigr\}\le1.
	\end{align*}
%But if  $X_n\cdot e_1>\fl{n\eta_k\cdot e_1}$, $X_n\ge y$, and $n\eta_k\ge y$, then \eqref{comparison} implies
%	\[\frac{Z_{y+e_1,X_n}}{Z_{y,X_n}}\ge \frac{Z_{y+e_1,\fl{n\eta_k}}}{Z_{y,\fl{n\eta_k}}}
%	\quad\text{and}\quad \frac{Z_{y+e_2,X_n}}{Z_{y,X_n}}\le \frac{Z_{y+e_2,\fl{n\eta_k}}}{Z_{y,\fl{n\eta_k}}}\,.\]
The weak directedness implies  the first probability converges to one.  Hence,
%This and the weak directedness imply
%	\[\Pi_x\bigl\{X_n\cdot e_1>\fl{n\eta_k\cdot e_1},\,X_n\ge y\bigr\}\mathop{\longrightarrow}_{n\to\infty}1.\]
%we get
	\[\lim_{n\to\infty}\Pi_x\Bigl\{\frac{Z_{y+e_2,X_n}}{Z_{y,X_n}}\le e^{-B^{\ximin}(y,y+e_2,\what)}+\e,X_n\ge y\Bigr\}=1.\]
Similarly, 
	\[\lim_{n\to\infty}\Pi_x\Bigl\{\frac{Z_{y+e_1,X_n}}{Z_{y,X_n}}\ge e^{-B^{\ximin}(y,y+e_1,\what)}-\e,X_n\ge y\Bigr\}=1.\]
Using a similar argument with the sequence $\zeta_k$ we also get
	\[\lim_{n\to\infty}\Pi_x\Bigl\{\frac{Z_{y+e_2,X_n}}{Z_{y,X_n}}\ge e^{-B^{\ximax}(y,y+e_2,\what)}-\e,X_n\ge y\Bigr\}=1\]
and
	\[\lim_{n\to\infty}\Pi_x\Bigl\{\frac{Z_{y+e_1,X_n}}{Z_{y,X_n}}\le e^{-B^{\ximax}(y,y+e_1,\what)}+\e,X_n\ge y\Bigr\}=1.\]
Since $\xi,\ximin,\ximax\in\Diff$ we have $\nabla\fe(\ximin-)=\nabla\fe(\ximax+)=\nabla\fe(\xi)$  
and by our choice of $\what$, $B^{\ximax}=B^{\ximin}=B^{\xi\pm}=B^\xi$. We have shown that  
$Z_{y+e_i,X_n}/Z_{y,X_n}$ converges in $\Pi_x$-probability 
to $e^{-B^\xi(y,y+e_i,\what)}$ for every $y\ge x$ and $i\in\{1,2\}$. 
%By going to a subsequence $n_j$ we  turn this into  $\Pi_x$-almost sure convergence.

Using any fixed admissible path from $x$ to $y$ and applying the cocycle property of $B^\xi$ and the above limit (to the increments of the path) we see that $Z_{y,X_{n}}/Z_{x,X_{n}}\to e^{-B^\xi(x,y,\what)}$, 
in $\Pi_x$-probability.
But if $\ell=y\cdot\ehat$ then
	\begin{align}\label{Z-bd}
	0\le\frac{Z_{y,X_{n}}}{Z_{x,X_{n}}}=\frac{Z_{y,X_{n}}}{\sum_{\substack{v\ge x\\v\in\V_\ell}}Z_{x,v}Z_{v,X_{n}}}\le\frac{1}{Z_{x,y}}<\infty.
	\end{align}
Since $\w=\pi_\Omega(\what)\in\Omnondeg$, Lemma \ref{lm:nondeg} says $\Pi_x$ is nondegenerate. Thus, bounded convergence and \eqref{B-Z} imply that $B^\xi$ is the cocycle that corresponds to $\Pi_x$.  In other words, $\Pi_x=\Pi_x^{\xi,\what}$. Since $B^\xi=B^{\xi+}$ and $\what\in\OmBhat{B^{\xi+}}$ we conclude that $\Pi_x$ is  strongly $\Uset_\xi$-directed.
%But then \eqref{Pi-B-cons}
% implies that for any up-right path $x_{m,n}$ starting from $x$ 
% 	\[\Pi_x(X_{m,n}=x_{m,n})=e^{\sum_{i=m}^{n-1}\w_{x_i}-B^\xi(x,x_n,\what)}=\Pi_x^{\xi,\what}(X_{m,n}=x_{m,n}).\]
	
For the weak convergence claim, apply Theorem \ref{thm:Buselim1} to get that for any up-right path $x_{m,k}$ out of $x$
	\begin{align}\label{Q-cv}
	\begin{split}
	Q^\w_{x,x_n}(X_{m,k}=x_{m,k})
	&=\frac{e^{\sum_{i=m}^{k-1}\w_{x_i}}Z_{x_k,x_n}}{Z_{x,x_n}}\\
	&\mathop{\longrightarrow}_{n\to\infty} e^{\sum_{i=m}^{k-1}\w_{x_i}-B^\xi(x,x_k,\what)}=\Pi_x^{\xi,\what}(X_{m,k}=x_{m,k}).
	\end{split}
	\end{align}
	
The covariance and consistency claims follow from the covariance of $B^\xi=B^{\xi+}$ and the fact that $\Pi_x^{\xi,\what}$ all use the same 
transition probabilities $\pi^{\xi,\what}$, regardless of the starting point $x$, as noted right before the statement of Theorem \ref{th:PiB-lln}.
The theorem is proved.
\end{proof}

\begin{proof}[Proof of Theorem \ref{th:main3}]
Define \label{OmBusPi}$\OmBusPi$ out of $\OmBusPihat$, similarly to \eqref{eq:Omexist}.
Then $\P(\OmBusPi)=1$ and for each $\w\in\OmBusPi$ there exists $\what\in\OmBusPihat$ with $\pi_\Omega(\what)=\w$.
The claim now follows directly from Theorem \ref{th:DLR=B}.
\end{proof}

\begin{proof}[Proof of Theorem \ref{thm:main:Bus}]
Let \label{OmBushat}$\OmBushat=\OmBusallhat\cap\OmBusPihat$.
Define \label{OmBus}$\OmBus$ out of $\OmBushat$, similarly to \eqref{eq:Omexist}.
Then $\P(\OmBus)=1$ and for each $\w\in\OmBus$ there exists $\what\in\OmBushat$ with $\pi_\Omega(\what)=\w$.
When $\ximin,\xi,\ximax\in\Diff$, $\nabla\fe(\ximin\pm)=\nabla\fe(\ximax\pm)=\nabla\fe(\xi\pm)=\nabla\fe(\xi)$. Since $\what\in\OmBusallhat\subset\Omconthat{\ximin}\cap\Omconthat{\ximax}$, $B^{\ximin-}=B^{\xi-}=B^{\xi+}=B^{\ximax+}$. Theorem \ref{thm:Buselim1} then implies the limit in \eqref{Bus-lim-nice} exists and equals the value of the cocycle 
$B^\xi(x,y,\what)$. Then \eqref{mono-nice} follows from \eqref{mono}. 

Take $x \in \bbV_m$ and consider $n>m$. By \cite[Theorem 4.1]{Ras-Sep-14} the distributions of $X_n/n$ under $Q_{x,(n)}^{\w,h}$ satisfy a large deviation principle with rate function 
	\begin{align*}
	J(\zeta)&=- h\cdot\zeta -\fe(\zeta)+\fepl(h),\quad\zeta\in\Uset.
	\end{align*}
By duality, $J(\cdot)$ vanishes exactly on $\Uset_h = [\ximin,\ximax]$. Borel-Cantelli and strict positivity of $J$ off of $[\ximin,\ximax]$ imply that  $\nu^\w=\otimes_{n>m} Q_{x,(n)}^{\w,h}$ is strongly $[\ximin,\ximax]$-directed.  For $i\in\{1,2\}$ use the weak convergence in Theorem \ref{th:main3} to find
	\begin{align*}
	\frac{Z_{x+e_i,(n)}^h}{Z_{x,(n)}^h}
	&=e^{-\w_x-h \cdot e_i} E^{\nu^{\omega}}[Q_{x,X_{n}}^\w(x+e_i)]\\
	&\mathop{\longrightarrow}_{n\to\infty}e^{-\w_x-h\cdot e_i}\Pi_x^{\xi,\w}(x+e_i)=e^{-B^\xi(x,x+e_i;\w)-h\cdot e_i}.
	\end{align*}
\eqref{Bus-lim-p2l-nice} follows from the above, telescoping products, and \eqref{coc-prop}.
\end{proof}

\begin{proof}[Proof of Corollary \ref{cor:main3+Bus}]
The claims follow from the observation that the limit in \eqref{Bus-lim-nice} is exactly the cocycle $B^\xi$.
\end{proof}

\begin{lemma}\label{lm:mgale1}
Fix $x,y\in\Z^2$, $\w\in\Omega$, and $\Pi_x\in\DLR_x^\w$. Then 
$Z_{y,X_n}/Z_{x,X_n}$ is a $\Pi_x$-backward martingale relative to the filtration $\cX_{[n,\infty)}$. %=\sigma(X_{n,\infty})$.
\end{lemma}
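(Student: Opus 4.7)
The plan is to verify the defining backward martingale identity
$$E^{\Pi_x}\bigl[Z_{y,X_n}/Z_{x,X_n} \,\bigm|\, \cX_{[n+1,\infty)}\bigr] = Z_{y,X_{n+1}}/Z_{x,X_{n+1}}$$
directly, using the Markov structure that the DLR equations enforce on $\Pi_x$ together with the standard one-step recursion for the partition function in the second variable. First I would note that $M_n := Z_{y,X_n}/Z_{x,X_n}$ is $\cX_{[n,\infty)}$-measurable since it depends only on $X_n$, and that it is uniformly bounded (zero off the event $\{X_n \geq y\}$, and bounded by $1/Z_{x,y}$ on that event by the same estimate as in \eqref{Z-bd}); integrability is therefore automatic.

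The key computational input combines Lemma \ref{lm:Pi-cons} and Lemma \ref{lm:p2pQ-pi}. The former shows that conditionally on $\cX_{[n+1,\infty)}$ with $X_{n+1} = v$, the law of the initial segment $X_{m,n+1}$ is the quenched polymer measure $Q^\w_{x,v}$, so the conditional distribution of $X_n$ depends only on $X_{n+1}$. The latter identifies the one-step backward transitions of $Q^\w_{x,v}$ as $\backpix_{v,v-e_i} = e^{\w_{v-e_i}} Z_{x,v-e_i}/Z_{x,v}$. Substituting and cancelling the $Z_{x,v-e_i}$ factors produces
$$E^{\Pi_x}\bigl[M_n \,\bigm|\, X_{n+1} = v\bigr] = \sum_{i=1,2} \frac{Z_{y,v-e_i}}{Z_{x,v-e_i}} \cdot \frac{e^{\w_{v-e_i}} Z_{x,v-e_i}}{Z_{x,v}} = \frac{e^{\w_{v-e_1}} Z_{y,v-e_1} + e^{\w_{v-e_2}} Z_{y,v-e_2}}{Z_{x,v}},$$
and the numerator equals $Z_{y,v}$ by the standard one-step recursion, which holds for every $v \neq y$ (with the paper's convention $Z_{y,u} = 0$ whenever $y \not\leq u$).

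There is no substantial obstacle. The only subtle point is the boundary case $v = y$, where the recursion fails because $v-e_i \not\geq y$ for both $i$ (so the sum vanishes while $Z_{y,y}/Z_{x,y} = 1/Z_{x,y}$). This can occur only when $X_{n+1} = y$, hence at the single level $n+1 = y \cdot \ehat$. Accordingly, the backward martingale is naturally indexed by $n \geq y \cdot \ehat$\textemdash the same range appearing in the subsequent identity \eqref{B-Z} where this lemma is invoked\textemdash and on this range the pointwise computation above applies pathwise and delivers exactly $Z_{y,X_{n+1}}/Z_{x,X_{n+1}} = M_{n+1}$.
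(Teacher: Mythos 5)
Your argument is correct and is essentially the paper's own proof: condition on the future, use the DLR/Markov structure (Lemmas \ref{lm:Pi-cons} and \ref{lm:p2pQ-pi}) to identify the backward one-step transition $e^{\w_{v-e_i}}Z_{x,v-e_i}/Z_{x,v}$, cancel, and apply the one-step recursion $Z_{y,v}=\sum_{i}e^{\w_{v-e_i}}Z_{y,v-e_i}$. Your explicit remark that the martingale is indexed by $n\ge y\cdot\ehat$ (to avoid the failure of the recursion at $v=y$) is a point the paper leaves implicit, and it matches the range used in \eqref{B-Z}.
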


\begin{proof}
Fix $N>n$ and an up-right path $x_{n+1,N}$ with $\Pi_x(x_{n+1,N})>0$. Abbreviate $A=\{x_{n+1}-e_1,x_{n+1}-e_2\}$. Write
	\begin{align*}
	&E^{\Pi_x}\Bigl[\frac{Z_{y,X_n}}{Z_{x,X_n}}\,\Big|\,X_{n+1,N}=x_{n+1,N}\Bigr]
	=\sum_{x_n\in A} \frac{Z_{y,x_n}}{Z_{x,x_n}}\cdot\frac{\Pi_x(x_{n,N})}{\Pi_x(x_{n+1,N})}\\
	%&= \sum_{x_n\in A}\frac{Z_{y,x_n}}{Z_{x,x_n}}\cdot\frac{\Pi_x(x_N)Z_{x,x_n}e^{\sum_{j=n}^{N-1}\w_{x_j}}/Z_{x,x_N}}{\Pi_x(x_N)Z_{x,x_{n+1}}e^{\sum_{j=n+1}^{N-1}\w_{x_j}}/Z_{x,x_N}}\\
	&= \sum_{x_n\in A}\frac{Z_{y,x_n}e^{\w_{x_n}}}{Z_{x,x_{n+1}}}=\frac{Z_{y,x_{n+1}}}{Z_{x,x_{n+1}}}\,.\qedhere
	\end{align*}
\end{proof}

\begin{theorem}\label{th:B-lim}
Fix $\w\in\Omega$ and $x\in\V_m$, $m\in\Z$. 
Let $\Pi_x$ be a nondegenerate extreme point of $\DLR_x^\w$. 
Then for all $u,v\ge x$, 
	\begin{align}\label{cv-as}
	\frac{Z_{v,X_n}}{Z_{u,X_n}}\mathop{\longrightarrow}_{n\to\infty} e^{-B^{\Pi_x}(u,v)}\quad\text{$\Pi_x$-almost surely.}
	\end{align}
\end{theorem}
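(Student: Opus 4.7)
The plan is to apply the backward martingale convergence theorem to the ratios $M_n^y := Z_{y,X_n}/Z_{x,X_n}$ from Lemma \ref{lm:mgale1} and to identify the limit using tail triviality, which is forced by extremality. The key observation is that it suffices to handle the base case in which the denominator is anchored at the root $x$, since the general case $Z_{v,X_n}/Z_{u,X_n} = M_n^v/M_n^u$ then follows by taking ratios and using the cocycle property \eqref{def:covcoc}(b). Anchoring at $x$ also sidesteps any well-definedness issue of the ratios, since $X_n \ge x$ always under $\Pi_x$ and hence $Z_{x,X_n}>0$.

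By Lemma \ref{lm:mgale1}, for each $y \ge x$ and $n \ge y\cdot\ehat$, $M_n^y$ is a $\Pi_x$-backward martingale with respect to the filtration $\cX^x_{[n,\infty)}$, and by \eqref{Z-bd} it is bounded by $1/Z_{x,y}$. The backward martingale convergence theorem yields an a.s.\ and $L^1$ limit
\[
L^y \;=\; E^{\Pi_x}\bigl[M_N^y \,\big|\, \cF_\infty\bigr], \qquad \cF_\infty \;=\; \bigcap_n \cX^x_{[n,\infty)},
\]
for any fixed $N \ge y\cdot\ehat$. The main content of the proof is then to show that extremality of $\Pi_x$ forces $\cF_\infty$ to be $\Pi_x$-trivial. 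Given $A \in \cF_\infty$ with $0 < \Pi_x(A) < 1$, note that for all $k \le \ell$, $A \in \cX^x_{[\ell,\infty)} \subset \cX^x_{(k,\ell)^c}$. Using $\cX^x_{(k,\ell)}$-properness \eqref{proper} one obtains $\one_A \ker^\w_{k,\ell} f = \ker^\w_{k,\ell}(\one_A f)$; combined with the DLR equation $\Pi_x \ker^\w_{k,\ell}=\Pi_x$ this gives
\[
\Pi_x(\,\cdot\,|A)\,\ker^\w_{k,\ell} \;=\; \Pi_x(\,\cdot\,|A),
\]
and similarly for $A^c$. Hence both conditional measures lie in $\DLR_x^\w$; they are distinct (evaluating on $A$) and realize $\Pi_x$ as their nontrivial convex combination, contradicting extremality.

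With tail triviality in hand, $L^y$ is a $\Pi_x$-a.s.\ constant, and by bounded convergence and the martingale property it equals $E^{\Pi_x}[M_n^y]$, which by \eqref{B-Z} is exactly $e^{-B^{\Pi_x}(x,y)}$. This constant is strictly positive, so the identity $\{M_n^u=0\}=\{X_n\not\ge u\}$ together with a.s.\ convergence forces $X_n\ge u$ (and, taking $y=v$, $X_n\ge v$) eventually $\Pi_x$-a.s., making the ratio $Z_{v,X_n}/Z_{u,X_n}=M_n^v/M_n^u$ well defined in the tail. Applying the convergence to $y=u$ and $y=v$ and dividing, the cocycle property gives
\[
\frac{Z_{v,X_n}}{Z_{u,X_n}} \;\longrightarrow\; e^{\,-B^{\Pi_x}(x,v)+B^{\Pi_x}(x,u)} \;=\; e^{-B^{\Pi_x}(u,v)} \qquad \Pi_x\text{-a.s.}
\]

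The main obstacle is the tail-triviality step. Abstractly this is the standard equivalence of extremality with tail-triviality in Gibbs specifications (cf.\ Georgii's treatment), but one must verify carefully that conditioning on a tail event genuinely preserves the DLR equations in the present directed setting; this ultimately reduces to the $\cX^x_{(k,\ell)}$-properness of the kernels $\ker^\w_{k,\ell}$ combined with the containment $\cF_\infty \subset \cX^x_{(k,\ell)^c}$.
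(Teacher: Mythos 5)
Your proof is correct, and at the decisive step it takes a genuinely different route from the paper. Both arguments start identically: Lemma \ref{lm:mgale1}, backward martingale convergence, the bound \eqref{Z-bd}, the identification of the mean via \eqref{B-Z}, and the final reduction of $Z_{v,X_n}/Z_{u,X_n}$ to ratios anchored at $x$ plus the cocycle property. Where you diverge is in how extremality forces the a.s.\ limit to be deterministic. You prove directly that an extreme element of $\DLR_x^\w$ has $\Pi_x$-trivial tail $\bigcap_n\cXx_{[n,\infty)}$: conditioning on a tail event $A$ preserves the DLR equations because $A\in\cXx_{(k,\ell)^c}$ for every $k\le\ell$ and the kernels are proper (\eqref{proper} plus Lemma \ref{lm:DLR}), so $0<\Pi_x(A)<1$ would exhibit $\Pi_x$ as a nontrivial convex combination of two distinct DLR solutions. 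This is sound, and it is the specialization to this setting of the standard extremality--tail-triviality equivalence; since the limit $E^{\Pi_x}[M^y_N\,|\,\cF_\infty]$ is then an a.s.\ constant equal to $e^{-B^{\Pi_x}(x,y)}>0$, you also get for free that $X_n\ge u,v$ eventually, so the ratio is well defined in the tail. The paper instead never mentions tail triviality: it takes the random limit $\eB_{x,y}$, checks that the recovery identity \eqref{Z-rec} and the multiplicative cocycle relation survive the limit (\eqref{rec-aux}, \eqref{coc-aux}), builds from $\eB$ a path-dependent Markov chain $\Pi^\eB_x$ which it verifies lies in $\DLR_x^\w$, shows $\Pi_x=\int\Pi_x^{\eB(x_{m,\infty})}\,\Pi_x(dx_{m,\infty})$, and then invokes extremality of $\Pi_x$ to force $\Pi^\eB_x=\Pi_x$ a.s., hence $\eB_{x,y}=e^{-B^{\Pi_x}(x,y)}$. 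Your route is shorter and avoids the bookkeeping on the event $\{\eB_{x,y}=0\}$ and the verification that the limit defines a genuine DLR solution; the paper's route is more hands-on, re-deriving the needed direction of the Gibbs-theoretic equivalence through an explicit mixture over DLR solutions rather than appealing to it. Either way the conclusion and the inputs from the rest of the paper (nondegeneracy for Theorem \ref{th:B-DLR} and \eqref{B-Z}) are used identically.
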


\begin{proof}
By the backward-martingale convergence theorem \cite[Theorem 5.6.1]{Dur-10}
%, bound \eqref{Z-bd}, and bounded convergence, we know that 
$Z_{y,X_n}/Z_{x,X_n}$ converges $\Pi_x$-almost surely and in $L^1(\Pi_x)$ 
to a limit $\eB_{x,y}=\eB_{x,y}(x_{m,\infty})$. 
{\sl A priori} $\eB_{x,y}$ is $\bigcap_n\cX_{[n,\infty)}$-measurable.
%We omit the $X_{m,\infty}$ from the notation.
Define
	\[\eB_{y,y+e_i}=\frac{\eB_{x,y+e_i}}{\eB_{x,y}},\quad\text{for $i\in\{1,2\}$ and  $y\in x+\Z^2_+$ with $\eB_{x,y}>0$}.\]
Note that %if $Z_{y,X_n}>0$ then we have
	\begin{align}\label{Z-rec}
	\frac{Z_{y+e_1,X_n}}{Z_{y,X_n}}+\frac{Z_{y+e_2,X_n}}{Z_{y,X_n}}=e^{-\w_y}.
	\end{align}
%Then \eqref{eB-rec} implies 
This implies
	\begin{align}\label{rec-aux}
	\Pi_x\Bigl\{\forall y\ge x:\eB_{x,y}=0\text{ or }\eB_{y,y+e_1}+\eB_{y,y+e_2}=e^{-\w_y}\Bigr\}=1.
	\end{align}

Next, note that %if $Z_{y+e_1+e_2,X_n}>0$, then $Z_{y,X_n}>0$, $Z_{y+e_1,X_n}>0$, and $Z_{y+e_2,X_n}>0$ and we have
	\[\frac{Z_{y+e_1,X_n}}{Z_{y,X_n}}\cdot\frac{Z_{y+e_1+e_2,X_n}}{Z_{y+e_1,X_n}}=\frac{Z_{y+e_2,X_n}}{Z_{y,X_n}}\cdot\frac{Z_{y+e_1+e_2,X_n}}{Z_{y+e_2,X_n}}.\]
Thus, %similarly to the above, this property transfers with the weak limit and we get
	\begin{align}\label{coc-aux}
	\begin{split}
	\Pi_x\Bigl\{\forall y\ge x:\,&\eB_{x,y+e_1+e_2}=0\text{ or }\\
	&\eB_{y,y+e_1}\eB_{y+e_1,y+e_1+e_2}
	=\eB_{y,y+e_2}\eB_{y+e_2,y+e_1+e_2}\Bigr\}=1.
	\end{split}
	\end{align}
%Let $\EB_1$ and $\EB_2$ denote  the two events in \eqref{rec-aux} and \eqref{coc-aux}, respectively. Let $\EB_0=\EB_1\cap\EB_2$. Then $\Pi_x(\EB_i)=1$, $i\in\{0,1,2\}$.
	
On the event in \eqref{rec-aux}
	\[\pi_{y,y+e_i}=\begin{cases}\eB_{y,y+e_i} \,e^{\w_y}&i\in\{1,2\}\text{ and }y\in x+\Z^2_+\text{ such that }\eB_{x,y}>0\\ 1/2&i\in\{1,2\}\text{ and }y\in x+\Z^2_+\text{ such that }\eB_{x,y}=0\end{cases}\]
define transition probabilities. Let $\Pi^\eB_x$ be the distribution of the Markov chain $X_{m,\infty}$ starting at $X_m=x$ and using these transition probabilities.

Note that $\eB_{x,x}=1$ and %\eqref{rec-aux} implies that 
if $\eB_{x,y}>0$ and $\pi_{y,y+e_i}>0$, then $\eB_{y,y+e_i}>0$ and $\eB_{x,y+e_i}=\eB_{x,y}\eB_{y,y+e_i}>0$.
This means that the Markov chain stays $\Pi^\eB_x$-almost surely within the set $\{y\ge x:\eB_{x,y}>0\}$.

On the intersection of the two events in \eqref{rec-aux} and \eqref{coc-aux}, if $x_{m,k}$ is an admissible path starting at $x$ and $\Pi^\eB_x(x_{m,k})>0$, then the above paragraph says $\eB_{x,x_i}>0$ for each $i\in\{m,\dotsc,k\}$  and then
	\begin{align}\label{PieB-Z}
	\Pi^\eB_x(x_{m,k})=\prod_{i=m}^{k-1}\pi_{x_i,x_{i+1}}=\prod_{i=m}^{k-1}\eB_{x_i,x_{i+1}}\,e^{\w_{x_i}}=\eB_{x,x_k}\,e^{\sum_{i=m}^{k-1}\w_{x_i}}.
	\end{align}
%For the last equality we used the fact that $\eB\in\EB_2$.
Adding over all admissible paths from $x$ to $y\in\V_k$ gives
	\begin{align}\label{PieB-Z2}
	\Pi^\eB_x(y)=\eB_{x,y}\,Z_{x,y}.
	\end{align}
Putting the two displays together gives
	\[\Pi^\eB_x(x_{m,k})=\Pi^\eB_x(y)\cdot\frac{e^{\sum_{i=m}^{k-1}\w_{x_i}}}{Z_{x,y}}=\Pi^\eB_x(y)\,Q^\w_{x,y}(x_{m,k}).\]
In other words, $\Pi^\eB_x\in\DLR_x^\w$, $\Pi_x$-almost surely.
The $L^1$-convergence implies 
	\[E^{\Pi_x}[\eB_{x,y}]=\lim_{n\to\infty}E^{\Pi_x}\Bigl[\frac{Z_{y,X_n}}{Z_{x,X_n}}\Bigr]=e^{-B^{\Pi_x}(x,y)},\]
where we used \eqref{B-Z} for the last equality (since $\Pi_x$ is assumed to be nondegenerate).  The above, \eqref{PieB-Z}, and \eqref{Pi-B-cons}  give
	\[E^{\Pi_x}[\Pi^\eB_x(x_{m,k})]=e^{\sum_{i=m}^{k-1}\w_{x_i}-B^{\Pi_x}(x,y)}=\Pi_x(x_{m,k}).\]
%for any admissible path $x_{0,m}$ starting at $x$.
In other words, $\Pi_x=\int \Pi^{\eB(x_{m,\infty})}_x\,\Pi_x(dx_{m,\infty})$. Since $\Pi_x$ was assumed to be an extreme point in $\DLR_x^\w$, we conclude that 
$\Pi_x(\Pi^\eB_x=\Pi_x)=1$. Since $\Pi^\eB_x$ determines $\eB$, this says that $\eB_{x,y}(x_{m,\infty})=e^{-B^{\Pi_x}(x,y)}$ for all $y\ge x$ and $\Pi_x$-almost every $x_{m,\infty}$. 
%In particular, $\eB(x,y)>0$ and Since $e^{-B^{\Pi_x}(x,y)}>0$ for all $y\ge x$, we have that 
%	\[\Pi_x(X_m\ge y)=\Pi_x(Z_{y,X_m}>0)\mathop{\longrightarrow}_{m\to\infty}1.\] 
%Since $X_n\ge X_m$ for all $n\ge m$  the events $\{X_m\ge y\}=\{\forall n\ge m:X_n\ge y\}$ are non-decreasing and \eqref{Xn>y} follows.
%
%Now that we know that $\Pi_x$-almost surely, $Z_{u,X_n}>0$ for large enough $n$, 
Now \eqref{cv-as} follows from  writing 
	\[\frac{Z_{v,X_n}}{Z_{u,X_n}}=\frac{Z_{v,X_n}/Z_{x,X_n}}{Z_{u,X_n}/Z_{x,X_n}},\]
taking $n\to\infty$ and applying the cocycle property of $B^{\Pi_x}$.
\end{proof}

We now turn to the proof of Theorem \ref{th:main2}. The full proof requires handling some technical issues, so we begin with a brief sketch of the main idea in the case where $\fe$ is strictly concave to give a sense of how the argument works. By \eqref{cv-as}, $\log Z_{y,X_n}-\log Z_{y+e_1,X_n}$ converges $\Pi_x$-almost surely to $B^{\Pi_x}(y,y+e_1)$.
On the other hand, \eqref{Bus-lim-nice}  implies that for nice directions $\xi$, 
$\log Z_{y,\fl{n\xi}}-\log Z_{y+e_2,\fl{n\xi}}$ converges $\P$-almost surely to $B^\xi(y,y+e_1)$.
This, and the monotonicity from \eqref{comparison} imply that if $X_n\cdot e_1>n\xi\cdot e_1$ happens infinitely often, then 
$B^{\Pi_x}(y,y+e_1)\le B^\xi(y,y+e_1)$ for all $y\ge x$. But then coupling $\Pi_x^{\xi,\w}$ and $\Pi_x$ pathwise, as described in Section \ref{sub:coupling}, implies that almost surely the $\Pi_x$-path must stay to the right of the $\Pi_x^{\xi,\w}$-path. 
A similar argument holds if $X_n\cdot e_1<n\xi\cdot e_1$ happens infinitely often. In short, this argument shows that if a subsequential limit point of $X_n$ goes to the right of a nice direction $\zeta$ with positive probability, then every subsequential limit point must stay to the right of $\zeta$. Similarly, if any subsequential limit goes to the left of a nice direction $\eta$, then every subsequential limit point must stay to the left of $\eta$. These two statements are only consistent if the path satisfies the strong law of large numbers for some direction $\xi \in \ri \Uset$. The technicalities in the proof arise because we do not assume strict concavity.

\begin{proof}[Proof of Theorem \ref{th:main2}]
%Let $\Ddense$ be a countable dense subset of $\Diff$.
%\normalmarginpar\fixtext{$\Ommonohat$ is the event where $B^\xi_\pm$ are monotone and $\OmBusPihatzeta$ is the event on which liminf and limsup bounds hold for the given $\zeta$}
Let \label{Omdirhat}$\Omdirhat=\Omexisthat\cap\OmBusallhat\cap\Omeihat$ and
%\note{even though $\Ddense\subset\Diff$, we do not necessarily have Busemann limits. for that we also need $\zetamin,\zetamax\in\Diff$}
similarly to \eqref{eq:Omexist} let 
	\label{Omdir}$\Omdir=\Omnondeg\cap\Omreg\cap\bigl\{\w\in\Omega:\mu_\w(\Omdirhat)=1\bigr\}.$
Then $\P(\Omdir)=1$. Fix $\w\in\Omdir$. There exists $\what\in\Omdirhat$ such that $\pi_\Omega(\what)=\w$.

%Since $\w\in\Omnondeg$, if $\Pi_x$ is a degenerate extreme solution,  it must equal $\Pi_x^{e_i}$ for $i=1$ or $i=2$. In this case, $\Pi_x$ is strongly $\Uset_{e_i}$-directed.
%In the rest of the proof we assume $\Pi_x$ is a nondegenerate extreme solution.

Take $\zeta\in\ri\Uset$. For any $y\ge x$ we have $n\zeta\ge y$ when $n$ is large enough.
Then when  $X_n\cdot e_1>\fl{n\zeta\cdot e_1}$  
inequality \eqref{comparison} implies
	\[\frac{Z_{y+e_1,X_n}}{Z_{y,X_n}}\ge \frac{Z_{y+e_1,\fl{n\zeta}}}{Z_{y,\fl{n\zeta}}}
	\quad\text{and}\quad \frac{Z_{y+e_2,X_n}}{Z_{y,X_n}}\le \frac{Z_{y+e_2,\fl{n\zeta}}}{Z_{y,\fl{n\zeta}}}\,.\]
Since $\what\in\OmBusallhat$
	\[\varliminf_{n\to\infty}\frac{Z_{y+e_1,\fl{n\zeta}}}{Z_{y,\fl{n\zeta}}}\ge e^{-B^{\zetamin-}(y,y+e_1,\what)}
	\quad\text{and}\quad \varlimsup_{n\to\infty}\frac{Z_{y+e_2,\fl{n\zeta}}}{Z_{y,\fl{n\zeta}}}\le e^{-B^{\zetamin-}(y,y+e_2,\what)}.\]
%Also, \eqref{cross} implies $\Pi_x(X_n\ge y)\to1$, for all $y\in x+\Z^2$. 
Putting these facts together with \eqref{cross} we get for $\e>0$
	\begin{align*}
	&\varlimsup_{n\to\infty}\Pi_x\bigl\{X_n\cdot e_1>\fl{n\zeta\cdot e_1}\bigr\}
	\le \varlimsup_{n\to\infty}\Pi_x\Bigl\{\frac{Z_{y+e_2,X_n}}{Z_{y,X_n}}\le \frac{Z_{y+e_2,\fl{n\zeta}}}{Z_{y,\fl{n\zeta}}},X_n\ge y\Bigr\}\\
	&\qquad\qquad\le \varlimsup_{n\to\infty}\Pi_x\Bigl\{\frac{Z_{y+e_2,X_n}}{Z_{y,X_n}}<e^{-B^{\zetamin-}(y,y+e_2,\what)}+\e,X_n\ge y\Bigr\}.
%	&\le \one\bigl\{e^{-B^{\Pi_x}(y,y+e_2)}<e^{-B^{\zetamin-}(y,y+e_2,\what)}+\e\Bigr\}.
	\end{align*}
If the limsup on the left is positive then using \eqref{cv-as} implies $e^{-B^{\Pi_x}(y,y+e_2)}\le e^{-B^{\zetamin-}(y,y+e_2,\what)}+\e$. The case of $e_1$ is similar. Taking $\e\to0$ we get
%Taking $\e\to0$ we get that $B^{\Pi_x}(y,y+e_2)\ge B^{\zetamin-}(y,y+e_2,\what)$. The case of $e_1$ increments is dealt with in the same way and we get that
	 \begin{align}\label{B<Bzeta}
	 \begin{split}
	 &B^{\Pi_x}(y,y+e_1)\le B^{\zetamin-}(y,y+e_1,\what)\quad\text{and}\\
	 &B^{\Pi_x}(y,y+e_2)\ge B^{\zetamin-}(y,y+e_2,\what)
	 \end{split}
	 \end{align}
 for each $y\in x+\Z^2_+$ and $\zeta\in\ri\Uset$ such that
	\begin{align}\label{dir-aux1}
	\varlimsup_{n\to\infty}\Pi_x\bigl\{X_n\cdot e_1>n\zeta\cdot e_1\bigr\}>0.
	\end{align}
%Similarly, we can show that 
%	 \begin{align}\label{B>Bzeta}
%	 B^{\Pi_x}(y,y+e_1)\ge B^\zetamax_+(y,y+e_1,\what)\quad\text{and}\quad B^{\Pi_x}(y,y+e_2)\le B^\zetamax_+(y,y+e_2,\what)
%	 \end{align}
% for each $y\in x+\Z^2_+$ and $\zeta\in\Ddense$ such that
%	\begin{align}\label{dir-aux2}
%	\varlimsup_{n\to\infty}\Pi_x\bigl\{X_n\cdot e_1<n\zeta\cdot e_1\bigr\}>0.
%	\end{align}

%Recall formula \eqref{pi-B} for the transition probabilities of $\Pi_x$ and the coupling of solutions $\Pi_x^{\zeta\pm,\what}$ via paths  $X^{x,\zeta\pm,\what}_{m,\infty}$. 
%Recall also that $x\in\V_m$. 
Couple $\{\Pi_x,\Pi_x^{\zeta\pm,\what}:\zeta\in\ri\Uset\}$ as described in Section \ref{sub:coupling} and denote the coupled paths by
$\XDLR_{m,\infty}$ (distribution $\Pi_x$) and $X^{x,\zeta\pm,\what}_{m,\infty}$ (distribution $\Pi_x^{\zeta\pm,\what}$).
%by $\XDLR_m=x$ and 
%	\[\XDLR_{k+1}=\XDLR_k+\begin{cases}e_1&\text{if }\unif(\XDLR_k)<\pi^x(\XDLR_k,\XDLR_k+e_1),\\e_2&\text{otherwise.}\end{cases}\]
%%Here, $\pi^x$ is the transition probability for $\Pi_x$, as given by \eqref{pi-Pi}.
%Then the distribution of $\XDLR$, induced by $\bfP$, is exactly $\Pi_x$.

We have already seen that paths $X^{x,\zeta\pm,\what}$ are monotone in $\zeta$. Similarly, \eqref{B<Bzeta} implies that for  $\zeta\in\ri\Uset$ satisfying \eqref{dir-aux1}, we have
	\[\XDLR_k\cdot e_1\ge X_k^{x,\zetamin-,\what}\cdot e_1\quad\text{for all $k\in\Z_+$}.\]
%and \eqref{B>Bzeta} implies that for each $\zeta\in\Ddense$ satisfying \eqref{dir-aux2}
%	\[\XDLR_k\cdot e_1\le X_k^{x,\zetamax,+,\what}\cdot e_1\quad\text{for all $k\in\Z_+$.}\]
Since the distribution of $X_k^{x,\zetamin-,\what}$ %and $X_k^{x,\zetamax,+,\what}$ are 
is $\Pi_x^{\zetamin-,\what}$ %and $\Pi_x^{\zetamax,+,\what}$ 
and %are 
is strongly directed into $\Uset_{\zetamin-}$ (because $\what\in\Omexisthat$) %and $\Uset_{\zetamax+}$, respectively, 
we see that for $\zeta\in\ri\Uset$ satisfying \eqref{dir-aux1}
	\begin{align}\label{zetaminmin}
	\Pi_x\Bigl\{\varliminf_{n\to\infty} n^{-1}X_n\cdot e_1\ge\zetaminmin\cdot e_1\Bigr\}=1.
	\end{align}
%and
%	\[\Pi_x\Bigl\{\varlimsup_{n\to\infty} n^{-1}\XDLR_k\cdot e_1\le\zetamaxmax\cdot e_1\Bigr\}=1\]
%for $\zeta\in\Ddense$ satisfying \eqref{dir-aux2}. 
Here, $\zetaminmin\cdot e_1=\inf\{\zeta\cdot e_1:\zeta\in\Uset_{\zetamin-}\}=1-\zetaminmin\cdot e_2$. % with a similar definition for $\zetamaxmax$.
Let $\xi'\in\Uset$ be such that 
	\[\xi'\cdot e_1=\sup\{\zeta\cdot e_1:\zeta\in\ri\Uset\text{ and \eqref{dir-aux1} holds for $\zeta$}\}.\]
If the above set is empty, then we set $\xi'=e_2$. 
%$X_n/n\to e_2$ in $\Pi_x$-probability and the case is closed.
%Assume $\xi'\ne e_2$.  
Let $\xi_1=\ximin'$. If $\xi'=e_2$, then $\xi_1=\ximin_1=e_2$ as well and we trivially have
	\begin{align}\label{liminf}
	\Pi_x\Bigl\{\varliminf_{n\to\infty} n^{-1}X_n\cdot e_1\ge\ximin_1\cdot e_1\Bigr\}=1.
	\end{align}

Assume $\xi'\ne e_2$ and take $\zeta\in\ri\Uset$ with $\zeta\cdot e_1<\xi'\cdot e_1$. 
Observe that we can take $\zetaminmin$ arbitrarily close to $\ximin_1$. Indeed, if $\xi_1\cdot e_1<\xi'\cdot e_1$, then take $\xi_1\cdot e_1<\zeta\cdot e_1<\xi'\cdot e_1$
to get $\zetamin=\ximin'=\xi_1$ and $\zetaminmin=\ximin_1$. If instead $\xi_1=\xi'$, then also $\ximin_1=\ximin'=\xi_1$. Now, as
$\zeta\to\xi_1$, $\nabla\fe(\zeta\pm)$ approach but never equal $\nabla\fe(\xi_1-)$ because there is no linear segment of $\fe$ adjacent to $\xi_1$ on the left. This forces $\zetamin$ and $\zetaminmin$
to converge to $\xi_1$.

Fix $\e>0$ and take $\zeta\in\ri\Uset$ with $\zeta\cdot e_1<\xi'\cdot e_1$ and $\zetaminmin\cdot e_1>\ximin_1\cdot e_1-\e$. Then \eqref{dir-aux1} holds and therefore \eqref{zetaminmin} holds too and we have
	\begin{align*}
%	\varliminf_{n\to\infty}\Pi_x\{X_n\cdot e_1>n\ximin\cdot e_1-n\e\}
%	&\ge E^{\Pi_x}\Bigl[\varliminf_{n\to\infty}\one\Bigl\{n^{-1}X_n\cdot e_1>\ximin\cdot e_1-\e\Bigr\}\Bigr]\\
%	&=
	\Pi_x\Bigl\{\varliminf_{n\to\infty} n^{-1}X_n\cdot e_1>\ximin_1\cdot e_1-\e\Bigr\}
	\ge\Pi_x\Bigl\{\varliminf_{n\to\infty} n^{-1}X_n\cdot e_1\ge\zetaminmin\cdot e_1\Bigr\}
	=1.
	\end{align*}
Take $\e\to0$ to get \eqref{liminf} when $\xi'\ne e_2$.

%Then $\xi'\in\Uset_{\xi+}$ and thus $\xi'\cdot e_1\le\ximax\cdot e_1$. Fix $\e>0$.
%Take $\zeta\in\Ddense$ such that $\xi'\cdot e_1<\zeta\cdot e_1<\ximax\cdot e_1+\e$. Then the definition of $\xi'$ implies that \eqref{dir-aux1} fails and
%	\begin{align}\label{foo}
%	\lim_{n\to\infty}\Pi_x\{n^{-1}X_n\cdot e_1\ge\ximax\cdot e_1+\e\}\le \lim_{n\to\infty}\Pi_x\{X_n\cdot e_1>n\zeta\cdot e_1\}=0.
%	\end{align}
A symmetric argument (e.g.\ exchanging the roles of $e_1$ and $e_2$) gives
	\begin{align}\label{limsup}
	\Pi_x\Bigl\{\varlimsup_{n\to\infty} n^{-1}X_n\cdot e_1\le\ximax_2\cdot e_1\Bigr\}=1,
	\end{align}
where $\xi_2=\ximax''$ and $\xi''\in\Uset$ is such that 
	\[\xi''\cdot e_1=\inf\Bigl\{\zeta\cdot e_1:\zeta\in\ri\Uset\text{ and $\varlimsup_{n\to\infty}\Pi_x\bigl\{X_n\cdot e_1<n\zeta\cdot e_1\bigr\}>0$}\Bigr\},\]
with $\xi''=e_1$ if the set is empty.

\eqref{liminf} and \eqref{limsup} imply that $\ximin_1\cdot e_1\le\xi''\cdot e_1$ and $\xi'\cdot e_1\le\ximax_2\cdot e_1$. 
%case \ximax_2=e_1 is trivial
For example if $\zeta\in\ri\Uset$ is such that $\zeta\cdot e_1>\ximax_2\cdot e_1$ then
Fatou's lemma gives
	\begin{align*}
	1
	&=\Pi_x\Bigl\{\varlimsup_{n\to\infty} n^{-1}X_n\cdot e_1\le\ximax_2\cdot e_1\Bigr\}
	\le\Pi_x\Bigl\{\varlimsup_{n\to\infty} n^{-1}X_n\cdot e_1<\zeta\cdot e_1\Bigr\}\\
	&\le E^{\Pi_x}\Bigl[\varliminf_{n\to\infty}\one\bigl\{X_n\cdot e_1<n\zeta\cdot e_1\bigr\}\Bigr]
	\le \varliminf_{n\to\infty} \Pi_x\bigl\{X_n\cdot e_1<n\zeta\cdot e_1\bigr\},
	\end{align*}
and then $\zeta\cdot e_1\ge\xi'\cdot e_1$.  

Also, $\xi'\cdot e_1\ge\xi''\cdot e_1$. To see this take $\zeta,\zeta'\in\ri\Uset$ with $\zeta\cdot e_1<\zeta'\cdot e_1<\xi''\cdot e_1$. Then $\Pi_x(X_n\cdot e_1\ge n\zeta'\cdot e_1)\to1$ and hence
\eqref{dir-aux1} holds and $\zeta\cdot e_1\le \xi'\cdot e_1$. Take $\zeta\to\xi''$. We now consider three cases.  

Case \eqref{case-a}: If $\xi'=\xi_1$, then $\xi'=\xi_1=\ximin_1$,  forcing $\xi''=\xi'=\xi_1$.  Let $\xi=\xi'$.
Weak $\{\xi\}$-directedness holds by the definitions of $\xi'$ and $\xi''$, since they equal $\xi$. 
Note that $\ximax=\xi_2$ and $\Uset_{\ximax}=[\xi_1,\ximax_2]=[\ximin_1,\ximax_2]$. Then strong directedness
into $\Uset_{\ximax}$ follows from \eqref{liminf} and \eqref{limsup}. The case $\xi''=\xi_2$ is similar.

Case \eqref{case-b}: Assume $\xi'\ne\xi_1$ and $\xi''\ne\xi_2$ but $\ximin_1\cdot e_1\le\xi''\cdot e_1\le\xi_1\cdot e_1$. Then set $\xi=\xi_1$. We have $\ximax''=\xi$ and thus $\ximax_2=\ximax$. 
%\ximax'=\xi because \xi=\xi_1 is the leftmost point of a linear segment and \ximax' is inside \Uset_{\xi-}
We also have $\ximin_1=\ximin$ and again 
strong directedness into $\Uset_\xi$ follows from \eqref{liminf} and \eqref{limsup}. The case $\xi_2\cdot e_1\le\xi'\cdot e_1\le\ximax_2\cdot e_1$ is similar. 

Case \eqref{case-c}: In the remaining case, $\xi_1\cdot e_1<\xi''\cdot e_1\le\xi'\cdot e_1<\xi_2\cdot e_1$ we have $[\xi_1,\xi_2]=\Uset_{\xi'}=\Uset_{\xi''}$. In this case, $\fe$ is linear on $[\xi_1,\xi_2]$ and therefore
$\xi',\xi''\in\Diff$.
Let $\xi=\xi'$. The definitions of $\xi''$ and $\xi'$ give weak directedness into $[\xi'',\xi']\subset[\xi_1,\xi_2]=\Uset_\xi$. Strong directedness into $\Uset_{\ximin}\cup\Uset_{\ximax}=[\ximinmin,\ximaxmax]=[\ximin_1,\ximin_2]$ follows 
from \eqref{liminf} and \eqref{limsup}.

To finish, note that in all three cases $\xi\in\ri\Uset$. Indeed, strong directedness into $\Uset_{e_1}$ would imply \eqref{dir-aux1} and thus \eqref{B<Bzeta} hold for all $\zeta\in\ri\Uset$. Then Lemma \ref{lm:B-ei} would imply $B^{\Pi_x}(y,y+e_2)=\infty$, contradicting nondegeneracy. Strong directedness into $\Uset_{e_2}$ is argued similarly.
\end{proof}

For the rest of the section we assume that \eqref{La-reg} holds. Then, in Theorem \ref{thm:cocyexist}, 
we can ask that $1\in\sB_0$ and take $\sH_0^1$ to be
$\{-\nabla\fe(\xi):\xi\in\Ddense\}$, where $\Ddense$ is the countable dense subset of $\ri\Uset$ from the paragraph following \eqref{La-reg}.
Theorem \ref{thm:Buselim1} then implies that for $\xi\in\Ddense$ and 
$\what\in\Ommonohat\cap\OmBusallhat\cap\Omconthat{\ximin}\cap\Omconthat{\ximax}$, $B^{\ximin-}=B^{\ximax+}=B^\xi$ is a function of $\{\w_x(\what):x\in\Z^2\}$.
This and \eqref{Busemann-limits} imply that the whole process $\{B^{h\pm}:h\in\sB\}$ is 
measurable with respect to $\mfS = \sigma(\w_x : x \in \bbZ^2)\subset\sF$.
In other words we do not need the extended space $\Omhat$. For the rest of the section we write $\w$ instead of $\what$ and more generally drop the hats from our notation.%\smallskip

Recall the definition of the countable random set $\Usetnonuniq\subset\ri\Uset$ in  \eqref{Usetnonuniq}.

\begin{lemma}\label{lm:B-disc}
Assume \eqref{La-reg}. Fix $x\in\Z^2$. The following hold.
\begin{enumerate}[label={\rm(\alph*)}, ref={\rm\alph*}]
\item\label{lm:B-disc:a} For any $\eta,\zeta\in\ri\Uset$,
$\{\w\in\Ommono:A\cap\Usetnonuniq\neq\varnothing\}$ is measurable, where $A$ is any of the four intervals $[\eta,\zeta]$, $[\eta,\zeta[$, $]\eta,\zeta]$, or $]\eta,\zeta[$.
Also, $\{\w\in\Ommono:\Diff\cap\Usetnonuniq\neq\varnothing\}$ and $\{\w\in\Ommono:\abs{\Diff\cap\Usetnonuniq}=\infty\}$ are measurable as are  %HERE
 %$\{\w\in\Ommono:\Usetnonuniq\text{ is dense in }[\eta,\zeta]\}$.
 $\{\w\in\Ommono:\eta\text{ is a right-accumulation point in }\Usetnonuniq\}$
and  $\{\w\in\Ommono:\zeta\text{ is a left-accumulation point in }\Usetnonuniq\}$.
\item\label{lm:B-disc:b} For any $\eta,\zeta\in\Uset$, $\P([\eta,\zeta]\cap\Usetnonuniqz\neq\varnothing)\in\{0,1\}$
and $\P([\eta,\zeta]\cap\Usetnonuniqz\neq\varnothing)=1$ if and only if
	\begin{align}\label{B-discont}
	\P\bigl\{\w\in\Ommono:\exists\xi\in[\eta,\zeta]\cap\ri\Uset:B^{\xi+}(0,e_1,\w)\ne B^{\xi-}(0,e_1,\w)\bigr\}>0.
	\end{align}
%\item\label{lm:B-disc:c} $\Usetnonuniq$ is supported outside the linear segments of $\fe$:
%For any $\xi\in\Diff$, $\P\{[\ximin,\ximax]\cap\Usetnonuniq\neq\varnothing\}=0$. Its atoms are precisely the points of non-differentiability of $\fe$:
%If $\xi\in\Diff$, then $\P\{\xi\in\Usetnonuniq\}=0$ and if $\xi\in\ri\Uset\smallsetminus\Diff$, then $\P\{\xi\in\Usetnonuniq\}=1$.
%\item\label{lm:B-disc:d} For any $\eta,\zeta\in\Uset$ such that $[\eta,\zeta]\cap\ri\Uset\subset\Diff$ and $\P\{[\eta,\zeta]\cap\,\Usetnonuniq\neq\varnothing\}>0$,
%we have \[\P\bigl\{[\eta,\zeta]\cap\,\Usetnonuniq\text{ is dense in }[\eta,\zeta]\bigr\}=1.\]
\end{enumerate}
\end{lemma}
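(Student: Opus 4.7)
\emph{Plan.} My approach is to handle (a) by measurability bookkeeping and (b) by ergodicity plus a shift-invariant envelope argument. Under \eqref{La-reg} and the remark following \eqref{B:cont}, the full process $\{B^{\xi\pm}(y,y+e_1):y\in\Z^2,\,\xi\in\ri\Uset\}$ is $\mfS$-measurable on $\Omega$, obtained as monotone one-sided limits over the countable dense set $\Ddense\subset\Diff$ of the $\mfS$-measurable maps $\w\mapsto B^\zeta(y,y+e_1,\w)$, which are non-increasing in $\zeta\cdot e_1$ on $\Ommono$. For fixed $y\ge x$ and rationals $a<b$, the function $\zeta\mapsto B^\zeta(y,y+e_1,\w)$ has a jump of size at least $1/m$ somewhere in $]a,b[$ iff for every $n\ge 1$ there exist $\zeta_1,\zeta_2\in\Ddense\cap\,]a,b[$ with $0<\zeta_2\cdot e_1-\zeta_1\cdot e_1<1/n$ and $B^{\zeta_1}(y,y+e_1)-B^{\zeta_2}(y,y+e_1)\ge 1/m$; the forward direction uses the definition of the one-sided limits together with monotonicity, and the converse extracts a subsequential limit in $]a,b[$, with endpoint accumulation ruled out because both $B$-values would then share the same one-sided limit at an endpoint. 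Unioning over $m$, $n$, rational $(a,b)\subset[\eta,\zeta]$, and $y\ge x$ gives measurability of $\{]\eta,\zeta[\cap\Usetnonuniq\ne\varnothing\}$; closed variants only add the boundary events $\{B^{\eta-}(y,y+e_1)>B^{\eta+}(y,y+e_1)\}$ and analogously at $\zeta$. Intersection with $\Diff$ is measurable since $\Diff^c\cap\ri\Uset$ is countable; ``at least $N$ discontinuities in $\Diff$'' is a countable union over $N$-tuples of disjoint rational subintervals, and infiniteness is the intersection over $N$. Accumulation at $\eta$ (resp.~$\zeta$) becomes $\bigcap_n\{\Usetnonuniqz\cap\,]\eta,\eta+(1/n)e_1[\,\ne\varnothing\}$, measurable by the above.

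For part (b), consider the event
\[A=\bigl\{\w\in\Ommono:\exists\,\xi\in[\eta,\zeta]\cap\ri\Uset,\ \exists\,y\in\Z^2:B^{\xi-}(y,y+e_1,\w)\ne B^{\xi+}(y,y+e_1,\w)\bigr\}.\]
Shift-covariance \eqref{cov-prop} identifies a discontinuity at base $y$ in $T_z\w$ with one at base $y+z$ in $\w$, so quantifying over all $y\in\Z^2$ makes $A$ a $T$-invariant event. Ergodicity of $\P$ under $T$ (from the i.i.d.\ weights) gives $\P(A)\in\{0,1\}$. Setting $D_z=\{\exists\,\xi\in[\eta,\zeta]\cap\ri\Uset:B^{\xi-}(z,z+e_1)\ne B^{\xi+}(z,z+e_1)\}$, the same covariance yields $\P(\bigcup_{y\ge z}D_y)=\P(\bigcup_{y\ge 0}D_y)$ for every $z\in\Z^2$. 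Along $z_n=-n(e_1+e_2)$ the events $\bigcup_{y\ge z_n}D_y$ increase to $A$, so continuity from below forces
\[\P\bigl([\eta,\zeta]\cap\Usetnonuniqz\ne\varnothing\bigr)=\P\bigl(\textstyle\bigcup_{y\ge 0}D_y\bigr)=\P(A)\in\{0,1\}.\]
This common value is positive (hence equal to $1$) iff $\P(D_0)>0$, which is exactly condition \eqref{B-discont}.

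The hard part is not conceptually deep but notational: in (a) one must phrase ``some jump exists in an interval'' for a monotone process defined only along a countable dense set as a genuine countable Boolean operation (the $\forall n\exists\zeta_1,\zeta_2$ formulation above), and in (b) one must spot the right shift-invariant envelope $A$ of the target event so that ergodicity reduces the claim to a single equality of probabilities.
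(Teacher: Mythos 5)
Most of your argument tracks the paper's proof. Your detection of a jump inside an open interval via pairs $\zeta_1,\zeta_2\in\Ddense$ that get arbitrarily close while the gap $B^{\zeta_1}(y,y+e_1)-B^{\zeta_2}(y,y+e_1)$ stays bounded below, with closed intervals handled by adding the endpoint events, is exactly the paper's formulation of measurability for the interval and accumulation-point events, and your part (b) is correct: the paper instead argues that if the shift-invariant event has probability one then ergodicity gives a positive density of sites with a discontinuity, hence one in $\Z_+^2$; your monotone envelope $\bigcup_{y\ge z_n}D_y\nearrow A$ together with $\P(\bigcup_{y\ge z}D_y)=\P(\bigcup_{y\ge 0}D_y)$ and the union bound $\P(A)\le\sum_y\P(D_y)$ is a clean equivalent route to the same two conclusions.

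The genuine gap is in part (a), for the events $\{\Diff\cap\Usetnonuniq\neq\varnothing\}$ and $\{\abs{\Diff\cap\Usetnonuniq}=\infty\}$. Countability of $(\ri\Uset)\setminus\Diff$ does not by itself make these events countable Boolean combinations of the ones you have already handled: your pair criterion certifies that \emph{some} jump lies in a given rational interval, but says nothing about whether that jump sits at a differentiability point, and $(\ri\Uset)\setminus\Diff$ may be dense, so you cannot isolate $\Diff$ by choosing intervals avoiding it. Your proposed rewriting of ``at least $N$ discontinuities in $\Diff$'' as a union over $N$-tuples of disjoint rational subintervals is either circular (it presupposes measurability of ``this subinterval contains a jump located in $\Diff$'') or it merely counts jumps in $\Usetnonuniq$, which could all be at non-differentiability points. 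The paper closes this by strengthening the detection criterion: it requires pairs $\xi_0^j,\xi_1^j\in\Ddense$ straddling the jump with \emph{both} $\abs{\xi_1^j-\xi_0^j}_1<1/k$ and $\abs{\nabla\fe(\xi_1^j)-\nabla\fe(\xi_0^j)}_1<1/k$ while the $B$-gap stays above $1/\ell$; by Lemma \ref{lem:superdiffprop} (closedness of the superdifferential and the characterization of $\Diff$ via coinciding one-sided gradients) the accumulation point of such pairs must lie in $\Diff$, which certifies membership in $\Diff\cap\Usetnonuniq$ by a genuinely countable condition, and the count $\ge m$ is then expressed by $m$ ordered such pairs. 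Some such gradient (or equivalent) certificate is needed; without it your claim about these two events is unproved.
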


\begin{proof}
Let $\Ddense$ be a dense set of points in $\Diff$.
%or could use $\Usetnice$, defined below \eqref{La-reg}.
For $\eta,\zeta\in\ri\Uset$ the event $\{\w\in\Ommono:]\eta,\zeta[\,\cap\,\Usetnonuniq\neq\varnothing\}$ can be rewritten as
	\begin{align*}
	&\Bigl\{\w\in\Ommono:\exists y\in x+\Z_+^2\ \exists i\in\{1,2\}\ \exists\ell\in\N\ \forall k\in\N\ \exists \xi_0,\xi_1\in \Ddense\cap]\eta,\zeta[:\\
	&\qquad\qquad\qquad\qquad\abs{\xi_1-\xi_0}_1<1/k,\ \abs{B^{\xi_1}(y,y+e_i)-B^{\xi_0}(y,y+e_i)}_1>1/\ell\Bigr\}.
	\end{align*}
%if \w has a point of discontinuity at \xi\in ]\eta,\zeta[, then it is clearly in the above event (just take \x_0 and \xi_1 surrounding this \xi)
% if on the other hand \w is in the above event, then we can go along a subsequence in k to have \xi_0 and \xi_1 converge to some \xi.
%it must be that \xi_0 goes up to \xi and \xi_1 goes down to \xi. otherwise, if e.g. \xi_0 went down to \xi then so does \xi_1 and
%then intervals [\xi_0,\xi_1] can be taken to be disjoint and we'd have infinitely many jumps of size 1/\ell.
%now we see that \xi must be inside ]\eta,\zeta[. but also that it is a jump point.
It is therefore measurable. The other cases of the set $A$ can be obtained as decreasing intersections of the one above and are hence measurable too.

Recall that $\nabla \fe(\xi+)=\nabla\fe(\xi-)$ if and only if $\xi \in \Diff$. By Lemma \ref{lem:superdiffprop}\eqref{item-b}, \eqref{item-a}, and \eqref{item-c} this holds if and only if there exist sequences $\xi_{1}^j,\xi_{2}^j \in \Ddense$ with $\xi_{1}^j \cdot e_1 < \xi \cdot e_1 < \xi_2^j \cdot e_1$, $\lim_j \xi_{1}^j = \lim_j \xi_{2}^j = \xi$ and $\lim_j \nabla \fe(\xi_{1}^j) = \lim_j \nabla\fe(\xi_{2}^j)$. 
%if such sequences exist, then \partial\fe(\xi)$ is trapped between $\nabla\fe(\xi_1^j)$ and
%\nabla\fe(\xi_2^j)$, which converge to the same thing. hence \partial\fe(\xi) is a singleton
%and $\xi\in\Diff$.    conversely, if \xi\in\Diff$ then \partial\fe(\xi) is a signleton.
%taking any sequences \xi_1^j and \xi_2^j and going to a subsequence to get the 
%gradients to converge, we must get something in \partial\fe(\xi). hence the limits
%exist and are equal
The event $\{\w\in\Ommono:\abs{\Diff\cap\Usetnonuniq}\ge m\}$ can then be rewritten as
	\begin{align*}
	\Bigl\{\w\in\Ommono:\,&\exists\ell\in\N\ \exists y_j\in x+\Z_+^2\ \exists i_j\in\{1,2\},\ j\in\{1,\dotsc,m\},\ \forall k\in\N\\ 
	&\exists \xi_0^1<\xi_1^1<\xi_0^2<\xi_1^2<\cdots<\xi_0^m<\xi_1^m\in \Ddense:\ \abs{\xi_1^j-\xi_0^j}_1<1/k,\\
	 &\!\!\!\!\abs{\nabla\fe(\xi_1^j)-\nabla\fe(\xi_0^j)}_1<1/k,\ \abs{B^{\xi_1^j}(y_j,y_j+e_{i_j})-B^{\xi_2^j}(y_j,y_j+e_{i_j})}_1>1/\ell,\ j\in\{1,\dotsc,m\}\Bigr\},
	\end{align*}
so it is measurable. $m=1$ gives $\{\w\in\Ommono:\Diff\cap\Usetnonuniq\ne\varnothing\}$. Intersection over all $m$ gives $\{\w\in\Ommono:\abs{\Diff\cap\Usetnonuniq}=\infty\}$.

%HERE
%The event $\{\Usetnonuniq\text{ is dense in }[\eta,\zeta]\}$ is the intersection of the events 
%$\{\Usetnonuniq\cap[\eta',\zeta']\neq\varnothing\}$ over $\eta',\zeta'\in]\eta,\zeta[\cap\Ddense$.
The event $\{\w\in\Ommono:\eta\text{ is a right-accumulation point in }\Usetnonuniq\}$ is the intersection of the events
$\{\Usetnonuniq\cap]\eta,\eta']\neq\varnothing\}$ over $\eta'\in\Ddense$ with $\eta'\cdot e_1>\eta\cdot e_1$. Similarly for left-accumulation points.
Part \eqref{lm:B-disc:a} is proved.

Fix $\eta,\zeta\in\Uset$. Similarly to $\{\w\in\Ommono:[\eta,\zeta]\cap\Usetnonuniq\ne\varnothing\}$, the event
%	\[\cE=\bigl\{\w\in\Ommono:\exists y\in\Z^2,\exists i\in\{1,2\},\exists\xi\in\ri\Uset:B^{\xi+}(y,y+e_i,\w)\ne B^{\xi-}(y,y+e_i,\w)\bigr\}\]
	\[\cE=\bigl\{\w\in\Ommono:\exists y\in\Z^2,\exists i\in\{1,2\},\exists\xi\in[\eta,\zeta]\cap\ri\Uset:B^{\xi+}(y,y+e_i,\w)\ne B^{\xi-}(y,y+e_i,\w)\bigr\}\]
is measurable. 
It is also shift-invariant and the ergodicity of the distribution of $\{\w_x:x\in\Z^2\}$ induced by $\P$ 
implies that this event has probability either 
$0$ or $1$. It has probability $1$ if and only if 
	\begin{align}\label{B-discont2}
	\P\bigl\{\exists i\in\{1,2\},\exists\xi\in[\eta,\zeta]\cap\ri\Uset:B^{\xi+}(0,e_i)\ne B^{\xi-}(0,e_i)\bigr\}>0.
	\end{align}
%Recovery implies that $B^{\xi+}(0,e_1)=B^{\xi-}(0,e_1)$ is equivalent to $B^{\xi+}(0,e_2)=B^{\xi-}(0,e_2)$.
%Hence, 
%	\begin{align*}
%	&\P\bigl\{\exists\xi\in[\eta,\zeta]\cap\ri\Uset:B^{\xi+}(0,e_1)\ne B^{\xi-}(0,e_1)\bigr\}\\
%	&\quad\le\P\bigl\{\exists i\in\{1,2\},\exists\xi\in[\eta,\zeta]\cap\ri\Uset:B^{\xi+}(0,e_i)\ne B^{\xi-}(0,e_i)\bigr\}\\
%	&\quad\le\sum_{i=1}^2 \P\bigl\{\exists\xi\in[\eta,\zeta]\cap\ri\Uset:B^{\xi+}(0,e_i)\ne B^{\xi-}(0,e_i)\bigr\}\\
%	&\quad= 2\P\bigl\{\exists\xi\in[\eta,\zeta]\cap\ri\Uset:B^{\xi+}(0,e_1)\ne B^{\xi-}(0,e_1)\bigr\}.
%	\end{align*}
But recovery \eqref{rec-prop1} implies that $B^{\xi+}(0,e_1)\ne B^{\xi-}(0,e_1)$ is equivalent to $B^{\xi+}(0,e_2)\ne B^{\xi-}(0,e_2)$.
Therefore, \eqref{B-discont2} holds if and only if \eqref{B-discont} holds.

If $\P(\cE)=0$ then $\P\{\w:[\eta,\zeta]\cap\Usetnonuniqz\neq\varnothing\}=0$, since the latter is a smaller event.
On the other hand, if $\P(\cE)=1$ then \eqref{B-discont2} holds and ergodicity implies that with $\P$-probability one there is a positive density of sites $y$ such that 
there exist $i\in\{1,2\}$ and $\xi\in[\eta,\zeta]\cap\ri\Uset$ with $B^{\xi+}(y,y+e_i)\ne B^{\xi-}(y,y+e_i)$. In particular, there exist such sites in $\Z^2_+$ and so $[\eta,\zeta]\cap\Usetnonuniqz\neq\varnothing$. Part \eqref{lm:B-disc:b} is proved.
\end{proof}

\begin{proof}[Proof of Theorem \ref{th:main4}]
For $\xi\in(\ri\Uset)\setminus\Diff$ let $\eta=\zeta=\xi$. Then \eqref{E[h(B)]} implies \eqref{B-discont} holds.
The first claim in part \eqref{th:main4:b} follows from applying Lemma \ref{lm:B-disc},
since there are countably many directions of non-differentiability.
The second claim, about $\xi\in\Diff$, comes from the continuity in Remark \ref{B-xi}.

When  $\ximin\ne\ximax$ condition \eqref{La-reg} implies that $[\ximin,\ximax]\subset\Diff$ and hence $\nabla\fe(\zeta\pm)=\nabla\fe(\xi)$
and $B^{\zeta-}=B^{\zeta+}=B^\xi$ for all $\zeta\in[\ximin,\ximax]$. Part \eqref{th:main4:b'} now follows  from Lemma \ref{lm:B-disc}
with $\eta=\ximin$ and $\zeta=\ximax$. (There are countably many $\xi$ with $\ximin\ne\ximax$.)

%HERE
The first claim in part \eqref{th:main4:b''} is the same as the first claim in Lemma \ref{lm:B-disc}.  
Fix $\eta$ and $\zeta$ as in the second claim. Define
	\[A=\Bigl\{\xi\in[\eta,\zeta[:\P(]\xi,\xi']\cap\,\Usetnonuniq\neq\varnothing)=1\quad\forall \xi'\in]\xi,\zeta]\Bigr\}\subset[\eta,\zeta].\]
Note that any point in $A$ is an almost sure (right) accumulation point of $\Usetnonuniqz$. Let $\xi_0\in[\eta,\zeta]$ be such that 
	\begin{align*}
	\xi_0\cdot e_1=\sup\bigl\{\xi'\cdot e_1:\xi'\in[\eta,\zeta]\text{ and }\P([\eta,\xi']\cap\Usetnonuniq\neq\varnothing)=0\bigr\}.
	\end{align*}
%By \eqref{th:main4:b}, $\P(\eta\in\Usetnonuniq)=0$ and so the above set is not empty.
We have $\P([\eta,\xi']\cap\,\Usetnonuniq=\varnothing)=1$ for all $\xi'\in[\eta,\xi_0[$. Taking $\xi'\to\xi_0$ 
implies the same claim for $[\eta,\xi_0[$. Since $\xi_0\in\Diff$, part \eqref{th:main4:b} implies the same holds for $[\eta,\xi_0]$; therefore $\xi_0\ne\zeta$. 
The definition of $\xi_0$, the (already proven) first claim in \eqref{th:main4:b''}, and $\P(\xi_0 \not\in\Usetnonuniq)=1$ now imply 
%$\P([\eta,\xi']\cap\,\Usetnonuniq\neq\varnothing)=1$ for  $\xi'\in]\xi,\zeta]\cap\Ddense$. Recalling the conclusion from the previous paragraph we can replace $\eta$ by $\xi$.
that $\xi_0 \in A$ and so $A$ is not empty. 

For any $\xi\in A$ and $\xi'\in]\xi,\zeta[$ there exists $\xi''\in]\xi,\xi']$ such that 
$\P([\xi'',\xi']\cap\,\Usetnonuniq\neq\varnothing)=1$. Otherwise, taking $\xi''\to\xi$ and using $\P(\xi\in\Usetnonuniq)=0$ we get a contradiction with $\xi\in A$. The previous paragraph  shows that there exists $\xi''' \in ]\xi'',\xi'[ \, \cap \, A$. It follows that $\xi$ is an accumulation point of $A$.  \eqref{th:main4:b''} is proved.

Let \label{OmBusPiall}$\OmBusPiall$ be the intersection of $\Ommono\cap\OmBusall\cap\Omnondeg\cap\Omexist\cap\Omdir$ with 
the full-measure event from the already proven parts \eqref{th:main4:b} and \eqref{th:main4:b'} and with
 $\Omcont{\ximin}\cap\Omcont{\ximax}$ for all of $\fe$'s 
linear segments $[\ximin,\ximax]$, $\ximin\ne\ximax$ (if any). Take $\w\in\OmBusPiall$.

Since $\w\in\Omnondeg$, uniqueness of degenerate extreme solutions comes from Lemma \ref{lm:nondeg}. 
Assumption \eqref{La-reg} implies that 
	\begin{align}\label{all-equal}
	\Uset_{\ximin}=\Uset_{\ximax}=\Uset_{\xi-}=\Uset_{\xi+}=\Uset_\xi\quad\text{for all }\xi\in\Uset.
	\end{align}
Then strong directedness of nondegenerate extreme solutions follows from Theorem \ref{th:main2} (since $\w\in\Omdir$). This proves part \eqref{th:main4:c}.

Consider a solution  $\Pi_x\in\DLR_x^\w$ that is weakly $\Uset_\xi$-directed for some $\xi\in\Uset$. If $\xi=e_i$ for some $i\in\{1,2\}$ then the paragraph following Theorem \ref{th:main2} explains why it must be that $\Pi_x=\Pi_x^{e_i}$.
Assume therefore that $\xi\in\ri\Uset$. 
As explained at the end of Section \ref{sub:DLR}, applying Choquet's theorem gives
%\addmath{mention necessary conditions and a reference}
the existence of a probability measure $\nu$ on $\DLR_x^\w$ such that
	\[\Pi_x=\int_{\ext\DLR_x^\w}  \Pibar_x\, \nu(d\Pibar_x).\]
Fix $\eta,\zeta\in\ri\Uset$ such that $\etamin\cdot e_1<\ximin\cdot e_1$ and $\zetamax\cdot e_1>\ximax\cdot e_1$. Then
	\begin{align*}
	&\Pi_x\bigl\{\etamin\cdot e_1\le n^{-1}X_n\cdot e_1\le\zetamax\cdot e_1\bigr\}\\
	&\qquad\qquad=\int_{\ext\DLR_x^\w} \!\!\! \Pibar_x\bigl\{\etamin\cdot e_1\le n^{-1}X_n\cdot e_1\le\zetamax\cdot e_1\bigr\}\, \nu(d\Pibar_x).
	\end{align*}
The weak $\Uset_\xi$-directedness of $\Pi_x$ implies the left-hand side goes to $1$ as $n\to\infty$.
On the other hand, the strong directedness of extreme DLR solutions, proved in part \eqref{th:main4:c}, implies that the probability being integrated on the right-hand side converges to either $0$ or $1$. 
It converges to $1$ exactly for those $\Pibar_x$ that are strongly $[\etamin,\zetamax]$-directed.  Applying bounded convergence we then get
	\[\nu\bigl\{\Pibar_x\text{ is strongly $[\etamin,\zetamax]$-directed}\bigr\}=1.\]
Taking $\eta$ and $\zeta$ to $\xi$ we conclude that 
	\begin{align}\label{ex-directed}
	\nu\bigl\{\Pibar_x\text{ is strongly $\Uset_\xi$-directed}\bigr\}=1.
	\end{align}
But then this implies that $\Pi_x$ is strongly $\Uset_\xi$-directed and part \eqref{th:main4:d} is proved.

Now fix $\xi\in\Uset\setminus\Usetnonuniq$.
Since $\w\in\Omexist$ and $\Uset_{\xi-}=\Uset_{\xi+}$, we already know from Theorem \ref{th:DLR-dir1} that $\Pi^{\xi,\w}_x$ is a strongly $\Uset_\xi$-directed DLR solution.  
Let $\Pi_x$ be (possibly another) strongly $\Uset_\xi$-directed DLR solution.
If $\ximin\ne\xi$, then assumption \eqref{La-reg} implies $\fe$ is linear on $[\ximin,\ximax]\subset\Diff$ 
and $\w\in\Omcont{\ximin}$ implies  $B^{\ximin-}=B^{\ximin}=B^\xi=B^{\xi-}$. 
Either way, we have $B^{\ximin-}=B^{\xi-}$. Similarly, $B^{\ximax+}=B^{\xi+}$.
By Theorem \ref{thm:Buselim1}  we have $\Pi_x$-almost surely, for all $y\in x+\Z^2_+$,
	\begin{align}\label{foofoo1}
	e^{-B^{\xi-}(y,y+e_1,\w)}\le \varliminf_{n\to\infty}\frac{Z_{y+e_1,X_n}}{Z_{y,X_n}} \le \varlimsup_{n\to\infty}\frac{Z_{y+e_1,X_n}}{Z_{y,X_n}} \le e^{-B^{\xi+}(y,y+e_1,\w)}
	\end{align}
and
	\begin{align}\label{foofoo2}
	e^{-B^{\xi+}(y,y+e_2,\w)}\le \varliminf_{n\to\infty}\frac{Z_{y+e_2,X_n}}{Z_{y,X_n}} \le \varlimsup_{n\to\infty}\frac{Z_{y+e_2,X_n}}{Z_{y,X_n}} \le e^{-B^{\xi-}(y,y+e_2,\w)}.
	\end{align}
%with similar inequalities for $e_2$-increments.	
Consequently, if  $\xi\not\in\Usetnonuniq$, then for $i\in\{1,2\}$
%we get that 
	\[\lim_{n\to\infty}\frac{Z_{y+e_i,X_n}}{Z_{y,X_n}}=e^{-B^\xi(y,y+e_i,\w)}\]
and hence %consequently
	\[\lim_{n\to\infty}\frac{Z_{y,X_n}}{Z_{x,X_n}}=e^{-B^\xi(x,y,\w)}\]
$\Pi_x$-almost surely and for all $y\in x+\Z^2_+$.
Then due to  \eqref{Z-bd} and \eqref{B-Z} applying bounded convergence we deduce that $\Pi_x=\Pi^{\xi,\w}_x$.  
The existence and uniqueness claimed in part \eqref{th:main4:e} have been verified.

As explained above Lemma \ref{lm:cond-bi-to-root}, one can write $\Pi_x$ as a convex integral mixture of extreme measures from $\DLR_x^\w$. This mixture will then have to be supported on DLR solutions that are all strongly $\Uset_\xi$-directed. Uniqueness then implies that they are all equal to $\Pi_x$ and therefore $\Pi_x$ is extreme.

The weak convergence claim comes similarly to \eqref{Q-cv}. The argument for consistency is similar
to the one below \eqref{Q-cv}. \eqref{th:main4:e} is proved.

When $\xi\in\Usetnonuniq$, $\Pi^{\xi\pm,\w}_x$ are two DLR solutions which, by Theorem \ref{th:DLR-dir1} and \eqref{all-equal}, are both strongly $\Uset_\xi$-directed.
The two are different because they are nondegenerate and so if $y\in x+\Z^2_+$ and $i\in\{1,2\}$ are such that $B^{\xi-}(y,y+e_i,\w)\ne B^{\xi+}(y,y+e_i,\w)$, then 
passing through $y$ has a positive probability under both $\Pi^{\xi\pm,\w}_x$, and the transitions out of $y$ are different. 

Since $\Pi_x^{\xi\pm,\w}$ are two different $\Uset_\xi$-directed solutions, there must exist at least two different extreme ones. Part \eqref{th:main4:f} is proved and we are done.
\end{proof}

We can in fact prove a little bit more than the claim in Theorem \ref{th:main4}\eqref{th:main4:f}. 

\begin{lemma}
Under the assumptions of Theorem \ref{th:main4}\,\eqref{th:main4:f} we have that $\Pi_x^{\xi\pm,\w}$ are extreme.
\end{lemma}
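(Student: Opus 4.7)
The plan is to use Choquet's theorem to decompose $\Pi_x^{\xi+,\w}$ into extreme DLR solutions and show that the representing measure is a point mass at $\Pi_x^{\xi+,\w}$ itself; the argument for $\Pi_x^{\xi-,\w}$ is identical. The key tools will be Theorem \ref{th:B-lim}, which identifies the cocycle of a nondegenerate extreme DLR solution through almost sure limits of partition-function ratios, and the bijection from Theorem \ref{th:B-DLR} between nondegenerate DLR solutions and recovering cocycles.

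First I would write $\Pi_x^{\xi+,\w} = \int_{\ext \DLR_x^\w} \bar\Pi_x \, \nu(d\bar\Pi_x)$. Since $\xi \in \Usetnonuniq \subset \ri\Uset$, we have $\Uset_\xi \subset \ri\Uset$ by Lemma \ref{lem:polymermartin}, and $\Pi_x^{\xi+,\w}$ is strongly $\Uset_\xi$-directed by Theorem \ref{th:DLR-dir1}. Integrating the indicator of $\{X_n/n \to e_i\}$ against $\nu$ gives zero, so $\nu(\{\Pi_x^{e_1}\}) = \nu(\{\Pi_x^{e_2}\}) = 0$ and $\nu$-almost every $\bar\Pi_x$ is a nondegenerate extreme solution.

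Next, let $N \subset \X_x$ be the complement of the full $\Pi_x^{\xi+,\w}$-measure event on which $Z_{v,X_n}/Z_{u,X_n} \to e^{-B^{\xi+}(u,v,\w)}$ simultaneously for all $u, v \in x + \Z_+^2$; such $N$ exists by Theorem \ref{th:B-lim} and countability. From $0 = \Pi_x^{\xi+,\w}(N) = \int \bar\Pi_x(N) \, \nu(d\bar\Pi_x)$ I conclude that $\bar\Pi_x(N) = 0$ for $\nu$-almost every $\bar\Pi_x$. Applying Theorem \ref{th:B-lim} to each such nondegenerate extreme $\bar\Pi_x$ gives $Z_{v,X_n}/Z_{u,X_n} \to e^{-B^{\bar\Pi_x}(u,v)}$ on a set of full $\bar\Pi_x$-measure, so the two limits must agree and $B^{\bar\Pi_x}(u,v) = B^{\xi+}(u,v,\w)$ for all $u, v \ge x$. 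By Theorem \ref{th:B-DLR}, $\bar\Pi_x = \Pi_x^{\xi+,\w}$, and hence $\nu = \delta_{\Pi_x^{\xi+,\w}}$.

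The only delicate point is showing that $\nu$ assigns no mass to the degenerate extremes $\Pi_x^{e_i}$, which is why we need $\Uset_\xi \subset \ri\Uset$; once that is in place, everything reduces to two applications of Theorem \ref{th:B-lim} and a standard null-set transfer from the Choquet mixture to its components.
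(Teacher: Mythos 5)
Your overall plan (Choquet decomposition plus identification of the cocycles of the components) is the right one, and the part ruling out mass on the degenerate extremes $\Pi_x^{e_i}$ is fine. But the central step is circular. You define $N$ as the complement of the event on which $Z_{v,X_n}/Z_{u,X_n}\to e^{-B^{\xi+}(u,v,\w)}$ for all $u,v\ge x$ and claim $\Pi_x^{\xi+,\w}(N)=0$ ``by Theorem \ref{th:B-lim} and countability.'' Theorem \ref{th:B-lim}, however, is stated for nondegenerate \emph{extreme} points of $\DLR_x^\w$, and extremality of $\Pi_x^{\xi+,\w}$ is exactly what you are trying to prove, so it cannot be invoked for this measure. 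Nor can you get the convergence from Theorem \ref{thm:Buselim1}: in the regime of Theorem \ref{th:main4}\eqref{th:main4:f} one has $\xi\in\Usetnonuniq$, so $B^{\xi-}\ne B^{\xi+}$ somewhere ahead of $x$, and \eqref{fixed-xi-lim} (equivalently \eqref{foofoo1}--\eqref{foofoo2}) only sandwiches the ratios between the two \emph{distinct} values $e^{-B^{\xi-}}$ and $e^{-B^{\xi+}}$. What is available without extremality is only that the ratios converge a.s.\ to \emph{some} limit (the backward-martingale argument of Lemma \ref{lm:mgale1} and Theorem \ref{th:B-lim}); for a nontrivial mixture the limit would be the cocycle of the component carrying the path, so the statement ``the limit is a.s.\ the constant $e^{-B^{\xi+}(u,v)}$'' is essentially equivalent to $\nu=\delta_{\Pi_x^{\xi+,\w}}$. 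Your argument therefore assumes its conclusion.

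The paper's proof avoids this by never asserting convergence of the ratios under $\Pi_x^{\xi\pm,\w}$ itself. It applies Theorem \ref{th:B-lim} only to the $\nu$-a.e.\ components $\Pibar_x$, which \emph{are} extreme, and combines this with Theorem \ref{thm:Buselim1} (valid because the components are strongly $\Uset_\xi$-directed, via \eqref{ex-directed}) to get only the one-sided inequalities \eqref{temp-ineq}, e.g.\ $e^{-B^{\xi-}(y,y+e_1,\w)}\le e^{-B^{\Pibar_x}(y,y+e_1)}$. These are then upgraded to equalities by an integration argument: since $\int e^{-B^{\Pibar_x}(x,v)}\,\nu(d\Pibar_x)=e^{-B^{\xi-}(x,v,\w)}$ (from \eqref{Pi-B-cons} or \eqref{Pi-Z-B}), the cocycle property of $B^{\xi-}$ forces equality $\nu$-a.s., hence $B^{\Pibar_x}=B^{\xi-}$ and $\nu=\delta_{\Pi_x^{\xi-,\w}}$. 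If you replace your step 3 by this ``one-sided inequality plus matching integrals'' argument, the rest of your outline goes through.
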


\begin{proof}
Applying \eqref{ex-directed} to $\Pi_x=\Pi_x^{\xi-,\w}$ says that this measure is a convex mixture of extreme DLR solutions that are all strongly $\Uset_\xi$-directed.
Then $\nu$-almost surely $\Pibar_x$ is nondegenerate and \eqref{foofoo1} and \eqref{foofoo2} hold $\Pibar_x$-almost surely. By \eqref{cv-as}, the ratios of partition functions converge
 $\Pibar_x$-almost surely and we have 
% \reversemarginpar\addmath{why do we need bounded cv or Fatou??}
% Equation \eqref{Z-rec} implies $0\le Z_{y+e_i,X_n}/Z_{y,X_n}\le e^{-\w_y}$ and combining bounded convergence with Fatou's lemma  implies that 
	\begin{align}\label{temp-ineq}
	e^{-B^{\xi-}(y,y+e_1,\w)}\le e^{-B^{\Pibar_x}(y,y+e_1)} \quad\text{and}\quad e^{-B^{\xi-}(y,y+e_2,\w)}\ge e^{-B^{\Pibar_x}(y,y+e_2)}.
	\end{align}
By the cocycle property of $B^{\Pibar_x}$ we can rewrite the above as
	\[e^{-B^{\Pibar_x}(x,y)}\,e^{-B^{\xi-}(y,y+e_1,\w)}\le e^{-B^{\Pibar_x}(x,y+e_1)} \quad\text{and}\quad e^{-B^{\Pibar_x}(x,y)}\,e^{-B^{\xi-}(y,y+e_2,\w)}\ge e^{-B^{\Pibar_x}(x,y+e_2)}.\]
Integrating any of \eqref{Pi-B-cons}, \eqref{B-Z}, or \eqref{Pi-Z-B} shows that $\int e^{-B^{\Pibar_x}(x,v)}\,\nu(d\Pibar_x)=e^{-B^{\xi-}(x,y,\w)}$. Therefore
	\[e^{-B^{\xi-}(x,y,\w)}\,e^{-B^{\xi-}(y,y+e_1,\w)}\le e^{-B^{\xi-}(x,y+e_1,\w)} \quad\text{and}\quad e^{-B^{\xi-}(x,y,\w)}\,e^{-B^{\xi-}(y,y+e_2,\w)}\ge e^{-B^{\xi-}(x,y+e_2,\w)}.\]
But the cocycle property of $B^{\xi-}$ says the above are in fact equalities. Hence, it must be the case that \eqref{temp-ineq} were in fact equalities and therefore $B^{\Pibar_x}=B^{\xi-}$, $\nu$-almost surely.
In other words, we have shown that $\nu=\delta_{\Pi_x^{\xi-}}$ and therefore $\Pi_x^{\xi-}$ is extreme.  A similar reasoning holds for $\Pi_x^{\xi+}$.
\end{proof}

\begin{proof}[Proof of Theorem \ref{th:cif}]
Let $\Ddense$ be a countable dense subset of $\Diff$ containing the endpoints of all linear segments of $\fe$ (if any).
We define a coupling of certain paths on the tree $\cT^\w_0$. 
Set \label{Omcif}$\Omcif=\bigcap_{\zeta\in\Ddense}\OmBusPizeta$ and take $\w\in\Omcif$.
For $n\in\N$ and $\zeta\in\Ddense$ let $\Xtreen_{0,\infty}$ be the up-right path on $\cT^\w_0$ that goes from $0$ to $\fl{n\zeta}$ and then continues by taking, say, $e_1$ steps.
Let $\wh Q^\w_{0,(n)}$ be the joint distribution of $\cT_0^\w$ and $\{\Xtreen_{0,\infty}:\zeta\in\Ddense\}$, induced by $Q_0^\w$. 
By compactness, the sequence $\wh Q^\w_{0,(n)}$ has a subsequence that converges weakly to 
a probability measure.
Let $\wh Q^\w_0$ be a weak limit. This is a probability measure on trees spanning $\Z^2_+$ and infinite up-right paths on these trees, rooted at $0$ and indexed by $\zeta\in\Ddense$. 
% for any n, the paths stay on the tree up to level n, and these conditions define closed sets, so the limit measure will give full
%probability to the event where every path stays on the tree up to all levels.  
 We denote the tree by $\wh\cT^\w_0$ and the paths by $\Xtree_{0,\infty}$.
The distribution of $\wh\cT^\w_0$ under $\wh Q^\w_0$ is the same as that of $\cT^\w_0$ under $Q_0^\w$.
Furthermore,  since by Lemma \ref{lm:p2pQ-pi} for each $n\in\N$ and $\xi\in\Ddense$ the distribution of $\Xtreen_{0,n}$ under $Q_0^\w$ is exactly $Q^\w_{0,\fl{n\zeta}}$, 
Theorem \ref{th:main3} implies that 
the distribution of $\Xtree_{0,\infty}$ under $\wh Q^\w_0$ is exactly $\Pi_0^{\zeta,\w}$. 
One consequence is that $\Xtree$ is $\Uset_\zeta$-directed, $\wh Q^\w_0$-almost surely and for all $\zeta\in\Ddense$.

We can define a competition interface $\wh\CI_n^\w$ between the subtrees of $\wh\cT^\w_0$ rooted at $e_1$ and $e_2$, and its distribution under $\wh Q^\w_0$ is then the same as the distribution of the original competition interface $\CI_n^\w$ under $Q_0^\w$.
Since $\Xtree$ is a path on the spanning tree $\wh\cT^\w_0$, $\{\Xtree_1=e_2\}$ implies that $\wh\CI_n^\w\cdot e_1\ge\Xtree_n\cdot e_1$ for all $n\in\Z_+$. This in turn implies the event
$\{\varliminf \wh\CI_n^\w\cdot e_1/n\ge\zetamin\cdot e_1\}$. Consequently, for all $\zeta\in\Ddense$,
	\[Q_0^\w\Bigl\{\varliminf_{n\to\infty}\CI^\w_n\cdot e_1/n<\zetamin\cdot e_1\Bigr\}\le\Pi_0^{\zeta,\w}(X_1=e_1)
	=e^{\w_0-B^\zeta(0,e_1,\w)}.\]
A similar argument gives
	\begin{align}\label{fafa2}
	Q_0^\w\Bigl\{\varlimsup_{n\to\infty}\CI^\w_n\cdot e_1/n\le\zetamax\cdot e_1\Bigr\}\ge 
	e^{\w_0-B^\zeta(0,e_1,\w)}.
	\end{align}
For $\xi\in\ri\Uset$ with $\ximax\in\Ddense$ taking $\Ddense\ni\zeta\to\ximax$ with $\zeta\cdot e_1$ strictly decreasing 
makes $\zetamin\to\ximax$. Recall that $\w\in\Omcont{\ximax}$.
Applying the above we get
	\[Q_0^\w\Bigl\{\varliminf_{n\to\infty}\CI^\w_n\cdot e_1/n\le\ximax\cdot e_1\Bigr\}\le e^{\w_0-B^{\ximax}(0,e_1,\w)}.\]
Applying \eqref{fafa2} with $\zeta=\ximax$ we get 
	\[Q_0^\w\Bigl\{\varlimsup_{n\to\infty}\CI^\w_n\cdot e_1/n\le\ximax\cdot e_1\Bigr\}\ge 
	e^{\w_0-B^{\ximax}(0,e_1,\w)}.\]
Since  the liminf is always bounded above by the limsup we get 
	\[Q_0^\w\Bigl\{\varliminf_{n\to\infty}\CI^\w_n\cdot e_1/n\le\ximax\cdot e_1\Bigr\}
	=Q_0^\w\Bigl\{\varlimsup_{n\to\infty}\CI^\w_n\cdot e_1/n\le\ximax\cdot e_1\Bigr\}
	=e^{\w_0-B^{\ximax}(0,e_1,\w)}.\]
A similar argument, starting by taking $\zeta\to\ximin$ and applying \eqref{fafa2}, gives 
	\[Q_0^\w\Bigl\{\varliminf_{n\to\infty}\CI^\w_n\cdot e_1/n<\ximin\cdot e_1\Bigr\}
	=Q_0^\w\Bigl\{\varlimsup_{n\to\infty}\CI^\w_n\cdot e_1/n<\ximin\cdot e_1\Bigr\}
	=e^{\w_0-B^{\ximin}(0,e_1,\w)}\]
for all $\xi\in\ri\Uset$ with $\ximin\in\Ddense$. But for $\xi\in\Ddense$
%whether $\ximin=\ximax$ or $\ximin\ne\ximax$
we  have $B^\xi(\w)=B^{\ximin}(\w)=B^{\ximax}(\w)$. Hence, all four probabilities in the above two displays  equal
$e^{\w_0-B^\xi(0,e_1,\w)}$. We conclude that for any $\xi\in\Ddense$
	\[Q_0^\w\Bigl\{\varliminf_{n\to\infty}\CI^\w_n\cdot e_1/n\le\xi\cdot e_1\Bigr\}
	=Q_0^\w\Bigl\{\varlimsup_{n\to\infty}\CI^\w_n\cdot e_1/n\le\xi\cdot e_1\Bigr\}
	=e^{\w_0-B^{\xi}(0,e_1,\w)}.\]
This implies that  $\cid=\lim_{n\to\infty}\CI^\w_n\cdot e_1/n$ exists $Q_0^\w$-almost surely and its cumulative distribution function is given by \eqref{cid-CDF}.
Parts \eqref{th:cif:a} and \eqref{th:cif:c} are proved.
Part \eqref{th:cif:b} follows because $B^{\xi+}$ is constant on the linear segments of $\fe$.
 For  \eqref{th:cif:d} observe that
	\begin{align*}
	\E Q_0^\w\{\cid=\xi\}=\E\bigl[e^{\w_0}(e^{-B^{\xi+}(0,e_1,\w)}-e^{-B^{\xi-}(0,e_1,\w)})\bigr],
	\end{align*}
which vanishes if and only if $\P\{B^{\xi+}(0,e_1)=B^{\xi-}(0,e_2)\}=1$, which holds if and only if $\xi\in\Diff$.
\end{proof}

\section{Bi-infinite polymer measures} \label{sec:biDLR}
We now prove Theorem \ref{thm:nodirbi} and Lemma \ref{lem:nocovbi}, showing non-existence of two classes of bi-infinite polymer measures. 
The following  is the key step in the proof of Theorem \ref{thm:nodirbi}. 
%The proof is a lack of space argument originally due to Burton and Keane \cite{Bur-Kea-89}.
%Recall that we assume $\beta=1$ without loss of generality.

\begin{lemma}\label{lem:unifto0}
Let $B$ be a shift-covariant cocycle which recovers the potential. Then there is a Borel set \label{Omback}$\Omback{B} \subset \Omega$ with $\bbP(\Omback{B}) = 1$ so that for all $\w \in \Omback{B}$ and for all $x \in \bbZ^2$,
\begin{align*}
\lim_{n\to \infty}\max_{\substack{y \leq x \\ \abs{x-y}_1 = n}}\Pi_{y}^{B(\w)}(x) &=0.
\end{align*}
\end{lemma}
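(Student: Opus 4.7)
The plan is to reduce this to a concentration/spreading statement for a single Markov chain, handle the directions on which the large deviation rate is positive via the LDP, and then close the remaining critical directions by monotone coupling plus almost sure coalescence.

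First, by Theorem \ref{th:B-DLR} the measure $\Pi_y^{B}$ is the law of the Markov chain on $\Z^2$ with transitions
\[
\pi(z,z+e_i)=e^{\w_z-B(z,z+e_i)},\qquad i\in\{1,2\},
\]
started at $y$; crucially these transitions depend on $z$ (and $\w,B$) but not on the root $y$, so $\Pi_y^B(x)=\Pi_y^B(X_{x\cdot\ehat}=x)$ is a hitting probability. Using shift-covariance of $B$ we have $\Pi_y^{B(\w)}(x)=\Pi_0^{B(T_{-y}\w)}(X_n=x-y)$ with $n=|x-y|_1$, so the claim is equivalent to showing
\[
\max_{v\in\Z_+^2,\ |v|_1=n}\Pi_0^{B(T_{x-v}\w)}(X_n=v)\longrightarrow 0
\]
$\P$-a.s.\ as $n\to\infty$. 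Note that $h(B)$ is shift-invariant, so the rate function $I_B(\xi)=-h(B)\cdot\xi-\fe(\xi)$ from Theorem \ref{th:PiB-lln} and Lemma \ref{lm:h-xi} is the same for all the translated environments appearing above on one $T$-invariant $\P$-full-measure event.

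Second, I would apply the LDP of Theorem \ref{th:PiB-lln} to handle all $v$ whose rescaled direction is not too close to the zero set $\Uset_{h(B)}$ of $I_B$. Fix $\delta>0$ and let $K_\delta=\{\xi\in\Uset:\dist(\xi,\Uset_{h(B)})\ge\delta\}$, a compact set on which $I_B$ attains a positive minimum $c(\delta)>0$. The LDP upper bound gives, uniformly over $v\in\Z_+^2$ with $|v|_1=n$ and $v/n\in K_\delta$, a bound $\Pi_0^{B(T_{x-v}\w)}(X_n=v)\le e^{-c(\delta)n/2}$ for all $n$ large enough, where the uniformity uses shift-invariance of $h(B)$ and of the rate function. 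Thus the contribution from the "LDP-non-critical" starting points decays exponentially.

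Third, the remaining case is directions $v/n\to\xi\in\Uset_{h(B)}$, where the LDP is degenerate and one must use a genuine spreading argument. Here I would use the monotone coupling from Section \ref{sub:coupling} of the $n+1$ chains $X^{(i)}$ started from the $n+1$ antidiagonal sites $y^{(i)}=x-(n-i)e_1-ie_2$, combined with Theorem \ref{thm:RWREcoal} on a.s.\ coalescence in a common environment. Monotonicity makes $i\mapsto X^{(i)}_{x\cdot\ehat}\cdot e_1$ nondecreasing, so the set of indices $i$ with $X^{(i)}_{x\cdot\ehat}=x$ is a (possibly empty) discrete interval. Coalescence and the LLN built into Theorem \ref{th:PiB-lln} then imply that the endpoints $\{X^{(i)}_{x\cdot\ehat}\}$ cluster, $\P$-a.s., near the translates $y^{(i)}+n\xi$, so that only a sub-linear number of coupled chains can actually land at the single site $x$. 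Averaging this bound against the shift covariance (or, equivalently, exploiting the stationarity of the rate function derived in Paragraph 1) rules out the existence of an $\epsilon>0$ and a positive-density family of starting points on far antidiagonals each placing mass $\ge\epsilon$ at $x$, and this forces the uniform bound $\max_y\Pi_y^B(x)\to 0$. Taking a countable intersection over $x\in\Z^2$ of the $\P$-full-measure events used yields the desired shift-invariant $\Omback{B}$.

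The main obstacle is Paragraph 3: the LDP is flat on $\Uset_{h(B)}$, so no exponential decay is available for starting points whose direction to $x$ hits the LLN set, and one is forced to turn the qualitative a.s.\ coalescence of Theorem \ref{thm:RWREcoal} into a quantitative obstruction against many coupled chains simultaneously landing at the fixed site $x$. The monotonicity of the coupling is what makes this tractable, because it reduces the problem to bounding the length of a monotone contiguous block of endpoints equal to $x$, but extracting that bound from coalescence plus the LLN is the technical heart of the proof.
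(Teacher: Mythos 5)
Your reduction to a hitting probability and your instinct that coalescence must carry the critical directions are reasonable, but Paragraph~3 --- which you correctly flag as the heart --- has a genuine gap, in two places. First, the claim that coalescence plus the LLN force only sublinearly many of the $n+1$ coupled chains to land at $x$ is not established: every starting point $y$ on the antidiagonal whose direction to $x$ lies within $\delta$ of $\Uset_{h(B)}$ (and there are order $n$ such points, since $\Uset_{h(B)}$ may be a nondegenerate segment, and even when it is a single point the $\delta$-window contains about $2\delta n$ sites) has its LLN cone containing the direction to $x$, so LLN-scale clustering of the endpoints near $y+n\,\Uset_{h(B)}$ says nothing about how many chains hit the single site $x$ exactly. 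Second, and more fundamentally, even if you knew that in every realization at most $o(n)$ chains hit $x$, that only bounds $\sum_y \Pi_y^{B}(x)$ over the antidiagonal (it is the $\bfP$-expectation of the number of hits), i.e.\ an average; likewise your closing step, ``averaging against shift covariance rules out a positive-density family each placing mass $\ge\epsilon$ at $x$,'' is a statement about densities, whereas the lemma requires the \emph{maximum} over all $y$ at distance $n$ to vanish --- a single exceptional $y$ per level, of density zero, already violates the conclusion and is not excluded by your argument. (Your LDP step also tacitly needs uniformity over the $n$ shifted environments $T_{x-v}\w$, which the fixed-environment LDP of Theorem \ref{th:PiB-lln} does not provide; one could get it from $\Pi_y^{B}(x)=Z_{y,x}e^{-B(y,x)}$ and the two shape theorems, but in fact no splitting into critical and non-critical directions is needed at all.)

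The missing ideas are the ones the paper uses. Work with the coupled paths $X^y$ and set $N_x=\{y\le x : x\in X^y\}$. Because the transition kernel does not depend on the root, if the path from some $y$ with $\abs{x-y}_1=n$ passes through $x$, then every one of the $n+1$ sites on that path segment also has its coupled path passing through $x$; hence $\{x\in X^y\}\subset\{\abs{N_x}\ge n\}$ and therefore $\max_{y\le x,\,\abs{x-y}_1=n}\Pi_y^{B}(x)\le\bfP(\abs{N_x}\ge n)$. This converts the problematic maximum into the tail of a single random variable, and it remains only to show $\abs{N_x}<\infty$ almost surely. That is where coalescence really enters: if $\bfP\otimes\P(\abs{N_x}=\infty)>0$, the ergodic theorem gives a positive density of sites $v$ with $\abs{N_v}=\infty$, Theorem \ref{thm:RWREcoal} turns these into a positive density of junction points where two such path families coalesce, and a Burton--Keane lack-of-space count (junction points are interior vertices of binary trees whose leaves lie on the boundary of $[1,L]^2$, so there are at most $O(L)$ of them, not order $L^2$) yields the contradiction. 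Note that neither the LDP nor the LLN is needed anywhere in this route.
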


\begin{proof}
By shift-covariance of $B$, it is enough to deal with the case $x=0$.
Couple $\{\Pi_y^{B(\w)}:y\in\Z^2\}$ as described in Section \ref{sub:coupling} and denote the coupled paths by
$X^{y,\w}_{m,\infty}$, or $X^y$ for short, $y\in\V_m$, $m\in\Z$. 
Let $N_v= \{y\le v : v \in X^y\}$.
We will call a point $z\in\Z^2$ a {\sl junction point} if there exist distinct  $u,v\in\Z^2$ such that $\abs{N_u}=\abs{N_v}=\infty$ and 
$X^u$ and $X^v$ coalesce precisely at $z$.

Suppose now that $\bfP \otimes \bbP(\abs{N_0} = \infty) > 0$. The shift-covariance of $B$ implies  
%$\mfg_u(\tau_{v}\unif,T_v \w) = \mfg_{u+v}(\unif, \w)$ and so 
$N_u(\tau_v \unif,T_v\w) = N_{u+v}(\unif, \w)$. Hence,  by the ergodic theorem, with positive $\bfP \otimes \bbP$-probability there is a positive density of sites $v \in \bbZ^2$ with $\abs{N_v} = \infty$. 

By Theorem \ref{thm:RWREcoal}, for $\bfP \otimes \bbP$-almost every $(\unif,\w)$ and all $u,v \in \bbZ^2$, 
$X^u$ and $X^v$ coalesce.
%It follows from this and the previous paragraph that there is a positive $\bfP\otimes\bbP$-probability
%of having at least one junction point.  Again, by the ergodic theorem,
It follows from this and the previous paragraph that 
with positive $\bfP \otimes \bbP$-probability there is a positive density of junction points. 
%The lack of space argument in the proof of Theorem 4.6 in \cite{Geo-Ras-Sep-17-ptrf-2} applies now word for word and leads to a \addmath{uncomment the argument in the arxiv version}
%contradiction, proving that $\bfP\otimes\P(\N_0<\infty)=1$.

For $L \in \bbN$, let $J_L$ denote the union of the junction points in $[1,L]^2$ together with the vertices of the south-west boundary of $[1,L]^2$, $\{k e_i : 1 \leq k \leq L, i \in \{1,2\}\}$, with the property that one of the junction points lies on $X^{k e_i}$. For each junction point $z$, there are at least two such points on the south-west boundary. Decompose $J_L$ into finite binary trees by declaring that the two immediate descendants of a junction point $z$ are the two  closest points $u,v \in J_L$ with the property that $z \in X^u\cap X^v$. 
The leaves of these trees are points in $J_L$ which lie on the boundary and the junction points are the interior points of the trees. This tree cannot have more than $2L+1$ leaves, but this contradicts that there are on the order of $L^2$ junction points, since a binary tree has more leaves than interior points. Thus $\bfP\otimes\P(N_0<\infty)=1$.

Fix $\e >0$. We now know that 
%since $\abs{N_x}$ is $\bfP \otimes \bbP$-almost surely finite, 
$\bfP(\abs{N_0(\unif,\w)} < \infty) = 1$ for $ \bbP$-almost all $\w$. Then there exists an integer $n_0=n_0(\w)$ such  that $\bfP\left(\abs{N_0(\unif,\w)} \geq n\right) < \e$ for  $n\ge n_0$. The claim  follows from the observation that
$\Pi_y^{B}(0) =  \bfP(0\in X^y) \leq \bfP(\abs{N_0(\unif,\w)} \geq n)$ for $y\leq 0$ with $\abs{y}_1 = n$.
\end{proof}

We can now rule out the existence of polymer Gibbs measures satisfying the law of large numbers in a fixed direction and of metastates.

\begin{proof}[Proof of Theorem \ref{thm:nodirbi}]
Let \label{Ombi}$\Ombi = \Omback{B^\xi}\cap\OmBusPi$ and take $\w\in\Ombi$.
 Suppose  there exists a weakly $\Uset_\xi$-directed $\Pi \in \biDLR^{\w}$. 
Take any $x \in \bbZ^2$ 
such that $c=\Pi(x) > 0$. Fix $n\le x\cdot\ehat$. 
If $\Pi(x\,|\,y)\le c/2$ for all $y\in\V_n$ with $\Pi(y)>0$, then $\Pi(y,x)\le c\Pi(y)/2$ for all $y\in\V_n$ 
and adding over $y$
we get  $c=\Pi(x)\le c/2$, which contradicts $c>0$.
Hence, there exists a $y_n\le x$ such that $y_n\in\V_n$, $\Pi(y_{n}) > 0$, and 
$\Pi(x\,|\,y_n)>c/2$. But, by Lemma \ref{lm:cond-bi-to-root}, $\Pi(\acdot\, |\,y_n)$ is a weakly $\Uset_{\xi}$-directed 
element of $\DLR_{y_n}^{\w}$ and, by Theorem \ref{th:main3}, it must be that $\Pi(x\,|\,y_n)=\Pi^{\xi,\w}_{y_n}(x)$.
But then $\Pi_{y_n}^{\xi,\w}(x) > c/2$ for all $n$, which contradicts Lemma \ref{lem:unifto0} since 
Theorem \ref{th:DLR=B} says $\Pi^{\xi,\w}_{y_n}=\Pi^{B^\xi(\w)}_{y_n}$.
\end{proof}

\begin{proof}[Proof of Lemma \ref{lem:nocovbi}]
Suppose that $\Pi$ is a measure satisfying Definition \ref{de:scGibbs}\eqref{de:scGibbs1} and Definition \ref{de:scGibbs}\eqref{de:scGibbs3}.
Then for each $z\in\V_0$
\begin{align*}
 \bbE\left[\Pi^{\w}(X_0 = z) \right]= \bbE\left[\Pi^{T_z\w}(X_0 = 0) \right]=\bbE\left[\Pi^{\w}(X_0 = 0)\right].
\end{align*}
This is a contradiction since $\{X_0 = z\}$, $z\in\V_0$, form a partition of $\bbX$.
\end{proof}

\appendix
\section{Coupled RWRE paths with $\{\lowercase{e}_1,\lowercase{e}_2\}$ steps}\label{app:RWRE}
\subsection{Path coupling}\label{sub:coupling}
In this section we construct a coupling of a family of random walks in a  random environment (RWRE) with admissible steps $\{e_1,e_2\}$ that several arguments in this paper rely on.
%To avoid re-introducing the same ideas repeatedly, we will only write out the coupling explicitly once. 

Let $(\Omega, \sF, \bbP)$ satisfy the assumptions of Section \ref{sec:setting}. Let $\bfP$ denote the law of i.i.d.~Uniform[0,1] random variables $\unif = \{\unif(y) : y \in \bbZ^2\}$  on $[0,1]^{\bbZ^2}$, equipped with the Borel $\sigma$-algebra
and the natural group of coordinate shifts $\tau_x$. Define a family of shifts on the product space $[0,1]^{\bbZ^2} \times \Omega$ indexed by $x \in \bbZ^2$ in the natural way, via $\widetilde{T}_{x}(\nu, \w) = (\tau_x \nu, T_x \w)$. This shift preserves
 $\bfP \otimes \bbP$.
 
Let $\sA$ be some index set and let $\{p_x^\alpha : x\in\Z^2, \alpha \in \sA\}$ be a collection of $[0,1]$-valued $\sF$-measurable random variables. Abbreviate $ \mfG = \{e_1,e_2\}^{\bbZ^2}$.
% and let $\proj_x$ denote the natural coordinate projection in this space. 
For $\alpha \in \sA$, construct a random graph $\mfg^{\alpha}(\unif, \w) = \mfg^{\alpha} \in \mfG$, via
\begin{align*}
\mfg^{\alpha}_x =  \begin{cases}
e_1 & \text{ if } \unif(x) < p_x^{\alpha}( \w),\\
e_2 & \text{ if } \unif(x) \geq p_x^{\alpha}( \w).
\end{cases}
\end{align*} 
For each $x \in \bbV_m$, $m\in\Z$, let $X_{m,\infty}^{x, \alpha,\w} = X_{m,\infty}^{x,\alpha,\w} (\unif)$ denote the random path defined via $X_m^{x,\alpha,\w} =x$ and $X_k^{x,\alpha,\w} = X_{k-1}^{x,\alpha,\w} + \mfg^{\alpha}_{X_{k-1}^{x,\alpha,\w}}(\unif,\w)$ for $k >m$. We observe that under $\bfP$, for fixed $\alpha$, $X_{m,\infty}^{x,\alpha}$ has the law of a quenched RWRE with admissible steps $\{e_1,e_2\}$ started from $x$ and taking the step $e_1$ at site $y$ with probability $p_y^\alpha(\w)$. Two properties
follow immediately. %from the definitions. 
%This coupling has two important properties which follow immediately from the definition.

\begin{corollary} \label{cor:coupcoal}
The following hold for any $\w\in\Omega$ and $\unif\in[0,1]^{\Z^2}$.
\begin{enumerate}[label={\rm(\alph*)}, ref={\rm\alph*}]
\item\label{cor:coupcoal:a}{\rm(}Coalescence{\rm)} If for some $\alpha\in\sA$, $x,y \in \bbZ^2$, and $n\ge\max(x\cdot\ehat,y\cdot \ehat)$ we have 
$X_n^{x,\alpha,\w}(\unif) = X_n^{y,\alpha,\w}(\unif)$, then $X_{k}^{x,\alpha,\w}(\unif) = X_k^{y,\alpha,\w}(\unif)$ for all $k\ge n$. 
\item{\rm(}Monotonicity{\rm)} Fix $x\in\V_m$, $m\in\Z$, and $\alpha_1,\alpha_2\in\sA$. 
If $p_y^{\alpha_1}(\w) \leq p_y^{\alpha_2}(\w)$ for all $y\ge x$  then $X_{n}^{x,\alpha_1,\w}(\unif) \cdot e_1 \leq X_n^{x,\alpha_2,\w}(\unif) \cdot e_1$
for all $n\ge m$.
\end{enumerate}
\end{corollary}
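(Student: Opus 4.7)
\textbf{Proof plan for Corollary \ref{cor:coupcoal}.} The whole argument rests on the observation that the graph environment $\mfg^{\alpha}_z(\unif,\w)$, which determines the step taken at $z$ by any path of type $\alpha$ passing through $z$, is a function of $z, \alpha, \unif(z), \w$ only, with no dependence on where the path came from.

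For part \eqref{cor:coupcoal:a}, fix $\w,\unif,\alpha$ and suppose $X_n^{x,\alpha,\w}(\unif)=X_n^{y,\alpha,\w}(\unif)=z$. By the recursive definition of the paths, $X_{n+1}^{x,\alpha,\w}=z+\mfg^{\alpha}_z=X_{n+1}^{y,\alpha,\w}$, and an immediate induction on $k\ge n$ gives the conclusion.

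For part (b), I would argue by induction on $n\ge m$ that the gap
\begin{align*}
d_n=X_n^{x,\alpha_2,\w}(\unif)\cdot e_1-X_n^{x,\alpha_1,\w}(\unif)\cdot e_1
\end{align*}
is nonnegative. The base case $n=m$ is trivial since both paths sit at $x$. For the inductive step, fix $n$ and assume $d_n\ge 0$; note that both paths are in $x+\bbZ_+^2$ (where the hypothesis on the $p_y^{\alpha_i}$ holds) and lie on $\bbV_n$, so $d_n\in\bbZ_{\ge 0}$. Split into two cases. If the paths sit at a common site $z$, then since $p_z^{\alpha_1}(\w)\le p_z^{\alpha_2}(\w)$, one checks directly from the definition of $\mfg^{\alpha_i}_z$ that the event $\{\mfg^{\alpha_1}_z(\unif,\w)=e_1\}$ is contained in $\{\mfg^{\alpha_2}_z(\unif,\w)=e_1\}$; equivalently, $\mfg^{\alpha_2}_z\cdot e_1\ge\mfg^{\alpha_1}_z\cdot e_1$. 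Hence $d_{n+1}\ge d_n\ge 0$. If instead the paths are at distinct sites then $d_n\ge 1$, and since each single step changes the $e_1$-coordinate by $0$ or $1$, we get $d_{n+1}\ge d_n-1\ge 0$.

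The main (minor) subtlety to keep in mind is that in the two-site case the two paths read $\unif$ at different, possibly independent, locations, so one cannot use the coupling inequality $p^{\alpha_1}\le p^{\alpha_2}$ directly; the point is simply that the gap has enough slack to absorb a decrease of one. No other obstacles arise, and the two inductions together yield the corollary.
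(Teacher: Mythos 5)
Your proof is correct and is exactly the argument the paper has in mind: the paper simply asserts that both properties "follow immediately" from the fact that the step at a site depends only on $(\unif(z),p_z^{\alpha}(\w))$, and your two inductions (paths merge once they meet; the $e_1$-gap stays nonnegative because it moves by at most one per level and the single-site case is handled by $p_z^{\alpha_1}\le p_z^{\alpha_2}$) are the standard planarity details behind that assertion. Nothing is missing.
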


The proof of Lemma \ref{lem:mono} is an example of how we use this coupling.

\begin{proof}[Proof of Lemma \ref{lem:mono}]
It suffices to work with a fixed $\beta \in (0,\infty)$. The case $\beta=\infty$ comes by taking a limit.
%By taking $\beta \to \infty$. 
Fix $n\in\Z$ and construct the coupled paths $X^{x,\beta,h,\w}_{m,\infty}(\unif)$, $x\in\V_m$, $m\in\Z$, as above, with
\begin{align*}
p_x(\w) = \begin{cases}
		e^{\beta \w_{x+e_1} + \beta h \cdot e_1}\frac{Z_{x+e_1,(n)}^{\beta,h}}{Z_{x,(n)}^{\beta,h}}&\text{if $\abs{x}_1 < n, x \ge 0$},\\ 
		1/2&\text{otherwise}.
		\end{cases}
\end{align*}
Note that for $x \in \bbV_m$, $m+1<n$, and $i,j \in \{1,2\}$
\begin{align*}
\partial_{h_i} F_{x,(n)}^{\beta,h} &= E_{x,(n)}^{\w,\beta,h}[e_i \cdot(X_n-x)] \quad\text{and}\quad \partial_{h_i} F_{x+e_j,(n)}^{\beta,h}= E_{x+e_j,(n)}^{\w,\beta,h}[e_i \cdot (X_n-x-e_j)].
\end{align*}
It follows that whenever $x \in \bbV_m$, $m<n$ and $i,j \in \{1,2\}$,
\begin{align*}
\partial_{h_i} B_n^{\beta,h}(x,x+e_j) %&= \partial_{h_i} \left(F_{x,(n)}^{\beta,h} - F_{x+e_j,(n)}^{\beta,h} - h \cdot e_j\right) \\
&= E_{x,(n)}^{\w,\beta,h}[e_i \cdot X_n] - E_{x+e_j,(n)}^{\w,\beta,h}[e_i\cdot(X_n - e_j)] - e_i  \cdot e_j\\
%&= E_{x,(n)}^{\w,\beta,h}[e_i \cdot X_n] - E_{x+e_j,(n)}^{\w,\beta,h}[e_i\cdot X_n] \\
&= \bfE \left[e_i\cdot(X_n^{x,\beta,h,\w} - X_{n}^{x+e_j,\beta,h,\w})\right] .
\end{align*}
Then Corollary \ref{cor:coupcoal}\eqref{cor:coupcoal:a} and planarity imply that
\begin{align*}
&\partial_{h_i} B_n^{\beta,h}(x,x+e_i) \leq 0  \quad\text{and}\quad \partial_{h_{3-i}} B_n^{\beta,h}(x,x+e_{i}) \geq 0. \qedhere
\end{align*}
\end{proof}

\subsection{Coalescence of RWRE paths}
We show that the quenched measures of a general 1+1-dimensional  random walk with $\{e_1,e_2\}$ steps in a stationary weakly elliptic random environment can be coupled so that the paths coalesce. The proof is an easier version of the well-known Licea-Newman \cite{Lic-New-96} argument for coalescence of first-passage percolation geodesics. Notably, the measurability issues which make the Licea-Newman argument somewhat involved in zero temperature vanish in positive temperature due to the extra layer of randomness coming from the coupling.

Let $p:\Omega\to[0,1]$ be a measurable function. 
Assume {\sl weak ellipticity}:
	\begin{align}\label{ass:ellip}
	\P(0<p<1)=1.
	\end{align}
Construct random variables $\{X_{m,\infty}^x : x \in \V_m,m\in\Z\}$ via the coupling described in Subsection \ref{sub:coupling} with $p_x(\w) = p(T_x\w) $ and write $P = \bfP \otimes \bbP$. Let $P^\w_x$ (with expectation operator $E^\w_x$) be the distribution (on $(\Z^2)^{\Z_+}$) of the corresponding random walk in random environment starting at $x$.

\begin{lemma}\label{lm:crossing}
Assume \eqref{ass:ellip}. For $\P$-almost every $\w$, for all $x\in\Z^2$,  $P^\w_x$-almost every path crosses all vertical lines to the right of $x$ and all
horizontal lines above $x$. 
\end{lemma}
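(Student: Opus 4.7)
The plan is to handle the two claims symmetrically, so I focus on the assertion that $P^\w_x$-a.s.\ the path crosses every vertical line to the right of $x$; since increments lie in $\{e_1,e_2\}$, this is equivalent to the coupled walk $X^x$ taking infinitely many $e_1$-steps. Using the coupling of Section \ref{sub:coupling}, for each $y\in\Z^2$ introduce the event
\[
C_y=\bigl\{\unif(y+ke_2)\ge p(T_{y+ke_2}\w)\ \forall\,k\ge 0\bigr\}.
\]
By construction, on $C_y$ any coupled path $X^z$ that reaches $y$ takes only $e_2$-steps from $y$ onward. Hence
\[
\{X^x\text{ takes only finitely many }e_1\text{-steps}\}=\bigcup_{y\in x+\Z_+^2}\{X^x\text{ visits }y\}\cap C_y,
\]
a countable union, so it suffices to show $P(C_y)=0$ for every $y$, and by shift-invariance of $\bfP\otimes\bbP$ it is enough to treat $y=0$.

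Conditional on $\w$, the variables $\unif(ke_2)$, $k\ge 0$, are independent uniforms on $[0,1]$, so
\[
\bfP(C_0\mid\w)=g(\w):=\prod_{k=0}^\infty\bigl(1-p(T_{ke_2}\w)\bigr).
\]
Weak ellipticity \eqref{ass:ellip} gives $1-p\in(0,1)$ $\P$-a.s., so $\log(1-p)<0$ $\P$-a.s., and consequently $\E[\log(1-p)\mid\sI_{T_{e_2}}]<0$ $\P$-a.s., where $\sI_{T_{e_2}}$ is the $\sigma$-algebra of $T_{e_2}$-invariant sets. Birkhoff's ergodic theorem applied to the stationary sequence $\bigl(\log(1-p(T_{ke_2}\w))\bigr)_{k\ge 0}$ then yields
\[
\sum_{k=0}^{n-1}\log\bigl(1-p(T_{ke_2}\w)\bigr)\longrightarrow-\infty\quad\P\text{-a.s.,}
\]
so $g(\w)=0$ $\P$-a.s.\ and $P(C_0)=0$. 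A countable union bound over $y\in\Z^2$, the symmetric argument for horizontal lines (swapping the roles of $e_1$ and $e_2$), and an intersection over the countably many starting points $x\in\Z^2$ then produce the single full-$\P$-measure event asserted by the lemma.

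The main subtlety is structural rather than computational: the coupling is what lets me rewrite the a priori path-dependent event \emph{eventually only $e_2$-steps from $x$} as a countable union of the single-ray events $C_y$, which depend only on $\w$ and $\unif$ along the vertical ray $y+\N e_2$. Once that reduction is in place, weak ellipticity and the ergodic theorem do all the remaining work; notably, no moment assumption on $\log(1-p)$ is required, because pointwise strict negativity is enough to drive the Birkhoff partial sums to $-\infty$.
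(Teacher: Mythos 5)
Your proof is correct and follows essentially the same route as the paper: reduce the failure event to eventual travel along a single coordinate ray, show the quenched probability $\prod_{k\ge0}\bigl(1-p(T_{y+ke_2}\w)\bigr)$ (resp.\ $\prod_{k\ge0}p(T_{y+ke_1}\w)$) of that ray event vanishes $\P$-a.s.\ via the ergodic theorem applied to the strictly negative $\log(1-p)$ (resp.\ $\log p$), and conclude with a countable union bound over rays and starting points. The only cosmetic difference is that you express the ray events through the coupling uniforms ($C_y$), whereas the paper bounds $\P\{P_0^\w(\text{finitely many }e_2\text{ increments})>0\}$ directly; the mechanism is identical.
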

\begin{proof}
By shift-invariance, it is enough to prove the claim for $x=0$. 
For $i\in\{1,2\}$ let $\mathcal I_i$ be the $T_{e_i}$-invariant $\sigma$-algebra.
Note that $\E[\log p\,|\,\mathcal I_1]<0$ and $\E[\log(1-p)\,|\,\mathcal I_2]<0$. The ergodic theorem implies then that 
\[\P\Big( \prod_{k=0}^\infty p(T_{ke_1}\w) = 0 \Big) = \P\Big( \prod_{k=0}^\infty (1-p(T_{ke_2}\w)) = 0 \Big) =1.\]
By a union bound, we have that
	\[\P\Big\{ P_0^\w(\text{the path has at most finitely many $e_2$ increments})>0\Big\}\le \sum_{x\in\Z_+^2}\P\Big( \prod_{k=0}^\infty p(T_{x+ke_1}\w) > 0 \Big)=0.\]
A similar argument works for the case of finitely many $e_1$ increments.
\end{proof}

\begin{theorem}\label{thm:RWREcoal}
Assume \eqref{ass:ellip}. Then $P$-almost surely, for any $u,v\in\Z^2$ there exists an $n\in\Z$ with $X^u_{n,\infty}=X^v_{n,\infty}$.
\end{theorem}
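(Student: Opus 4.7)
The plan is a Licea--Newman style contradiction argument that exploits planarity of up-right paths and the i.i.d.\ uniform coupling, following the paper's remark that the coupling bypasses the classical measurability obstacles of the argument.

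\emph{Reduction.} By Lemma \ref{lm:crossing} every coupled path a.s.\ crosses each anti-diagonal $\V_n$, and by Corollary \ref{cor:coupcoal}\eqref{cor:coupcoal:a} coalescence is absorbing. Hence coalescence of an arbitrary pair $(X^u,X^v)$ with $u\in\V_m$, $v\in\V_n$ and $m\le n$ reduces (via the position $X^u_n\in\V_n$) to coalescence of a pair of paths starting on the common level $\V_n$. Using shift-invariance and the transitivity of coalescence along a chain of adjacent starts on $\V_0$, a countable union bound reduces the theorem to the single statement
\begin{equation*}
P\bigl(\exists\,n:\,X^0_n=X^{e_1-e_2}_n\bigr)=1.
\end{equation*}

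\emph{Ergodic density.} Suppose for contradiction that $\delta:=P(A_0)>0$, where $A_k:=\{X^{k(e_1-e_2)}\text{ and }X^{(k+1)(e_1-e_2)}\text{ never coalesce}\}$. By shift-invariance $P(A_k)=\delta$ for all $k$. Since $\bfP\otimes\bbP$ is a product of i.i.d.\ measures, the shift $\widetilde{T}_{e_1-e_2}$ is ergodic, and the pointwise ergodic theorem yields that $P$-almost surely the density of $k\in[-L,L-1]$ with $A_k$ occurring converges to $\delta$ as $L\to\infty$. By monotonicity (Corollary \ref{cor:coupcoal}\eqref{cor:coupcoal:a}) and absorbing coalescence, whenever $A_k$ holds the paths $X^{k(e_1-e_2)}$ and $X^{(k+1)(e_1-e_2)}$ remain strictly $e_1$-ordered, and hence edge-disjoint, forever. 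Applying the ergodic theorem also in the $e_1+e_2$ direction gives a positive two-dimensional density of non-coalescing adjacent pairs, so in any box of side $L$ there are at least of order $L^2$ non-coalescing adjacent pairs whose starts lie in the box.

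\emph{Planar counting contradiction.} I would close the argument with a Burton--Keane style pigeonhole: in a box $B_L$, each coupled path eventually exits through one of the $O(L)$ vertices of the NE boundary, so the set of possible (exit-of-first-path, exit-of-second-path) configurations has size at most of order $L^2$. With order $L^2$ non-coalescing adjacent pairs and, say, a factor-of-$10$ larger target box, one forces a growing family of non-coalescing pairs sharing their exit configuration; planarity then makes the paths of this family nest monotonically, and weak ellipticity (forcing a positive density of merge points, i.e.\ in-degree-$2$ vertices in the out-forest defined by $\{(x,x+\mathfrak{g}_x)\}_{x\in\Z^2}$) is used to produce a coalescence within the nested family, contradicting the non-coalescence hypothesis. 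The main obstacle is this final planar-rigidity-plus-ellipticity step: showing that nested parallel non-coalescing pairs with identical exit points must, through the coupled uniforms and the positive density of merges, produce an unavoidable coalescence. The paper's comment that this is an ``easier'' version of the Licea--Newman argument reflects that the coupled uniforms $\vartheta$ make every event in the argument manifestly measurable, so the genuine work of the proof is this combinatorial rigidity step.
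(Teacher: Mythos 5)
Your reduction to adjacent starts on a common level and the stationarity/density step are sound in outline, with one caveat: the theorem is stated for a merely \emph{stationary} environment. $\bbP$ is not assumed ergodic and $p$ is an arbitrary measurable function of $\w$ (in the paper's application $p$ is built from the Busemann cocycles on the extended space $\Omhat$, whose law is not known to be ergodic), so your appeal to ergodicity of $\widetilde{T}_{e_1-e_2}$ is unjustified; it can be repaired by using the ergodic theorem without ergodicity, i.e.\ working on a positive-probability event where the limiting density of non-coalescing adjacent pairs is positive, which is exactly how the paper argues. The genuine gap is the final step, which you yourself flag as the main obstacle: the counting you set up does not close. Order $L^2$ non-coalescing adjacent pairs against order $L^2$ exit configurations yields no pigeonhole contradiction, and, more fundamentally, paths belonging to \emph{different} non-coalescing pairs may share vertices, so your family carries no disjointness at all; the proposed ``nested family plus positive density of merge points forces coalescence'' assertion is precisely the statement that needs proof, and weak ellipticity alone does not supply it (a nested family of pairwise non-coalescing paths is exactly the configuration one is trying to rule out).

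What the paper does instead, and what is missing from your argument, is a \emph{shielding} construction that manufactures genuinely pairwise disjoint paths before any counting. From the non-coalescence assumption, the crossing lemma (Lemma \ref{lm:crossing}), stationarity and planarity, one produces with positive probability a triple $X^0$, $X^{ke_2+e_1}$, $X^{\ell e_2}$ in which the middle path is disjoint from the two outer ones. Then, using weak ellipticity and the fact that this event is independent of the uniforms $\unif$ on the deterministic segments $[m+1,n]e_1$ and $[1,\ell-1]e_2$, one forces $\mfg_x=e_1$ on the horizontal segment and $\mfg_x=e_2$ on the vertical one; on the resulting positive-probability event $A_1$ the middle path is shielded, in the sense that $X^{u}_{s,\infty}$ cannot meet it for any $u\not\ge 0$. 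Translates $A_1\circ T_z$ over $z\in[0,L]^2$ with pairwise disjoint rectangles then give, via the (non-ergodic) ergodic theorem, of order $L^2$ pairwise disjoint semi-infinite up-right paths started inside $[0,L]^2$; disjoint paths must cross the boundary of the box at distinct vertices, of which there are only $O(L)$, and this is the Burton--Keane contradiction. Without the shielding step, or some substitute that both produces disjointness across pairs and supplies an actual coalescence-forcing mechanism, your argument does not reach a contradiction.
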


\begin{proof}
The proof comes by way of contradiction. 
Observe that if $X^u$ and $X^v$ ever intersect, say $X^u_n=X^v_n$, then we would have $X^u_{n,\infty}=X^v_{n,\infty}$.
So suppose  $P\{X_{r,\infty}^u\cap X_{k,\infty}^v=\varnothing\}>0$ for some $u\in\V_r$, $v\in\V_k$, $r,k\in\Z$.  
By Lemma \ref{lm:crossing} these paths  must cross any  vertical line to the right of $u$ and $v$.   
Restart  the paths  from the points where they exit some such vertical line.   By  stationarity  we can assume $u=0$ (hence $r=0$), $X_1^0=e_1$, $v=ke_2$,  and  $X^{ke_2}_{k+1}=ke_2+e_1$. 
Thus we have %take the following   assumption as the basis from which a contradiction will come.    
		\[P\big\{X_{0,\infty}^0\cap X_{k,\infty}^{k e_2}=\varnothing, \, X^0_1=e_1,X^{ke_2}_{k+1}=ke_2+e_1\big\}>0.\]
Again by  shift invariance   $X_{i,\infty}^{ie_2}\cap X_{i+k,\infty}^{(i+k)e_2}=\varnothing$ for infinitely many $i\ge0$,  with positive probability.
Consequently, there exists $i>k$ such that 
		\[P\big\{X_{0,\infty}^0\cap X_{k,\infty}^{k e_2}=\varnothing,\, X_{i,\infty}^{ie_2}\cap X_{i+k,\infty}^{(i+k)e_2}=\varnothing,\, X^0_1=e_1,\, X^{ke_2}_{k+1} = ke_2+e_1,\, X^{(i+k)e_2}_{i+k+1}=(i+k)e_2+e_1\big\}>0.\]
Let  $\ell=i+k$. If $X_{k,\infty}^{k e_2}\cap X_{\ell,\infty}^{\ell e_2}\neq\varnothing$ then by planarity   $X_{i,\infty}^{i e_2}$  intersects   $X_{\ell,\infty}^{\ell e_2}$.
So we have integers $0<k<\ell$ such that 
			\[P\big\{X_{0,\infty}^0\cap X_{k+1,\infty}^{k e_2+e_1}=\varnothing,\, X_{k+1,\infty}^{k e_2+e_1}\cap X_{\ell,\infty}^{\ell e_2}=\varnothing,\,X_1^0=e_1,X_{\ell+1}^{\ell e_2}=\ell e_2+e_1\big\}>0.\]
Let $\tau_\ell\in\N$ be the first coordinate of the point where $X^0_{0,\infty}$ first reaches the horizontal line $\ell e_2+\R_+ e_1$. Let $\tau' e_1$ be the point at which $X^0_{0,\infty}$ exits the horizontal line $\R_+ e_1$. 
Then we can also find integers $0<m<n$ such that
			\[P\big\{X_{0,\infty}^0\cap X_{k+1,\infty}^{k e_2+e_1}=\varnothing,\, X_{k+1,\infty}^{k e_2+e_1}\cap X_{\ell,\infty}^{\ell e_2}=\varnothing,\,X_1^0=e_1,X_{\ell+1}^{\ell e_2}=\ell e_2+e_1,\tau'=m,\tau_\ell=n\big\}>0.\]

	\begin{figure}[h]
		\begin{center}
		
			\begin{tikzpicture}[>=latex, scale =0.69]
				
				\draw(0,0)rectangle(8,4);
				\draw(0,0)node[below, left]{$0$};
				\draw(8,0)node[below]{$n$};
				\draw(3,0)node[below]{$m$};
				\draw(0,4)node[left]{$\ell$};
				\draw(0,2)node[left]{$k$};
				
				\draw[color=nicosred, line width= 2.5pt] (0,0.5)--(0,3.5);
				\draw[color=nicosred, line width= 2.5pt] (3.5,0)--(8,0);

				\draw[darkgreen, line width=1.2pt]plot coordinates{(0,4)(1,4)(1,4.5)(1.5,4.5)(1.5,5.5)(2.5,5.5)};
				\draw[darkgreen, line width=1.2pt,->]plot[smooth] coordinates{(2.5,5.5)(3.4,5.8)(4,6.8)(4.8,7.2)};
				\draw[darkgreen, line width=1.2pt]plot coordinates{(0,0)(3,0)(3,1)(3.5,1)(3.5,2)(5,2)(5,2.5)(6,2.5)(6,3)(7,3)(7,3.5)(8,3.5)(8,4)};
				\draw[darkgreen, line width=1.2pt,->]plot[smooth] coordinates{(8,4)(8.5,4.5)(9,5.5)(9.7,5.9)};
				\draw[darkgreen, line width=1.2pt]plot coordinates{(0.5,2)(2,2)(2,3)(3,3)(3,3.5)(3.5,3.5)};
				\draw[darkgreen, line width=1.2pt,->]plot[smooth] coordinates{(3.5,3.5)(4,3.7)(4.5,4.2)(5,4.7)(5.5,5.7)(6.2,6)};

		\end{tikzpicture}

		\end{center}
	\caption{The non-intersecting paths $X_{0,\infty}^0$, $X_{k+1,\infty}^{k e_2+e_1}$, and $X_{\ell,\infty}^{\ell e_2}$. The variables $\unif_x$ on the thick segment on the south edge of the rectangle are such that  $\mfg_x=e_1$.
	The variables $\unif_x$ on the west edge of the rectangle are such that $\mfg_x=e_2$.}
		
	\label{fig:geo}
\end{figure}
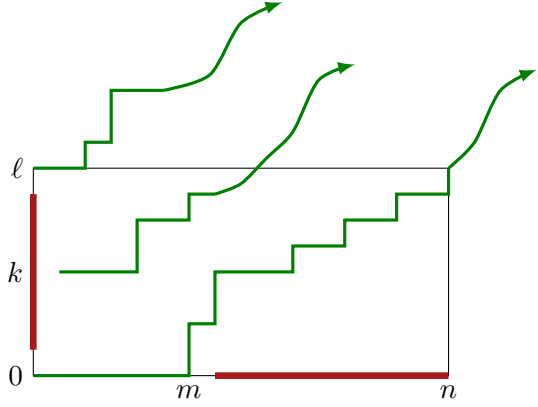

The event in the above probability is independent of the variables $\unif_{(m+1)e_1,ne_1}\cup \unif_{e_2,(\ell-1)e_2}$ (where for example $\unif_{(m+1)e_1,ne_1} = \{\unif_{(m+1 )e_1} , \dots, \unif_{n e_1}\}$). Since $\P(0<p<1)=1$ we have that
			\begin{align*}
			P\big\{&X_{0,\infty}^0\cap X_{k+1,\infty}^{k e_2+e_1}=\varnothing,\, X_{k+1,\infty}^{k e_2+e_1}\cap X_{\ell,\infty}^{\ell e_2}=\varnothing,\,X_1^1=e_1,X_{\ell+1}^{\ell e_2}=\ell e_2+e_1,\tau'=m,\tau_\ell=n,\\
			&\qquad\qquad\qquad\qquad \mfg_x =e_1\text{ for }x\in[m+1,n]e_1\text{ and } \mfg_x=e_2\text{ for }x\in[1,\ell-1]e_2\big\}>0.
			\end{align*}
Call the above event $A_1$. (See Figure \ref{fig:geo}.) On this event, path $X_{k+1,\infty}^{k e_2+e_1}$ is shielded by the arrows on the boundary $[m+1,n]e_1\cup[1,\ell-1]e_2$ and by paths $X_{0,\infty}^0$ and $X_{\ell,\infty}^{\ell e_2}$ and for any $u\not\ge 0$ with $u\cdot\ehat=s$
we have $X_{k+1,\infty}^{k e_2+e_1}\cap X_{s,\infty}^u=\varnothing$.

	The Burton-Keane lack of space argument \cite{Bur-Kea-89} furnishes the necessary contradiction.
Indeed, by $P(A_1)>0$ and the ergodic theorem
there exists an event $A_2$ of positive probability such that on $A_2$ for all  large enough $L$  and a small enough fixed  $\delta>0$,  event 
$A_1\circ T_z$ occurs for  at least $\delta L^2$ points $z\in[0,L]^2$ such that the rectangles $z+[0,n]\times[0,\ell]$ are pairwise disjoint and lie inside $[0,L]^2$.    Then with positive probability we have $\delta L^2$ pairwise disjoint paths that start inside $[0,L]^2$.  
Each of these paths must exit through a boundary point of  $[0,L]^2$,  but for large enough $L$ the number of   boundary points is $< \delta L^2$.    The theorem is proved.
\end{proof}

\section{A shape theorem for cocycles}\label{app:BuseShape}
The results in this section extend \cite[Theorem A.3]{Geo-etal-15} to the stationary setting. The proof is identical once one alters the definitions appropriately. Fix a dimension $d \in \bbN$ and let $\sR \subset \bbZ^d$ be an arbitrary finite set of admissible steps that contains at least one nonzero element. Admissible paths $x_{0,n}=(x_k)_{k=0}^n$ satisfy $x_k - x_{k-1} \in \sR$. Let $M = |\sR|$. 
%Define
%\begin{align*}
%\sG^+ &= \left\{ \sum_{z \in \sR}b_z  z : b_z \in \bbZ_+\right\}
%\end{align*} 
%and let $\sG = \sG^+ - \sG^+$ be the additive group generated by $\sR$.
Let $\sG=\{\sum_{z\in\sR}b_z z:b_z\in\Z\}$ be the additive group generated by $\sR$.

\begin{definition}
A shift covariant-cocycle is a Borel-measurable function $B: \sG \times \sG \times \Omega \to \bbR$ which satisfies
\begin{enumerate}
\item (Shift covariance) $B(x+z,y+z,\omega) = B(x,y,T_z \omega)$ for all $x,y,z\in \sG$ and $\bbP$-almost all $\omega$.
\item (Cocycle property) $B(x,z,\omega) = B(x,y,\omega) + B(y,z,\omega)$ for all $x,y,z \in \sG$ and $\bbP$-almost all $\omega$.
\end{enumerate}
A cocycle is said to be $L^p(\bbP)$ if $\bbE[|B(0,z)|^p]< \infty$ for all $z \in \sR$. 
\end{definition}

\begin{definition}
A function $V:\sR \times \Omega\to\R$ is in class $\sL$ if for every nonzero $z \in \sR$ %, $V(x, \acdot) \in L^1(\bbP)$ and
\begin{align}
\lim_{\e \searrow 0} \varlimsup_{n \to \infty} \max_{x \in \sG : |x|_1\leq n \e}\, \frac{1}{n} \sum_{0 \leq k \leq n \e}\abs{V(z, T_{x + k z} \omega)} = 0.
\end{align}
\end{definition}
Boundedness of $V$ is of course sufficient. If $V$ is a local measurable function of $\{\w_x:x\in\Z^d\}$ and these are i.i.d.\ with $2+\e$ moments for some $\e>0$, then $V\in\sL$. More generally, membership in $\sL$ depends on a combination of 
mixing of $\P$ and moments of $V$.  See Lemma A.4 of \cite{Ras-Sep-Yil-13} for a precise statement. 

%Fix an enumeration $\sR = \{z_1,\dots,z_M\}$. We introduce the notation 
Let
	\[D_n = \Bigl\{x \in \sG : \exists b_z\in\Z_+, \sum_{z\in\range} b_z z = x,  \sum_{z\in\range}b_z = n\Bigr\}\] 
denote the set of sites which can be reached in $n$ admissible steps. 

%Let $\sI$ be $\sigma$-algebra generated by $T$-invariant events. 
For $A\subset\sG$ let $\sI_A$ be the $\sigma$-algebra generated by events that are invariant under $T_x$ for all $x\in A$. Let $\sI=\sI_{\sG}$.

For a shift-covariant $L^1(\P)$ cocycle $c(x)=\E[B(0,x)\,|\,\sI]$ is an additive function on the group $\sG$ and hence there exists a vector $m(B)\in\R^d$ with $\E[B(0,x)\,|\,\sI]=m(B)\cdot x$ for all $x\in\sG$. This vector is not unique unless $\range$ spans $\R^d$, but the inner products $m(B)\cdot x$, $x\in\sG$, are well defined.  

%define a vector $h \in \bbR^M$ via $h = (h(z_1),\dots,h(z_M))$, where $h(z)=\E[B(0,z)\,|\,\sI]$ for $z \in \sR$ and $\sI$ is the $\sigma$-algebra generated by $T$-invariant events.
\begin{theorem}\label{thm:CocShp}
Suppose  $B$ is a shift-covariant $L^1(\bbP)$ cocycle and there exists a function $V(z,\omega) : \sR \times \bbR \to \bbR$ with $V\in\sL$ such that $B(0,z,\omega) \leq V(z,\omega)$ for all $z \in \sR$ and $\bbP$-almost every $\omega$. Then $\bbP$-almost surely,
\begin{align*}
\lim_{n\to\infty} \max_{x \in D_n } \frac{|B(0,x) - m(B)\cdot x|}{n} = 0.
\end{align*} 
%where $x_1,\dots,x_M \in \bbZ$ are arbitrary integers which satisfy $x = (x_1,\dots,x_M)\cdot (z_1,\dots,z_M)^t$.
\end{theorem}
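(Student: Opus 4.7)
The plan is to adapt the argument of \cite[Theorem A.3]{Geo-etal-15}, whose setting was ergodic, to the merely stationary situation: establish a.s.\ convergence along each reference direction via Birkhoff's ergodic theorem, identify the limit as $m(B)\cdot z$ by upgrading $\sI_{\Z z}$-measurability to $\sI$-measurability, and then extend to arbitrary $x\in D_n$ uniformly by a path decomposition controlled by the class $\sL$ hypothesis. For each $z\in\sR$, the cocycle property and shift-covariance give
\[
\frac{B(0,nz,\w)}{n}=\frac{1}{n}\sum_{k=0}^{n-1}B(0,z,T_{kz}\w),
\]
and Birkhoff applied to the $T_z$-action on the $L^1$ function $B(0,z,\,\cdot\,)$ produces the a.s.\ limit $\bar b(z,\w):=\bbE[B(0,z)\mid\sI_{\Z z}]$. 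The identification $\bar b(z)=m(B)\cdot z$ follows once I show $\bar b(z)$ is $\sI$-measurable, since then $\bar b(z)=\bbE[\bar b(z)\mid\sI]=\bbE[B(0,z)\mid\sI]=m(B)\cdot z$. For this, given any $w\in\sG$, shift-covariance and cocycle yield
\[
\bar b(z,T_w\w)-\bar b(z,\w)=\lim_{n\to\infty}\frac{B(nz,nz+w,\w)}{n},
\]
and telescoping $B(nz,nz+w,\w)$ along a fixed admissible path of length $|w|_1$ expresses it as a bounded number of terms $B(0,z',\,\cdot\,)\circ T_{nz+y}$ with $z'\in\sR$ and $|y|_1=O(1)$, each divided by $n$ being $o(1)$ a.s.\ by the Ces\`aro byproduct of Birkhoff for $|B(0,z')|$ along direction $z$. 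Taking countably many $w$ generating $\sG$ yields the $\sI$-measurability.

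To pass to arbitrary $x_n\in D_n$ with $x_n/n\to\xi=\sum_{z\in\sR}c_z z$, write $x_n=\sum_z n_z^{(n)} z$ with $n_z^{(n)}/n\to c_z$ and decompose via the cocycle along a path that exhausts each direction successively:
\[
\frac{B(0,x_n,\w)}{n}=\sum_{z\in\sR}\frac{1}{n}\sum_{k=0}^{n_z^{(n)}-1}B(0,z,T_{y_{k,z}^{(n)}}\w),
\]
where the base points $y_{k,z}^{(n)}$ have norm $O(n)$. For each $z$, the inner sum is a Birkhoff average along $z$ shifted by an $O(n)$ base point, and the class $\sL$ hypothesis furnishes exactly the maximal control on $V$-sums along rays needed to ensure such shifted averages still converge a.s.\ to $\bar b(z,\w)=m(B)\cdot z$; summing in $z$ gives $B(0,x_n,\w)/n\to m(B)\cdot\xi$. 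To make this uniform over $x\in D_n$, fix $\e>0$, choose a finite $\e$-mesh $\xi_1,\dots,\xi_K$ of the convex hull of $\sR$, and for $x\in D_n$ close to $n\xi_j$ write $B(0,x,\w)-B(0,\tilde x_j(n),\w)=B(\tilde x_j(n),x,\w)$ with $\tilde x_j(n)\in D_n$ near $n\xi_j$; bounding this increment by summing $V$ along an admissible path of length $O(\e n)$ and invoking the class $\sL$ control uniformly over base points in $n\cdot\mathrm{conv}(\sR)$ yields an error of $o(n)+O(\e n)$. Letting $n\to\infty$ and then $\e\to 0$ closes the argument.

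The main obstacle is the identification step $\bar b(z)=m(B)\cdot z$: in the non-ergodic stationary setting, the raw Birkhoff limit only lives in the smaller invariant $\sigma$-algebra $\sI_{\Z z}$, and killing the dependence on the drifting base point $nz$ is precisely what the class $\sL$ hypothesis is designed for. A secondary technical point is that the class $\sL$ bound as stated only controls sums based near the origin; to deploy it in the uniform extension I would upgrade it by a stationarity/covering argument to a maximal bound valid over all base points of norm $O(n)$.
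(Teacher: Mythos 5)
Your first step is fine: Birkhoff along each fixed $z\in\sR$ gives the a.s.\ limit $\bbE[B(0,z)\,|\,\sI_{\bbZ z}]$, and your Ces\`aro trick ($a_n/n\to 0$ whenever the Ces\`aro averages of $(a_k)$ converge) applied to the telescoped increment $B(nz,nz+w)$ is a legitimate way to upgrade to $\sI$-measurability; the paper does the same identification by a slightly different device (the term $B(0,y,T_{nx}\w)/n$ must converge a.s.\ because the other terms in the cocycle identity do, and it converges to $0$ in probability, hence a.s.). Note that this step needs neither $V$ nor class $\sL$, and that $w\in\sG$ may require steps from $\pm\sR$, which is harmless since $B$ is defined on $\sG\times\sG$.

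The genuine gap is in the passage to general $x\in D_n$. You decompose $B(0,x_n)$ into ray sums $\sum_k B(0,z,T_{y^{(n)}_{k,z}}\w)$ whose base points drift at distance $\Theta(n)$ from the origin, and you assert that the class $\sL$ hypothesis makes these shifted Birkhoff averages converge to $m(B)\cdot z$. It does not: membership in $\sL$ only controls sums of $\abs{V}$ over stretches of length $O(\e n)$ (after dividing by $n$ and sending $\e\to0$), and in any case $B(0,z,\cdot)\le V(z,\cdot)$ is only a one-sided bound, so no $V$-control can identify the value of a full-length ray average of $B$ started at an $\w$-dependent, $O(n)$-distant base point. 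Uniform-over-base-point convergence of ergodic averages is exactly the difficulty a shape theorem must overcome, so this step is essentially the theorem itself. The paper avoids it entirely: it proves (Lemmas \ref{lem:apest0} and \ref{lem:apest1}) via the \emph{multidimensional} ergodic theorem that averages of $\Bbar(0,\sum_i k_iz_i)/n$ over growing boxes of multi-indices vanish a.s., and then compares each $x\in D_n$ not to a single nearby point but to an entire nearby box, connected by admissible paths of length at most $\delta n$ whose cocycle increments are bounded by $V$-sums, which is where class $\sL$ (a maximal bound over $O(n)$ base points and $O(\delta n)$ stretches) enters. Your mesh argument also stumbles on two related points that the paper's construction is built around: a point $x\in D_n$ near $n\xi_j$ need not be comparable to your mesh representative $\tilde x_j(n)$ in the coordinatewise order, so there may be no admissible path between them in either direction (with $\sR=\{e_1,e_2\}$, two points of $D_n$ are never comparable); and even when a path exists, $B\le V$ bounds the increment only from one side, which is why the paper runs two separate one-sided arguments (a lower bound for the minimum over $D_n$ using boxes reachable \emph{from} $x$, and an upper bound for the maximum using boxes from which $x$ is reachable) rather than a single two-sided approximation.
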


The rest of this section proves the above theorem.

\begin{lemma}
Suppose that $B$ is a shift-covariant $L^1(\bbP)$ cocycle. Let $x \in \sG$. Then $\bbP$-almost surely, 
$\E[B(0,x)\,|\,\sI_x]=m(B)\cdot x$.
\end{lemma}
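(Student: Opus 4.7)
The plan is to show that $\E[B(0,x)\,|\,\sI_x]$ is in fact $\sI$-measurable, from which the conclusion follows immediately by the tower property: since $\sI \subset \sI_x$,
\[
m(B)\cdot x = \E[B(0,x)\,|\,\sI] = \E\bigl[\E[B(0,x)\,|\,\sI_x]\,\big|\,\sI\bigr] = \E[B(0,x)\,|\,\sI_x].
\]
Here the last equality uses that a $\sI$-measurable random variable is its own conditional expectation given $\sI$.

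To establish the $\sI$-measurability, first I would use the cocycle property and shift-covariance to rewrite $B(0,x)\circ T_z$ for an arbitrary $z \in \sG$. The identities $B(0,x,T_z\w)=B(z,x+z,\w)$ and $B(x,x+z,\w)=B(0,z,T_x\w)$, combined with $B(0,x+z)=B(0,x)+B(x,x+z)=B(0,z)+B(z,x+z)$, give the key relation
\[
B(0,x)\circ T_z - B(0,x) = B(0,z)\circ T_x - B(0,z).
\]

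Next, I would use that $\sI_x$ is invariant under every $T_z$, $z\in\sG$ (because the group is abelian, so if $T_xA=A$ then $T_x(T_{-z}A)=T_{-z}(T_xA)=T_{-z}A$). A standard computation then shows that $\E[\,\cdot\,|\,\sI_x]\circ T_z = \E[\,\cdot\circ T_z\,|\,\sI_x]$ for every $z\in\sG$. Applying this and the displayed identity,
\[
\E[B(0,x)\,|\,\sI_x]\circ T_z - \E[B(0,x)\,|\,\sI_x] = \E[B(0,z)\circ T_x\,|\,\sI_x] - \E[B(0,z)\,|\,\sI_x].
\]
Since any $\sI_x$-measurable random variable is $T_x$-invariant, $\E[Y\circ T_x\,|\,\sI_x] = \E[Y\,|\,\sI_x]\circ T_x = \E[Y\,|\,\sI_x]$ for any integrable $Y$. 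Therefore the right-hand side vanishes, giving $\E[B(0,x)\,|\,\sI_x]\circ T_z = \E[B(0,x)\,|\,\sI_x]$ for all $z\in\sG$, which is the desired $\sI$-measurability.

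There is no serious obstacle here; the main subtlety is checking carefully that $\sI_x$ is preserved by all shifts (which uses commutativity of the group of transformations $\{T_z\}$), so that conditional expectations commute with the relevant shifts. Once that is in place, the cocycle identity reduces the problem to the trivial observation that conditioning on the $T_x$-invariant $\sigma$-algebra kills the $T_x$-coboundary $B(0,z)\circ T_x - B(0,z)$.
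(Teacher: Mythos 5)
Your proof is correct, but it takes a genuinely different route from the paper. The paper proves the same invariance statement by an ergodic-theorem argument: it writes $B(0,nx)/n=\frac1n\sum_{i=0}^{n-1}B(0,x)\circ T_{ix}$, identifies the a.s.\ limit as $L=\E[B(0,x)\,|\,\sI_x]$ via Birkhoff, and then uses the cocycle decomposition $B(0,nx)\circ T_y=B(y,0)+B(0,nx)+B(0,y)\circ T_{nx}$ to conclude $L\circ T_y=L$, with a slightly delicate step where the last term, which converges to $0$ in probability, is argued to have a.s.\ limit $0$. You instead avoid all limits: the coboundary identity $B(0,x)\circ T_z-B(0,x)=B(0,z)\circ T_x-B(0,z)$, together with the exchange rule $\E[Y\circ T_z\,|\,\sI_x]=\E[Y\,|\,\sI_x]\circ T_z$ (valid because the shifts commute, so $T_z^{-1}\sI_x=\sI_x$) and the fact that conditioning on $\sI_x$ annihilates $T_x$-coboundaries, gives the invariance of $\E[B(0,x)\,|\,\sI_x]$ under every $T_z$ directly, and the tower property finishes as in the paper. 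Your argument is more elementary and sidesteps both Birkhoff's theorem and the in-probability-versus-a.s.\ limit point; the paper's proof, in exchange, exhibits $\E[B(0,x)\,|\,\sI_x]$ as the law-of-large-numbers limit of $B(0,nx)/n$, which fits conceptually with the shape-theorem machinery that follows, though that identification is not used later. One shared small point worth being aware of in both arguments: a.s.\ invariance under each $T_z$, $z\in\sG$ (a countable group), yields only a.s.\ equality with an $\sI$-measurable function, which is exactly what the final tower-property computation requires.
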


\begin{proof}
By shift-covariance and the cocycle property,
\begin{align*}
\frac{B(0,nx,\w)}{n} &= \frac{\sum_{i=0}^{n-1} B(ix, (i+1)x, \w)}{n} = \frac{\sum_{i=0}^{n-1} B(0, x,T_{ix}\w)}{n}.
\end{align*}
By Birkhoff's ergodic theorem the limit exists $\P$-almost surely and equals $L=\bbE[B(0,x)\,|\,\sI_x]$.
On the other hand, by shift-covariance and the cocycle property, we also have
\begin{align*}
\frac{B(0,nx, T_y\omega)}{n} = \frac{B(y,y+nx,\omega)}{n} =  \frac{B(y,0, \omega)}{n} + \frac{B(0,nx,\omega)}{n} + \frac{B(0,y , T_{nx}\omega)}{n}.
\end{align*}
The left-hand side converges $\P$-almost surely to $L\circ T_y$ and second term on the right-hand side converges $\P$-almost surely to $L$. The first term on the right-hand side converges $\P$-almost surely to $0$. This implies that the last term must also converge $\P$-almost surely. Since it converges to $0$ in probability, its almost sure limit must also be $0$.
Consequently, we have shown that  $\bbE[B(0,x)|\sI_x]  \circ T_y = \bbE[B(0,x)\,|\,\sI_x]$, which implies that 
$\bbE[B(0,x)|\sI_x] $ is $\sI$-measurable. Therefore $\bbE[B(0,x) | \sI_x] = \bbE\bigl[\bbE[B(0,x)\,|\,\sI_x ] \,|\, \sI\bigr] = \bbE[B(0,x)\,|\,\sI ]  = m(B)\cdot x$.
\end{proof}

A consequence of the above lemma is that if $A\subset\Z^d$ and $x\in A$, then 
	\begin{align}\label{very nice}
	\begin{split}
		\E[B(0,x)\,|\,\sI_A]&=\bbE\bigl[\bbE[B(0,x)\,|\,\sI_x ] \,|\, \sI_A\bigr]=\bbE\bigl[\bbE[B(0,x)\,|\,\sI ] \,|\, \sI_A\bigr]\\
		&=\bbE[B(0,x)\,|\,\sI ] = m(B)\cdot x.
		\end{split}
	\end{align}

Abbreviate $\Bbar(x,y)=B(x,y)-m(B)\cdot(y-x)$. Note that $\Bbar$ is also a shift-covariant cocycle and that
$\E[\Bbar(x,y)\,|\,\sI]=\E[\Bbar(0,y-x)\,|\,\sI]=0$.

\begin{lemma}\label{lem:apest0}
Suppose that $B$ is a shift-covariant $L^1(\bbP)$ cocycle. 
Let the integers $j,r$ such that $1\le j\le r\le M$ be given
and let $z_1,\dots,z_r \in \sR$ be distinct. Let $g:[0,1]^r\to\R$ be a continuous function. Then $\bbP$-almost surely
\begin{align*}
\lim_{n\to\infty} \frac{1}{n^r} \sum_{k_1=0}^{n-1} \dots \sum_{k_r=0}^{n-1} g(n^{-1}(k_1,\dots,k_r))\Bbar(0,z_j, T_{k_1 z_1 + \dots + k_r z_r}\w ) = 0.
\end{align*} 
\end{lemma}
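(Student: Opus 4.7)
The strategy is to reduce the claim to the multidimensional (Wiener) ergodic theorem for the measure-preserving $\Z^r$-action on $(\Omega,\sF,\P)$ defined by $(k_1,\dots,k_r) \mapsto T_{k_1 z_1 + \cdots + k_r z_r}$. Let $A \subset \sG$ denote the subgroup generated by $z_1,\dots,z_r$, so that the invariant $\sigma$-algebra of this $\Z^r$-action is exactly $\sI_A$. Since $z_j \in A$, the identity \eqref{very nice} gives $\E[\Bbar(0,z_j)\,|\,\sI_A] = m(B)\cdot z_j - m(B)\cdot z_j = 0$. This zero conditional expectation is what ultimately drives the result.

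Next, I would invoke Wiener's multidimensional ergodic theorem applied to the $L^1$ function $f_j(\w) = \Bbar(0,z_j,\w)$ and to the $\Z^r$-action above. For each $N \in \N$ and each $(i_1,\dots,i_r) \in \{0,\dots,N-1\}^r$, let $C_i = \prod_{\ell=1}^r [i_\ell/N,(i_\ell+1)/N]$. The ergodic theorem along the Følner sequence of boxes gives that $\P$-almost surely
\[
\lim_{n\to\infty} \frac{1}{n^r}\sum_{k \in nC_i \cap \Z^r} \Bbar(0,z_j,T_{k\cdot z}\w) \;=\; \frac{1}{N^r}\,\E[\Bbar(0,z_j)\,|\,\sI_A] \;=\; 0,
\]
where $k \cdot z := k_1 z_1 + \cdots + k_r z_r$. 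Since there are countably many pairs $(N, i)$, and since applying the theorem to $|f_j|$ also yields $\P$-almost surely $\frac{1}{n^r}\sum_{k \in \{0,\dots,n-1\}^r} |\Bbar(0,z_j,T_{k\cdot z}\w)| \to \Lambda(\w) := \E[|\Bbar(0,z_j)|\,|\,\sI_A] < \infty$, a countable intersection yields a single $\P$-full measure event $\Omega_0$ on which all these limits hold simultaneously.

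Finally, I would fix $\w \in \Omega_0$ and $\e > 0$. By uniform continuity of $g$ on $[0,1]^r$, choose $N$ so large that $|g(s)-g(t)|<\e$ whenever $s,t$ lie in a common cube $C_i$. Then, writing the full sum as a sum over the cells $\{k : k/n \in C_i\}$, the triangle inequality gives
\[
\biggl|\frac{1}{n^r}\sum_{k} g(k/n)\Bbar(0,z_j,T_{k\cdot z}\w) - \sum_{i} g(i/N)\cdot\frac{1}{n^r}\sum_{k \in nC_i \cap \Z^r}\Bbar(0,z_j,T_{k\cdot z}\w)\biggr| \;\le\; \e\cdot\frac{1}{n^r}\sum_{k}|\Bbar(0,z_j,T_{k\cdot z}\w)|.
\]
Letting $n \to \infty$, the inner $N^r$-many sums on the left all tend to $0$, so the $\limsup_{n}$ of the original weighted average is bounded by $\e\,\Lambda(\w)$. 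Letting $\e \to 0$ completes the proof.

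The only mild obstacle is bookkeeping: ensuring that a single full-measure event accommodates simultaneous convergence for all cubes $C_i$ across all refinements $N$, which is resolved by the countable intersection above. Note that the hypothesis $B(0,z) \le V(z,\w)$ with $V \in \sL$ (used for the shape theorem) is not needed here; $L^1(\P)$-integrability of $B$ suffices.
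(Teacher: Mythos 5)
Your proposal is correct and follows essentially the same route as the paper: the paper's proof invokes the multidimensional ergodic theorem for $g\equiv 1$, identifies the limit as $\E[\Bbar(0,z_j)\,|\,\sI_{\{z_1,\dotsc,z_r\}}]=0$ via \eqref{very nice}, and then handles general continuous $g$ by ``the familiar uniform approximation by step functions,'' which is exactly the cube decomposition and $\e$-argument you spell out. Your added remarks (that the invariant $\sigma$-algebra of the $\Z^r$-action is $\sI_A$ and that the hypothesis $V\in\sL$ is not needed here) are accurate but do not change the argument.
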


\begin{proof}
The case $g\equiv1$ follows from the multidimensional ergodic theorem \cite[Theorem 6.2.8]{Kre-85}, 
which in this case says the $\P$-almost sure limit exists 
and equals $\E[\Bbar(0,z_j)\,|\,\sI_{\{z_1,\dotsc,z_r\}}]$, and an application of \eqref{very nice}.
The case of a general continuous $g$ comes by the familiar uniform approximation by step functions.
\end{proof}

\begin{lemma} \label{lem:apest1}
Suppose that $B$ is a shift-covariant $L^1(\bbP)$ cocycle. Let $r \in \{1,2,\dots,M\}$ be given and let $z_1,\dots,z_r \in \sR$ be distinct. Let $0 \leq a_i < b_i$ be given for $1 \leq i \leq r$. Then $\bbP$-almost surely
\begin{align*}
\lim_{n\to\infty} \frac{1}{n^r} \sum_{k_1=\lf n a_1 \rf}^{\lf n b_1\rf -1} \dots \sum_{k_r=\lf n a_r \rf}^{\lf n b_r\rf -1} \frac{\Bbar(0, k_1 z_1 + \dots + k_r z_r)}{n} &= 0.
\end{align*} 
\end{lemma}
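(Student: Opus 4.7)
The plan is to reduce Lemma \ref{lem:apest1} to Lemma \ref{lem:apest0} by telescoping $\Bbar(0, k_1 z_1 + \cdots + k_r z_r)$ along a canonical admissible path and then exchanging the order of summation. Using the cocycle property and shift-covariance, write
\[
\Bbar(0, k_1 z_1 + \cdots + k_r z_r) \;=\; \sum_{j=1}^{r} \sum_{l=0}^{k_j-1} \Bbar(0, z_j)\circ T_{v_{j,l}(k)},
\qquad v_{j,l}(k) := k_1 z_1 + \cdots + k_{j-1}z_{j-1} + l z_j.
\]
Plugging this into the average in the lemma splits it into $r$ pieces, one for each $j$, each of the form
\[
T_j^{(n)} := \frac{1}{n^{r+1}}\sum_{k_1=\lf na_1\rf}^{\lf nb_1\rf -1}\!\!\cdots\!\!\sum_{k_r=\lf na_r\rf}^{\lf nb_r\rf -1}\sum_{l=0}^{k_j-1}\Bbar(0,z_j)\circ T_{v_{j,l}(k)}.
\]

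Next, I would observe that the summand of $T_j^{(n)}$ does not depend on $k_j$ itself (only on $l$) or on $k_{j+1},\dots,k_r$. So the sums over $k_{j+1},\dots,k_r$ factor out, contributing $\prod_{i>j}(\lf n b_i\rf - \lf n a_i\rf)/n^{r-j} \to \prod_{i>j}(b_i-a_i)$. Exchanging the order of summation between $k_j$ and $l$, the number of valid $k_j$ given $l$ is $N_j(l,n) := (\lf n b_j\rf - \max(\lf n a_j\rf, l+1))_+$, and $N_j(l,n)/n \to g_j(l/n)$ uniformly, where $g_j(s) = \max(0,\min(b_j-s, b_j-a_j))\one\{0\le s\le b_j\}$ is continuous and supported on $[0,b_j]$. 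Thus $T_j^{(n)}$ becomes a Riemann-type sum
\[
T_j^{(n)} \approx C \cdot \frac{1}{n^{j}}\sum_{k_1=\lf na_1\rf}^{\lf nb_1\rf -1}\!\!\cdots\!\!\sum_{k_{j-1}=\lf na_{j-1}\rf}^{\lf nb_{j-1}\rf -1}\sum_{l=0}^{\lf n b_j\rf -1} g_j(l/n)\,\Bbar(0,z_j)\circ T_{k_1 z_1+\cdots +k_{j-1}z_{j-1}+ lz_j},
\]
with an error controlled by $\sup_{l,n}|N_j(l,n)/n - g_j(l/n)|$ and the almost sure Cesaro boundedness of $|\Bbar(0,z_j)|\circ T$ coming from the ordinary multidimensional ergodic theorem applied to the $L^1$ function $|\Bbar(0,z_j)|$.

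The inner sum has precisely the shape to which Lemma \ref{lem:apest0} applies, except the summation ranges are not the full cube $\{0,\dots,n-1\}^j$ but truncated boxes, and the prefactor involves the indicator of these boxes. To handle this, I would take $N = \lf n\max_i b_i\rf + 1$, rescale to index in $\{0,\dots,N-1\}^j$, and approximate the bounded measurable function $g_j(s)\prod_{i<j}\one\{a_i\le s_i\le b_i\}$ uniformly in $L^1([0,1]^j, ds)$ by continuous functions $g^\e$. Applying Lemma \ref{lem:apest0} to each $g^\e$ gives convergence to zero almost surely, while the approximation error is controlled almost surely by
\[
\frac{1}{N^j}\sum_{k_1,\dots,k_{j-1},l} |g - g^\e|(k/N) \cdot |\Bbar(0,z_j)|\circ T_{k_1 z_1+\cdots +l z_j} \;\longrightarrow\; \|g-g^\e\|_{L^1}\,\E|\Bbar(0,z_j)|,
\]
again by the multidimensional ergodic theorem. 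Letting $\e\to 0$ finishes the proof. The main technical point is this last approximation step: one must ensure that the uniform $L^1$ approximation combined with the almost sure ergodic theorem for $|\Bbar(0,z_j)|$ dominates the error, which is standard but requires care with the order of limits. Everything else is a bookkeeping exercise with telescoping and change of summation order.
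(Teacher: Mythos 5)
Your argument reaches the same conclusion by a genuinely different route from the paper. The paper first reduces to $a_i=0$, $b_i=1$ by taking differences (inclusion--exclusion over anchored boxes) and re-indexing, and then inducts on $r$: in the inductive step only the last coordinate is telescoped, the sum over $k_r$ is exchanged with the telescoping index, and this produces the continuous weight $\bigl(1-\tfrac{k_r+1}{n}\bigr)$ on the full cube, so Lemma \ref{lem:apest0} applies verbatim and nothing but the centered cocycle $\Bbar$ ever appears. You instead telescope all $r$ coordinates at once; the bookkeeping (factoring out the sums over $k_i$, $i>j$, and the count $N_j(l,n)$ with $N_j(l,n)/n\to g_j(l/n)$ uniformly) is correct, but the price is that the resulting weight on the ambient cube is $g_j(l/N)\prod_{i<j}\one\{a_i\le k_i/N<b_i\}$, which is discontinuous, so Lemma \ref{lem:apest0} does not apply directly and you are pushed into the approximation step you flag at the end.

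That step is the one place where what you wrote is not yet a proof: the asserted limit
\[
\frac{1}{N^j}\sum_{k}\abs{g-g^\e}(k/N)\,\abs{\Bbar(0,z_j)}\circ T_{k_1z_1+\dots+lz_j}\;\longrightarrow\;\|g-g^\e\|_{L^1}\,\E\abs{\Bbar(0,z_j)}
\]
does not follow from the multidimensional ergodic theorem as invoked, because the weight $\abs{g-g^\e}$ is discontinuous and $\abs{\Bbar(0,z_j)}$ is only in $L^1$, so an $L^1$-small weight concentrated near the box boundaries is not automatically harmless; moreover the limit, when it exists, is $\int\abs{g-g^\e}\cdot\E\bigl[\abs{\Bbar(0,z_j)}\,\big|\,\sI'\bigr]$ for the relevant invariant field $\sI'$, not the unconditional mean (immaterial for your purpose, but the statement should be corrected). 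The repair is standard and uses that $\abs{\Bbar(0,z_j)}\ge 0$ and that your weight is Riemann integrable: dominate $\abs{g-g^\e}$ by a continuous $h^\e$ with $\int h^\e\le 2\e$, and prove the weighted ergodic limit for continuous weights by the same uniform step-function approximation that underlies Lemma \ref{lem:apest0}, handling sub-boxes by inclusion--exclusion and Krengel's theorem. With that inserted your proof closes. Alternatively, performing the paper's reduction to $a_i=0$, $b_i=1$ at the outset removes the truncation indicators, makes all your weights continuous, and lets Lemma \ref{lem:apest0} apply directly, at which point your argument essentially becomes the paper's induction carried out in one sweep.
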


\begin{proof}
%Note that we have shown that $h(x) = \bbE[B(0,x)]|\sI_A]$ where $\sI_A$ is the sigma algebra generated events which are invariant under any non-empty subset $ A\subseteq \{T_x : x \in \sG, x \neq 0\}$. It follows from the multidimensional ergodic theorem that
%\begin{align*}
%\lim_{n\to\infty} \frac{1}{n^r} \sum_{k_1=\lf n a_1 \rf}^{\lf n b_1\rf -1} \dots \sum_{k_r=\lf n a_r \rf}^{\lf n b_r\rf -1} B(0, x, T_{k_1 z_1 + \dots + k_r z_r}\omega ) - h(x,\omega) &= 0.
%\end{align*} 
%A uniform approximation argument by step functions now shows that for any continuous $g : [0,1]^r \to \bbR$, we have
%\begin{align*}
%\lim_{n\to\infty} \frac{1}{n^r} \sum_{k_1=0 }^{n -1} \dots \sum_{k_r=0}^{n -1} g(n^{-1}(k_1,\dots,k_r))\left(B(0, x, T_{k_1 z_1 + \dots + k_r z_r}\omega ) - h(x,\omega)\right) &= 0.
%\end{align*}
By taking differences and re-indexing, it suffices to consider the case $a_i = 0$ and $b_i = 1$ for all $i$. We now prove the result by induction on $r$.  
To start the induction, note that
\begin{align*}
\frac{1}{n} \sum_{k=0}^{n-1} \frac{\Bbar(0,k z_1, \omega)}{n} &= \frac{1}{n} \sum_{k=0}^{n-1} \frac{1}{n}\sum_{j=0}^{k-1} \Bbar(0,z_1, T_{j z_1} \omega)  
= \frac{1}{n} \sum_{j=0}^{n-1}\left(1 - \frac{j+1}{n}\right)\Bbar(0,z_1, T_{j z_1}\w ),
%  \mathop{\longrightarrow}_{n\to\infty} 0.
\end{align*}
which by Lemma \ref{lem:apest0} goes to $0$ as $n\to\infty$.
Now, suppose the result has been proven for $r-1$ and take $r \in \{2,\dots,M\}$. Write
\begin{align*}
&\frac{1}{n^r} \sum_{k_1=0}^{n-1} \dots \sum_{k_r=0}^{n-1} \frac{\Bbar(0, k_1 z_1 + \dots + k_r z_r, \omega)}{n}\\
&=\frac{1}{n^r} \sum_{k_1=0}^{n-1} \dots \sum_{k_r=0}^{n-1} \frac{\Bbar(0, k_1 z_1 + \dots + k_{r-1} z_{r-1},\omega )}{n} 
+ \frac{1}{n^r} \sum_{k_1=0}^{n-1} \dots \sum_{k_r=0}^{n-1} \frac{\Bbar(0, k_r z_r, T_{k_1 z_1 + \dots + k_{r-1} z_{r-1} } \omega)}{n}\\
&= \frac{1}{n^{r-1}} \sum_{k_1=0}^{n-1} \dots \sum_{k_{r-1}=0}^{n-1} \frac{\Bbar(0, k_1 z_1 + \dots + k_{r-1} z_{r-1},\omega )}{n} \\
&\qquad\qquad+ \frac{1}{n^r} \sum_{k_1=0}^{n-1} \dots \sum_{k_r=0}^{n-1} \left(1 - \frac{k_r+1}{n}\right)\Bbar(0,z_r, T_{k_1z_1 + \dots + k_r z_r}\omega). 
\end{align*}
The first term tends to $0$ by the induction hypothesis and the second tends to $0$ by Lemma \ref{lem:apest0}.
\end{proof}

\begin{proof}[Proof of Theorem \ref{thm:CocShp}]
 Fix a labelling $z_1, z_2, \dots, z_M$ of the admissible steps $\sR$.We first show that
\begin{align}\label{liminf B}
\varliminf_{n\to\infty} \min_{x \in D_n} \frac{\Bbar(0,x)}{n} \geq 0.
\end{align}
 Let $\delta > 0$ be given and define $a_k = k\delta/(4M)$ for $k \in \bbZ_+$. For $\bfk = (k_1,\dots,k_M) \in \bbZ_+^M$ introduce the notation
\begin{align*}
C_{n,\bfk} &= \Bigl\{\sum_{i=1}^M s_i z_i : \lf n a_{k_i}\rf \leq s_i < \lf n a_{k_i + 1}\rf \Bigr\}.
\end{align*}
%For $x \in D_n$ we have $x = \sum_{i=1}^M b_i z_i$ with $\sum_{i=1}^M b_i = n$ and $b_i \in\Z_+$. 
%Take $k_i$ minimal such that $\lf n a_{k_i} \rf \geq b_i$ to obtain 
%a vector $\bfk=\bfk(x)$ with the property that every $y \in C_{n,\bfk}$ can be reached from $x$ using an admissible path with at most $n \delta$ steps.  With this in mind, let $K = \Z_+^M\cap[0,4M/\delta +1]^M$. Then, using that $0 \leq b_i/n \leq 1$, we see that for all $n\in \bbN$ and for all $x \in D_n$, there exists a vector $\bfk \in K$ such that every point $y \in C_{n,\bfk}$ can be reached from $x$ in at most $n \delta$ steps. For each $x$ let $\bfk(x)$ be a $\bfk \in K$ with this property.
Let $K = \Z_+^M\cap[0,4M/\delta+1]^M$. For any $n\in \bbN$ and $x \in D_n$, write  
$x = \sum_{i=1}^M b_i z_i$ with $\sum_{i=1}^M b_i = n$ and $b_i \in\Z_+$, then 
take $k_i$ minimal such that 
$\lf n a_{k_i} \rf \geq b_i$. This way we obtain a vector $\bfk(x) \in K$ such that every point $y \in C_{n,\bfk(x)}$ can be reached from $x$ in at most $n \delta$ steps.
%\fl{na_{k_i-1}}<b_i implies na_{k_i-1}\le b_i implies n(k-1)\delta/(4M)\le n implies k\le 4M/\delta+1

For each $x \in D_n$ and each $y \in C_{n,\bfk(x)}$, fix a path $x_{0,\ell}$, $\ell\le\delta n$,  from $x$ to $y$ with the property that the steps $z_1,\dots,z_M$ are taken in order. Denote by $\rho_0 = \max\{|z|_1 : z \in \sR\}$. Then we have
\begin{align*}
\Bbar(0,x)&= \Bbar(0,y) - \Bbar(x,y) \\
&= \Bbar(0,y) - \sum_{i=0}^{\ell-1} \Bbar(0,x_{i+1}-x_i, T_{x_i}\omega) \\
&\geq \Bbar(0,y) - \sum_{i=0}^{\ell-1} V(x_{i+1}-x_i, T_{x_i}\omega)\one\{x_{i+1}\neq x_i\} - \ell \max_{i \leq M}|m(B)\cdot z_i|\\
&\geq \Bbar(0,y) - \max_{z \in \sR \backslash\{0\}} \Bigl\{ \max_{|u|_1 \leq 2n\rho_0} \sum_{0\leq i \leq n \delta} |V(z, T_{u + i z}\omega)|\Bigr\} - \ell \max_{i \leq M}|m(B)\cdot z_i |.
\end{align*}
Note that the error term is independent of $x$ and $y$. Average over $y \in C_{n,\bfk(x)}$ and then take a minimum over $x \in D_n$ to obtain
\begin{align*}
\min_{x \in D_n} \frac{\Bbar(0,x)}{n} &\geq \min_{\bfk \in K} \frac{1}{N_{n,\bfk}} \sum_{s_1 = \lf n a_{k_1}\rf}^{\lf n a_{k_1+1}\rf - 1} \dots \sum_{s_M = \lf n a_{k_M}\rf}^{\lf n a_{k_M+1}\rf - 1} \frac{\Bbar(0, \sum_{i=1}^M s_i z_i)}{n} \\
&- \sum_{z \in \sR \backslash \{0\}} \Bigl\{\max_{|u|_1 \leq 2n \rho_0} \frac{1}{n} \sum_{0 \leq i \leq n \delta} |V(z,T_{u+iz}\omega)| \Bigr\} -\delta \max_{i \leq M}|m(B)\cdot z_i|,
 \end{align*}
where $N_{n,\bfk} = \prod_{i=1}^M (\lf n a_{k_i+1}\rf - \lf n a_{k_i}\rf) \sim n^M$ as $n \to \infty$. The first term on the right-hand side tends to zero by Lemma \ref{lem:apest1} and the second tends to zero after taking $n \to \infty$ and then $\delta \to 0$ by the hypothesis that $V\in\sL$. 
Thus, we have shown that \eqref{liminf B} holds.

Now, for $\emptyset \neq I \subset \{1,\dots,M\}$ and $\bfk = (k_i)_{i \in I} \subset \bbZ_+^{|I|}$, define
\begin{align*}
C_{n,I,\bfk} &= \Bigl\{\sum_{i \in I} s_i z_i : \lf n a_{k_i-1}\rf \le s_i < \lf n a_{k_i}\rf\Bigr\}
\end{align*}
For  $x \in D_n$ write $x=\sum_{i=1}^M b_iz_i$ with $\sum_{i=1}^M b_i=n$ and $b_i\in\Z_+$ 
and let $I(x)=\{i:b_i>\fl{na_1}\}$. For $i\in I(x)$ choose $k_i$ maximal such that $\fl{a_{k_i}}\le b_i$.  This way we get a vector $\bfk(x)\in\Z_+^M\cap[0,8M/\delta]^M=K$ 
such that $x$ can be reached from every point $y \in C_{n,I(x),\bfk(x)}$ with an admissible path of at most $n \delta$ steps. 
%The choice of $I(x)$ allows us to account for points $x = \sum_{i=1}^M b_i z_i$ such that $b_i < \lf n a_1 \rf$ for some $i$. 
%As above, there is a finite set $K$ such that $\bfk(x) \in K$ for all $x\in D_n$ and all $n\in\N$ large enough.

For each $x,y$, take a path from $y$ to $x$ such that the steps $z_j,$ $j \in I(x)$ are taken in order. Call this path $x_{0,\ell}$. Then
\begin{align*}
\Bbar(0,x) &= \Bbar(0,y) + \sum_{i=0}^{\ell-1} \Bbar(0,x_{i+1}-x_i, T_{x_i}\omega) \\
& \leq \Bbar(0,y) + \sum_{i=0}^{\ell-1} V(x_{i+1}-x_i, T_{x_i}\omega) + \ell \max_{i \leq M}|m(B)\cdot z_i|.
\end{align*}
Averaging over $y\in C_{n,I(x),\bfk(x)}$ then taking a maximum over $x\in D_n$ we obtain
\begin{align*}
\max_{x \in D_n} \frac{\Bbar(0,x)}{n} &\leq \max_{\substack{\bfk \in K\\\varnothing\ne I\subset\{1,\dotsc,M\}}} \frac{1}{N_{n,I,\bfk}} \sum_{s_1 = \lf n a_{k_{j_1}-1}\rf}^{\lf n a_{k_{j_1}}\rf - 1} \dots \sum_{s_{\abs{I}} = \lf n a_{k_{j_{\abs{I}}}-1}\rf}^{\lf n a_{k_{j_{\abs{I}}}}\rf - 1} \frac{\Bbar(0, \sum_{i=1}^{\abs{I}} s_i z_{j_i})}{n} \\
&+ \sum_{z \in \sR \backslash \{0\}} \Bigl\{\max_{|u|_1 \leq 2n \rho_0} \frac{1}{n} \sum_{0 \leq i \leq n \delta} |V(z,T_{u+iz}\omega)| \Bigr\} + \delta \max_{i \leq M}|m(B)\cdot z_i|,
\end{align*}
where $N_{n,I,\bfk}= \prod_{i=1}^{\abs{I}} (\lf n a_{k_{j_i}}\rf - \lf n a_{k_{j_i}-1}\rf) \sim n^{\abs{I}}$
and $\{j_1,\dotsc,j_{\abs{I}}\}=I$.
The same argument as above now gives
\begin{align*}
\varlimsup_{n\to\infty} \max_{x \in D_n} \frac{\Bbar(0,x)}{n} \leq 0.
\end{align*}
The theorem is proved.
\end{proof}

\section{Auxiliary lemmas}\label{app:lemmas}

We start with a lemma that gives an analogue of J.B.~Martin's result \cite[Theorem 2.4]{Mar-04} on the boundary behavior of the shape function for directed last-passage percolation, in the positive temperature setting. It follows immediately from that result by bounding $\fe^{\beta}$ above and below using $\fe^{\infty}$ and counting paths.

\begin{lemma}\label{lem:polymermartin}
For each $\beta>0$, as $s \searrow 0$ 
	\[\fe^{\beta}(s e_1 + e_2) = \fe^{\beta}(e_1 + s e_2) = \bbE[\w_0] + 2\sqrt{ s \Var(\w_0)} + o(\sqrt{s}).\]
\end{lemma}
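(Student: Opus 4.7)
The plan is to sandwich $\fe^\beta$ between $\fe^\infty$ and $\fe^\infty$ plus an entropic error that vanishes faster than $\sqrt{s}$, and then invoke Martin's boundary shape result \cite[Theorem 2.4]{Mar-04} which asserts exactly $\fe^{\infty}(se_1+e_2)=\bbE[\w_0]+2\sqrt{s\,\Var(\w_0)}+o(\sqrt{s})$ as $s\searrow 0$. The equality $\fe^\beta(se_1+e_2)=\fe^\beta(e_1+se_2)$ is immediate from the lattice-symmetry identity $\fe^\beta(\xi_1 e_1+\xi_2 e_2)=\fe^\beta(\xi_2 e_1+\xi_1 e_2)$ noted in Section \ref{sec:fe}, so it suffices to handle the first expression.

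First I would establish the elementary sandwich at the prelimit level. For $x\in\Z_+^2$ and $\beta\in(0,\infty)$, since $Z^\beta_{0,x}$ has $|\bbX_0^x|=\binom{x\cdot e_1+x\cdot e_2}{x\cdot e_1}$ terms each bounded above by $e^{\beta G_{0,x}}$ and at least one term of size $e^{\beta G_{0,x}}$,
\begin{align*}
G_{0,x}\ \le\ F^\beta_{0,x}\ \le\ G_{0,x}+\frac{1}{\beta}\log\binom{x\cdot e_1+x\cdot e_2}{x\cdot e_1}.
\end{align*}
Specializing to $x=x_n:=\lfloor n(se_1+e_2)\rfloor$, dividing by $n$, and passing to the limit via the shape theorem \eqref{shape}, I obtain
\begin{align*}
\fe^{\infty}(se_1+e_2)\ \le\ \fe^{\beta}(se_1+e_2)\ \le\ \fe^{\infty}(se_1+e_2)+\frac{1}{\beta}(1+s)\,H\!\Bigl(\tfrac{s}{1+s}\Bigr),
\end{align*}
where $H(p)=-p\log p-(1-p)\log(1-p)$ is the binary entropy function and the last term arises from Stirling's formula applied to the binomial coefficient.

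Finally I would verify that the entropic error is $o(\sqrt{s})$ as $s\searrow 0$: a direct expansion gives $(1+s)H(s/(1+s))=-s\log s+s+O(s^2\log s)$ as $s\searrow 0$, so $(1+s)H(s/(1+s))/\sqrt{s}=\sqrt{s}\log(1/s)+O(\sqrt{s})\to 0$. Combining this with Martin's asymptotic for $\fe^\infty$ on both sides of the sandwich yields
\begin{align*}
\fe^\beta(se_1+e_2)=\bbE[\w_0]+2\sqrt{s\,\Var(\w_0)}+o(\sqrt{s}),
\end{align*}
which is the claim. There is no real obstacle here: the only slightly delicate point is that Martin's theorem requires the $2+\varepsilon$ moment hypothesis, which is already part of our standing assumption \eqref{main-assump}.
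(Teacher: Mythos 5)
Your proof is correct and follows essentially the same route as the paper: sandwich $\fe^\beta$ between $\fe^\infty$ and $\fe^\infty$ plus the path-counting entropy term (your $(1+s)H(s/(1+s))$ is exactly the paper's $\log(1+s)+s\log(1+s^{-1})$ obtained via Stirling), check that this error is $o(\sqrt{s})$, and conclude with \cite[Theorem 2.4]{Mar-04} together with the symmetry $\fe^\beta(se_1+e_2)=\fe^\beta(e_1+se_2)$. No gaps.
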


\begin{proof}
For any $\beta$ we have $\fe^{\beta}(s e_1 + e_2) = \fe^{\beta}(e_1 + s e_2)$. Using Stirling's formula, we obtain
\begin{align*}
{{N+ \lf Ns \rf}\choose{N}} &= \frac{\sqrt{1 + \lf Ns \rf/N}}{\sqrt{2\pi \lf N s \rf}} \left(1 + \frac{\lf Ns \rf}{N}\right)^N \left(1 + \frac{N}{\lf Ns \rf}\right)^{\lf Ns \rf}(1+o(1))
\end{align*}
By path counting and approximating each path by the largest path, we also observe that
\begin{align*}
N^{-1} G_{0, \lf N(e_1 + s e_2)\rf} \leq (\beta N)^{-1} \log Z_{0,\lf N (e_1 + s e_2)\rf}^\beta \leq N^{-1} G_{0, \lf N(e_1 + s e_2)\rf} + (\beta N)^{-1} \log {{N + \lf Ns \rf}\choose{N}}.
\end{align*}
It follows that
\begin{align*}
\fe^{\infty}(s e_1 + e_2)  \leq \fe^{\beta}(s e_1 + e_2) \leq \fe^{\infty}(s e_1 + e_2) + \beta^{-1}\left[\log(1 + s) + s  \log (1 + s^{-1})\right].
\end{align*}
The result follows from $ \log(1 + s) + s \log (1 + s^{-1}) = o(\sqrt{s})$ as $s \searrow 0$ and \cite[Theorem 2.4]{Mar-04}.
\end{proof}

Now, we provide the proofs we deferred to this appendix. We begin with the following lemma, which applies the above and explains why in the definition of $\partial \fe^{\beta}(\Uset)$ we do not consider the cases $h \in \partial \fe^{\beta}(e_1)$ or $h \in \partial \fe^{\beta}(e_2)$ and which is used in the proof of Lemma \ref{lem:insub}.

\begin{lemma}\label{lem:superdifempty}
$\partial \fe^{\beta}(e_1) = \partial \fe^{\beta}(e_2) = \varnothing$.
\end{lemma}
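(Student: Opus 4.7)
The plan is a direct contradiction argument using the boundary asymptotics from Lemma \ref{lem:polymermartin}, exploiting the fact that $\fe^{\beta}$ has infinite-slope behavior as one approaches the boundary of $\Uset$. I will handle $\partial\fe^\beta(e_1)$; the case of $\partial\fe^\beta(e_2)$ follows by the $e_1\leftrightarrow e_2$ symmetry of $\fe^\beta$ recalled in Section \ref{sec:fe}.

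First I would observe that $\fe^\beta(e_1) = \bbE[\w_0]$. This is immediate from the fact that the only admissible path in direction $e_1$ is the deterministic one $x_k = k e_1$, so $F^{\beta}_{0,ne_1} = \sum_{k=0}^{n-1}\w_{ke_1}$ and the $T$-ergodic theorem applies. Next, I assume for contradiction that there exists $v\in\partial\fe^\beta(e_1)$. Applying the defining inequality \eqref{eq:superdef} at the test point $\zeta = e_1 + s e_2 \in \R_+^2$ for $s>0$ gives
\[
-s\,(v\cdot e_2) = v\cdot(e_1-\zeta)\le \fe^\beta(e_1)-\fe^\beta(e_1+se_2)= -\bigl(\fe^\beta(e_1+se_2)-\bbE[\w_0]\bigr).
\]
Rearranging, $v\cdot e_2 \ge s^{-1}\bigl(\fe^\beta(e_1+se_2)-\bbE[\w_0]\bigr)$.

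Now I invoke Lemma \ref{lem:polymermartin}, which yields
\[
\fe^\beta(e_1+se_2)-\bbE[\w_0] = 2\sqrt{s\,\Var(\w_0)}+o(\sqrt{s}) \quad\text{as }s\searrow 0.
\]
Dividing by $s$ produces a lower bound on $v\cdot e_2$ of order $2\sqrt{\Var(\w_0)/s} + o(s^{-1/2})$, which diverges to $+\infty$ as $s\searrow 0$ because the hypothesis \eqref{main-assump} guarantees $\Var(\w_0)>0$. This contradicts $v\in\R^2$, so $\partial\fe^\beta(e_1)=\varnothing$. The symmetry $\fe^\beta(\xi_1 e_1+\xi_2 e_2)=\fe^\beta(\xi_2 e_1+\xi_1 e_2)$ recorded in Section \ref{sec:fe} then yields $\partial\fe^\beta(e_2)=\varnothing$ by the same argument with the roles of the coordinates swapped.

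The only nontrivial ingredient is the $\sqrt{s}$ boundary behavior of $\fe^\beta$, which is already supplied by Lemma \ref{lem:polymermartin}; the rest is just unpacking the definition of the superdifferential. There is no real obstacle — the whole content of the statement is that the square-root singularity at the boundary makes the slopes of $\fe^\beta$ blow up, which prevents any affine support functional from existing at $e_1$ or $e_2$.
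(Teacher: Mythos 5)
Your argument is correct and is essentially identical to the paper's own proof: both plug the test point $\zeta=e_1+se_2$ into the superdifferential inequality \eqref{eq:superdef}, invoke the square-root boundary asymptotics of Lemma \ref{lem:polymermartin} together with $\Var(\w_0)>0$, and conclude that a supergradient at $e_1$ would need an infinite $e_2$-component, with the $e_2$ case following by symmetry. The only cosmetic differences are your sign convention ($v$ in place of $-h$) and the explicit justification that $\fe^\beta(e_1)=\bbE[\w_0]$, which the paper simply asserts.
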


\begin{proof}
Suppose that $\partial \fe^{\beta}(e_1)$ is not empty and let $-h \in \partial \fe^{\beta}(e_1)$. Take $\xi = e_1 + s e_2$ in \eqref{eq:superdef}. Then
\begin{align*}
s h \cdot e_2 \leq \fe^{\beta}(e_1) - \fe^{\beta}(e_1 + s e_2).
\end{align*}
Observe that $\fe^\beta(e_1)=\fe^\beta(e_2)=\E[\w_0]$.
%By Lemma \ref{lem:polymermartin}, the right hand side is negative and of order $\sqrt{s}$. 
Then taking $s \searrow 0$ and applying Lemma \ref{lem:polymermartin}, 
we must have $h \cdot e_2 = -\infty$, a contradiction. The other claim is similar.
\end{proof}

\begin{proof}[Proof of Lemma \ref{lem:insub}]
Let $\xi \in \ri \Uset$ satisfy $-h \in \partial \fe^{\beta}(\xi)$. Setting $\zeta = \lambda \xi$ and using homogeneity, for all $\lambda > 0$ we must have
\begin{align*}
-(1-\lambda) h \cdot \xi \leq (1-\lambda) \fe^\beta(\xi).
\end{align*}
Taking $\lambda>1$ and $\lambda <1$ and dividing through by $(1-\lambda)$ gives that we must have $-h \cdot \xi = \fe^\beta(\xi)$.
%Extend $\fe^\beta$ by setting $\fe^{\beta}(\xi) = -\infty$ for $\xi \notin \bbR_+^2$, which preserves lower semicontinuity and convexity of $-\fe^{\beta}$ as a function of $\xi \in \bbR^2$.  
%Then by \cite[Theorem 23.5 (d)]{Roc-70} 
%the Legendre-Fenchel transform of $-\fe^\beta$ satisfies 
%	\[(-\fe^{\beta})^*(h) = \sup_{\zeta\in\R^2}\{h\cdot\zeta+\fe^\beta(\zeta)\}=0.\]
%But
%	\[\fepl^{\beta}(h)  =\sup_{\zeta\in\Uset}\{h\cdot\zeta+\fe(\zeta)\}\leq (-\fe^{\beta})^*(h).\] 
For any $\zeta\in\R_+^2$ we have
	\[h\cdot\zeta+\fe^\beta(\zeta)=-h\cdot(\xi-\zeta)-(\fe^\beta(\xi)-\fe^\beta(\zeta))\le0.\]
Taking a supremum over $\zeta\in\Uset$ gives $\fepl^\beta(h)\le0$. 
Since $h \cdot \xi + \fe^\beta(\xi) = 0$, the supremum is achieved and we must have $\fepl^\beta(h) = 0$.

Conversely, suppose that $\fepl^\beta(h) = 0$. Then by continuity, there is $\xi \in \Uset$ with $h \cdot \xi+\fe^\beta(\xi)=0$. For any $\zeta\in\R^2_+\setminus\{0\}$ we have 
	\[h\cdot\zeta+\fe^\beta(\zeta)=\abs{\zeta_1}\,\Bigl[h\cdot\frac{\zeta}{\abs{\zeta}_1}+\fe^\beta\Bigl(\frac{\zeta}{\abs{\zeta}_1}\Bigr)\Bigr]\le\abs{\zeta}_1\fepl^\beta(h)=0\] 
	and hence
	\[-h\cdot(\xi-\zeta)\le\fe^\beta(\xi)-\fe^\beta(\zeta).\]
This implies that $-h\in\partial\fe^\beta(\xi)$.
%But then we have
%\begin{align*}
%(-\fe^{\beta})^*(h) &= \sup_{\xi \in \bbR_+^2} \left\{h \cdot \xi + \fe^\beta(\xi)\right\} = \sup_{\xi \in \bbR_+^2 \backslash \{0\}} \left\{\abs{\xi}_1\left(h \cdot \left(\frac{\xi}{\abs{\xi}_1}\right) + \fe^\beta\left(\frac{\xi}{\abs{\xi}_1}\right)\right)\right\} \leq 0,
%\end{align*}
%where in the last step we used the fact that $h \cdot \left(\frac{\xi}{\abs{\xi}_1}\right) + \fe^\beta\left(\frac{\xi}{\abs{\xi}_1}\right) \leq 0$ for all $\xi \in \bbR_+^2 \backslash \{0\}$. Since there is $\xi \in \Uset$ for which equality holds, we conclude that $(-\fe^{\beta})^*(h) = 0$. By \cite[Theorem 23.5 (d)]{Roc-70} the equality $-\xi \cdot h = \fe^\beta(\xi)$ then implies that $-h \in \partial \fe^\beta(\xi)$. 
But by Lemma \ref{lem:superdifempty} $\partial \fe^{\beta}(e_1)$ and $\partial \fe^{\beta}(e_2)$ are empty, so $\xi \in \ri \Uset$.
\end{proof}

\begin{proof}[Proof of Lemma \ref{ker-consist}]
Write
	\begin{align*}
	\ker_{k,\ell}^\w\ker_{r,s}^\w f(x_{m,\infty})
%	=\int f(\bar X_{m,\infty}) \ker^\w_{k,\ell}(x_{m,k},x_{\ell,\infty},dX_{m,\infty})\ker^\w_{r,s}(X_{m,r},X_{s,\infty},d\bar X_{m,\infty})\\
%	&\quad=\int f(x_{m,k}X_{k,r}\bar X_{r,s}X_{s,\ell}x_{\ell,\infty}) \ker^\w_{k,\ell}(x_{m,k},x_{\ell,\infty},dX_{m,\infty})\ker^\w_{r,s}(X_{m,r},X_{s,\infty},d\bar X_{m,\infty})\\
%	&\quad
%	=\sum_{\substack{x_{k,\ell},\bar x_{r,s}\\ \bar x_r=x_r,\bar x_s=x_s}}f(x_{m,r}\bar x_{r,s}x_{s,\infty}) Q^\w_{x_k,x_\ell}(X_{k,\ell}=x_{k,\ell})Q^\w_{x_r,x_s}(X_{r,s}=\bar x_{r,s})\\
%	&\quad=\sum_{\substack{x_{k,r},\bar x_{r,s},x_{s,\ell}\\ \bar x_r=x_r,\bar x_s=x_s}}f(x_{m,r}\bar x_{r,s}x_{s,\infty})\sum_{x_{r,s}} \frac{e^{\sum_{i=k}^{r-1}\w_{x_i}}e^{\sum_{i=r}^{s-1}\w_{x_i}}e^{\sum_{i=s}^{\ell-1}\w_{x_i}}e^{\sum_{i=r}^{s-1}\w_{\bar x_i}}}{Z_{x_k,x_\ell}Z_{x_r,x_s}}\\
%	&\quad=\sum_{\substack{x_{k,r},\bar x_{r,s},x_{s,\ell}\\ \bar x_r=x_r,\bar x_s=x_s}}f(x_{m,r}\bar x_{r,s}x_{s,\infty})\frac{e^{\sum_{i=k}^{r-1}\w_{x_i}}e^{\sum_{i=r}^{s-1}\w_{\bar x_i}}e^{\sum_{i=s}^{\ell-1}\w_{x_i}}}{Z_{x_k,x_\ell}}\\
%	&\quad=\sum_{x_{k,\ell}}f(x_{m,\infty})Q^\w_{x_k,x_\ell}(x_{k,\ell})=\ker_{k,\ell}^\w f(x_{m,k},x_{\ell,\infty}).\qedhere
&= \sum_{y_{k,\ell} \in \bbX_{x_k}^{x_{\ell}}}\sum_{\bar{y}_{r,s} \in \bbX_{y_r}^{y_s}} f(x_{m,k}y_{k,r}\bar{y}_{r,s}y_{s,\ell}x_{\ell,\infty}) Q_{y_r,y_s}^{\w}(\bar{y}_{r,s}) Q_{x_k,x_\ell}(y_{k,\ell}) \\
&= \sum_{y_{k,\ell} \in \bbX_{x_k}^{x_{\ell}}}\sum_{\bar{y}_{r,s} \in \bbX_{y_r}^{y_s}} f(x_{m,k}y_{k,r}\bar{y}_{r,s}y_{s,\ell}x_{\ell,\infty}) \frac{e^{ \sum_{i=r}^{s-1} \w_{\bar{y}_i} +\sum_{j=k}^{\ell-1} \w_{y_j}  }}{Z_{y_r,y_s}^{\w} Z_{x_k,x_\ell}^{\w} } \\
&= \sum_{\substack{x_k \leq y_r \leq y_s \leq x_{\ell}\\ y_r \in \V_r, y_s \in \V_s}} \sum_{y_{k,r} \in \bbX_{x_k}^{y_r} }\sum_{\bar{y}_{r,s} \in \bbX_{y_r}^{y_s}}\sum_{y_{s,\ell} \in \bbX_{y_s}^{x_\ell} } f(x_{m,k}y_{k,r}\bar{y}_{r,s}y_{s,\ell}x_{\ell,\infty}) \times \\
&\qquad\qquad\qquad\qquad\left(\sum_{y_{r,s} \in \bbX_{y_r}^{y_s}}e^{\sum_{j=r}^{s-1} \w_{y_j}} \frac{ e^{ \sum_{i=r}^{s-1} \w_{\bar{y}_i} +\sum_{j=k}^{r-1} \w_{y_j} +\sum_{j=s}^{\ell-1} \w_{y_j}}}{Z_{y_r,y_s}^{\w} Z_{x_k,x_\ell}^{\w} }\right) \\
&= \sum_{y_{k,\ell} \in \bbX_{x_k}^{x_\ell}}f(x_{m,k}y_{k,\ell}x_{\ell,\infty})Q_{x_k,x_\ell}(y_{k,\ell}) = \ker_{k,\ell}^\w f(x_{m,\infty}).\qedhere
	\end{align*}
\end{proof}

\begin{proof}[Proof of Lemma \ref{lm:DLR}]
If $\Pi_x$ is a Gibbs measure, then \eqref{eq:DLR} comes from 
	\[E^{\Pi_x}[f]=E^{\Pi_x}\bigl[E^{\Pi_x}[f\,|\,\cX_{(k,\ell)^c}]\bigr]=E^{\Pi_x}[\ker_{k,\ell}^\w f]=\Pi_x\ker_{k,\ell}^\w f.\]
For the other direction take a bounded measurable function $f$ and a bounded $\cXx_{(k,\ell)^c}$-measurable function $g$. Then, using \eqref{proper} and \eqref{eq:DLR} we get
	\[E^{\Pi_x}[g\ker_{k,\ell}^\w f]=E^{\Pi_x}[\ker_{k,\ell}^\w(gf)]=E^{\Pi_x}[gf].\]
This proves that $E^{\Pi_x}[f\,|\,\cXx_{(k,\ell)^c}]=\ker_{k,\ell}^\w f$.
\end{proof}

\begin{proof}[Proof of Lemma \ref{lm:Pi-cons}]
If $\Pi_x\in\DLR_x^\w$, then \eqref{Pi-cons} follows from 
	\[\Pi_x(x_{m,n})=\Pi_x\ker_{m,n}^\w(x_{m,n}).\]
%for a given up-right path $x_{0,n}$ we have 
%	\[\mu(X_{0,n}=x_{0,n})=\mu\ker_{x_0,x_n}^\w(X_{0,n}=x_{0,n})=\int\ker_{x_0,x_n}^\w(X_{n,\infty},\{X_{0,n}=x_{0,n}\})\mu(dX_{n,\infty})=Q^\w_{x_0,x_n}(X_{0,n}=x_{0,n})\mu(X_n=x_n).\]
Conversely, assume \eqref{Pi-cons} holds for all up-right paths $x_{m,n}$ with $x_m=x$.  Fix $\ell\ge n\ge k\ge m$ and an up-right path $x_{m,\ell}$ with $x_m=x$. Then
	\begin{align*}
	\Pi_x\ker^\w_{k,n}(x_{m,\ell})
	&=Q^\w_{x_k,x_n}(x_{k,n})\Pi_x(x_{m,k},x_{n,\ell})\\
%	&=\sum_{\substack{\bar x_{k,n}\\ \bar x_k=x_k,\bar x_n=x_n}}\!\!\!\!\!\!\!\!Q^\w_{x_k,x_n}(x_{k,n})\mu(x_{m,k},\bar x_{k,n},x_{n,\ell})\\
%	&=\sum_{\substack{\bar x_{k,n}\\ \bar x_k=x_k,\bar x_n=x_n}}\!\!\!\!\!\!\!\!Q^\w_{x_k,x_n}(x_{k,n})Q^\w_{x_m,x_\ell}(x_{m,k},\bar x_{k,n},x_{n,\ell})\mu(x_\ell)\\
%	&=Q^\w_{x_m,x_\ell}(x_{m,\ell})\mu(x_\ell)=\mu(x_{m,\ell}).
	&=\sum_{y_{k,n} \in \bbX_{x_k}^{x_n}} Q_{x_k,x_n}^{\w}(x_{k,n}) \Pi_x(x_{m,k}y_{k,n}x_{n,\ell}) \\
	&=\sum_{y_{k,n} \in \bbX_{x_k}^{x_n}} Q_{x_k,x_n}^{\w}(x_{k,n}) Q_{x_m,x_\ell}^\w(x_{m,k}y_{k,n}x_{n,\ell})\Pi_x(x_\ell)\\
&=Q^\w_{x_m,x_\ell}(x_{m,\ell})\Pi_x(x_\ell)=\Pi_x(x_{m,\ell}).
	\end{align*}
The penultimate equality came by a cancellation similar to the one in the proof of Lemma \ref{ker-consist}.
\end{proof}

\begin{proof}[Proof of Lemma \ref{lm:p2pQ-pi}]
Let $x\cdot\ehat=m$ and $y\cdot\ehat=n$. For an admissible path $x_{m,n}$ from $x$ to $y$ write
	\[Q^\w_{x,y}(x_{m,n})=\frac{e^{\sum_{i=m}^{n-1}\w_{x_i}}}{Z_{x,y}}=\prod_{i=m}^{n-1}\frac{e^{\w_{x_i}}Z_{x,x_i}}{Z_{x,x_{i+1}}}=\prod_{i=m}^{n-1}\backpix_{x_{i+1},x_i}(\w).\qedhere\]
\end{proof}

%\subsection{Polymer comparison}
Lastly, a lemma that allows us to compare ratios of partition functions.

%\subsection{Polymer comparison}
%\begin{lemma}[Lemma A.1, \cite{Geo-etal-15}]
%For any $x,y \in \bbZ_+^2$ with, coordinatewise, $y - e_1 \geq x + e_1 + e_2$, we have
%\begin{align*}
%\frac{Z_{x+e_1, y - e_1}^\beta}{Z_{x+e_1,y}^\beta} &\leq \frac{Z_{x+e_1+e_2,y-e_1}^\beta}{Z_{x+e_1+e_2,y}^\beta} \leq \frac{Z_{x+e_2,y-e_1}^\beta}{Z_{x+e_2,y}^\beta}.
%\end{align*}
%Similarly, for any $x,y \in \bbZ_+^2$ with, coordinatewise, $y - e_2 \geq x + e_1 + e_2$, we have
%\begin{align*}
%\frac{Z_{x+e_1, y - e_2}^\beta}{Z_{x+e_1,y}^\beta} &\geq \frac{Z_{x+e_1+e_2,y-e_2}^\beta}{Z_{x+e_1+e_2,y}^\beta} \geq \frac{Z_{x+e_2,y-e_2}^\beta}{Z_{x+e_2,y}^\beta}.
%\end{align*}
%\end{lemma}

\begin{lemma}
For any $\w\in\Omega$, $x\in\Z^2$, and $u,v\in x+e_1+e_2+\Z_+^2$ with $u\cdot e_1\ge v\cdot e_1$ and $u\cdot e_2\le v\cdot e_2$
\begin{align}\label{comparison}
\frac{Z^\beta_{x+e_1,u}}{Z^\beta_{x,u}}\ge\frac{Z^\beta_{x+e_1,v}}{Z^\beta_{x,v}}\quad\text{and}\quad
\frac{Z^\beta_{x+e_2,u}}{Z^\beta_{x,u}}\le\frac{Z^\beta_{x+e_2,v}}{Z^\beta_{x,v}}.
\end{align}
\end{lemma}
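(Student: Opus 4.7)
The plan is to first reduce both inequalities to a single cross-ratio inequality and then prove that cross-ratio via a path-swapping bijection of the standard Lindström--Gessel--Viennot type.

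Since $u, v \geq x + e_1 + e_2$, both $x + e_1$ and $x + e_2$ precede $u$ and $v$. Decomposing paths by their first step gives
\begin{align*}
Z^\beta_{x,w} = e^{\beta \w_x}\bigl(Z^\beta_{x+e_1, w} + Z^\beta_{x+e_2, w}\bigr), \qquad w \in \{u,v\}.
\end{align*}
Substituting this, one checks by elementary manipulation that each of the two claimed inequalities is equivalent to the single cross-ratio inequality
\begin{align*}
Z^\beta_{x+e_1, u}\, Z^\beta_{x+e_2, v} \;\geq\; Z^\beta_{x+e_2, u}\, Z^\beta_{x+e_1, v}.
\end{align*}

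To prove this, expand each factor as a sum over admissible paths and consider the resulting double sum over pairs $(\pi_1, \pi_2)$ with $\pi_1 \in \X^u_{x+e_2}$ and $\pi_2 \in \X^v_{x+e_1}$. The key geometric observation is that \emph{every such pair shares at least one vertex}. Indeed, at the starting level $x\cdot\ehat+1$ one has $\pi_1\cdot e_1 = x\cdot e_1 < x\cdot e_1 + 1 = \pi_2\cdot e_1$, whereas a short case analysis using the hypotheses $u\cdot e_1 \geq v\cdot e_1$ and $u\cdot e_2 \leq v\cdot e_2$ shows that at the smaller terminal level $L = \min(u\cdot\ehat, v\cdot\ehat)$ the signed difference of $e_1$-coordinates has become non-negative (if $L = u\cdot\ehat$ this is immediate from $u\cdot e_1 \geq v\cdot e_1$; if $L = v\cdot\ehat$ it follows from the step-count inequality $u\cdot e_1 - \pi_1\cdot e_1 \leq u\cdot\ehat - v\cdot\ehat$ combined with $u\cdot e_2 \leq v\cdot e_2$). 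Since this difference changes by $0$ or $\pm 1$ from one level to the next, it must pass through $0$, i.e., the two paths share a vertex.

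Let $w$ be the \emph{last} common vertex of $\pi_1$ and $\pi_2$. Define the swapped pair
\begin{align*}
\pi_1' &= (\pi_1\restriction x+e_2 \to w) \,\circ\, (\pi_2\restriction w \to v) \in \X^v_{x+e_2},\\
\pi_2' &= (\pi_2\restriction x+e_1 \to w) \,\circ\, (\pi_1\restriction w \to u) \in \X^u_{x+e_1}.
\end{align*}
The multiset of vertices visited by $\pi_1 \cup \pi_2$ (excluding the two terminal vertices $u, v$) agrees with that visited by $\pi_1' \cup \pi_2'$, so the total Boltzmann weight is preserved. Moreover, because $\pi_1$ and $\pi_2$ do not meet strictly after $w$, neither do $\pi_1'$ and $\pi_2'$, so $w$ is again their last common vertex; this makes the swap an involution and hence a weight-preserving bijection between $\X^u_{x+e_2} \times \X^v_{x+e_1}$ (all of whose pairs share a vertex) and the subset of $\X^v_{x+e_2} \times \X^u_{x+e_1}$ consisting of pairs that share a vertex. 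Dropping the sharing constraint on the right-hand side yields the cross-ratio inequality, and hence the lemma. The main technical points requiring care are the level-by-level argument showing that the two paths are forced to meet and the bookkeeping verifying that the tail swap is weight-preserving; both are elementary once framed correctly.
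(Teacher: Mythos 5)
Your proof is correct, but it takes a different route from the paper. The paper disposes of the lemma in two lines: it quotes the nearest-neighbor monotonicity of partition-function ratios from Lemma A.1 of \cite{Geo-etal-15} (after the reflection $x\mapsto -x$), namely
\[
\frac{Z^\beta_{x+e_1,y}}{Z^\beta_{x,y}}\ \ge\ \frac{Z^\beta_{x+e_1,y-e_1}}{Z^\beta_{x,y-e_1}}\ \ge\ \frac{Z^\beta_{x+e_1,y-e_1+e_2}}{Z^\beta_{x,y-e_1+e_2}},
\]
and then iterates it along an up-left path from $u$ to $v$ (and symmetrically for the $e_2$ ratio). You instead give a self-contained argument: using $Z^\beta_{x,w}=e^{\beta\w_x}\bigl(Z^\beta_{x+e_1,w}+Z^\beta_{x+e_2,w}\bigr)$ you correctly reduce both claimed inequalities to the single cross-ratio (log-supermodularity) inequality $Z^\beta_{x+e_1,u}Z^\beta_{x+e_2,v}\ge Z^\beta_{x+e_2,u}Z^\beta_{x+e_1,v}$, and you prove that by the tail-swap at the last common vertex; your level-by-level intermediate-value argument showing that any $\pi_1\in\X^u_{x+e_2}$ and $\pi_2\in\X^v_{x+e_1}$ must intersect (using exactly the hypotheses $u\cdot e_1\ge v\cdot e_1$, $u\cdot e_2\le v\cdot e_2$), the multiset bookkeeping with the terminal weights excluded, and the observation that the swap is an involution onto the intersecting pairs on the other side are all sound. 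What each approach buys: the paper's proof is shorter on the page but leans on an external one-step lemma from the log-gamma paper, while yours is elementary and self-contained, and it isolates the cleaner underlying fact (endpoint log-supermodularity of $(a,y)\mapsto Z^\beta_{a,y}$) via a Lindström--Gessel--Viennot-type crossing argument, which is essentially the positive-temperature analogue of the path-crossing proofs used in last-passage percolation; note also that your argument, like the paper's, is entirely deterministic in $\w$.
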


\begin{proof}
Reversing the picture in \cite[Lemma A.1]{Geo-etal-15} via  $x\mapsto-x$ gives
	\[\frac{Z^\beta_{x+e_1,y}}{Z^\beta_{x,y}}\ge\frac{Z^\beta_{x+e_1,y-e_1}}{Z^\beta_{x,y-e_1}}\ge\frac{Z^\beta_{x+e_1,y-e_1+e_2}}{Z^\beta_{x,y-e_1+e_2}}\]
for all $x,y\in\Z^2$ with $y\ge x$ and any choice of $\w\in\Omega$. The first equality in \eqref{comparison} comes by applying this repeatedly with $y$ on any up-left path from $u$ to $v$. The second equality is similar.
%
%For any $x,y \in \bbZ^2$ with, coordinatewise, $y - e_1 \geq x + e_1 + e_2$, we have
%\begin{align*}
%\frac{Z_{x+e_1, y - e_1}^\beta}{Z_{x+e_1,y}^\beta} &\leq \frac{Z_{x+e_1+e_2,y-e_1}^\beta}{Z_{x+e_1+e_2,y}^\beta} \leq \frac{Z_{x+e_2,y-e_1}^\beta}{Z_{x+e_2,y}^\beta}.
%\end{align*}
%Similarly, for any $x,y \in \bbZ_+^2$ with, coordinatewise, $y - e_2 \geq x + e_1 + e_2$, we have
%\begin{align*}
%\frac{Z_{x+e_1, y - e_2}^\beta}{Z_{x+e_1,y}^\beta} &\geq \frac{Z_{x+e_1+e_2,y-e_2}^\beta}{Z_{x+e_1+e_2,y}^\beta} \geq \frac{Z_{x+e_2,y-e_2}^\beta}{Z_{x+e_2,y}^\beta}.
%\end{align*}
\end{proof}

\section*{Acknowledgements}
We thank Timo Sepp\"al\"ainen and Louis Fan for many valuable comments on the manuscript.

\footnotesize

\bibliographystyle{aop-no-url}

\bibliography{firasbib2010}

\end{document}